\documentclass{article}

\usepackage{graphicx} % Required for inserting images
\usepackage{graphicx} % Required for inserting images
\usepackage{graphicx,amssymb,amstext,amsmath,amsthm} 
\numberwithin{equation}{section}
\usepackage{url}
\usepackage[hidelinks,colorlinks=true,linkcolor=blue,citecolor=blue]{hyperref}
\usepackage{mathtools}
\usepackage[a4paper, margin = 3cm]{geometry}
\usepackage{xcolor}
\usepackage{float}
\usepackage{tikz-cd}
\usepackage[T1]{fontenc} %these two allow labels of equations
\usepackage[utf8]{inputenc}
\usepackage{bigints}
\usepackage{etoolbox}
\usepackage[
  backend=biber,
  hyperref=true,
  backref=true,
  isbn=false,
  doi=true,
  url=false,
  natbib=true,
  eprint=true,
  useprefix=true,
  maxcitenames=99,
  maxbibnames=99,  
  maxalphanames=3, 
  minalphanames=3,
  safeinputenc,
  style=alphabetic,
  citestyle=alphabetic,
  block=space,
  datamodel=ext-eprint,
]{biblatex}

\newcommand{\Sp}{{Spin$(7)$}} %%SHORTHAND FOR SPIN(7)
\theoremstyle{plain}
\newtheorem{thm}{Theorem}[section]
\newtheorem{lem}[thm]{Lemma}
\newtheorem{claim}[thm]{Claim}
\newtheorem{cor}[thm]{Corollary}
\newtheorem{prop}[thm]{Proposition}
\newtheorem*{thm*}{Theorem}
\theoremstyle{definition}
\newtheorem{defn}[thm]{Definition}
\newtheorem*{defn*}{Definition}
\newtheorem{remark}[thm]{Remark}
\newtheorem{example}[thm]{Example}

\def\pt{\partial}
\newcommand{\ptt}{\frac{\pt}{\pt t}}

\def\d{\mathrm{d}}
\def\Ric{\mathrm{Ric}}
\def\Riem{\mathrm{Rm}}

\def\Vol{\mathrm{Vol}}

\DeclareMathOperator\Div{div}
\DeclareMathOperator{\inj}{inj}
\title{Shi-type estimates and finite-time singularities of reasonable flows of \texorpdfstring{\Sp}{Spin(7)}-structures}
\author{Joseph Duthie\footnote{Mathematical Institute, University of Oxford, Oxford OX2 6GG, United Kingdom\\
Email: joseph.duthie@maths.ox.ac.uk, ORCID: \href{https://orcid.org/0009-0008-2661-3573}{\color{black}{0009-0008-2661-3573}}}}
\date{\today}

\begin{document}
\maketitle
\begin{abstract}
   This paper establishes foundational analytic and geometric results for a broad class of reasonable flows of \Sp-structures. We first prove Shi-type derivative estimates, showing that a uniform bound on the quantity 
   \[
   \Lambda(x,t) = \left(|\Riem(x,t)|_{g(t)}^2 + |T(x,t)|^4_{g(t)} + |\nabla T(x,t)|_{g(t)}^2 \right )^{1/2} 
   \]
   implies bounds on all covariant derivatives of the curvature $\Riem$ and torsion tensor $T$.
   We show further that $\Lambda(x,t)$ must blow up at a finite-time singularity, and establish a lower bound on the blow-up rate. We also prove a compactness theorem for solutions to such flows and apply these results to the analysis of finite-time singularities.
   These results provide a general analytic framework for studying flows of \Sp-structures; once a proposed flow is shown to satisfy the reasonable condition, our estimates, compactness theorems, and singularity analysis apply.
\end{abstract}

{\hypersetup{linkcolor=black}
\tableofcontents
}%Makes table of contents colour black instead of blue, but remain clickable hyperlinks

\section{Introduction}

Geometric flows have proven to be a powerful tool in geometry and topology. By deforming geometric structures via suitably-chosen evolution equations, one can hope to improve an initial structure, detect canonical representatives, or find obstructions to their existence. The most prominent intrinsic geometric flow is the Ricci flow of metrics, which has had a profound impact on differential geometry and low-dimensional topology, most notably in Perelman's resolution of the Poincaré conjecture \cite{perelman2002entropyformularicciflow}, \cite{perelman2003ricciflowsurgerythreemanifolds}.
For the study of geometric structures in general, flows provide a tool to study non-linear geometric PDEs dynamically; instead of attempting to solve an elliptic system directly, one evolves an initial geometric structure and studies its long-time behaviour. If the flow exists for all time and converges, it may produce a distinguished canonical limiting structure such as a Ricci-flat metric in the case of Ricci flow. On the other hand, the formation of singularities can also encode important geometric information.

\bigskip
The geometric structures we are interested in here are \Sp-structures on $8$-manifolds. A \Sp-structure on an $8$-manifold $M$ is a choice of $4$-form $\Phi$ satisfying a particular algebraic condition. Such a $4$-form induces a Riemannian metric $g_\Phi$ and an orientation $\text{vol}_\Phi$. If $\Phi$ is parallel with respect to the induced Levi-Civita connection, then the \Sp-structure is said to be torsion-free and the pair $(M,\Phi)$ is called a \Sp-manifold. Torsion-free \Sp-structures induce metrics with holonomy contained in the Lie group Spin($7$), one of the two exceptional holonomy groups in Berger's classification \cite{Berger}. \Sp-manifolds occupy an important space in the subject of differential geometry. On the level of Riemannian metrics, \Sp-manifolds currently provide the only non-trivial examples of non-complex, compact Ricci-flat $8$-manifolds. From the point of view of calibrated geometry, \Sp-manifolds carry Cayley calibrations, leading to a rich theory of the study of Cayley submanifolds. \Sp-manifolds are also of interest in mathematical physics.

\bigskip
The problem of finding \Sp-manifolds, that is to solve the equation $\nabla \Phi = 0$, is a highly non-linear PDE. In the compact setting, there is essentially only one method of construction, due to Joyce \cite{JoyceSpin7}.
Joyce’s construction produces such manifolds by resolving singular orbifolds and then perturbing \Sp-structures with sufficiently small torsion to torsion-free examples. This method is extremely powerful, but relies on carefully-engineered initial data. In this sense, all of the currently available examples constructed via this method are very similar, and so we have little understanding of the general problem. It is therefore natural to seek complementary methods, with the goal of deforming a more arbitrary \Sp-structure towards a torsion-free one, or obtaining some understanding of possible obstructions to doing so.

\bigskip
One proposed method is via geometric flows. In contrast to the $G_2$-setting, flows of \Sp-structures have received comparatively little attention. Several interesting flows have been introduced and studied (\cite{DLE24},         \cite{Dwivedi24}, \cite{DwivediRHF}), but there is currently no single flow that is widely acknowledged to be a canonical tool in the study of \Sp-structures, unlike the Laplacian flow of $G_2$-structures. Because of this variety of flows, and the likelihood of more to come, one needs analytic tools which are not tied closely to a particular choice of evolution equation, but which work for a wide range of flows.

The purpose of this paper is to provide such a toolkit, adapting successful methods from the study of $G_2$-flows (\cite{Lotay-Wei},\cite{Chen},\cite{DwivediRHF}) into the \Sp-setting.

\bigskip

\noindent\textbf{Outline of paper.}

After recalling some necessary background on \Sp-structures in Section \ref{SectionPrelims}, we begin in Section \ref{SectionReasonable} by defining a class of reasonable flows of \Sp-structures, analogously to Chen in the $G_2$ case \cite{Chen} (cf. Definition \ref{DefReasonable}).
\begin{defn*}
    Let $\Phi(t)$ be a flow of \Sp-structures, given by the evolution equation
    \begin{equation}
        \ptt \Phi(t) = (A \diamond\Phi),
    \end{equation}
    for some $A = h+X$, as discussed in Section \ref{Flowsintro}.
   We say that the flow $\Phi(t)$ is \emph{reasonable} if it has short-time existence and uniqueness, and the following schematic equations hold for the induced evolution equations of the induced Riemannian metric $g(t)$ and the torsion tensor $T(t)$, and for the skew $2$-tensor $X$:
    \begin{align}
        \frac{\pt }{\pt t}g_{ij} &= 2h_{ij}=  -2 \Ric_{ij}  + L(T) + T*T +C ,\\
        X&= L(\nabla T)  + L(\Riem) + L( T) + T*T +C, \\
         \frac{\pt }{\pt t}T_{m;is}&= \Delta T_{m;is} + L(\nabla T) + L(T) + \Riem * T \nonumber\\
         &\quad+ \nabla T * T + T*T*T + T*T ,
    \end{align}
    where $L$ denotes linear maps, $*$ denotes any multilinear contraction using $g$ and $\Phi$, and $C$ denotes tensor fields independent of $g$ and $T$, but that we require to be bounded.
\end{defn*}
This definition is discussed in more detail in Section \ref{SectionReasonable}, and we show there that Dwivedi's Ricci-Harmonic flow is reasonable. We note that this condition is easy to check for a given flow of \Sp-structures.

\bigskip 
\noindent\textbf{Shi-type estimates.}

The choice of metric and torsion evolution in the definition of reasonable flows was chosen precisely to capture the analytic features needed for the following derivative estimates for the quantity
\[
\Lambda(x,t) = \left(|\Riem(x,t)|_{g(t)}^2 + |T(x,t)|^4_{g(t)} + |\nabla T(x,t)|_{g(t)}^2 \right )^{1/2},
\] which hold for any reasonable flow. The motivation behind defining $\Lambda$ to be this particular quantity is discussed in Section \ref{SectionShiType}. We now state our Shi-type estimate (cf. Theorem \ref{ThmShiType}).
\begin{thm*}
    Let $K>0$ and $r>0$. Let $M$ be an $8$-manifold, $p \in M$ and $\Phi(t)$ be a solution to a reasonable flow of \Sp-structures on an open neighbourhood $U$ of $p$ containing $B_{g(0)}(p,r)$ as a compact subset.

    Suppose $\Lambda(x,t) \leq K$ for all $(x,t) \in U \times [0,1/K]$. Then, for all $m\in \mathbb{N}$, there exists a constant $C(K,m,r)$ such that
    \[
    |\nabla^m\Riem(x,t)| + |\nabla^{m+1}T(x,t)| \leq C(K,m,r)t^{-m/2}
    \]
    for all $x \in B_{g(0)}(p,r/2)$, $t \in [0,1/K]$.
\end{thm*}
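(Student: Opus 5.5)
We follow Shi's Bernstein-type argument, as adapted to $G_2$ flows by Lotay--Wei and Chen, and argue by induction on $m$. The case $m=0$ is the hypothesis $\Lambda\le K$. The first step is to derive evolution equations for $\nabla^m\Riem$ and $\nabla^{m+1}T$ under a reasonable flow. We start from $\partial_t g = -2\Ric + L(T)+T*T+C$ and the schematic torsion equation. Since $\Riem$ depends on $g$, its evolution is
\[
\partial_t \Riem = \Delta\Riem + \Riem*\Riem + \nabla^2(L(T)+T*T+C) + \cdots .
\]
Time derivatives of $\nabla^k$ produce terms of the form $\nabla^k(\partial_t \Gamma)*\cdots$ with $\partial_t\Gamma \sim \nabla(\partial_t g)$. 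The derivatives of $\Phi$ appearing inside $*$ contribute further factors $\nabla\Phi \sim T*\Phi$. Commuting $\nabla^k$ with $\Delta$ gives the schematic equations
\begin{align*}
\partial_t \nabla^m\Riem &= \Delta\nabla^m\Riem + \sum_{i+j=m}\nabla^i\Riem*\nabla^j\Riem + \nabla^{m+2}T + \sum \nabla^{i_1}T*\cdots*\nabla^{i_l}T*\nabla^{j}\Riem + \cdots,\\
\partial_t \nabla^{m+1}T &= \Delta\nabla^{m+1}T + L(\nabla^{m+2}T) + \nabla^{m+1}\Riem*T + \sum\nabla^i\Riem*\nabla^jT + \cdots .
\end{align*}
The grading in $\Lambda$ ($\Riem$, $T^2$ and $\nabla T$ all have weight $2$) is exactly what makes every lower-order term scale consistently. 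The crucial feature is that the only terms of top order are the linear ones $\nabla^{m+2}T$ in the $\Riem$ equation and $L(\nabla^{m+2}T)$, $\nabla^{m+1}\Riem*T$ in the $T$ equation.

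Next we pass to squared norms. This gives
\[
\partial_t|\nabla^m\Riem|^2 \le \Delta|\nabla^m\Riem|^2 - 2|\nabla^{m+1}\Riem|^2 + C|\nabla^m\Riem||\nabla^{m+2}T| + \ldots,
\]
and the analogous inequality for $|\nabla^{m+1}T|^2$ with a good term $-2|\nabla^{m+2}T|^2$. The dangerous top-order cross terms are absorbed by the good gradient terms via Young's inequality. The term $|\nabla^m\Riem||\nabla^{m+2}T|$ is bounded by $\epsilon|\nabla^{m+2}T|^2 + C_\epsilon|\nabla^m\Riem|^2$, and $|\nabla^{m+1}T||\nabla^{m+1}\Riem||T|\le \epsilon|\nabla^{m+1}\Riem|^2+C|\nabla^{m+1}T|^2$. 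For this reason I would always treat the pair $F_m := |\nabla^m\Riem|^2+|\nabla^{m+1}T|^2$ together rather than separately.

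For $m=1$ we consider Shi's quantity
\[
G = t F_1\,(B + F_0), \qquad F_0 = |\Riem|^2+|\nabla T|^2 \ (+|T|^4),
\]
with $B$ a large multiple of $K^2$. By the previous step, $\partial_t F_0 \le \Delta F_0 - F_1 + CK^3$ and $\partial_t F_1 \le \Delta F_1 - F_2 + CK F_1 + CK^3$. Shi's computation then gives
\[
\partial_t G \le \Delta G - \tfrac{c}{tK^2}G^2 + CK^2 \cdot(\ldots)
\]
on $[0,1/K]$, after absorbing the gradient cross term $\nabla F_0\cdot\nabla F_1$ using $F_2$. For the higher cases we proceed inductively with $G_m = t^m F_m\,(B_m + t^{m-1}F_{m-1})$, using the inductive bounds $|\nabla^j\Riem|+|\nabla^{j+1}T|\le Ct^{-j/2}$ for $j<m$ to control every lower-order term.

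Localization is done with a cutoff $\varphi(x)=\psi(d_{g(0)}(p,x)/r)$ supported in $B_{g(0)}(p,r)$. We need uniform equivalence of $g(t)$ and $g(0)$ on $[0,1/K]$, which follows from $|\partial_t g|\le C(|\Ric|+|T|+|T|^2+1)\le CK$ by integration. We also need bounds on $\nabla\varphi$ and $\nabla^2\varphi$ with respect to $g(t)$. Rather than using the Laplacian comparison for $d_{g(t)}$, I would follow Lotay--Wei and estimate
\[
\partial_t\Gamma = \nabla(\partial_t g) = \nabla\Riem + \nabla T + T*\nabla T + \cdots .
\]
The problem is that this is not bounded a priori, since $\nabla\Riem$ is the quantity being estimated. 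The fix is to build the cutoff from $g(0)$ and compute $\Delta_{g(t)}\varphi$ via $\Gamma(t)-\Gamma(0)$. We bound $|\Gamma(t)-\Gamma(0)|\le\int_0^t|\nabla\Riem|+\ldots$, which is $\le C\int_0^t s^{-1/2}\,ds$ once a preliminary bound on $\nabla\Riem$ is known. Alternatively, and more simply, we apply the maximum principle to $\varphi G$ and handle $\Delta\varphi$ by Shi's standard trick of a barrier in $d_{g(0)}$ with a lower Ricci bound. Either way, the maximum principle applied to $\varphi G$ yields $G\le C(K,m,r)$ on $B_{g(0)}(p,r/2)$, which is the claim.

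I expect the main obstacle to be controlling the top-order linear terms. These are the $\nabla^{m+2}T$ in the curvature evolution, which comes from $L(T)$ in $\partial_t g$ and, via $X$, from $L(\nabla T)$, together with the $\nabla^{m+1}\Riem*T$ in the torsion evolution. The coupled quantity must be chosen so that these are absorbed by the diffusion terms. This is where the specific weights in $\Lambda$ and the combined $F_m$ matter, and where care is needed that no uncontrolled $\nabla^{m+2}\Riem$ appears, e.g.\ from differentiating $\Phi$ inside contractions.
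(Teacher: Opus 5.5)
Your Bernstein quantities and your reason for pairing $|\nabla^m\Riem|^2$ with $|\nabla^{m+1}T|^2$ are exactly the paper's. Your $G=tF_1(B+F_0)$ is the $F$ of Claim~\ref{ClaimBaseCase}, and your $G_m$ is the paper's $F_{m-1}=t^{m}(\mu_{m-1}+t^{m-1}U_{m-1})V_{m-1}$. The absorption of $L(\nabla^{m+2}T)$ and $\nabla^{m+1}\Riem*T$ into each other's diffusion terms is the same mechanism.

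The step that does not close as written is the localization at $m=1$.
\begin{itemize}
\item Your first fix is circular. The bound $|\nabla\Riem|\le Cs^{-1/2}$ that you integrate to control $\Gamma(t)-\Gamma(0)$ is precisely the conclusion of the $m=1$ step. Moreover, $\partial_t\Gamma$ contains $\nabla\Ric$, which nothing else controls.
\item Your second fix does not touch the problem. Write $\Delta_{g(t)}\varphi=\operatorname{tr}_{g(t)}\nabla^2_{g(0)}\varphi-g(t)^{ij}(\Gamma(t)-\Gamma(0))^k_{ij}\partial_k\varphi$. Comparison geometry for $g(0)$ can only handle the first term, and even that needs a one-sided Hessian bound, not just a Ricci lower bound, since the trace is taken with $g(t)$. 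The second term is the same uncontrolled quantity.
\end{itemize}

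Either of two standard repairs works.

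(i) Make the first fix a continuity argument. Let $t_0$ be the first time at which $\max\varphi G=A$, attained at $x_0$. Since $\varphi(x_0)G(x_0,s)\le A$ for $s\le t_0$ and $G\ge sBF_1$, you get $|\nabla\Riem(x_0,s)|^2\le A/(Bs\,\varphi(x_0))$. Hence $|\Gamma(t_0)-\Gamma(0)|(x_0)\le C\sqrt{At_0/(B\varphi(x_0))}+C$. Combined with $|\nabla\varphi|\le C\sqrt{\varphi}$, this gives $-\Delta_{g(t_0)}\varphi(x_0)\le C(1+\sqrt{A})$. The maximum-principle inequality then becomes $0\le -cA^2+CA^{3/2}+CA+C$, which fails for $A$ large.

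(ii) Use the time-dependent cutoff $\psi(d_{g(t)}(p,x)/r')$ that you set aside. The bound $\Ric_{g(t)}\ge -C(K)$ gives Laplacian comparison for each $g(t)$, and $|\partial_t g|\le C(K)$ controls $\partial_t d_{g(t)}$. So $(\partial_t-\Delta_{g(t)})\varphi+2\varphi^{-1}|\nabla\varphi|^2\le C$ in the barrier sense, after shrinking $r'$ using the uniform equivalence of $g(t)$ and $g(0)$.

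The paper itself simply quotes a cutoff with $-\Delta_{g(t)}\eta+2\eta^{-1}|\nabla\eta|^2_{g(t)}\le C$ from the Ricci flow literature. For $m\ge 2$ the difficulty disappears, because $|\nabla\Riem|\le Ct^{-1/2}$ is then already known on a larger ball. This is also why the radius has to shrink at each inductive step (the paper works on $B_{g(0)}(p,r/2^m)$), which your final claim on $B_{g(0)}(p,r/2)$ glosses over.
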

We note that here, and throughout the paper, the notation $\nabla^k$ refers to the $k$-fold covariant derivative, not a raised index $k$.

This shows that $\Lambda$ behaves similarly to the curvature tensor $\Riem$ under Ricci flow.
Using this, in Section \ref{SectionLongTime} we show that, at any finite-time singularity of any reasonable flow of \Sp-structures, the quantity $\Lambda(x,t)$ must blow up, and we establish a  lower bound on the blow-up rate (cf. Theorem \ref{ThmLongTime}).
\begin{thm*}
    Let $\Phi(t)$ be a solution to a reasonable flow of \Sp-structures on a compact manifold $M$ on a maximal time interval $[0,T_0)$ with $T_0<\infty$, and let $\Lambda(t) = \sup_{x \in M}\Lambda(x,t)$, where $\Lambda(x,t)$ is as defined in \eqref{eqLambdaDef}.
    Then, 
    \begin{equation}
       \lim_{t\nearrow T_0} \Lambda(t) = \infty, 
    \end{equation}
    and we have the following lower bound on the blow-up rate:
    \begin{equation}
       \Lambda(t) \geq \frac{C}{T_0-t}, 
    \end{equation}
    for some constant $C>0$.
\end{thm*}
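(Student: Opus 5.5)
The proof has two parts. First, if $\Lambda$ stays bounded up to $T_0$, then the flow can be extended past $T_0$. Second, a doubling-time estimate for $\Lambda$ gives the lower bound on the blow-up rate. Both rely on the Shi-type estimates of Theorem \ref{ThmShiType}, applied on the compact manifold $M$ with uniform constants.

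\textbf{Blow-up.} Suppose for contradiction that $\liminf_{t\nearrow T_0}\Lambda(t)<\infty$. First I would show that $\Lambda$ cannot be bounded on any interval $[t_0,T_0)$. If $\Lambda\le K$ on $[T_0-\delta,T_0)$, I apply Theorem \ref{ThmShiType} on time windows of length $\min(\delta,1/K)/2$ starting at later and later times. Covering $M$ by finitely many balls then bounds $|\nabla^m\Riem|+|\nabla^{m+1}T|$ uniformly near $T_0$.

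Since the flow is reasonable, $\pt_t g=-2\Ric+L(T)+T*T+C$ is bounded. Hence the metrics $g(t)$ are uniformly equivalent and converge to a continuous limit. The evolution $\pt_t\Phi=A\diamond\Phi$ involves $h$ and $X=L(\nabla T)+L(\Riem)+\dots$, and these are bounded together with all their derivatives. Differentiating this evolution and commuting derivatives (Christoffel symbols are controlled because $\pt_t\Gamma=\nabla h$ is bounded) gives uniform $C^k$ bounds on $\Phi(t)$ with respect to $g(0)$. So $\Phi(t)\to\Phi(T_0)$ smoothly, and $\Phi(T_0)$ is a smooth \Sp-structure, since the positivity condition is open and the metrics are uniformly equivalent. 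Short-time existence and uniqueness, which are part of the definition of a reasonable flow, then extend the flow past $T_0$, contradicting maximality. Once the doubling estimate below is available, this strengthens to $\lim\Lambda=\infty$: if $\Lambda(t_i)\le K$ along a sequence $t_i\to T_0$, the doubling estimate keeps $\Lambda\le 2K$ on $[t_i,t_i+c/K]$, and for large $i$ this interval covers all of $[t_i,T_0)$, which returns us to the bounded case.

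\textbf{Doubling time and rate.} This is the main step. Using the reasonable evolution equations, I would compute $\pt_t|\Riem|^2$, $\pt_t|T|^4$ and $\pt_t|\nabla T|^2$. The curvature evolves as $\pt_t\Riem=\Delta\Riem+\Riem*\Riem+\nabla^2(\text{lower order})+\dots$, which involves $\nabla^2 T$ and $\nabla^2\Riem$ through $h$ and $X$. These higher-order terms are where the difficulty lies. They must be absorbed with Kato/Young inequalities against the good terms $-2|\nabla\Riem|^2$ and $-2|\nabla^2T|^2$. The weighting $|T|^4$ makes every term scale like $\Lambda^3$; the argument would be modelled on \cite{Chen} and \cite{Lotay-Wei}. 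The aim is an inequality
\[
\pt_t\Lambda^2\le\Delta\Lambda^2+C\Lambda^3+C
\]
(with the constant term handled by $\Lambda\ge1$ or a shift). Maximum principle comparison then gives $\frac{d}{dt}\Lambda\le C\Lambda^2$ in the barrier sense. Integrating from $t$ to $s$ yields $\Lambda(t)^{-1}-\Lambda(s)^{-1}\le C(s-t)$. Letting $s\nearrow T_0$, with $\Lambda(s)\to\infty$, gives $\Lambda(t)\ge 1/(C(T_0-t))$.

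If the $\nabla^2$ terms cannot be absorbed directly, I would fall back on a Shi-type evolution estimate. One option is the auxiliary quantity from the proof of Theorem \ref{ThmShiType}, at the first derivative level. The other is a blow-up argument: rescale at times where $\Lambda$ roughly doubles, and use the Shi estimates to obtain the doubling time $\ge c/\Lambda$. That estimate implies the same rate bound by iterating over dyadic levels.
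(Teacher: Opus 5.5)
Your proposal follows the paper's proof essentially step for step. A bound on $\Lambda$ plus the Shi-type estimates (Theorem \ref{ThmShiType}) give uniform $C^k$ control of $g(t)$ and $\Phi(t)$ relative to $g(0)$. This yields smooth convergence to $\Phi(T_0)$, and short-time existence and uniqueness then contradict maximality. The doubling-time estimate (Proposition \ref{PropDoublingTime}) upgrades $\limsup$ to $\lim$, and the maximum principle applied to \eqref{eqLambdaTilde} integrates to the rate. Restarting the Shi estimates on later time windows is a sensible way to avoid the $t^{-m/2}$ degeneration at the start of the window, which the paper leaves implicit.

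Your worry about the second-order terms is unnecessary, and so are the fallbacks. Only $h$ enters $\ptt\Riem$, since $X$ does not change the metric. The $-2\Ric$ part of $h$ produces exactly $\Delta\Riem+\Riem*\Riem$, and the rest of $h$ involves only $T$. So no $\nabla^2\Riem$ appears beyond the Laplacian. The extra top-order terms are $L(\nabla^2T)$ and $\nabla^2T*T$ in the curvature equation, and $\nabla\Riem*T$, $L(\nabla^2T)$, $\nabla^2T*T$ in the $\nabla T$ equation. Each appears linearly against lower-order factors, so Young's inequality absorbs it into $-2|\nabla\Riem|^2-2|\nabla^2T|^2$, exactly as in the paper.

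The step that does not work as written is the admissibility of the limit. You say $\Phi(T_0)$ is a \Sp-structure ``since the positivity condition is open and the metrics are uniformly equivalent''. That is the $G_2$ argument, and the paper stresses in Section \ref{SectionPrelims} that it does not transfer. Uniform equivalence does make the limit non-degenerate. But \Sp-forms form a single $\mathrm{GL}(8,\mathbb{R})$-orbit of dimension $64-21=43$ inside the $70$-dimensional space of $4$-forms, while non-degenerate forms form an open set. So non-degeneracy does not imply admissibility, and openness is in any case the wrong property to pass to a limit.

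What you need is a closedness statement, and uniform equivalence supplies it directly. At each $x$ write $\Phi(t)_x=e_t^*\Phi_0$ for an oriented isomorphism $e_t\colon T_xM\to\mathbb{R}^8$, so that $g(t)_x=e_t^*g_{\mathbb{R}^8}$. The bounds $\lambda^{-1}g(0)\le g(t)\le\lambda g(0)$ bound the operator norms of $e_t$ and $e_t^{-1}$. Hence the $e_t$ lie in a compact subset of $\mathrm{GL}^+(8,\mathbb{R})$, and any subsequential limit $e$ gives $\Phi(T_0)_x=e^*\Phi_0$. The paper instead appeals to the Salamon--Walpuski characterisation, Theorem \ref{ThmCharacterisation}. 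With this repair your argument is complete.
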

This also gives rise to a classification of singularity types (Definition \ref{defSingularitytypes}), which we expect to be useful in future studies of singularities. 

\bigskip 
\noindent\textbf{Compactness.}
In the analysis of Ricci flow, Hamilton's compactness theorem is essential in the study of the behaviour of the flow close to singularities \cite{HamiltonCompactness}. 
In Section \ref{SectionCompactness}, we prove a compactness result for the space of \Sp-structures (Theorem \ref{ThmSpin7Compact}), and use that to obtain the following compactness theorem for solutions to reasonable flows of \Sp-structures, analogously to  the $G_2$-setting \cite{Lotay-Wei} (cf. Theorem \ref{thmCompactnessSpaceofSolutions}).
\begin{thm*}
    Let $M_i$ be a sequence of compact $8$-manifolds and let $p_i \in M_i$ for each $i$. Suppose that $\Phi_i(t)$ is a sequence of solutions to a given reasonable flow of \Sp-structures on $M_i$, with the induced sequence of Riemannian metrics $g_i(t)$ on $M_i$, for $t\in (a,b)$, where $-\infty \leq a < 0 <b \leq \infty$.

    Suppose further that 
    \begin{equation}
        \sup_i\sup_{x \in M_i, t \in (a,b)}\left(\left|\nabla_{g_i(t)}T_i(x,t) \right |_{g_i(t)}^2 + \left|\Riem_i(x,t) \right |_{g_i(t)}^2 + \left|T_i(x,t) \right |_{g_i(t)}^4 \right)^{\frac{1}{2}}< \infty,
    \end{equation}
    where $T_i$ and $\Riem_i$ denote the torsion and Riemann curvature tensor induced by $\Phi_i$. Finally, suppose that the injectivity radius of the each initial manifold $(M_i,g_i(0))$ at $p_i$ satisfies
    \begin{equation}
        \inf_i \inj(M_i,g_i(0),p_i)>0.
    \end{equation}
    Then, there exists an $8$-manifold $M$, a point $p\in M$ and a solution $\Phi(t)$ to the same reasonable flow of \Sp-structures that $\Phi_i(t) $ solves for $t\in (a,b)$ such that, after passing to a subsequence,
    \begin{equation}
        (M_i,\Phi_i(t),p_i) \to (M,\Phi(t),p) \text{ as } i \to \infty.
    \end{equation}
\end{thm*}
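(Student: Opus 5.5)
The plan follows Lotay--Wei's argument in the $G_2$ case and Hamilton's compactness theorem for Ricci flow. It has three steps: turn the uniform bound on $\Lambda$ into uniform bounds on all derivatives, pass to a limit at each time slice using the compactness theorem for \Sp-structures (Theorem \ref{ThmSpin7Compact}), and finally obtain uniform control in time so the limit is a genuine solution of the same flow.

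\textbf{Step 1: bounds on all derivatives.} Let $K$ be the uniform bound on $\Lambda_i$ from the hypothesis. For any $t_0\in(a,b)$ and $\tau>0$ with $t_0-\tau>a$, apply the Shi-type estimate (Theorem \ref{ThmShiType}) on the interval $[t_0-\tau,t_0]$. Because the flows are time-translation invariant, and we may shrink the interval and enlarge $K$, this is legitimate. It gives
\[
|\nabla^m \Riem_i|+|\nabla^{m+1}T_i|\le C(K,m,\tau)
\]
on $M_i\times[t_0,b)$. Taking a sequence $t_0\searrow a$ makes these bounds uniform in $i$ on compact subintervals of $(a,b)$. Since each $M_i$ is compact, the Shi estimate applies on whole balls and so globally.

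\textbf{Step 2: injectivity radius and the time-zero limit.} The metric evolves by $\partial_t g=-2\Ric+L(T)+T*T+C$, which is bounded by $K$. So on compact time intervals the metrics $g_i(t)$ are uniformly equivalent to $g_i(0)$. By Cheeger--Gromov--Taylor, the injectivity radius lower bound at $p_i$ at $t=0$ spreads to balls and to other times, using the curvature bound. Theorem \ref{ThmSpin7Compact} needs bounds on $\nabla^k T$ and $\nabla^k\Riem$ together with an injectivity radius bound. It then gives a subsequence and diffeomorphisms $F_i:U_i\to M_i$ from an exhaustion $U_i$ of a limit $M$, with $F_i(p)=p_i$, such that $F_i^*\Phi_i(0)\to\Phi(0)$ smoothly on compacts.

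\textbf{Step 3: time derivatives and the limit flow.} Fix the diffeomorphisms $F_i$ from time $0$ and consider $F_i^*\Phi_i(t)$ on compact sets $K\subset M$ and compact time intervals $[c,d]\subset(a,b)$. I need uniform $C^k$ bounds with respect to the fixed background $g(0)$ on space-time derivatives. There are two sources of control:
\begin{itemize}
\item The evolution $\partial_t\Phi=A\diamond\Phi$ has $A=h+X$ built from $\Riem$, $T$, $\nabla T$ and $C$ (bounded), so every $\partial_t^\ell\nabla^k$ of $\Phi$ is bounded by Step 1.
\item Comparing $\nabla_{g_i(t)}$ with $\nabla_{g_i(0)}$ is done as in Hamilton, by integrating $\partial_t\Gamma=g^{-1}*\nabla h$, with $h$ and its derivatives bounded.
\end{itemize}
Arzel\`a--Ascoli and a diagonal argument then give $C^\infty_{loc}$ convergence on $M\times(a,b)$ to a family $\Phi(t)$. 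Since the reasonable flow's evolution equation is a smooth expression in $\Phi$ and its derivatives, it passes to the limit, so $\Phi(t)$ solves the same flow.

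The main obstacle is this last step. The derivative bounds are stated with respect to the evolving metric, whereas Arzel\`a--Ascoli needs bounds with respect to the fixed $F_i^*g_i(0)$. The bound on $\nabla^k\Gamma$ differences has to be set up inductively in $k$, keeping track of the terms that involve the bounded tensor $C$. I would carry this out exactly as in Lotay--Wei's Proposition on convergence of Laplacian flows.
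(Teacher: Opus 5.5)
Your proposal is correct and follows essentially the same route as the paper. Both arguments obtain uniform higher-derivative bounds from the Shi-type estimates (Theorem \ref{ThmShiType}) and a time-zero limit from Theorem \ref{ThmSpin7Compact}. Both then get uniform space-time bounds for $F_i^*\Phi_i(t)$ relative to the fixed limit metric via the Christoffel-difference argument, which the paper packages as Claims \ref{ClaimSupMetricDerivative} and \ref{ClaimSupSpin7Derivative}, and finish with Arzel\`a--Ascoli and a diagonal argument. Restricting the Shi bounds to compact subintervals bounded away from $a$ is, if anything, slightly more precise than the paper's statement of \eqref{eqCompactnessCurvatureTorsion}.
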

The notion of convergence of \Sp-flows discussed here is defined in Section \ref{SectionCompactness}, and is analogous to that of Ricci flows  \cite{HamiltonCompactness}.

\bigskip 
\noindent\textbf{Finite-time singularities.}
Finally, we apply all of this to the study of finite-time singularities, culminating in the following, which shows that, under suitable assumptions on the growth rate of torsion and scalar curvature, the blow-up limit at a finite time singularity of a reasonable flow of \Sp-structures is a maximal volume growth \Sp-manifold (See Theorem \ref{ThmMaximalVolumeGrowthSpin7Manifold} for a more precise statement of the following ).
\begin{thm*}
 Let $\Phi(t)$ be a solution to a reasonable flow of \Sp-structures on a compact manifold $M^8$, on a maximal time interval $[0,T_0)$, for some $T_0<\infty.$
    Assume that
    \begin{equation}
        \int_0^{T_0}(T_0-t)\sup_{x\in M}|T(x,t)|_{g(t)}^4\d t < \infty,
    \end{equation}
    and
    \begin{equation}
        \sup_{x\in M}\left(|R(x,t)|_{g(t)} + |T(x,t)|_{g(t)}^2 \right) = o\left(\frac{1}{T_0-t} \right) \text{ as } t\to T_0.
    \end{equation}

    Then, a suitably-rescaled flow $Q_k\Phi(t_k)$ converges to a maximal volume growth torsion-free \Sp-manifold.
\end{thm*}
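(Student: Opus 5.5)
The strategy follows Lotay–Wei and Chen in the $G_2$ setting. By the blow-up theorem above, $\Lambda(t)\to\infty$ as $t\nearrow T_0$. Choose a blow-up sequence $(x_k,t_k)$ with $t_k\nearrow T_0$ and $\Lambda(x_k,t_k)=\sup_{M\times[0,t_k]}\Lambda =: Q_k$ (a Type I or II point-picking). Define the rescaled flows
\[
\Phi_k(t)=Q_k^{4}\,\Phi\!\left(t_k+Q_k^{-1}t\right),
\]
so that $g_k(t)=Q_k g(t_k+Q_k^{-1}t)$. Under this scaling $\Lambda_k=Q_k^{-1}\Lambda$, so $\Lambda_k\le 1$ on $M\times[-Q_kt_k,0]$ and $\Lambda_k(x_k,0)=1$.

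The rescaled flow solves the same reasonable flow only up to rescaling the lower-order and bounded terms $L(T),C$. These terms carry positive powers of $Q_k^{-1}$ and tend to zero, so Theorem \ref{ThmShiType} still gives uniform bounds on all derivatives. The compactness theorem then yields a subsequential limit $(M_\infty,\Phi_\infty(t),p)$ on $(-\infty,0]$, provided $\inf_k \inj(M,g_k(0),x_k)>0$.

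\textbf{Injectivity radius.} This is the main obstacle. I would try to obtain a uniform non-collapsing estimate from the hypothesis $\int_0^{T_0}(T_0-t)\sup|T|^4\,\d t<\infty$. The evolution $\pt_t g=-2\Ric+L(T)+T*T+C$ is Ricci flow plus a perturbation of size $O(|T|^2+|T|+1)$. I would adapt Perelman's $\mathcal{W}$-entropy monotonicity: the error term in $\frac{d}{dt}\mathcal{W}$ is controlled by $\tau\sup|T|^4$ (after Cauchy–Schwarz against $|\Ric+\nabla^2f-g/2\tau|^2$), and it is integrable precisely by the hypothesis. This gives a uniform lower bound for $\mu(g(t),\tau)$, hence $\kappa$-noncollapsing at scales $\le\sqrt{T_0}$, and with the curvature bound this yields the injectivity radius bound by Cheeger–Gromov–Taylor. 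The rescaled flows are then $\kappa$-noncollapsed at all scales up to $\sqrt{Q_k T_0}\to\infty$.

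\textbf{Torsion-free limit.} Under the scaling, $|T_k|^2_{g_k}=Q_k^{-1}|T|^2_g$ and $|R_k|=Q_k^{-1}|R|$. Using $Q_k\gtrsim (T_0-t_k)^{-1}$ (lower blow-up rate) together with the $o\big((T_0-t)^{-1}\big)$ hypothesis, these quantities tend to zero uniformly on compact time intervals. Therefore $T_\infty\equiv0$ and $R_\infty\equiv0$. Consequently the limit is a torsion-free Spin(7) structure, hence Ricci-flat, and static in time. It is non-flat because $\Lambda_\infty(p,0)=1$ with $T_\infty=0$, $\nabla T_\infty=0$, so $|\Riem_\infty(p)|=1$.

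\textbf{Maximal volume growth.} Since $\kappa$-noncollapsing holds at all scales in the limit and the limit is Ricci-flat, Bishop–Gromov gives $\Vol B(p,r)\ge\kappa r^8$ for all $r$. This yields a maximal volume growth Spin(7)-manifold. The delicate steps are the entropy computation with the reasonable-flow error terms and checking that the rescaled lower-order terms in Definition \ref{DefReasonable} do not spoil Shi's estimates or compactness.
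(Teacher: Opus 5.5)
\textbf{The gap is in the non-collapsing step, which is what item 4 (maximal volume growth) rests on.}

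The entropy argument (Chen's Theorem \ref{kappaNonCollapsed}) gives non-collapsing of $g(t_0)$ at scale $\rho$ with $\kappa$ depending on
\[
I(t_0,\rho)=\int_0^{t_0}(t_0+\rho^2-t)\sup_M|E|^2\,\d t\le C\int_0^{t_0}(t_0-t)\bigl(\sup_M|T|^4+1\bigr)\d t+C\rho^2\int_0^{t_0}\bigl(\sup_M|T|^4+1\bigr)\d t .
\]
Your hypothesis bounds the first term. The second term is bounded uniformly as $t_0\to T_0$ only if $\rho^2\lesssim T_0-t_0$, because then $\rho^2\le T_0-t$ on $[0,t_0]$. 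For a fixed scale such as $\rho=\sqrt{T_0}$ you would need $\int_0^{T_0}\sup|T|^4\,\d t<\infty$, which is not assumed. For example, a torsion profile $\sup|T|^4\sim(T_0-t)^{-3/2}$ satisfies both hypotheses of the theorem but has $\int_0^{T_0}\sup|T|^4\,\d t=\infty$.

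So the uniform non-collapsing scale is $\sqrt{T_0-t}$. After rescaling it becomes $\sqrt{Q_k(T_0-t_k)}$, not $\sqrt{Q_kT_0}$. The lower blow-up rate $Q_k\ge C/(T_0-t_k)$ only makes this scale bounded below. That is enough for the injectivity radius bound at a fixed scale, and hence for extracting a torsion-free limit. It is not enough for volume growth at all scales. Bishop--Gromov cannot bridge the gap: for $\Ric\ge 0$ it propagates lower volume-ratio bounds from large radii to small ones, not the reverse.

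\textbf{The missing idea is that the hypotheses force a Type II sequence.} You need $(T_0-t_k)Q(t_k)\to\infty$ along a suitable sequence, and you must choose $t_k$ along that sequence. Your blow-up sequence is arbitrary, and $(T_0-t)Q(t)$ need not tend to infinity along every sequence.

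The paper gets this from Theorem \ref{ThmLimSups}, which shows $\limsup_{t\to T_0}(T_0-t)^2Q(t)P(t)>0$. Its proof uses non-collapsing on the scale $\sqrt{T_0-t}$. It also proves $\sup|\Ric|\le C\sqrt{PQ}$, from an $L^2$ bound on $\Ric$ (obtained by integrating the scalar curvature evolution) and a parabolic mean value inequality.

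The $o\bigl((T_0-t)^{-1}\bigr)$ hypothesis then gives $(T_0-t)P(t)\to 0$, hence $(T_0-t)Q(t)\to\infty$ along a subsequence. Along it, the rescaled non-collapsing scale $\sqrt{Q_k(T_0-t_k)}$ tends to infinity. Meanwhile $R_k\to 0$, so the scalar-curvature condition in the non-collapsing statement holds on balls of any fixed radius.

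In your proposal the second hypothesis is used only to kill $T$ and $R$ in the limit; its essential role is this Type II forcing.

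A minor point: with $g_k=Q_kg$ you need $\Phi_k=Q_k^2\Phi$, not $Q_k^4\Phi$, since $c^4\Phi$ induces $c^2g$.
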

This result shows that, under appropriate torsion and curvature-growth assumptions, finite-time singularities have highly structured blow-up models: that of maximal volume growth torsion-free \Sp-manifolds.

\bigskip
\noindent\textbf{Acknowledgements.}
The author is grateful to his supervisor, Jason D. Lotay, for his constant support and many helpful discussions.
The author also thanks Shubham Dwivedi for interesting conversations about geometric flows, and Sam Close for useful discussions about $4$-forms inducing metrics. 

This work was supported by a doctoral scholarship from the Engineering and Physical Sciences Research Council (Project reference EP/W524311/1 2929148).

\section{\texorpdfstring{Preliminaries on \Sp-structures}{Preliminaries on Spin(7)-structures}}\label{SectionPrelims}
In this section we introduce the necessary background material on manifolds with \Sp-structures, which we will use throughout the rest of the paper. 
Much more detail can be found in, for instance, \cite[Chapter 11]{JoyceBook}.
We begin by defining the Lie group Spin($7$) and the notion of \Sp-structures on $8$-manifolds.

\begin{defn}\label{defSpin7}
    Endow $\mathbb{R}^8$ with an orthonormal basis $\{e_1,\cdots, e_8\}$ and define the $4$-form $\Phi_0$ as
    \begin{equation}\label{eqPhi0}
        \begin{split}
            \Phi_0 =& e^{1234}+ e^{1256} + e^{1278} + e^{1357} - e^{1368} - e^{1458} - e^{1467}\\ 
            &+e^{5678}+ e^{3478} + e^{3456} + e^{2468} - e^{2457} - e^{2367} -e^{2358},
        \end{split}
    \end{equation}
    where $e^i$ denotes the $1$-form dual to $e_i$, and $e^{ijkl} = e^i \wedge e^j \wedge e^k \wedge e^l$. 
    
    We define the Lie group Spin($7$) to be the subgroup of $\mathrm{GL}(8,\mathbb{R})$ that fixes $\Phi_0$.
\end{defn}

A \Sp-structure on an $8$-manifold $M$ is a reduction of the structure group of the frame bundle $Fr(M)$ from GL($8,\mathbb{R})$ to 
\[
\text{Spin}(7) \subset \text{SO}(8) \subset \text{GL}(8,\mathbb{R}).
\]
In the rest of this paper, we will use the following, equivalent, definition

\begin{defn}\label{DefSPin7Structure}
    A \Sp-structure on an $8$-manifold $M$ is a choice of $4$-form $\Phi\in \Omega^4(M)$ such that, for each $p\in M$, there exists an oriented isomorphism between $T_pM$ and $\mathbb{R}^8$ for which $\Phi|_p$ is identified with the $4$-form $\Phi_0$ on $\mathbb{R}^8$ defined above \eqref{eqPhi0}. Such a $4$-form $\Phi$ is called \emph{admissible} and referred to as a \emph{Cayley form} or a \emph{\Sp-form}.
    The space of admissible $4$-forms is denoted $\mathcal{A}M.$
\end{defn}
{The Lie group Spin($7$) is a subgroup of $\operatorname{SO}(8)$, so the existence of a \Sp-structure implies the existence of a reduction of the structure group of the frame bundle $Fr(M)$ to SO($8$), which provides orientability and existence of a Riemannian metric. We denote the Riemannian metric induced by a \Sp-structure $\Phi$ by $g_\Phi$, the associated Hodge star by $*_\Phi$, and the associated volume form by $\text{vol}_{\Phi}$. 
With respect to the metric, Hodge star and volume forms discussed above, it holds that
$*_\Phi \Phi = \Phi$, and  $\Phi \wedge \Phi = 14\operatorname{vol}_\Phi$.}

Given a \Sp-form $\Phi$, the following equation holds \cite[Corollary 4.3.2]{KarigiannisDeformations}
\begin{equation}\label{EqPositivity4form}
    (v \lrcorner w \lrcorner\Phi)\wedge(v \lrcorner w \lrcorner\Phi)\wedge \Phi = 6|v\wedge w|_{g_\Phi}^2\mathrm{vol}_\Phi.
\end{equation}
However, there exist $4$-forms $\Psi$ for which
\[
(v \lrcorner w \lrcorner\Psi)\wedge(v \lrcorner w \lrcorner\Psi)\wedge \Psi = f(v,w)\mathrm{vol}_\Psi,
\]
for some positive function $f$, which are not \Sp-structures.\footnote{The author is grateful to Sam Close for pointing this out.} One way of seeing this is to note that the above is an \emph{open} condition, whereas the admissible $4$-forms live in a $43$-dimensional subspace of the $70$-dimensional space of $4$-forms, and so being admissible is a \emph{closed} condition. We note that this is in contrast to the case of $G_2$-structures on $7$-manifolds, where the space of $G_2$-forms is an open subbundle of the space of $3$-forms, and 
\begin{equation}
    (u\lrcorner\varphi) \wedge (v \lrcorner \varphi)\wedge \varphi
\end{equation}
defines a positive $7$-form if and only if $\varphi$ defines a $G_2$-structure (see e.g \cite[Section 2.1]{Lotay-Wei}). 

In the study of long-time existence of reasonable flows of \Sp-structures and compactness of the space of \Sp-structures (Theorems \ref{ThmLongTime} and \ref{ThmSpin7Compact}) we will need a way of determining whether a given $4$-form defines a \Sp-structure. This question is studied in \cite[Section 7]{SalamonWalpuski}, and we briefly recap some of their results here. We begin with the following definition of non-degenerate $4$-forms and compatible inner products.
\begin{defn}[\cite{SalamonWalpuski}]\label{DefNonDegenerate}
    Let $W$ be an $8$-dimensional vector space. A $4$-form $\Phi \in \Lambda^4W^*$ is called \emph{non-degenerate} if, for any triple $u,v,w$ of linearly independent vectors in $W$, there exists a vector $x\in W$ such that $\Phi(u,v,w,x)\neq 0$.

    An inner product $\langle\cdot,\cdot \rangle$ is called \emph{compatible} with a $4$-form $\Phi$ if the map $W^3 \to W: (u,v,w) \to u\times v \times w$ defined by
    \begin{equation}
        \langle x,u\times v \times w \rangle = \Phi(x,u,v,w)
    \end{equation}
    is a triple cross product (See \cite[Definition 6.1]{SalamonWalpuski}.

    A $4$-form is called a Spin($7$)-form if it admits a compatible inner product.
\end{defn}
This definition rephrases the condition for a given $4$-form to be a Spin($7$)-form, but it is not yet a checkable condition. The following theorem \cite[Theorem 7.8]{SalamonWalpuski} provides such a condition, giving an intrinsic characterisation of Spin($7$)-forms.
\begin{thm}[\cite{SalamonWalpuski}]\label{ThmCharacterisation}
    Let $W$ be an $8$-dimensional vector space.
    A $4$-form $\Phi \in \Lambda^4W^*$ is a Spin($7$)-form if and only if it satisfies both of the following conditions:
    \begin{enumerate}
        \item  The $4$-form $\Phi$ is non-degenerate, in the sense of Definition \ref{DefNonDegenerate}.
        \item If $u,v$ and $w$ are linearly independent vectors in $W$ such that
        \begin{equation}
            (v \lrcorner u \lrcorner \Phi )\wedge (w\lrcorner u \lrcorner\Phi )\wedge \Phi = (u \lrcorner v \lrcorner \Phi )\wedge (w\lrcorner v \lrcorner\Phi )\wedge \Phi = 0,
        \end{equation}
        then for all $x\in W$, we have 
        \begin{equation}
            (w \lrcorner u \lrcorner \Phi )\wedge (x\lrcorner v \lrcorner\Phi )\wedge \Phi = 0 \text{ if and only if }  (w \lrcorner v \lrcorner \Phi )\wedge (x\lrcorner v \lrcorner\Phi )\wedge \Phi = 0.
        \end{equation}
    \end{enumerate}
\end{thm}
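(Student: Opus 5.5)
Both directions reduce to linear algebra on the single vector space $W$, and I would organise the proof around the quadratic form
\[
Q_{u}(v,w)\,\mathrm{vol} \;:=\; (v\lrcorner u\lrcorner\Phi)\wedge(w\lrcorner u\lrcorner\Phi)\wedge\Phi .
\]
By \eqref{EqPositivity4form}, for a Spin($7$)-form this equals, up to a fixed positive constant, $|u|^2\langle v,w\rangle-\langle u,v\rangle\langle u,w\rangle$ times a positive multiple of the volume form. So $Q_u$ recovers the metric on $u^\perp$.

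\textbf{Necessity.} Here I work in the model. Since both conditions are invariant under $\mathrm{GL}(W)$, it suffices to check them for $\Phi_0$ on $\mathbb{R}^8$. Non-degeneracy follows from the triple cross product: for linearly independent $u,v,w$, the vector $u\times v\times w$ is nonzero, with norm equal to $|u\wedge v\wedge w|$. Taking $x=u\times v\times w$ then gives $\Phi(u,v,w,x)\neq0$. For condition 2, the polarised identity above shows that the two vanishing hypotheses mean $w\perp u$ and $w\perp v$, after projecting off the $u$- and $v$-components. The two expressions in the conclusion are then polarisations of the same inner-product expressions, namely $\langle w,x\rangle$ modulo terms in $u,v$. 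So they vanish simultaneously.

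\textbf{Sufficiency.} This is the substantive direction, and I would follow Salamon–Walpuski. From $\Phi$ alone, define for each nonzero $u$ the symmetric bilinear form $Q_u$ on $W/\langle u\rangle$. Non-degeneracy of $\Phi$ should force $Q_u$ to be definite. The argument I would try: if $Q_u(v,v)=0$, contraction produces a degenerate triple, contradicting non-degeneracy. After a global sign choice of orientation, this makes $Q_u$ positive definite.

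Condition 2 is exactly the compatibility needed to say that the orthogonality relations defined by the various $Q_u$ agree. That is, "$w\perp_u x$ iff $w\perp_v x$" on common complements. With this one glues the $Q_u$ into a single inner product $g$ on $W$, unique up to scale, such that $Q_u(v,w)=c\,(|u|^2\langle v,w\rangle-\langle u,v\rangle\langle u,w\rangle)$. Finally, fix the scale by requiring $\Phi\wedge\Phi=14\,\mathrm{vol}_g$. One then verifies that $\langle x,u\times v\times w\rangle=\Phi(x,u,v,w)$ satisfies the triple cross product axioms, namely $|u\times v\times w|=|u\wedge v\wedge w|$ together with the alternating property. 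Here one uses the identity for $Q_u$ and polarisation, and obtains a compatible inner product as in Definition \ref{DefNonDegenerate}.

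\textbf{Main obstacle.} I expect the hardest step to be the gluing. It requires showing that condition 2 really forces the family $\{Q_u\}$ to come from one inner product, and that the resulting cross product has the correct norm, not merely a conformal multiple on each slice. My first attempt would be an inductive normal-form argument. Build an orthonormal basis $e_1,\dots,e_8$ step by step using condition 2, check that $\Phi$ takes the form \eqref{eqPhi0} in this basis, and conclude directly via Definition \ref{defSpin7}.
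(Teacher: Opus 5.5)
The paper gives no proof of this statement; it is quoted from \cite[Theorem 7.8]{SalamonWalpuski}. Judged on its own, your proposal has two problems.

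\textbf{Necessity.} As printed, the first expression in the conclusion of condition 2 is the \emph{mixed} term $(w\lrcorner u\lrcorner\Phi)\wedge(x\lrcorner v\lrcorner\Phi)\wedge\Phi$. This is not a polarisation of $Q_u$ or $Q_v$. Identity \eqref{EqPositivity4form} only fixes the quadratic form $\omega\mapsto(\iota_\omega\Phi)\wedge(\iota_\omega\Phi)\wedge\Phi$ on decomposable $\omega\in\Lambda^2W$. Its full polarisation is therefore fixed only up to a Pl\"ucker term $\mu(\omega\wedge\eta)$ with $\mu\in\Lambda^4W^*$. For a Spin$(7)$-form the mixed term is in fact $\bigl(6\langle u\wedge w,v\wedge x\rangle+7\,\Phi(u,w,v,x)\bigr)\mathrm{vol}_\Phi$.

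Concretely, take $\Phi_0$ and $(u,v,w,x)=(e_1,e_2,e_3,e_4)$. Both hypotheses hold, and $(e_3\lrcorner e_2\lrcorner\Phi_0)\wedge(e_4\lrcorner e_2\lrcorner\Phi_0)\wedge\Phi_0=0$. However,
\[
(e_3\lrcorner e_1\lrcorner\Phi_0)\wedge(e_4\lrcorner e_2\lrcorner\Phi_0)\wedge\Phi_0=(-e^{24}+e^{57}-e^{68})\wedge(-e^{13}-e^{57}+e^{68})\wedge\Phi_0=-7\,e^{12345678}\neq0 .
\]
So $\Phi_0$ violates condition 2 as printed. The statement only holds with $x\lrcorner u\lrcorner\Phi$ in the first expression, presumably a transcription slip. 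Your necessity argument is valid only for that reading, so say so explicitly rather than asserting that both expressions are polarisations of $\langle w,x\rangle$.

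\textbf{Sufficiency.} This is the more serious problem: this direction is not proved. Your definiteness step is fine and can be made precise. With $\sigma=v\lrcorner u\lrcorner\Phi$ one has $v\lrcorner u\lrcorner(\sigma\wedge\sigma\wedge\Phi)=\sigma^3$. Non-degeneracy says exactly that $\ker\sigma=\mathrm{span}(u,v)$, so $\sigma^3\neq0$, and connectedness of the set of independent pairs fixes the sign.

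The next step, however, is the entire content of the hard direction, and you only assert it. Condition 2 (corrected) says merely this: for $w$ with $Q_u(v,w)=Q_v(u,w)=0$, the functionals $Q_u(w,\cdot)$ and $Q_v(w,\cdot)$ are proportional. You need an argument that these pointwise proportionalities assemble into a single inner product $g$ with $Q_u=c\,(|u|^2g-u^\flat\otimes u^\flat)$, for one constant $c$ and all $u$. ``An inductive normal-form argument'' names a strategy but supplies no mechanism.

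Your final step is also under-justified. The diagonal identity only gives $\sigma^3=6|u\wedge v|^2\,\iota_v\iota_u\mathrm{vol}$, i.e.\ that $\sigma/|u\wedge v|$ has Pfaffian $1$ on $\mathrm{span}(u,v)^\perp$. But $|u\times v\times w|=|u\wedge v\wedge w|$ requires $\sigma/|u\wedge v|$ to be a complex structure there. Closing this needs an input you do not mention, for instance the $G_2$ characterisation. For unit $u$ and $v,w\perp u$, the partially polarised identity reads $(\iota_v\iota_u\Phi)\wedge(\iota_w\iota_u\Phi)\wedge\iota_u\Phi=6\langle v,w\rangle\,\mathrm{vol}_{u^\perp}$. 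This forces $\iota_u\Phi|_{u^\perp}$ to be a $G_2$-form with metric $g|_{u^\perp}$, so that $\iota_v\iota_u\Phi$ is the complex structure $w\mapsto v\times w$.

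Relatedly, the scale of $g$ is already fixed by requiring $c=6$ with $\mathrm{vol}=\mathrm{vol}_g$. So $\Phi\wedge\Phi=14\,\mathrm{vol}_g$ must be checked afterwards, not imposed as a normalisation.
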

Note that an equivalent formulation of the non-degeneracy condition is that, for any linearly independent vectors $u,v$, the $8$-form given by
\[
(u \lrcorner v \lrcorner \Phi) \wedge (u \lrcorner v \lrcorner \Phi) \wedge \Phi
\]
is positive.

The useful thing about this theorem for us is that it provides an algebraic condition to check whether a given $4$-form defines a \Sp-structure. We will use this in the proofs of Theorem \ref{ThmLongTime} and Theorem \ref{ThmSpin7Compact} to ensure that certain limits of \Sp-structures are also \Sp-structures.

If a \Sp-structure is parallel with respect to the Levi-Civita connection of its induced metric, it is said to be torsion-free. We recall this definition and some consequences of existence of torsion-free \Sp-structures here.
\begin{defn}
    Let $(M,\Phi)$ be an $8$-manifold with \Sp-structure. We say $\Phi$ is \emph{torsion-free} if
    \[
    \nabla^{g_\Phi}\Phi = 0,
    \]
    where $\nabla^{g_\Phi}$ is the Levi-Civita connection with respect to the Riemannian metric $g_\Phi$.
\end{defn}
We now recall the following equivalent conditions, the first of which is the primary motivation for finding torsion-free \Sp-structures.
\begin{thm}\label{equivalence}
    Let $(M,\Phi)$ be an $8$-manifold with \Sp-structure. The following are equivalent:
    \begin{enumerate}
        \item $Hol(g_\Phi) \subseteq \text{Spin}(7)$,
        \item $\nabla^{g_\Phi}\Phi = 0$,
        \item $\mathrm{d}\Phi = 0$,
    \end{enumerate}
    where $Hol(g_\Phi)$ is the holonomy group of the metric $g_\Phi$.
\end{thm}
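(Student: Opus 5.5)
The plan is to run the cycle of implications $(2)\Rightarrow(3)$, $(2)\Rightarrow(1)$, $(1)\Rightarrow(2)$ and $(3)\Rightarrow(2)$. The first three are soft. The only substantive step is the last one, $\d\Phi = 0 \Rightarrow \nabla\Phi = 0$, which is Fernández's theorem for Spin$(7)$.

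\emph{The easy implications.} For $(2)\Rightarrow(3)$, the exterior derivative is the skew-symmetrisation of the Levi-Civita covariant derivative, because the Levi-Civita connection is torsion-free. Hence $\nabla\Phi = 0$ gives $\d\Phi = \mathrm{Alt}(\nabla\Phi) = 0$.

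For $(2)\Leftrightarrow(1)$, use the holonomy principle. A tensor on $M$ (assumed connected) is parallel if and only if its value at a point $p$ is fixed by $Hol_p(g_\Phi)$.
\begin{itemize}
\item If $\nabla\Phi = 0$, then $Hol_p$ fixes $\Phi_p$. In an adapted frame $\Phi_p$ is $\Phi_0$, so $Hol_p$ lies in the stabiliser of $\Phi_0$, which is Spin$(7)$ by Definition \ref{defSpin7}.
\item Conversely, suppose $Hol_p \subseteq$ Spin$(7)$, where the inclusion is understood in a frame identifying $\Phi_p$ with $\Phi_0$. Then $\Phi_p$ is $Hol_p$-invariant. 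Parallel transport of $\Phi_p$ gives a parallel Spin$(7)$-form $\tilde\Phi$ agreeing with $\Phi$ at $p$.
\end{itemize}
To conclude $\tilde\Phi=\Phi$ one needs the interpretation that the holonomy reduction is the one \emph{determined by} $\Phi$. I would take that as the meaning of (1), which is the standard convention, and note it explicitly.

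\emph{The hard implication $(3)\Rightarrow(2)$.} Here I would use intrinsic torsion. Since $\nabla$ is metric, $\nabla_X\Phi$ lies in the tangent space at $\Phi$ to the orbit $\mathrm{GL}(8)\cdot\Phi$ intersected with metric-preserving deformations. That is,
\[
\nabla_X\Phi = T(X)\diamond\Phi \quad\text{with}\quad T(X)\in\mathfrak{so}(8)/\mathfrak{spin}(7)\cong\mathfrak{spin}(7)^\perp .
\]
So $\nabla\Phi$ is encoded by a torsion tensor $T\in \Lambda^1\otimes\Lambda^2_7$, which has dimension $8\cdot 7=56$. Next, decompose this space under Spin$(7)$:
\[
\Lambda^1\otimes\Lambda^2_7 \cong \Lambda^1_8\oplus\Lambda^3_{48}.
\]
Compare with
\[
\Lambda^5 \cong \Lambda^3 = \Lambda^3_8\oplus\Lambda^3_{48}.
\]
The map $T\mapsto\d\Phi=\mathrm{Alt}(\nabla\Phi)$ is linear and Spin$(7)$-equivariant. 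By Schur's lemma it suffices to check that it is nonzero on each of the two irreducible summands. The dimensions match ($56$ on both sides), so nonzero on each summand means the map is an isomorphism. Therefore $\d\Phi=0$ forces $T=0$, hence $\nabla\Phi=0$.

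\emph{Main obstacle.} The main difficulty is the last verification: explicitly checking, on a test element of each summand, that the equivariant map is nonzero. This is best done with the contraction identities for $\Phi_0$ in the basis \eqref{eqPhi0}. I would pick, for the $8$-dimensional piece, $T = v\otimes(\text{projection of } v^\flat\wedge w^\flat \text{ to } \Lambda^2_7)$ with a trace-type structure. For the $48$-dimensional piece I would pick a simple element, and compute one component of $\mathrm{Alt}(T\diamond\Phi_0)$ by hand. Alternatively, one can cite Fernández \cite{JoyceBook} (Joyce's Proposition 10.5.3) for this representation-theoretic step.
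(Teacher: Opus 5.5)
The paper gives no proof of this statement. It is recalled as standard background (Fern\'andez's theorem; see \cite{JoyceBook}). Your outline is the standard proof, and it is correct:
\begin{itemize}
\item the holonomy principle gives (1)$\Leftrightarrow$(2);
\item $\d\Phi=\mathrm{Alt}(\nabla\Phi)$ gives (2)$\Rightarrow$(3);
\item injectivity of the Spin$(7)$-equivariant map $f\colon\Lambda^1\otimes\Lambda^2_7\to\Lambda^5$, $f(T)=\sum_k e^k\wedge(T(e_k)\diamond\Phi)$, gives (3)$\Rightarrow$(2).
\end{itemize}

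Your caveat about (1) is genuinely needed. As literally stated in the paper, (1)$\Rightarrow$(2) is false. On a flat $T^8$, let $R(x)\in\mathrm{SO}(8)$ be rotation by angle $2\pi x_3$ in the $e_1e_2$-plane, and set $\Phi_x=R(x)^*\Phi_0$. Then $g_\Phi$ is flat, so its holonomy is trivial. But $\nabla\Phi=\partial_{x_3}\Phi\,\d x_3\neq0$, because $e^1\wedge e^2\notin\Lambda^2_{21}$. So (1) must be read, as in the standard formulation, as saying that $\Phi$ is the $4$-form induced by the holonomy reduction, i.e.\ that $\Phi$ is invariant under parallel transport. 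Relatedly, your opening sentence should say that a $Hol_p$-invariant tensor at $p$ \emph{extends to} a parallel tensor. The ``if'' direction for $\Phi$ itself is exactly what the caveat supplies.

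The deferred nonvanishing check is short if you phrase it as surjectivity rather than testing each summand. Note that a single $v\otimes\pi_7(v\wedge w)$ is not in the $8$-dimensional summand; you need the full trace. From the component formula for $\diamond$, one has $(a\wedge b)\diamond\Phi=a\wedge(b^\sharp\lrcorner\Phi)-b\wedge(a^\sharp\lrcorner\Phi)$, and $\ker\diamond=\Lambda^2_{21}$. Hence for $T_w=\sum_i e^i\otimes\pi_7(e^i\wedge w)$,
\[
f(T_w)=\sum_i w\wedge e^i\wedge(e_i\lrcorner\Phi)=4\,w\wedge\Phi .
\]
So the image of $f$ contains the $8$-dimensional subrepresentation $\Lambda^1\wedge\Phi=\Lambda^5_8$. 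The map $w\mapsto w\wedge\Phi$ is injective since $*(w\wedge\Phi)=\pm w^\sharp\lrcorner\Phi$.

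Next take $T=e^1\otimes\pi_7(e^{12})$. Then
\[
f(T)=-e^{12}\wedge(e_1\lrcorner\Phi_0)=-(e^{12357}-e^{12368}-e^{12458}-e^{12467}).
\]
This is not of the form $w\wedge\Phi_0$. In $w\wedge\Phi_0$, the only contribution to $e^{12357}$ is $w_2\,e^2\wedge e^{1357}$, and the only contribution to $e^{25678}$ is $w_2\,e^2\wedge e^{5678}$. Matching coefficients would force both $w_2\neq0$ and $w_2=0$.

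So the image of $f$ contains $\Lambda^5_8$ and an element outside it. Since $\Lambda^5_{48}$ is irreducible, the image is all of $\Lambda^5$, and $f$ is bijective by the dimension count $56=56$.
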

Moreover, if $\Phi$ is a torsion-free \Sp-structure, then the induced metric $g_\Phi$ is Ricci-flat \cite{Bonan}.
\subsection{Decomposition of the space of forms}\label{SubsectionFormDecompositions}

let $(M,\Phi)$ be an $8$-manifold with \Sp-structure. By Definition \ref{defSpin7}, the $4$-form $\Phi|_p$ is stabilised by the Lie group \Sp, for any point $p\in M$.
So, the existence of a \Sp-structure on $M$ defines an action of Spin($7$) on the spaces of differential forms, which then induces a decomposition of the space of differential forms on $M$ into irreducible representations of \Sp. We recall these decompositions here.

\begin{prop}[{\cite[Prop. 11.4.4]{JoyceBook}}]\label{PropDecompositionForms}
    Let $(M,\Phi)$ be an $8$-manifold with \Sp-structure. Then, the spaces $\Lambda^kT^*M$ decompose orthogonally (with respect to the metric on forms induced by $g_\Phi$) as follows, where $\Lambda^k_l$ corresponds to an irreducible \Sp-representation of dimension $l$:
    
    \begin{equation}\label{EquationFormSplitting}
  \begin{split}
    &\Lambda^1T^*M = \Lambda^1_8,\\
    &\Lambda^2T^*M  = \Lambda^2_7 \oplus \Lambda^2_{21},\\
    &\Lambda^3T^*M  = \Lambda^3_8 \oplus \Lambda^3_{48},\\
    &\Lambda^4T^*M  = \Lambda^4_1 \oplus \Lambda^4_7 \oplus \Lambda^4_{27} \oplus \Lambda^4_{35}.
\end{split}  
\end{equation}
Moreover, the Hodge star defines an isomorphism \[ *_\Phi : \Lambda^k_l \to \Lambda^{n-k}_l,
\]
so the above decompositions also give decompositions for $\Lambda^5T^*M ,\cdots,\Lambda^8T^*M $.
\end{prop}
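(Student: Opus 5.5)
The decomposition is pointwise and purely algebraic, so I will prove it on a single tangent space. At $p\in M$ I fix an oriented isomorphism $T_pM\cong\mathbb{R}^8$ carrying $\Phi|_p$ to $\Phi_0$, as in Definition \ref{DefSPin7Structure}. It then suffices to decompose $\Lambda^k(\mathbb{R}^8)^*$ into irreducible Spin$(7)$-modules. The summands are defined invariantly: for example, $\Lambda^2_7=\{\omega : *(\Phi\wedge\omega)=3\omega\}$ and $\Lambda^2_{21}=\{\omega: *(\Phi\wedge\omega)=-\omega\}$. They are therefore independent of the chosen frame and glue into smooth subbundles. Orthogonality is automatic once each summand is an isotypic component of a Spin$(7)$-module on which $g_\Phi$ is invariant, and invariance holds because Spin$(7)\subset \mathrm{SO}(8)$.

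For $k=1$, the spin representation of Spin$(7)$ on $\mathbb{R}^8$ is irreducible, since the group acts transitively on $S^7$. For $k=2$, I use $\Lambda^2\cong\mathfrak{so}(8)=\mathfrak{spin}(7)\oplus\mathfrak{spin}(7)^\perp$, whose pieces have dimensions $21$ and $7$. I identify these with the eigenspaces of the symmetric operator $\omega\mapsto *(\Phi\wedge\omega)$, with eigenvalues $-1$ and $3$; the eigenvalue computation is routine in the basis \eqref{eqPhi0}. Irreducibility follows because $\Lambda^2_{21}$ is the adjoint representation of a simple Lie algebra, and $\Lambda^2_7$ is the vector representation of $\mathrm{SO}(7)$ via the double cover.

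For $k=3$, the map $v\mapsto v\lrcorner\Phi$ embeds $\Lambda^1_8$ equivariantly. Its orthogonal complement has dimension $56-8=48$, and irreducibility of this complement is the one genuinely representation-theoretic step. I would settle it with a weight or character computation: the character of $\Lambda^3$ of the spin representation equals $\chi_8+\chi_{48}$. For $k=4$, the $\mathrm{SO}(8)$-splitting $\Lambda^4=\Lambda^4_+\oplus\Lambda^4_-$ has pieces of dimension $35$ each. $\Lambda^4_-$ stays irreducible as $\Lambda^4_{35}$, again checked by character. Inside $\Lambda^4_+$ (note $\Phi$ is self-dual), I take $\Lambda^4_1=\langle\Phi\rangle$, $\Lambda^4_7=\{\xi\diamond\Phi:\xi\in\Lambda^2_7\}$ as the infinitesimal $\mathrm{SO}(8)$-action, and $\Lambda^4_{27}$ as the remainder. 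I would identify $\Lambda^4_{27}$ with traceless symmetric endomorphisms acting on $\Phi$, which form the irreducible $27$-dimensional module, and verify injectivity of that action.

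Finally, $*_\Phi$ is a Spin$(7)$-equivariant isometry $\Lambda^k\to\Lambda^{8-k}$, so by Schur's lemma it maps each isotypic summand $\Lambda^k_l$ onto $\Lambda^{8-k}_l$. For $k=4$ one needs the summands to have distinct dimensions, which holds. The main obstacle is the irreducibility checks for the $48$-, $27$- and $35$-dimensional pieces; I would handle all three with the Weyl character formula for $B_3$, using the weights $\tfrac12(\pm1,\pm1,\pm1)$ of the spin representation.
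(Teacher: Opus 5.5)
Your overall route is sound. It is the standard representation-theoretic argument behind the result, which the paper simply quotes from Joyce without giving a proof of its own: reduce to $(\mathbb{R}^8,\Phi_0)$, realise $\Lambda^2_7$, $\Lambda^3_8$, $\Lambda^4_1$, $\Lambda^4_7$ explicitly, get orthogonality and the Hodge-star statement from Schur's lemma (the summands in each degree have distinct dimensions), and certify irreducibility with $B_3$ characters. However, one step is wrong as stated.

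\textbf{The gap is your description of $\Lambda^4_{27}$.} Traceless symmetric endomorphisms of $\mathbb{R}^8$ form $S^2_0(\mathbb{R}^8)$, which has dimension $35$, not $27$. The map $h\mapsto h\diamond\Phi$ is indeed injective on it, but its image is $\Lambda^4_{35}=\Lambda^4_-$, not a subspace of $\Lambda^4_+$. This is exactly Corollary \ref{CorollaryIsomorphism}.

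In fact $\Lambda^4_{27}$ is orthogonal to the entire image $\mathfrak{gl}(8,\mathbb{R})\diamond\Phi=\Lambda^4_1\oplus\Lambda^4_7\oplus\Lambda^4_{35}$. It is the normal space to the $43$-dimensional orbit of admissible forms, which is why admissibility is a closed condition, as noted in Section \ref{SectionPrelims}. So no endomorphism acting on $\Phi$ can produce it, and carrying out that step would give a contradiction rather than a proof.

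The $27$-dimensional module is $S^2_0$ of the \emph{seven}-dimensional representation $\Lambda^2_7$. Concretely, $\Lambda^4_{27}$ is the image of $S^2_0(\Lambda^2_7)$ under $\alpha\odot\beta\mapsto\alpha\wedge\beta$. By equivariance, $S^2(\Lambda^2_7)$ can only hit $\Lambda^4_1\oplus\Lambda^4_{27}$. The map is nonzero on the traceless part: for $\omega=e^{12}+e^{34}+e^{56}+e^{78}\in\Lambda^2_7$ one has $\omega\wedge\omega-\tfrac{6}{7}\Phi_0\neq 0$.

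Alternatively, drop the identification and let the character computation do all the work.
\begin{enumerate}
\item Show $\chi_{\Lambda^4}=\chi_1+\chi_7+\chi_{27}+\chi_{35}$, with each summand occurring once.
\item The weights alone do not see the splitting $\Lambda^4_+\oplus\Lambda^4_-$, so place the pieces by dimension.
\item $\Lambda^4_+$ is invariant, $35$-dimensional, and contains $\Phi$ and $\mathfrak{so}(8)\diamond\Phi$, so it must be $1\oplus 7\oplus 27$.
\item This forces $\Lambda^4_-$ to be the irreducible $35$.
\end{enumerate}
With this repair the rest of your argument goes through.
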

We will write $\Omega^k(M)$ for the space of $k$-forms on $M$ (i.e., $\Omega^k(M) = \Gamma(\Lambda^kT^*M)$), or simply $\Omega^k$. In the presence of a \Sp-structure, each space of forms $\Omega^k$ decomposes as above, e.g.,
\begin{equation}
    \Omega^4 = \Omega^4_1 \oplus \Omega^4_7 \oplus \Omega^4_{27}\oplus \Omega^4_{35}.
\end{equation}
Although the metric $g_\Phi$ and the Hodge star $*_\Phi$ depend on $\Phi$, when there is no risk of ambiguity we will simply write $g$ and $*$.
One can describe these irreducible subspaces more explicitly, in terms of $\Phi$, as in \cite[Section 4.2]{KarigiannisDeformations}. Here, we shall only need the explicit expression for $\Omega^2_7$, which we state here.

\begin{align}\label{forms}
  \Omega^2_7   &= \{\alpha \in \Omega^2 \mid *(\alpha \wedge \Phi) = 3\alpha\}, 
  &\Omega^2_{21} &= \{\alpha \in \Omega^2 \mid *(\alpha \wedge \Phi) = -\alpha\}.
\end{align}

The equations above allow us to write down projection formulae for $\pi^k_l:\Omega^k \to \Omega^k_l$. The ones we will need are listed in the following proposition.

\begin{prop}[{\cite[Prop. 2.1]{KarigiannisFlows}}]
    Writing $\pi^k_l:\Omega^k \to \Omega^k_l$ for the projection maps from the space of $k$-forms to the (pointwise) $l$-dimensional irreducible component described above, we have the following explicit formulae:
    
\begin{align}
  \pi^2_7(\alpha)   &= \frac{\alpha + *(\Phi \wedge \alpha)}{4}, 
  &\pi^2_{21}(\alpha) &= \frac{3\alpha - *(\Phi \wedge \alpha)}{4}.
\end{align}
In local coordinates, the projections of $2$-forms can be expressed as

\begin{equation}
  \begin{split}
    &\pi^2_7(\alpha)_{ij} = \frac{1}{4}\alpha_{ij} - \frac{1}{8}\alpha_{ab}g^{ap}g^{bq}\Phi_{pqij}, \label{eqpi27coords}\\
    &\pi^2_{21}(\alpha)_{ij} = \frac{3}{4}\alpha_{ij} + \frac{1}{8}\alpha_{ab}g^{ap}g^{bq}\Phi_{pqij}.\\ 
\end{split}  
\end{equation}
\end{prop}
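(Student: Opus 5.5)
The plan is to derive the projection formulae directly from the eigenspace characterisation \eqref{forms} of $\Omega^2_7$ and $\Omega^2_{21}$, and then to translate the Hodge star expression into local coordinates. Consider the linear operator $L(\alpha) = *(\Phi\wedge\alpha)$ on $\Lambda^2$. By \eqref{forms}, $L$ acts as $3$ on $\Lambda^2_7$ and as $-1$ on $\Lambda^2_{21}$. By Proposition \ref{PropDecompositionForms}, these two spaces span $\Lambda^2 = \Lambda^2_7\oplus\Lambda^2_{21}$. Hence $L$ is diagonalisable with eigenvalues $3$ and $-1$, and the spectral projections are the Lagrange interpolation polynomials
\[
\pi^2_7 = \frac{L - (-1)}{3-(-1)} = \frac{1+L}{4}, \qquad \pi^2_{21} = \frac{L-3}{-1-3} = \frac{3-L}{4}.
\]
To confirm this without appealing to spectral theory, write $\alpha = \alpha_7+\alpha_{21}$. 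Then $L\alpha = 3\alpha_7 - \alpha_{21}$, so
\[
\alpha + L\alpha = 4\alpha_7, \qquad 3\alpha - L\alpha = 4\alpha_{21}.
\]
Since $\Phi$ is a $4$-form and $\alpha$ has even degree, $\Phi\wedge\alpha = \alpha\wedge\Phi$. This gives the first pair of formulae.

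For the coordinate expressions it suffices to prove the pointwise identity
\[
*(\Phi\wedge\alpha)_{ij} = -\tfrac12\,\alpha_{ab}g^{ap}g^{bq}\Phi_{pqij}.
\]
Substituting it gives $\tfrac14\alpha_{ij} - \tfrac18\alpha^{pq}\Phi_{pqij}$ for $\pi^2_7$ and $\tfrac34\alpha_{ij}+\tfrac18\alpha^{pq}\Phi_{pqij}$ for $\pi^2_{21}$. Both sides of the identity are linear in $\alpha$ and built from $g$ and $\Phi$ alone, so it is enough to check it at a point in a frame identifying $\Phi$ with $\Phi_0$ of \eqref{eqPhi0}, using the convention $\alpha = \frac12\alpha_{ij}e^i\wedge e^j$.

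The cleanest route avoids checking all $28$ basis elements by hand. The map $\alpha\mapsto -\frac12\alpha^{pq}\Phi_{pqij}$ is Spin($7$)-equivariant, as is $L$. By Schur's lemma, each of them therefore acts as a scalar on each irreducible summand $\Lambda^2_7$ and $\Lambda^2_{21}$, and these summands are non-isomorphic. So I only need to compare the two maps on one element of each summand.
\begin{itemize}
\item For $\Lambda^2_7$, take $\alpha = e^{12}+e^{34}+e^{56}+e^{78}$, which lies in $\Lambda^2_7$; one checks $*(\Phi_0\wedge\alpha) = 3\alpha$. The contraction gives, for instance, the $(1,2)$ component $-\frac12\cdot 2(\Phi_{3412}+\Phi_{5612}+\Phi_{7812}) = -3$ up to the sign convention. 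So the signs and the factor $\frac12$ have to be matched carefully.
\item For $\Lambda^2_{21}$, take $e^{12}-e^{34}$ and check that both maps give eigenvalue $-1$.
\end{itemize}

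The main obstacle is purely bookkeeping: getting the factor of $\frac12$ and the overall sign right, since these depend on the conventions for components of forms and on the orientation. I would fix these by the single explicit computation above at the point with $\Phi=\Phi_0$. The factor $\frac18 = \frac14\cdot\frac12$ then follows from the double counting in $\alpha_{ab}$ over ordered pairs.
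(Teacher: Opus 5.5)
Your first step is fine. Since $L(\alpha)=*(\Phi\wedge\alpha)$ acts by $3$ on $\Omega^2_7$ and by $-1$ on $\Omega^2_{21}$, and these two spaces span $\Omega^2$, the identities $\alpha+L\alpha=4\alpha_7$ and $3\alpha-L\alpha=4\alpha_{21}$ give the Hodge-star formulas.

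\textbf{The gap is in the coordinate step.} The identity you set out to prove, $*(\Phi\wedge\alpha)_{ij}=-\tfrac12\alpha_{ab}g^{ap}g^{bq}\Phi_{pqij}$, is false for the paper's $\Phi_0$ in \eqref{eqPhi0}, with the orientation for which \eqref{forms} holds. Your own test shows this. On $\alpha=e^{12}+e^{34}+e^{56}+e^{78}$ the Hodge star gives $+3$ in the $(1,2)$ slot, but the contraction gives $-3$. On $e^{12}-e^{34}$ your contraction has eigenvalue $+1$, not $-1$.

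This is not bookkeeping that a choice of conventions can absorb:
\begin{itemize}
\item The contraction $\alpha_{ab}g^{ap}g^{bq}\Phi_{pqij}$ does not involve the orientation at all.
\item Changing how components of forms are normalised only rescales terms by positive constants.
\item The orientation is already pinned down by \eqref{forms}, equivalently by $*\Phi=\Phi$.
\end{itemize}
Concretely, $\Phi_{1234}=\Phi_{1256}=\Phi_{1278}=1$ gives $\alpha_{ab}\Phi_{ab12}=6=6\alpha_{12}$ for your element of $\Omega^2_7$. The printed formula then returns $\pi^2_7(\alpha)_{12}=\tfrac14-\tfrac68=-\tfrac12$, not $1$.

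Carried out correctly, your Schur-lemma comparison proves $*(\Phi\wedge\alpha)_{ij}=+\tfrac12\alpha_{ab}g^{ap}g^{bq}\Phi_{pqij}$. Both maps are $3$ on $\Omega^2_7$ and $-1$ on $\Omega^2_{21}$; on $e^{12}$ both give $e^{34}+e^{56}+e^{78}$. In the paper's conventions this yields
\[
\pi^2_7(\alpha)_{ij}=\tfrac14\alpha_{ij}+\tfrac18\alpha_{ab}g^{ap}g^{bq}\Phi_{pqij},\qquad \pi^2_{21}(\alpha)_{ij}=\tfrac34\alpha_{ij}-\tfrac18\alpha_{ab}g^{ap}g^{bq}\Phi_{pqij}.
\]
The printed coordinate formulas hold exactly when $\alpha_{ab}g^{ap}g^{bq}\Phi_{pqij}=-6\alpha_{ij}$ on $\Omega^2_7$. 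That is the opposite sign convention for the Cayley form, presumably Karigiannis's; the paper quotes the proposition from him without proof. Using the general identity $*(\Phi\wedge\alpha)_{ij}=\tfrac12\alpha_{ab}g^{ap}g^{bq}(*\Phi)_{pqij}$ together with \eqref{forms}, that convention forces $*\Phi=-\Phi$. An example is $-\Phi_0$ with the reversed orientation. This contradicts the paper's normalisation $*_\Phi\Phi=\Phi$, $\Phi\wedge\Phi=14\,\mathrm{vol}_\Phi$.

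So, as set up here, the two displayed pairs of formulas are mutually inconsistent, and no final sign-fixing computation can prove both. You must either flip the sign of the $\Phi$-term in the coordinate formulas, or state explicitly that you are using the other (anti-self-dual) convention for $\Phi$.
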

When there is little risk of ambiguity, we will simply write $\pi_l$ for the projection.

In order to describe deformations and flows of \Sp-structures, Karigiannis introduces the \emph{diamond} map, which we recall here \cite{KarigiannisFlows}. 
Let $A$ be a $(0,2)$-tensor and consider the following map:
\begin{equation}\label{eqDiamond}
    A \mapsto(A \diamond \Phi)_{ijkl} = \left( A_{ip} g^{pq} \Phi_{qjkl} + A_{jp} g^{pq} \Phi_{iqkl} + A_{kp}g^{pq}  \Phi_{ijql} + A_{lp} g^{pq} \Phi_{ijkq} \right). 
\end{equation}

The diamond map $\diamond$ depends on $\Phi$, but we will simply write $\diamond$ to simplify notation.
Using the metric, $A$ can be viewed as a $(1,1)$-tensor, and under the identification 
\[
\Gamma(TM\otimes T^*M) \cong \operatorname{End}(TM) \cong \mathfrak{gl}(8,\mathbb{R}),\] $\diamond$ describes the infinitesimal action of $\operatorname{GL(8,\mathbb{R})}$ on $\Phi$. Thus, any infinitesimal deformation of a \Sp-structure $\Phi$ can be written as $A\diamond\Phi$ for some $A$. Decomposing $A$ into symmetric and skew-symmetric parts, we write $A = h + X$, for $h\in S^2$ and $X \in \Omega^2$. Karigiannis shows that the kernel of $\diamond$ is $\Omega^2_{21}$, so we can in fact take $X$ to lie in $\Omega^2_7$.
\begin{prop}[{\cite[Proposition 2.3]{KarigiannisFlows}}]\label{PropDiamondKernel}
    The kernel of the map $A \mapsto A \diamond \Phi$ is isomorphic to $\Omega^2_{21}$.
\end{prop}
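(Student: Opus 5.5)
The statement is pointwise and purely algebraic, so I will work at a single point $p$ and identify $T_pM$ with $\mathbb{R}^8$ so that $\Phi|_p=\Phi_0$ and $g=\delta$. With the metric, a $(0,2)$-tensor $A$ becomes an element of $\mathfrak{gl}(8,\mathbb{R})$, and $A\diamond\Phi$ is, up to sign convention, the infinitesimal action $\frac{d}{ds}\big|_{s=0}\exp(sA)^*\Phi_0$. The kernel of $A\mapsto A\diamond\Phi$ is therefore exactly the Lie algebra of the stabiliser of $\Phi_0$ in $\mathrm{GL}(8,\mathbb{R})$. By Definition \ref{defSpin7} this stabiliser is \Sp, so the kernel is $\mathfrak{spin}(7)$. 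The proof then reduces to identifying $\mathfrak{spin}(7)\subset\mathfrak{gl}(8)$ with $\Lambda^2_{21}$.

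\textbf{Step 1: no symmetric part survives.} Since \Sp$\subset\mathrm{SO}(8)$, its Lie algebra lies in $\mathfrak{so}(8)\cong\Lambda^2$, so every kernel element is skew. This can also be seen directly. Write $A=h+X$ and contract $A\diamond\Phi$ against $\Phi$. Using the identity $\Phi_{ijkl}\Phi_{ajkl}=42\,\delta_{ia}$, one gets $\langle h\diamond\Phi,\Phi\rangle=c\operatorname{tr}h$ with $c\neq0$. The traceless symmetric part maps injectively into $\Lambda^4_{35}$ by Schur's lemma together with a nonvanishing check on one element. Meanwhile $X\diamond\Phi$ lands in $\Lambda^4_7$, and these components are mutually orthogonal. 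Relying on the group-theoretic argument is simpler, though, and I will use that.

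\textbf{Step 2: the skew kernel is a $21$-dimensional subspace.} Restricted to $\Lambda^2\cong\mathfrak{so}(8)$, the map $X\mapsto X\diamond\Phi$ is \Sp-equivariant, and its kernel is $\mathfrak{spin}(7)$, which has dimension $21$. The kernel is an invariant subspace of $\Lambda^2=\Lambda^2_7\oplus\Lambda^2_{21}$, and these two summands are inequivalent irreducibles by Proposition \ref{PropDecompositionForms}. An invariant $21$-dimensional subspace must therefore equal $\Lambda^2_{21}$. Equivalently, $\mathfrak{spin}(7)$ is the adjoint representation, which is $\Lambda^2_{21}$.

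\textbf{Step 3: a direct check via the eigenvalue characterisation.} As a consistency check, I would also verify one explicit element using \eqref{forms}. For instance, take $\alpha=e^{12}-e^{34}$. From \eqref{eqPhi0} one computes $*(\alpha\wedge\Phi_0)=-\alpha$, so $\alpha\in\Lambda^2_{21}$, and $\alpha\diamond\Phi_0=0$ by direct evaluation. By irreducibility, this single nonzero element suffices to show that all of $\Lambda^2_{21}$ lies in the kernel.

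\textbf{Main obstacle.} The delicate point is Step 1, where I need the Lie algebra of the stabiliser to sit inside $\mathfrak{so}(8)$. This holds because the stabiliser preserves $g_{\Phi_0}$, the metric being determined by $\Phi_0$. Beyond that, the only remaining care needed is matching the sign and normalisation conventions of $\diamond$.
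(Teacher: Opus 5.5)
The paper does not prove this itself: it cites Karigiannis, while noting that $\diamond$ is the infinitesimal $\mathrm{GL}(8,\mathbb{R})$-action and that Spin$(7)\subset\mathrm{SO}(8)$. Your argument is correct and is the standard one behind that citation: the kernel is the Lie algebra of the stabiliser, a $21$-dimensional Spin$(7)$-invariant subspace of $\mathfrak{so}(8)\cong\Lambda^2_7\oplus\Lambda^2_{21}$, hence equal to $\Lambda^2_{21}$. The convention mismatch you flag is at worst a transpose and a sign, which is harmless precisely because the stabiliser algebra is skew, and your explicit check that $e^{12}-e^{34}\in\Lambda^2_{21}$ is annihilated by $\diamond$ also goes through.
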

We also have the following.
\begin{cor}[{\cite[Corollary 2.6]{KarigiannisFlows}}]\label{CorollaryIsomorphism}
    The map $A \mapsto A\diamond \Phi$ is injective on $S^2 \oplus \Omega^2_7$, and is therefore an isomorphism onto its image $\Omega^4_1 \oplus \Omega^4_7 \oplus \Omega^4_{35} $. 
    Decompose the space of symmetric $2$-tensors $S^2$ into multiples of the metric and trace-free parts: $S^2 = \langle g_\Phi \rangle  \oplus S^2_0$.
    The summands $\langle g_\Phi \rangle,S^2_0$ and $\Omega^2_7$ are mapped isomorphically onto $\Omega^4_1,\Omega^4_{35}$ and $\Omega^4_7$, respectively.
\end{cor}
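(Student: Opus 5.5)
The plan is to combine Proposition \ref{PropDiamondKernel} with a dimension count, and then check the three summands separately using Schur's lemma. The diamond map $A \mapsto A\diamond\Phi$ is linear, pointwise, and \Sp-equivariant: $\Phi$ is \Sp-invariant, so the action of \Sp{} on $\operatorname{End}(T_pM)$ intertwines with its action on $\Lambda^4$. By Proposition \ref{PropDiamondKernel}, the kernel on the full space $T^*M\otimes T^*M = S^2\oplus\Omega^2_7\oplus\Omega^2_{21}$ is (isomorphic to) $\Omega^2_{21}$.

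I would first pin down that the kernel is exactly the summand $\Omega^2_{21}$. An irreducible $21$-dimensional subrepresentation of $S^2\oplus\Omega^2_7\oplus\Omega^2_{21} = \mathbb{R}\oplus S^2_0\oplus\Lambda^2_7\oplus\Lambda^2_{21}$ must be $\Lambda^2_{21}$ itself. The other summands have dimensions $1$, $35$ and $7$, and $S^2_0$ is irreducible of dimension $35$, so none of them contains a copy of the $21$-dimensional representation. Hence no other isotypic component can contain it. Concretely, this also follows from the infinitesimal picture: $\Omega^2_{21}\cong\mathfrak{spin}(7)$ is the Lie algebra of the stabiliser of $\Phi$, so it acts trivially on $\Phi$.

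Injectivity on $S^2\oplus\Omega^2_7$ then follows, since the kernel meets this complement trivially. Therefore $\diamond$ restricted to $S^2\oplus\Omega^2_7$ is injective with image of dimension $36+7=43$. By equivariance and Schur's lemma, each irreducible summand maps either to zero or isomorphically onto an irreducible summand of $\Lambda^4$ of the same type. Since the map is injective, none maps to zero. Comparing dimensions in Proposition \ref{PropDecompositionForms}:
\begin{itemize}
\item $\langle g\rangle$ is $1$-dimensional and can only land in $\Lambda^4_1$. Directly, $g\diamond\Phi = 4\Phi$.
\item $\Omega^2_7$ can only land in $\Lambda^4_7$, which is the unique $7$-dimensional summand.
\item $S^2_0$ is $35$-dimensional and can only land in $\Lambda^4_{35}$. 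It cannot land in $\Lambda^4_{27}$ for dimension reasons.
\end{itemize}
So the image is $\Omega^4_1\oplus\Omega^4_7\oplus\Omega^4_{35}$, of dimension $43$, consistent with the count above.

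The main subtlety is justifying that $S^2_0$ and $\Lambda^4_{35}$ are irreducible and isomorphic, rather than, say, $\Lambda^4_{35}$ being some other $35$-dimensional irreducible representation. If one only trusts dimensions, one needs to exclude that the image of $S^2_0$ is spread between non-isomorphic summands. I would handle this by noting that $S^2_0(\mathbb{R}^8)$ is irreducible under $\mathrm{SO}(8)$ and remains irreducible under \Sp. Since $\Lambda^4_{35}$ is by definition the $35$-dimensional irreducible component, and $\Lambda^4$ contains no other irreducible piece of dimension $35$, the injective equivariant image must coincide with it.

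As a fallback to avoid heavy representation theory, the kernel statement can be made concrete with the projection formula \eqref{eqpi27coords} to verify $X\diamond\Phi\neq0$ for $0\neq X\in\Omega^2_7$. Similarly, $\langle h\diamond\Phi,\Phi\rangle$ computes $\operatorname{tr}h$, which separates the trace part from the trace-free part.
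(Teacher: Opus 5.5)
Your argument is correct. The paper does not prove this statement; it imports it from \cite[Corollary 2.6]{KarigiannisFlows} alongside Proposition~\ref{PropDiamondKernel}. Your proposal is therefore a self-contained derivation along the standard lines: the kernel is $\mathfrak{spin}(7)\cong\Lambda^2_{21}$ as the stabiliser algebra of $\Phi$, and the rest follows from \Sp-equivariance, Schur and a dimension count.

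Two refinements are worth making.

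First, of your two reasons that the kernel is exactly the summand $\Omega^2_{21}$, lead with the stabiliser argument. It needs only $\Omega^2_{21}\subseteq\ker$ and $\dim\ker=21$. The isotypic argument, by contrast, presupposes that Proposition~\ref{PropDiamondKernel} gives an isomorphism of representations and that $S^2_0$ is irreducible.

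Second, the one input you assert without proof is the \Sp-irreducibility of $S^2_0$. It is true: under the spin representation, $S^2(\mathbb{R}^8)\cong\Lambda^0(\mathbb{R}^7)\oplus\Lambda^3(\mathbb{R}^7)$. But it is not actually needed.
\begin{itemize}
\item Since $\Lambda^4=\Lambda^4_1\oplus\Lambda^4_7\oplus\Lambda^4_{27}\oplus\Lambda^4_{35}$ has pairwise non-isomorphic irreducible summands, every invariant subspace is a sum of some of them.
\item The $43$-dimensional image of $S^2\oplus\Omega^2_7$ must therefore be $\Lambda^4_1\oplus\Lambda^4_7\oplus\Lambda^4_{35}$, the only choice of summands whose dimensions add to $43$.
\item The $35$-dimensional invariant image of $S^2_0$, lying inside this, must then be $\Lambda^4_{35}$. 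The competing sum $1+7+27=35$ is excluded because $\Lambda^4_{27}$ is not in the image.
\item The same count places the images of $\langle g_\Phi\rangle$ and $\Omega^2_7$ in $\Lambda^4_1$ and $\Lambda^4_7$.
\end{itemize}
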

We shall use this description of deformations of \Sp-structures to study \emph{flows} of \Sp-structures in Section \ref{Flowsintro}. First, we introduce the torsion tensor
\subsection{The torsion tensor and torsion forms of a \texorpdfstring{\Sp-structure}{Spin(7)-structure}}
The obstruction to a \Sp-structure being torsion-free is called \emph{torsion}. 
In this section, we recall the definition of the torsion tensor $T$ and some other related tensors. We also recall the expression of the Ricci tensor in terms of the torsion tensor. We shall use all of this when considering flows of \Sp-structures.
We first consider the $(0,5)-$tensor $\nabla_X\Phi$, which will give rise to the torsion tensor $T$.

\begin{lem}[{\cite[Lemma 2.10]{KarigiannisFlows}}]
Let $X$ be a vector field and $\Phi$ be a \Sp-structure on an $8$-manifold $M$.
Then, $\nabla_X\Phi \in \Omega^4_7 \subset \Omega^4$. Thus, the $(0,5)$-tensor $\nabla \Phi$ lies in the space $\Omega^1_8 \otimes \Omega^4_7$.   
\end{lem}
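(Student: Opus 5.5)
The plan is to realise $\nabla_X\Phi$ as an infinitesimal deformation of $\Phi$ by a skew endomorphism, and then use the description of the diamond map from Proposition \ref{PropDiamondKernel} and Corollary \ref{CorollaryIsomorphism}.

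Fix a point $p\in M$ and a vector $X_p$, and work pointwise. Choose a local frame $\{e_1,\dots,e_8\}$ near $p$ in which $\Phi$ takes the standard form $\Phi_0$ of \eqref{eqPhi0} at every point. Such a frame exists: a \Sp-structure is a reduction of the frame bundle to Spin$(7)$, so we may take a local section of the Spin$(7)$-subbundle. In this frame the components $\Phi_{ijkl}$ are constant. Write the Levi-Civita connection of $g_\Phi$ as $\nabla e_j = \omega^k{}_j\, e_k$. Since $\nabla$ is metric and the frame is $g_\Phi$-orthonormal (Spin$(7)\subset \mathrm{SO}(8)$), the matrix $(\omega(X)_{kj})$ is skew-symmetric, i.e. $\omega(X)\in\mathfrak{so}(8)\cong\Omega^2$.

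Now differentiate. Because the components of $\Phi$ are constant, $(\nabla_X\Phi)_{ijkl}$ consists only of the connection terms:
\[
(\nabla_X\Phi)_{ijkl} = -\big(\omega(X)_{pi}\Phi_{pjkl}+\omega(X)_{pj}\Phi_{ipkl}+\omega(X)_{pk}\Phi_{ijpl}+\omega(X)_{pl}\Phi_{ijkp}\big).
\]
Comparing with \eqref{eqDiamond}, this is exactly $B\diamond\Phi$ for the skew $2$-tensor $B=\omega(X)$, up to the harmless sign coming from transposing the skew matrix.

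It remains to identify the image. Split $B=\pi_7(B)+\pi_{21}(B)$ with $\pi_7(B)\in\Omega^2_7$ and $\pi_{21}(B)\in\Omega^2_{21}$. By Proposition \ref{PropDiamondKernel}, $\pi_{21}(B)\diamond\Phi=0$. By Corollary \ref{CorollaryIsomorphism}, $\diamond$ maps $\Omega^2_7$ isomorphically onto $\Omega^4_7$. Hence
\[
\nabla_X\Phi=\pi_7(B)\diamond\Phi\in\Omega^4_7.
\]
Since $X\mapsto\nabla_X\Phi$ is tensorial in $X$, the $(0,5)$-tensor $\nabla\Phi$ is a section of $\Omega^1_8\otimes\Omega^4_7$.

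The main obstacle is index and sign bookkeeping: one has to check that the connection terms match the diamond formula exactly, rather than only up to some other linear combination. The skewness of $\omega$ is the essential input, since it ensures the symmetric part $h$ of the deformation vanishes and so no $\Omega^4_1$ or $\Omega^4_{35}$ components appear.

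An alternative route, which avoids adapted frames, is to differentiate the pointwise identities $|\Phi|^2=14$ and $*\Phi=\Phi$, together with the quadratic identities for $\Phi$. This shows that $\nabla_X\Phi$ is tangent at $\Phi$ to the orbit $\mathcal{A}M$ of $\mathrm{SO}(8)$-deformations preserving $g_\Phi$. That tangent space is $\Omega^2\diamond\Phi=\Omega^4_7$, and the same conclusion follows.
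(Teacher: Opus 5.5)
Your main argument is correct and complete. It relies only on the algebraic facts about $\diamond$ that the paper records before this lemma (Proposition \ref{PropDiamondKernel} and Corollary \ref{CorollaryIsomorphism}), so there is no circularity.

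The paper gives no proof of its own: the lemma is simply quoted from Karigiannis. The useful comparison is therefore with the other standard proof. That proof works frame-free: it uses $\nabla g=0$ to differentiate algebraic identities satisfied by every Spin$(7)$-form, and shows that $\nabla_X\Phi$ has no $\Omega^4_1$, $\Omega^4_{27}$ or $\Omega^4_{35}$ component.

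Your adapted-frame route is more conceptual. It is the general principle that the covariant derivative of the tensor defining an $H$-structure is the action of the $\mathfrak{h}^\perp$-part of the Levi-Civita connection form. It works verbatim for any $H\subset \mathrm{SO}(n)$, and it also identifies the torsion: $T_X=\pi_7(B)$ is the $\Omega^2_7$-part of the connection form in any adapted frame. The sign is not actually an issue. With $\nabla_X e_i=\omega(X)^p{}_i\,e_p$ you get $\nabla_X\Phi=B\diamond\Phi$ exactly, with $B_{ip}=-\omega(X)^p{}_i$, which is skew. The identity-based route avoids choosing frames and fits the index calculus used elsewhere in the paper, but it needs more care about which identity does what.

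That is where your closing alternative sketch needs correcting.
\begin{itemize}
\item Differentiating $|\Phi|^2=14$, $*\Phi=\Phi$ and the one-index identity $\Phi_{ijkl}\Phi_{ajkl}=42g_{ia}$ only removes the $\Omega^4_1$ and $\Omega^4_{35}$ components, i.e.\ the image of $S^2$ under $\diamond$. Every element of $\Omega^4_{27}$ satisfies all three differentiated conditions.
\item Excluding $\Omega^4_{27}$ needs the two-index identity for $\Phi_{ijkl}\Phi_{abkl}$. In the sign conventions of the paper's coordinate formula for $\pi^2_7$, it says that $\hat\Phi\colon\alpha\mapsto\frac12\Phi_{ijab}\alpha_{ab}$ satisfies $\hat\Phi^2+2\hat\Phi-3=0$, with eigenvalue $-3$ on $\Lambda^2_7$ and $1$ on $\Lambda^2_{21}$.
\item Differentiating gives $\hat\sigma\hat\Phi+\hat\Phi\hat\sigma+2\hat\sigma=0$ for $\sigma=\nabla_X\Phi$. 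This kills the diagonal blocks of $\hat\sigma$ with respect to $\Lambda^2_7\oplus\Lambda^2_{21}$. Since $\Lambda^2_7\otimes\Lambda^2_{21}$ has no $27$-dimensional summand, $\Omega^4_{27}$ lives entirely in those diagonal blocks, so its component is ruled out.
\item $\mathcal{A}M$ is the space of all admissible forms, whose tangent space at $\Phi$ is $\Omega^4_1\oplus\Omega^4_7\oplus\Omega^4_{35}$. The set you mean is the fibre of Spin$(7)$-forms inducing $g_\Phi$ and the given orientation. This is a single $\mathrm{SO}(8)$-orbit with tangent space $\Omega^2\diamond\Phi=\Omega^4_7$.
\end{itemize}
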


Because of this, Corollary \ref{CorollaryIsomorphism} implies that, for each vector field $e_m$, there exists a $2$-form $T_m\in \Omega^2_7$ (where $m$ is fixed) such that \[\nabla_m\Phi = T_m\diamond\Phi.\] This motivates the following definition.

\begin{defn}[{\cite[Definition 2.12]{KarigiannisFlows}}] \label{DefTorsion}
    The \emph{torsion tensor} $T$ of a \Sp-structure $\Phi$ is the element of $\Omega^1_8 \otimes \Omega^2_7$ such that
    \begin{equation}\label{EquationDefTorsion}
        \nabla_m\Phi_{ijkl} = (T_m \diamond \Phi)_{ijkl} = T_{m;ip} g^{pq} \Phi_{qjkl} 
+ T_{m;jp} g^{pq} \Phi_{i q k l} 
+ T_{m;kp} g^{pq} \Phi_{ij q l} 
+ T_{m;lp} g^{pq} \Phi_{ijk q}.
    \end{equation}
\end{defn}
Note that the semi-colon in $T_{m;ab}$ does not denote covariant differentiation of $T_m$. Instead, for each fixed index $m$, $T_{m;ab}$ lies in $\Omega^2_7$ and the semi-colon serves only to separate the $\Omega^1_8$-index $m$ from the two $\Omega^2_7$-indices $a,b$.

We can explicitly describe the torsion tensor $T_{m;ab}$ in local coordinates, in terms of $\Phi$, as follows. 
\begin{prop}[{\cite[Lemma 2.13]{KarigiannisFlows}}]\label{PropTorsion}
    The torsion tensor $T$ of a \Sp-structure $\Phi$ can be expressed in local coordinates as
    \begin{equation}\label{EquationTorsionTensor}
        T_{m;ab} = \frac{1}{96}(\nabla_m \Phi_{ajkl})(\Phi_{bpqr})g^{jp}g^{kq}g^{lr}.
    \end{equation}
\end{prop}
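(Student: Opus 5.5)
The plan is to contract the defining identity \eqref{EquationDefTorsion} against $\Phi_{bpqr}g^{jp}g^{kq}g^{lr}$ and evaluate the result with the standard contraction identities for the Cayley form. Fix $m$ and write $A := T_m \in \Omega^2_7$. Multiplying
\[
\nabla_m\Phi_{ajkl} = A_{ai}g^{is}\Phi_{sjkl} + A_{ji}g^{is}\Phi_{askl} + A_{ki}g^{is}\Phi_{ajsl} + A_{li}g^{is}\Phi_{ajks}
\]
by $\Phi_{bpqr}g^{jp}g^{kq}g^{lr}$ gives four terms. The first is $A_a{}^{s}\Phi_{sjkl}\Phi_b{}^{jkl}$. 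We use the identity $\Phi_{ijkl}\Phi_{abcd}g^{jb}g^{kc}g^{ld} = 42\, g_{ia}$, which follows from $|\Phi|^2 = 14$ together with Spin$(7)$-invariance; it turns the first term into $42A_{ab}$.

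The remaining three terms are equal by the skew-symmetry of $\Phi$ in its last three slots, so together they give $3A_{ji}g^{is}\Phi_{askl}\Phi_b{}^{jkl}$. Here we use the double contraction identity
\[
\Phi_{askl}\Phi_{bjcd}g^{kc}g^{ld} = 6(g_{ab}g_{sj} - g_{aj}g_{sb}) - 4\Phi_{asbj}
\]
(Karigiannis' identity), which we take as the one input not stated earlier in the paper. Contracting it with $A^{js}$ and using the skew-symmetry of $A$ gives
\[
A^{js}\bigl(6(g_{ab}g_{sj} - g_{aj}g_{sb}) - 4\Phi_{asbj}\bigr) = -6A_{ab}\cdot(\text{sign}) + \ldots
\]
The $g_{ab}g_{sj}$ term vanishes by the trace of a skew tensor, the second term gives a multiple of $A_{ab}$, and the $\Phi$ term gives a multiple of $A^{js}\Phi_{jsab}$. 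The defining relation for $\Omega^2_7$ in local coordinates, coming from \eqref{eqpi27coords} via $\pi_7(A) = A$, reads $A^{pq}\Phi_{pqij} = -6A_{ij}$. Substituting this makes the last term a multiple of $A_{ab}$ as well.

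The main obstacle is keeping track of signs and index orderings. The order of indices in $\Phi_{askl}\Phi_{bjkl}$ versus the identity has to be permuted carefully: for instance, $A_{ji}g^{is}$ contracts the second index of $A$, and moving $j$ to match the identity introduces a sign. Carried through, the three terms should combine to $3(6+8)A_{ab} = 42A_{ab}$, since both the $g$-term and the $\Phi$-term produce positive multiples after the sign flips, contributing $6$ and $4\cdot 2 = 8$ respectively. The total is then $42A_{ab} + 42A_{ab} = 84A_{ab}$.

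That total conflicts with the stated normalisation $1/96$, which requires a total of $96A_{ab}$. If that is the outcome, I would recheck the constant $42$ in the triple contraction identity and the coefficient $-6$ in the $\Omega^2_7$ eigenvalue relation against Karigiannis' conventions. This is because $96 = 42 + 54$ corresponds to the $\Phi$-term contributing $3\cdot(6+12)$, i.e.\ $A^{pq}\Phi_{pqij}$ having eigenvalue magnitude $6$ combined with a factor of $4\cdot 6/2 = 12$ after accounting for the two orderings of $j,s$. In other words, the factor-of-two bookkeeping in the $\Phi$-term is exactly where I expect to spend the effort.

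Finally, dividing the total by $96$ recovers $T_{m;ab}$.
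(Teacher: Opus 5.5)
You have the right strategy, and it is the standard derivation behind the cited lemma; the paper itself gives no proof, only the citation to Karigiannis. You contract the defining identity against $\Phi_{bjkl}$, use $\Phi_{pjkl}\Phi_{bjkl}=42g_{pb}$ on the first term, and use the double contraction identity together with $A_{pq}\Phi_{pqij}=-6A_{ij}$ on the other three. All of these inputs are correct and mutually consistent with the paper's projection formula.

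The proposal does not prove the statement, however. Your own bookkeeping gives $84A_{ab}$, you note that this conflicts with $1/96$, and your proposed reconciliation is also wrong: $3(6+12)$, with a halving attributed to ``two orderings of $j,s$''. You then divide by $96$ anyway. Evaluating the three contracted terms is the whole content of the proof, so this is a genuine gap.

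Here is where the computation goes wrong. In an orthonormal frame, the three equal terms are each
\[
A_{js}\Phi_{askl}\Phi_{bjkl}=A_{js}\bigl(6(\delta_{ab}\delta_{sj}-\delta_{aj}\delta_{sb})-4\Phi_{asbj}\bigr).
\]
\begin{itemize}
\item The $\delta_{ab}\delta_{sj}$ piece vanishes because $A$ is skew.
\item The second piece is $-6A_{ab}$, not $+6A_{ab}$. Substituting $j=a$, $s=b$ returns $A_{ab}$ directly, with no reordering and no sign flip.
\item For the $\Phi$ piece, note that $\Phi_{asbj}=\Phi_{jsab}$, since the rearrangement of slots is a $3$-cycle and hence even. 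So $-4A_{js}\Phi_{jsab}=-4\cdot(-6)A_{ab}=24A_{ab}$.
\end{itemize}
There is no factor $\tfrac12$ in the last step, because the eigenvalue relation already sums over all ordered pairs $(j,s)$; so neither your $8$ nor your $12$ is correct. Each of the three terms therefore contributes $-6+24=18$, and the total is $42+3\cdot 18=96$, which gives the stated formula.

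The $+24$ does not depend on orientation conventions. Set $L(\beta)_{ij}=\beta_{ab}\Phi_{abij}$. With the $-4\Phi$ identity one gets $L^2+4L-12=0$, and the $7$-dimensional eigenspace has eigenvalue $-6$. Flipping the sign of the $\Phi$-term flips this eigenvalue to $+6$, so their product is always $+24$.
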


For expressing other geometric quantities in terms of torsion, it will be useful to decompose the torsion tensor $T$ into two orthogonal components. We start with the following result, which is proved in \cite[p.~8]{Dwivedi24}.

\begin{prop}\label{PropTorsionSplitting}
    Let $M$ be an $8$-manifold with \Sp-structure $\Phi$.
    Then,
    \begin{equation}\label{EquationT8T48}
        \Omega^1_8 \otimes \Omega^2_7 \cong \Omega^1_8 \oplus_\perp\{\gamma_{i;jk} \in \Omega^1_8 \otimes \Omega^2_7 \mid \gamma_{i;jk}g^{ik} = 0\} \cong \Omega^1_8 \oplus \Omega ^3_{48}.
    \end{equation}
    
\end{prop}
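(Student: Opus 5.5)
The plan is to produce the splitting by decomposing the trace map. Define the contraction $\operatorname{tr}:\Omega^1_8\otimes\Omega^2_7\to\Omega^1$ by $\gamma\mapsto \gamma_{i;jk}g^{ik}$. This map is \Sp-equivariant, since it uses only $g$ and $g$ is \Sp-invariant. Its kernel is exactly the second summand in \eqref{EquationT8T48}. For the complement I will construct an explicit equivariant map in the other direction, $\iota:\Omega^1\to\Omega^1_8\otimes\Omega^2_7$, given by
\[
\iota(v)_{i;jk}=\pi_7(v^\flat\wedge e_i^\flat)_{jk}=\tfrac14(g_{ij}v_k-g_{ik}v_j)-\tfrac18 v^p\Phi_{pijk},
\]
using the coordinate formula \eqref{eqpi27coords} for $\pi^2_7$. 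For each fixed $i$ this lies in $\Omega^2_7$ by construction.

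Next I compute $\operatorname{tr}\circ\iota$. Contracting $i$ with $k$, the $\Phi$ term drops out because $\Phi$ is skew. The metric terms give $\tfrac14(v_j-8v_j)=-\tfrac74 v_j$. So $\operatorname{tr}\circ\iota=-\tfrac74\,\mathrm{id}$, which is invertible. It follows that $\iota$ is injective and $\operatorname{tr}$ is surjective. Every $\gamma$ then splits as
\[
\gamma=\iota\!\left(-\tfrac47\operatorname{tr}\gamma\right)+\left(\gamma-\iota\!\left(-\tfrac47\operatorname{tr}\gamma\right)\right),
\]
where the second piece is traceless.

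To check that the splitting is orthogonal, I show that $\langle\iota(v),\gamma\rangle$ is a multiple of $\langle v,\operatorname{tr}\gamma\rangle$. Write $\langle \iota(v),\gamma\rangle=\sum_i\langle\pi_7(v\wedge e_i),\gamma_i\rangle$. Since $\gamma_i\in\Omega^2_7$ and $\pi_7$ is an orthogonal projection, this equals $\sum_i\langle v\wedge e_i,\gamma_i\rangle=2\,v^j\gamma_{i;j}{}^{i}$ up to sign and normalisation. That quantity is a multiple of $\langle v,\operatorname{tr}\gamma\rangle$, so the image of $\iota$ is orthogonal to $\ker\operatorname{tr}$.

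Finally, for the identification with $\Omega^1_8\oplus\Omega^3_{48}$, I count dimensions: $8\cdot 7-8=48$. The kernel is a $48$-dimensional \Sp-representation. To identify it with $\Lambda^3_{48}$, I would use the equivariant map $\gamma\mapsto$ the skew-symmetrisation $\gamma_{[i;jk]}$ into $\Lambda^3=\Lambda^3_8\oplus\Lambda^3_{48}$, and show it is injective on the kernel with image in $\Lambda^3_{48}$. Schur's lemma then gives the isomorphism, provided the kernel is irreducible. Alternatively, one can appeal to the known tensor product decomposition $\mathbb R^8\otimes\mathfrak{g}_7=\mathbb R^8\oplus V_{48}$ as \Sp-modules (the spin representation tensored with the vector representation of $\mathfrak{so}(7)$), which identifies the kernel with $V_{48}\cong\Lambda^3_{48}$ directly.

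The main obstacle is this last step. Injectivity of the skew-symmetrisation map, or the irreducibility claim, requires either representation theory or a computation with $\Phi$-contraction identities. I would try the representation-theoretic route first: compute the weights of $\Delta\otimes\Lambda^2\mathbb R^7$ for $\mathfrak{so}(7)$.
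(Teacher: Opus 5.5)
Your trace map, the embedding $\iota$, and the adjointness computation are correct, and together they give a complete proof of the first isomorphism. Since $\operatorname{tr}\circ\iota$ is a nonzero multiple of the identity, $\iota$ is an equivariant embedding of $\Omega^1_8$ and $\operatorname{tr}$ is onto. Since $\langle\iota(v),\gamma\rangle$ is a multiple of $\langle v,\operatorname{tr}\gamma\rangle$, you get $\operatorname{im}\iota=(\ker\operatorname{tr})^\perp$. Your explicit coordinate formula for $\iota$ has some harmless slips: both $a=i$ and $b=i$ contribute to $\alpha_{ab}\Phi_{abjk}$, so the $\Phi$-term has coefficient $\pm\tfrac14$, and the sign of the metric term depends on whether you mean $v\wedge e_i$ or $e_i\wedge v$. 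Nothing downstream uses these constants. The paper gives no argument of its own here and simply cites Dwivedi, so this part of your proposal is more self-contained than the source.

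The gap is the identification $\ker\operatorname{tr}\cong\Omega^3_{48}$, which you leave open, and the logic you sketch for it is muddled. If you show that $\gamma\mapsto\gamma_{[i;jk]}$ is injective on $\ker\operatorname{tr}$ with image in $\Lambda^3_{48}$, you are done because $48=48$; irreducibility and Schur play no role. Conversely, if you know $\ker\operatorname{tr}$ is irreducible, you only need the map to be nonzero.

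The image condition is a one-line computation. We have $\Lambda^3_{48}=(\Lambda^3_8)^\perp$ with $\Lambda^3_8=\{v\lrcorner\Phi\}$, so $\Lambda^3_{48}=\{\beta:\beta_{abc}\Phi_{dabc}=0\}$. For any $\gamma\in\Omega^1_8\otimes\Omega^2_7$, in an orthonormal frame,
\[
\gamma_{[a;bc]}\Phi_{dabc}=\gamma_{a;bc}\Phi_{bcda}=-6\,\gamma_{a;da}=-6\,(\operatorname{tr}\gamma)_d .
\]
This uses that $\Omega^2_7$ is the $(-6)$-eigenspace of $\alpha\mapsto\alpha^{pq}\Phi_{pqij}$, with the sign convention of the coordinate formula for $\pi^2_7$; the sign is irrelevant here.

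What genuinely remains is injectivity, or alternatively irreducibility of $\ker\operatorname{tr}$ plus one explicit traceless $\gamma$ with $\gamma_{[i;jk]}\neq 0$. This needs real input. One option is a direct computation with $\Phi$. The other is to use two standard facts:
\begin{itemize}
\item $\Delta\otimes\mathbb{R}^7\cong\Delta\oplus V_{48}$, the Rarita--Schwinger decomposition, with $V_{48}$ irreducible;
\item $\Lambda^3\Delta\cong\Delta\oplus V_{48}$, with the same $V_{48}$.
\end{itemize}
With these two facts, your orthogonal splitting gives $\ker\operatorname{tr}\cong\Lambda^3_{48}$ directly, and no map is needed, since the statement only asserts an abstract isomorphism.

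Finally, $\Lambda^2_7$ is the $7$-dimensional vector representation $\mathbb{R}^7$ of $\mathrm{SO}(7)$. The module $\Delta\otimes\Lambda^2\mathbb{R}^7$ whose weights you propose to compute is $\Lambda^1_8\otimes\Lambda^2_{21}$, of dimension $168$, which is not the one you want.
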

Thus, the torsion tensor $T$ decomposes as $T = T_8 + T_{48}$.  Note also that \eqref{EquationT8T48} gives that 
\begin{equation}
    (T_8)_j = T_{i;jk}g^{ik}.
\end{equation}
Here, $T_8$ is defined as a $1$-form, but we will also write $T_8$ for the associated dual vector field. We will also need the following definition of the \emph{divergence} of the torsion tensor.
\begin{defn}
    Let $T$ be the torsion tensor of a \Sp-structure $\Phi$ on a manifold $M$. We write $\operatorname{div}T$ for the divergence of the torsion, which is an element of $\Omega^2_7$ and is defined by 
    \begin{equation}\label{eqDivT}
        \operatorname{div}T_{jk} = g^{nm}\nabla_nT_{m;jk}.
    \end{equation} 
\end{defn}
Here, we also mention the following explicit formula for the Ricci tensor of the metric induced by a \Sp-structure $\Phi$ in terms of its torsion tensor $T$, which also illustrates the fact that torsion-free \Sp-structures induce Ricci-flat metrics.
\begin{prop}[{\cite[Proposition 4.6]{KarigiannisFlows}}]\label{PropRicciTensor}
    Let $T$ be the torsion tensor of a \Sp-structure $\Phi$ on a manifold $M$. Then, the Ricci tensor of the metric $g_\Phi$ induced by $\Phi$ can be expressed as
    \begin{equation}\label{EquationRicci}
        R_{ij} = 4\nabla_i(g^{ap}T_{a;jp}) - 4g^{ap}\nabla_aT_{i;jp} - 8T_{i;jb}g^{ap}g^{bq}T_{a;qp} + 8T_{a;jb}g^{ap}g^{bq}T_{i;qp}.
    \end{equation}
\end{prop}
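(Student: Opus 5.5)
The plan is to derive a \Sp-analogue of the Bianchi identity by commuting covariant derivatives of $\Phi$. We then contract it, using the first Bianchi identity to eliminate the $\Omega^2_{21}$-part of the curvature. Throughout we work in normal coordinates at a point, so indices may be raised and lowered freely.

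\emph{Step 1: the Ricci identity on $\Phi$.} By Definition \ref{DefTorsion}, $\nabla_m\Phi = T_m\diamond\Phi$. Differentiate once more and antisymmetrise in $l,m$. The left-hand side $(\nabla_l\nabla_m-\nabla_m\nabla_l)\Phi$ is the curvature acting as a derivation on the $4$-form. In the diamond notation \eqref{eqDiamond} it equals $-(R_{lm\cdot\cdot})\diamond\Phi$, where $R_{lmab}$ is viewed as a $2$-form in $(a,b)$; the sign will be fixed by the conventions. On the right-hand side, the Leibniz rule gives
\[
\nabla_l(T_m\diamond\Phi) = (\nabla_lT_m)\diamond\Phi + T_m\diamond(T_l\diamond\Phi).
\]
Antisymmetrising, and using that $\diamond$ is the infinitesimal $\mathfrak{gl}(8)$-action (so it is a Lie algebra representation), the quadratic terms combine into $[T_l,T_m]\diamond\Phi$ up to sign. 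Here $[\cdot,\cdot]$ is the commutator of the endomorphisms $T_l,T_m$, i.e. $T_{l;ap}T_{m;pb}-T_{m;ap}T_{l;pb}$. Hence
\[
\big(R_{lm}+\nabla_lT_m-\nabla_mT_l - [T_l,T_m]\big)\diamond\Phi=0 .
\]

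\emph{Step 2: identify the $\Omega^2_7$ part.} The bracket is a $2$-form, since $T_l,T_m$ are skew. By Proposition \ref{PropDiamondKernel} and Corollary \ref{CorollaryIsomorphism}, the $2$-form in brackets must lie in $\Omega^2_{21}$. Since $T_l,T_m\in\Omega^2_7$, this yields
\[
\pi_7(R_{lm})_{ab} = \nabla_mT_{l;ab}-\nabla_lT_{m;ab}+\pi_7([T_l,T_m])_{ab}.
\]
I expect that the $\pi_7$ of the commutator of two $\Omega^2_7$ elements is just proportional to the commutator itself, because $\Omega^2_7\cong\mathfrak{so}(7)^\perp$ and $[\Omega^2_7,\Omega^2_7]\subset\Omega^2_{21}\oplus\Omega^2_7$. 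This is where the quadratic terms must be handled carefully with the local formula \eqref{eqpi27coords}.

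\emph{Step 3: contraction.} Write $4\pi_7(R_{lm})_{ab}=R_{lmab}-\tfrac12 R_{lmpq}\Phi_{pqab}$ using \eqref{eqpi27coords}, and contract $a$ with $l$. The first term gives $\pm R_{mb}$. The second term is $R_{lmpq}\Phi_{pqlb}$, which vanishes by the first Bianchi identity because $\Phi$ is totally skew in $l,p,q$. So $R_{mb}$ equals $4g^{la}$ times the right-hand side of Step 2. The derivative terms produce $4\nabla_m(g^{ap}T_{a;bp})$ and $-4g^{ap}\nabla_aT_{m;bp}$. The quadratic terms produce $-8T_{m;bq}T_{a;qa}+8T_{a;bq}T_{m;qa}$-type contractions, matching \eqref{EquationRicci} after renaming indices.

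\emph{Main obstacle.} The main difficulty is bookkeeping rather than ideas: fixing the sign convention for curvature acting via $\diamond$, and evaluating $\pi_7$ of the quadratic term. The latter requires contraction identities for $\Phi$ of the form $\Phi_{ijab}\Phi_{klab}=6(g_{ik}g_{jl}-g_{il}g_{jk})-4\Phi_{ijkl}$, together with the $\Omega^2_7$ eigen-relation \eqref{forms} for the $T_m$, so that the $\Phi$-contractions collapse to the two stated quadratic terms with coefficient $8$. I would first check these coefficients on the flat model with a constant test $T$, to confirm the signs before doing the general computation.
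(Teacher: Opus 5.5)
\textbf{Gap in Step 2.} Your overall architecture is sound: the Ricci identity on $\Phi$, then a Bianchi-type identity, then contraction with the first Bianchi identity. Steps 1 and 3 are fine. But Step 2 contains a genuine error, and it sits exactly where the quadratic terms come from.

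From $(R_{lm}+\nabla_lT_m-\nabla_mT_l-[T_l,T_m])\diamond\Phi=0$ you may only conclude that the $\Omega^2_7$-part of the bracket vanishes. To reach your displayed identity you replace $\pi_7(\nabla_mT_l-\nabla_lT_m)$ by $\nabla_mT_l-\nabla_lT_m$, on the grounds that $T_l,T_m\in\Omega^2_7$. This is not justified: $\nabla$ is not a \Sp-connection when $T\neq0$, so it does not preserve $\Omega^2_7$.

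Concretely, set $\tilde\nabla_m=\nabla_m-T_m\diamond$. It kills $\Phi$ and $g$, hence preserves the type decomposition. Moreover $T_m\diamond$ acts on a $2$-form as $\alpha\mapsto[T_m,\alpha]$. So for a section $\alpha$ of $\Omega^2_7$ you get $\pi_{21}(\nabla_m\alpha)=[T_m,\alpha]$. Applying this to $T_l$ (the correction from the index $l$ stays in $\Omega^2_7$) gives
\[
\pi_{21}(\nabla_mT_l-\nabla_lT_m)=2[T_m,T_l].
\]

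Conversely, the term you expect to produce the quadratic terms vanishes identically: $\pi_7([\alpha,\beta])=0$ for all $\alpha,\beta\in\Omega^2_7$. The reason is that $\Omega^2_7\cong\mathbb{R}^7$ and $\Lambda^2\mathbb{R}^7\cong\mathfrak{so}(7)$ is irreducible of dimension $21$, so there is no nonzero equivariant map $\Lambda^2\Omega^2_7\to\Omega^2_7$. Equivalently, $(\mathfrak{so}(8),\mathfrak{spin}(7))$ is a symmetric pair. This is the key difference from $G_2$, where the cross product survives. So Steps 2--3 as written produce a Ricci formula with no $T*T$ terms at all, which is false. The $\Phi$-contraction identities you plan to use would only confirm $\pi_7([T_l,T_m])=0$.

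\textbf{The fix.} Keep both type components. Combining $\pi_7(R_{lm}+\nabla_lT_m-\nabla_mT_l)=0$ with the $\pi_{21}$-computation above gives
\[
\nabla_mT_{l;ab}-\nabla_lT_{m;ab}=\pi_7(R_{lm})_{ab}+2\big(T_{m;ap}T_{l;pb}-T_{l;ap}T_{m;pb}\big).
\]
This is Karigiannis's \Sp\ Bianchi-type identity. The paper does not reprove the proposition; it only cites him. He obtains the identity by differentiating \eqref{EquationTorsionTensor}, where the product rule supplies the quadratic term directly.

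Your Step 3 then goes through unchanged:
\begin{enumerate}
\item Contract $l$ with $a$.
\item Kill the term $R_{lmpq}\Phi_{pqab}$ by the first Bianchi identity, as you said.
\item Use $g^{la}R_{lmab}=-R_{mb}$, with the convention $R_{lmip}=g(R(\partial_l,\partial_m)\partial_i,\partial_p)$ and $R_{mb}=g^{lp}R_{lmbp}$. This is the convention for which your Step 1 sign is correct.
\item Multiply by $-4$.
\end{enumerate}
This turns the quadratic coefficients $\mp 2$ into $\pm 8$ and reproduces \eqref{EquationRicci} term by term.
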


\subsection{Flows of \texorpdfstring{\Sp-structures}{Spin(7)-structures}}\label{Flowsintro}

Flows of \Sp-structures were first studied in \cite{KarigiannisFlows}, where Karigiannis considered the most general flow of \Sp-structures, using the discussion of arbitrary deformations of \Sp-structures that we outlined in Section \ref{SubsectionFormDecompositions}.
In this subsection, we recall this general theory, before introducing two particular flows of \Sp-structures that have received attention in the literature recently, namely the \emph{Gradient Flow} and \emph{Ricci Harmonic Flow.}

Recall that an arbitrary infinitesimal deformation of a given \Sp-structure $\Phi$ can be written as $A \diamond \Phi$ for some $A\in S^2 \oplus \Omega^2_7$, where $\diamond$ is as defined in Equation \eqref{eqDiamond}. Thus, an arbitrary flow of \Sp-structures can be written as 
\begin{equation}\label{EquationGeneralFlow}
    \frac{\partial}{\partial t}\Phi(t) = (A(t) \diamond_t \Phi(t)) = ((h(t)+X(t))\diamond_t \Phi(t) ),
\end{equation}
where $A(t)$ is a one-parameter family of tensors in $S^2 \oplus \Omega^2_7$ and the symbol $\diamond_t$ is the diamond map taken with respect to the \Sp-structure $\Phi(t)$. We will often write $A_t$ instead of $A(t)$ and $\Phi_t$ instead of $\Phi(t)$. Where there is no risk of ambiguity, we will omit writing the $t$-dependence of tensors altogether, and note that unless otherwise stated, all terms appearing in the flow equations that follow are dependent on $t$.

Karigiannis also computes the evolution equations of various induced metric and \Sp-related quantities under the most general flow of \Sp-structures \cite[Section 3]{KarigiannisFlows}.
In particular, we have that 
\begin{equation}
  \frac{\partial}{\partial t}g_{ij} = 2h_{ij}  
\end{equation}
and
\begin{equation}
  \frac{\partial}{\partial t} T_{m;\alpha\beta}
= A_{\alpha p} \, g^{pq} T_{m;q\beta}
- A_{\beta p} \, g^{pq} T_{m;q\alpha}
+ \pi_7 \left( \nabla_\beta h_{\alpha m}
- \nabla_\alpha h_{\beta m}
+ \nabla_m X_{\alpha\beta} \right).  
\end{equation}
Using these, the evolution equations of all other metric and torsion-related quantities can be obtained. We will use this in the following section to compute the evolution of the Ricci tensor and $\nabla T$, as well as their norms.

With this general setup of flows in hand, we now introduce and briefly discuss the gradient flow and Ricci Harmonic Flows of \Sp-structures.

Motivated by the problem of finding torsion-free \Sp-structures, it is natural to consider specific flows which have torsion-free \Sp-structures as critical points. A natural candidate is a negative gradient flow of the norm of the torsion tensor, in an attempt to decrease the torsion as quickly as possible. This is the motivation behind the so-called gradient flow of \Sp-structures, introduced by Dwivedi \cite{Dwivedi24}, which we recall now. All details and derivations can be found in \cite[Sections 3-5]{Dwivedi24}.

\begin{defn}
    Given a compact $8$-manifold with Spin($7$)-structure, $(M, \Phi)$, the energy functional $E$ is defined as 
    \begin{equation}
        E(\Phi) = \frac{1}{2}\int_M \lvert T_\Phi \rvert^2 \mathrm{vol}_\Phi,
    \end{equation}
    where $T_\Phi$ is the torsion tensor of the Spin($7$)-structure $\Phi$, and the norm and volume form are those induced by the Riemannian metric $g_\Phi$. 
\end{defn}
To avoid overloading notation, we will write $T$ for $T_\Phi$.
Dwivedi computes the gradient flow of this energy functional, obtaining the following evolution equation for $\Phi(t)$.

\begin{defn}[\cite{Dwivedi24}]
    Let $(M,\Phi_0)$ be a compact $8$-manifold with initial Spin($7$)-structure $\Phi_0$. The gradient flow of \Sp-structures is the following initial value problem:

    \begin{equation}
            \begin{cases}
                 \frac{\partial}{\partial t} \Phi(t) = (-\operatorname{Ric} + 2(\mathcal{L}_{T_8}g) + T\star T - |T|^2g + 2 \text{ div}T)_t \diamond_t \Phi(t),\\
                 \Phi(0) = \Phi_0, \\
                 
            \end{cases} \tag{GF}
        \end{equation}
        where each term is induced by the Spin($7$) structure $\Phi_t$
        and 
        \begin{equation}
            (T \star T)_{ij} = 4T_{b;il}T_{j;lb} + 4T_{b;jl}T_{i;lb} - 4T_{j;il}T_{b;lb} - 4T_{i;jl}T_{b;lb} + 2T_{i;lb}T_{j;lb}.
        \end{equation}
        This flow is the negative gradient flow of the functional $2E(\Phi)$.
\end{defn}
Dwivedi proves short-time existence of the gradient flow in \cite{Dwivedi24}, as well as non-existence of compact expanding solitons. In \cite{Duthie2025}, the author studied this flow in the homogeneous setting, finding explicit solutions including a shrinking soliton \Sp-structure on $\text{SU}(3)$. However, in Section \ref{SectionReasonable}, we will see that this flow is not "reasonable" in the sense discussed in this paper, pointing to the fact that other flows may potentially be more fruitful in the future study of \Sp-structures.

Another natural way to choose a particular flow of \Sp-structures is to consider a heat equation-type evolution, inspired by Hamilton's interpretation of the Ricci flow as the heat equation for the metric \cite{HamiltonSingularities}. This is the motivation behind Dwivedi's introduction of the \emph{Ricci-Harmonic flow} \cite[Section 7]{DwivediRHF}, where Dwivedi uses a Taylor series expansion of a \Sp-structure $\Phi$ to obtain an expression for the Laplacian of the components of $\Phi$. Then, considering the flow $\ptt \Phi_{ijkl} = \Delta \Phi_{ijkl}$ and expressing it in terms of the diamond operator $\diamond$, the following flow is obtained.
\begin{defn}
Let $(M,\Phi_0)$ be a compact $8$-manifold with initial Spin($7$)-structure $\Phi_0$. The Ricci-Harmonic flow of \Sp-structures is the following initial value problem:
    \begin{equation}\label{EquationRicciHarmonic} \tag{RHF}
 \begin{cases}
\displaystyle \frac{\partial \Phi}{\partial t} = (-\operatorname{Ric}+\operatorname{div} T + T*T)\diamond \Phi, \\
\Phi(0) = \Phi_0,
\end{cases}   
\end{equation}
where $T*T$ satisfies
\begin{equation}
((T * T) \diamond \Phi)_{ijkl}
= \frac{1}{2} \Big(
T_{p;is} (T_p \diamond \Phi)_{sjkl}
+ T_{p;js} (T_p \diamond \Phi)_{iskl}
+ T_{p;ks} (T_p \diamond \Phi)_{ijsl}
+ T_{p;ls} (T_p \diamond \Phi)_{ijks}
\Big).
\end{equation}
\end{defn}
We note that the $\text{div}T$ term here differs from \cite{DwivediRHF} by a factor of $2$. This renormalisation is chosen only so that the highest order term in the induced evolution of the tensor is exactly $\Delta T$ (see Proposition \ref{propRHFreasonable}).
Using the derivations of the principle symbols of the terms $\Ric$ and $\text{div}T$ from \cite{Dwivedi24}, we see that the Ricci-Harmonic flow enjoys short-time existence and uniqueness. In Proposition \ref{propRHFreasonable}, we will see that this flow is an example of a \emph{reasonable flow} of \Sp-structures, and so the results of this paper apply to it. 

\section{Reasonable flows of \texorpdfstring{\Sp-structures}{Spin(7)-structures}}\label{SectionReasonable}
In this section, we start by defining our class of \emph{reasonable flows} of \Sp-structures, adapting Chen's definition \cite[Definition 1.1]{Chen} to the \Sp-setting. We discuss some immediate consequences of this definition, before showing that the \emph{Ricci-harmonic flow} of \Sp-structures \cite{DwivediRHF} is reasonable, and that the \emph{gradient flow} of \Sp-structures \cite{Dwivedi24} is not.

We begin with the definition.
\begin{defn}\label{DefReasonable}
    Let $\Phi(t)$ be a flow of \Sp-structures, given by the evolution equation
    \begin{equation}
        \ptt \Phi(t) = (A \diamond\Phi),
    \end{equation}
    for some $A = h+X$, as discussed in Section \ref{Flowsintro}.
   We say that the flow $\Phi(t)$ is \emph{reasonable} if it has short-time existence, uniqueness for as long as it exists, and the following schematic equations hold for the induced evolution equations of the induced Riemannian metric $g(t)$ and the torsion tensor $T(t)$, and for the skew $2$-tensor $X(t)$:
    \begin{align}
        \frac{\pt }{\pt t}g &= 2h=  -2 \Ric  + L(T) + T*T +C ,\label{eqReasonableMetric}\\
        X&= L(\nabla T)  + L(\Riem) + L( T) + T*T +C, \label{eqreasonableX}\\
         \frac{\pt }{\pt t}T&= \Delta T + L(\nabla T) + L(T) + \Riem * T \nonumber\\
         &\quad+ \nabla T * T + T*T*T + T*T \label{eqReasonableTorsion},
    \end{align}
    where $L$ denotes linear maps, $*$ denotes multilinear contractions using $g$ and $\Phi$, and $C$ denotes tensor fields independent of $g$ and $T$. We require that $C(t)$ and its covariant derivatives of all orders are uniformly bounded for as long as a the flow exists.

    \end{defn}
    \begin{remark}
        We make a brief aside here to mention some examples of this schematic notation, to get a feel for how we will deal with such terms in what follows. Firstly, an example of a tensor of schematic form $T*T$ is:
        \begin{equation}
            (T * T)_{ij} = 4T_{b;il}T_{j;lb} + 4T_{b;jl}T_{i;lb} - 4T_{j;il}T_{b;lb} - 4T_{i;jl}T_{b;lb} + 2T_{i;lb}T_{j;lb},
        \end{equation}
        which is the lower order term appearing in the gradient flow of \Sp-structures \cite{Dwivedi24}. Moreover, tensors of the type $T*\Phi$ are $L(T)$, and we have equations like $T*T*T = T*T*T*\Phi$.
        There will be occasions where we have terms involving several repeated $*$ operations. So, we will write $A^n = \underset{n\text{ times}}{\underbrace{A * \cdots * A}}$ for any tensor $A$ .

        Since contractions using $*$ may involve $\Phi$, we have the following schematic equation, for any two tensors $A$ and $B$:
        \begin{equation}
            \nabla(A*B) = \nabla A * B + A * \nabla B + A*B*T, 
        \end{equation}
        where the final term involving $T$ comes from $\nabla \Phi$. We shall use such schematic equations frequently in what follows.
    \end{remark}

We now discuss the effect on a reasonable flow of \Sp-structures of rescaling $\Phi$, which we will use when discussing parabolic rescalings of reasonable flows in order to analyse their finite-time singularities. To avoid fractional powers of the scale factor, we rescale to $\tilde\Phi = c^4\Phi$ for some constant $c>0$, and we endow any term induced by $\tilde{\Phi}$ with its own $\tilde{\quad }$. Note that $\tilde \Phi$ also defines a \Sp-structure, and that the induced metric rescales by \begin{equation}\label{eqrescaledmetric1}
    \tilde g = c^2g,
\end{equation} and so $\tilde{g}^{-1} = c^{-2}g$. Moreover, from the definition of the torsion tensor (Definition \ref{DefTorsion}), and using that the diamond operator involves the inverse metric, we see that $\tilde{T} = c^2T$. Applying the same argument to $\Ric, \Riem, \nabla$ and $*$ contractions, we have that any reasonable flow (satisfying conditions \eqref{eqReasonableMetric} and \eqref{eqreasonableX}) has $\tilde{A} = A$, and hence satisfies 
\begin{equation}\label{EquationRescaledFlow}
    \tilde{A} \tilde{\diamond} \tilde{\Phi}  = c^2(A \diamond \Phi).
\end{equation}

In this rest of this paper, the schematic evolution equations in the definition of reasonable flows will be precisely what allow us to derive our analytic results on reasonable flows. Before doing so, we consider how this notion of reasonable fits into the context of the flows of \Sp-structures studied in the literature so far. In particular, we show that the Ricci-harmonic flow is reasonable, and that the gradient flow is not. 

\begin{prop}\label{propRHFreasonable}
    The Ricci-harmonic flow, given by the initial value problem
    \begin{equation}
 \begin{cases}
\displaystyle \frac{\partial \Phi}{\partial t} = (-\operatorname{Ric}+\operatorname{div} T + T*T)\diamond \Phi, \\
\Phi(0) = \Phi_0,
\end{cases}   
\end{equation}
where $T*T$ is given by
\begin{equation}
((T * T) \diamond \Phi)_{ijkl}
= \frac{1}{2} \Big(
T_{p;is} (T_p \diamond \Phi)_{sjkl}
+ T_{p;js} (T_p \diamond \Phi)_{iskl}
+ T_{p;ks} (T_p \diamond \Phi)_{ijsl}
+ T_{p;ls} (T_p \diamond \Phi)_{ijks}
\Big),
\end{equation}
is a reasonable flow of \Sp-structures.
\end{prop}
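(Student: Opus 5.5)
We must verify the four requirements of Definition \ref{DefReasonable} for (RHF), where $A = h + X$ with
\[
h = -\Ric + \mathrm{sym}(T*T), \qquad X = \operatorname{div}T + \pi_7(\mathrm{skew}(T*T)),
\]
and where we use that $\operatorname{div}T\in\Omega^2_7$ and that the kernel of $\diamond$ is $\Omega^2_{21}$ (Proposition \ref{PropDiamondKernel}).

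\textbf{Short-time existence and uniqueness.} These follow from the principal symbol computations of $\Ric$ and $\operatorname{div}T$ in \cite{Dwivedi24}. Modulo diffeomorphisms (a DeTurck-type trick), the linearisation is strongly parabolic. Uniqueness follows in the standard way via the harmonic map heat flow, as for Ricci flow.

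\textbf{The metric and $X$ equations.} Since $\ptt g = 2h$, we get
\[
\ptt g = -2\Ric + T*T,
\]
which is exactly \eqref{eqReasonableMetric} with $L(T)=0$ and $C=0$. For $X$, note that $\operatorname{div}T = g^{nm}\nabla_n T_m$ is $L(\nabla T)$, and the projected quadratic term is $T*T$, since $\pi_7$ is a contraction with $\Phi$ by \eqref{eqpi27coords}. This gives \eqref{eqreasonableX}.

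\textbf{The torsion evolution.} This is the main step. Substitute $A$ into Karigiannis' formula
\[
\ptt T_{m;\alpha\beta} = A_{\alpha p}T_{m;p\beta} - A_{\beta p}T_{m;p\alpha} + \pi_7\left(\nabla_\beta h_{\alpha m} - \nabla_\alpha h_{\beta m} + \nabla_m X_{\alpha\beta}\right).
\]
The first two terms are of the form $(\Ric + T*T + \nabla T)*T$, and hence are of type $\Riem*T + T*T*T + \nabla T*T$.

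For the derivative terms, use Proposition \ref{PropRicciTensor} to write
\[
\Ric = L(\nabla T) + T*T,
\]
so that $\nabla h = \nabla\nabla T * \Phi + \nabla T*T + T*T*T$. Similarly,
\[
\nabla_m X = \nabla_m\nabla_n T_{n} + \nabla T*T + T*T*T.
\]
The key claim is that the second-order part equals $\Delta T_{m;\alpha\beta} = \nabla_n\nabla_n T_{m;\alpha\beta}$ up to lower order terms. To show this, I would first commute derivatives: $\nabla_m\nabla_n T_n = \nabla_n\nabla_m T_n + \Riem*T$. Then I would use the Spin(7) Bianchi-type identity (Karigiannis)
\[
\nabla_i T_{j;ab} - \nabla_j T_{i;ab} = \tfrac12 \pi_7(R_{ijab}) + T*T,
\]
which gives $\nabla_n\nabla_m T_n = \nabla_n\nabla_n T_m + \nabla\Riem + \nabla T*T + T*T*T$. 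The $\nabla\Riem$ terms must then cancel against those arising from $\pi_7(\nabla_\beta \Ric_{\alpha m} - \nabla_\alpha \Ric_{\beta m})$ via the contracted second Bianchi identity. I would carry out this check symbolically, mirroring Dwivedi's symbol computation. It is also where the normalising factor on $\operatorname{div}T$ (the factor $2$ difference noted above) is chosen, precisely so that the coefficient of $\Delta T$ is exactly $1$.

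\textbf{Main obstacle.} The principal difficulty is this cancellation: one must show that all second-derivative terms of $T$ other than $\Delta T$ (equivalently, all $\nabla\Riem$ terms) cancel exactly. Every remaining term is then schematically of the allowed forms $\Riem*T$, $\nabla T*T$, $T*T*T$ and $T*T$, with no $C$ needed. This yields \eqref{eqReasonableTorsion}.
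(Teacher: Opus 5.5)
Your route is the paper's. You substitute $h=-\Ric+T*T$ and $X=\operatorname{div}T+T*T$ into Karigiannis' formula and dispose of $A_{\alpha p}T_{m;p\beta}$ as lower-order terms. You then identify the second-order part with $\Delta T$ via a commuted identity and the contracted second Bianchi identity. The paper imports the commuted identity as \cite[Lemma 4.12]{DLE24}, $\Delta T_{m;is}=\nabla_m(\operatorname{div}T)_{is}+\pi_7(\nabla_aR_{mais})+\Riem*T+\nabla T*T+\Riem*T*\Phi$, which is exactly what your ``commute $\nabla_m\nabla_n$, then apply the curl identity'' step would produce.

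However, your proposal stops at the one point the proposition is about. Whether the $\nabla\Riem$ terms disappear depends on exact coefficients, and you defer that check. The identity you would use is also misquoted. With the paper's normalisation of $\diamond$, apply the Ricci identity to $\nabla_j\Phi=T_j\diamond\Phi$: this gives $(\nabla_iT_j-\nabla_jT_i+T*T)\diamond\Phi=-R_{ij\cdot\cdot}\diamond\Phi$, up to the sign convention for $\Riem$. Since $\ker\diamond=\Omega^2_{21}$, it follows that $\nabla_iT_{j;ab}-\nabla_jT_{i;ab}=-\pi_7(R_{ij\cdot\cdot})_{ab}+T*T$, with coefficient one rather than $\tfrac12$. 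If you carry the computation out with your $\tfrac12$, an uncancelled multiple of $\pi_7(\nabla_\alpha\Ric_{\beta m}-\nabla_\beta\Ric_{\alpha m})$ remains. By Proposition \ref{PropRicciTensor} this is second order in $T$, so it would violate \eqref{eqReasonableTorsion}.

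With the correct constant the argument closes at once, and this is the paper's proof. In the paper's conventions the contracted Bianchi identity reads $\nabla_aR_{mais}=\nabla_iR_{sm}-\nabla_sR_{im}$. This is exactly the top-order part of $\nabla_sh_{im}-\nabla_ih_{sm}$, so $\pi_7(\nabla_mX_{is}+\nabla_sh_{im}-\nabla_ih_{sm})$ equals $\pi_7(\Delta T_{m;is})$ plus allowed terms. Your remark on normalisation is consistent with this: with $X=\lambda\operatorname{div}T+\dots$ one gets $\Delta T+(\lambda-1)\nabla_m\operatorname{div}T$ at top order. So the cancellation and the unit coefficient on $\Delta T$ are the same condition, $\lambda=1$.

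You also skip the final step, which the paper includes. Because the projection sits outside in Karigiannis' formula, what you actually obtain is $\pi_7(\Delta T_m)$. You need $\pi_7(\Delta T_m)=\Delta T_m+\nabla T*T+T*T*T$. This follows from $\pi_{21}(T_m)=0$, the fact that $\pi_{21}$ is a contraction with $\Phi$, and $\nabla\Phi=T*\Phi$.
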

\begin{proof}
    We verify the conditions of Definition \ref{DefReasonable}.
    In this proof, we will keep the indices of the highest order terms explicit, to show that we can arrange things to make the highest order term precisely a Laplacian, but we will use schematic notation for lower order terms.
    Recall that, for a flow of \Sp-structures given by
    \[
    \frac{\pt}{\pt t}\Phi = (h+X)\diamond\Phi,
    \]
    the evolution of the metric is
    \[
    \frac{\pt}{\pt t}g_{ij} = h_{ij}.
    \]
    For the Ricci-harmonic flow, this is
    \[
    \frac{\pt}{\pt t}g_{ij} =-2\Ric + T*T,
    \]
    which is indeed of the form given by \eqref{eqReasonableMetric}.

The induced evolution equation of the torsion tensor $T$ is given by \cite[Theorem 3.4]{KarigiannisFlows}:
\[
\frac{\pt }{\pt t}T_{m;is} =A_{ip}T_{m;ps} - A_{sp}T_{m;pi} + \pi_7(\nabla_mX_{is} + \nabla_sh_{im} - \nabla_ih_{sm}),
\]
For the Ricci-Harmonic flow \eqref{EquationRicciHarmonic}, we have 
\begin{align}
    h_{ij}&= -\Ric_{ij} + (T*T)_{ij} = -4 \nabla_iT_{a;ja} + 4\nabla_aT_{i;ja} + (T*T)_{ij},\\
    X_{ij} &= (\operatorname{div}T)_{ij} + (T*T)_{ij}  = 2\nabla_a T_{a;ij} + (T*T)_{ij},
\end{align}
where we have used expressions for $\Ric$ and $\Div T$ from \eqref{EquationRicci} and \eqref{eqDivT}.

First, note that the two terms $A_{ip}T_{m;ps}$ and $A_{sp}T_{m;pi}$ can each be written in the schematic form

\begin{equation} \label{eqTorsionEvolLowerOrder}
    A_{ip}T_{m;ps} = \nabla T * T + T*T*T = A_{sp}T_{m;pi}.
\end{equation}
Dealing with the term inside the $\pi_7$-projection, we have
\begin{align}
    \nabla_m X_{is} + \nabla_sh_{im} - \nabla_ih_{sm}&=  \nabla_m \nabla_aT_{a;is} + \nabla T*T \nonumber\\
    & \qquad+ \nabla_s(-R_{im}  + T*T)\nonumber \\
    & \qquad- \nabla_i(-R_{sm} +  T*T). \label{eqHighestOrderIntermediate}
\end{align}

Now, by \cite[Lemma 4.12]{DLE24}, we can express the Laplacian of $T$ in the form 
\[
\Delta T_{m;is} = \nabla_m\nabla_aT_{a;is} + \pi_7(\nabla_aR_{mais}) + T*\Riem + \nabla T * T + \Riem * T * \Phi.
\]
Using this, we can rewrite the right hand side of \eqref{eqHighestOrderIntermediate} as
\begin{align}
    & \Delta T_{m;is} - \pi_7(\nabla_a R_{mais}) + \nabla_i R_{sm} - \nabla_s R_{im} + T*\Riem + \nabla T * T + \Riem * T * \Phi
\end{align}
Applying the $\pi_7$-projection to the above, we obtain
\begin{align}
    \pi_7( & \Delta T_{m;is}  
    - \pi_7(\nabla_a R_{mais}) + \nabla_i R_{sm} - \nabla_s R_{im} + T*\Riem + \nabla T * T + \Riem * T * \Phi) \nonumber\\
    &=\pi_7(\Delta T_{m;is} +(- \nabla_aR_{mais} + \nabla_iR_{sm} - \nabla_sR_{im}) + T*\Riem + \nabla T * T + \Riem * T * \Phi) \nonumber\\
    &=\pi_7(\Delta T_{m;is} 
      + T*\Riem + \nabla T * T + \Riem * T * \Phi),
\end{align}
where we have used the Bianchi identity to cancel the term $(- \nabla_aR_{mais} + \nabla_iR_{sm} - \nabla_sR_{im})$.
Now, by \cite[p. 30]{DLE24},
\[
\pi_7(\Delta T_{m;is}) = \Delta T_{m;is} + \nabla T * T * \Phi + T*T*T*\Phi. 
\]
Using this, and the fact that $\pi_7(\alpha)_{ij} = \frac{1}{4}\alpha_{ij} - \frac{1}{8}\alpha_{ab}\Phi_{abij}$ for any $2$-form $\alpha$ (cf. \eqref{eqpi27coords}) for the lower order terms, we write

\begin{align}
   &\pi_7(\Delta T_{m;is} 
      + T*\Riem + \nabla T * T + \Riem * T * \Phi)\\
     &=\Delta T_{m;is} + \nabla T *T*\Phi + T*T*T*\Phi + T*\Riem + \nabla T * T + \Riem * T*\Phi \nonumber \\
     &\qquad + T*\Riem * \Phi + \Riem * T * \Phi * \Phi . \label{eqTorsionEvolPi7Part}
\end{align}
Combining this with \eqref{eqTorsionEvolLowerOrder}, and reordering terms in order of descending degree,
we obtain 
\begin{align}
\frac{\pt}{\pt t}T_{m;is} = &\Delta T_{m;is} + \Riem * T * \Phi * \Phi + \Riem * T * \Phi + \Riem * T \nonumber\\
&+ \nabla T * T * \Phi + \nabla T * T + T*T*T*\Phi + T*T*T.\\
=&\Delta T + \Riem * T   + \nabla T * T + T*T*T,
\end{align}
which is of the form described by \eqref{eqReasonableTorsion}.

We note that the same argument, just ignoring the metric terms, shows that the \emph{harmonic flow of} \Sp-\emph{structures} \cite{DLE24} is also reasonable.
\end{proof}

\begin{remark}
Note that the gradient flow of \Sp-structures \cite{Dwivedi24} is \emph{not} reasonable, since the metric evolution is of the form
\begin{equation}
    \frac{\partial}{\partial t}g = -2\operatorname{Ric} + 4 \mathcal{L}_{T_8}g + 2T \star T - 2 |T|^2g,
\end{equation}
and the term $\mathcal{L}_{T_8}g$ is not of the form given by \eqref{eqReasonableMetric}. We can get around the problem of the metric evolution by gauge fixing, but doing so makes the leading order term of the evolution equation for the torsion tensor $T$ something other than $\Delta T$, so the flow does not satisfy \eqref{eqReasonableTorsion} and so is still not reasonable.
\end{remark}

\section{Derivative estimates for reasonable flows of \texorpdfstring{\Sp-structures}{Spin(7)-structures}}\label{SectionShiType}
In this section, we develop regularity theory for all reasonable flows of \Sp-structures. Many of our results will rely on assuming a bound on the quantity 
\[
\Lambda(x,t) = \left(|\Riem(x,t)|_{g(t)}^2 + |T(x,t)|^4_{g(t)} + |\nabla T(x,t)|_{g(t)}^2 \right )^{1/2},
\]
so we begin by justifying this assumption, by proving the following doubling-time estimate which shows that $\Lambda$ does not blow up too quickly along any reasonable flow of \Sp-structures. We will also use the doubling-time estimate to study long-time-existence, in the proof of Theorem \ref{ThmLongTime}. With the doubling time estimate in hand, we shall prove the first main result of this paper, a local Shi-type derivative estimate for reasonable flows of \Sp-structures, in Theorem \ref{ThmShiType}. We will use both of these results extensively in the subsequent sections of this paper. The results of this section follow the case of flows of $G_2$-structures, as in (\cite{Lotay-Wei},\cite{Chen},\cite{DwivediRHF}).

We begin with the doubling-time estimate.
\begin{prop}\label{PropDoublingTime}
    Let $\Phi(t)$ be a solution to a reasonable flow of \Sp-structures for $t\in [0,\tau]$ for some $\tau \geq0$.
    Define 
    \begin{align}
    \Lambda(x,t) &= \left(|\Riem(x,t)|_{g(t)}^2 + |T(x,t)|^4_{g(t)} + |\nabla T(x,t)|_{g(t)}^2 \right )^{1/2},\label{eqLambdaDef}\\ 
    \tilde{\Lambda}(x,t) &= \left(|\Riem(x,t)|_{g(t)}^2 + |T(x,t)|^4_{g(t)} + |\nabla T(x,t)|_{g(t)}^2 +1 \right )^{1/2} \label{eqlambdatildedef},
\end{align}
    and let 
    \begin{equation} 
        \tilde{\Lambda}(t) = \sup_{x \in M}\tilde{\Lambda}(x,t).
    \end{equation}
    Then, there exists a constant $C$ such that 
    \[
    \tilde{\Lambda}(t) \leq 2 \tilde{\Lambda}(0) \text{ for all $t$ satisfying } 0\leq t \leq \min\left \{\tau , \frac{1}{C\Lambda(0)}\right\}.
    \]
\end{prop}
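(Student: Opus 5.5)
We argue by the maximum principle applied to the evolution of $\tilde\Lambda^2 = |\Riem|^2 + |T|^4 + |\nabla T|^2 + 1$. The goal is a differential inequality of the form
\begin{equation*}
\ptt \tilde\Lambda^2 \leq \Delta \tilde\Lambda^2 + C\tilde\Lambda^3,
\end{equation*}
with $C$ depending only on the schematic structure of the flow (the coefficients of the linear maps $L$, the contractions $*$, and the bounds on $C(t)$ and its derivatives). Granting this, the quantity $\tilde\Lambda(t)^2$ satisfies, in the barrier sense, $\frac{d}{dt}\tilde\Lambda(t)^2 \le C\tilde\Lambda(t)^3$. Hence $\frac{d}{dt}\tilde\Lambda(t)\le \frac{C}{2}\tilde\Lambda(t)^2$, which integrates to
\begin{equation*}
\tilde\Lambda(t)\le \frac{\tilde\Lambda(0)}{1-\frac{C}{2}t\tilde\Lambda(0)}.
\end{equation*}
This is at most $2\tilde\Lambda(0)$ for $t\le 1/(C\tilde\Lambda(0))$. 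Since $\tilde\Lambda(0)\le \Lambda(0)+1$, adjusting constants gives the stated time interval, with the caveat below about small $\Lambda(0)$. Compactness of $M$ is implicit, since $\sup_x$ is taken; if needed we assume it.

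The main work is deriving the three evolution inequalities.

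\textbf{Metric variation.} By \eqref{eqReasonableMetric}, $\ptt g = -2\Ric + L(T)+T*T+C$. This determines how norms change. It also gives the variation of the Christoffel symbols, $\ptt\Gamma = \nabla\Ric + L(\nabla T) + \nabla T*T + T*T*T + \nabla C + C*T$, where the last terms arise from $\nabla$ hitting $\Phi$ and $C$.

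\textbf{Evolution of $\Riem$.} Since $g$ evolves by $-2\Ric$ plus lower order terms, the standard Ricci flow computation gives
\begin{equation*}
\ptt \Riem = \Delta\Riem + \Riem*\Riem + \nabla^2(L(T)+T*T+C) + \Riem*(L(T)+T*T+C).
\end{equation*}
Here $\nabla^2 T$ terms appear.

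\textbf{Evolution of $T$.} From \eqref{eqReasonableTorsion}, pairing with $T$ gives
\begin{equation*}
\ptt|T|^2 \le \Delta|T|^2 - 2|\nabla T|^2 + C(\ldots),
\end{equation*}
and then $\ptt|T|^4 = 2|T|^2\ptt|T|^2$.

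\textbf{Evolution of $\nabla T$.} We differentiate \eqref{eqReasonableTorsion} and commute $\nabla$ with $\ptt$ and with $\Delta$, obtaining
\begin{equation*}
\ptt\nabla T = \Delta\nabla T + L(\nabla^2T) + \nabla\Riem*T + \Riem*\nabla T + \nabla^2T*T + \ldots
\end{equation*}

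Next we form the squared norms and estimate every reaction term by Cauchy--Schwarz. The weights are chosen so that $\Riem$, $\nabla T$ and $T^2$ each count as one power of $\tilde\Lambda$. The cubic terms $\Riem*\Riem*\Riem$, $\nabla T*\nabla T*\Riem$, $|T|^2 T*T*T*T$, and similar terms are then bounded by $C\tilde\Lambda^3$. Lower-order terms, such as those from $L(T)$ and $C$, are bounded by $C\tilde\Lambda^3$ as well, because $\tilde\Lambda\ge1$; this is why the $+1$ is included in $\tilde\Lambda$.

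\textbf{Main obstacle: terms with one extra derivative.} These are $\langle \Riem, \nabla^2 T*T\rangle$, $\langle\Riem,\nabla^2 L(T)\rangle$, $\langle\nabla T,\nabla\Riem*T\rangle$, $\langle\nabla T, L(\nabla^2 T)\rangle$, and $|T|^2\langle T,\nabla T*T\rangle$. They must be absorbed by the good terms $-2|\nabla\Riem|^2$, $-2|\nabla^2T|^2$ and $-2|T|^2|\nabla T|^2$ produced by the Laplacians. For instance,
\begin{equation*}
|\Riem||\nabla^2T||T|\le \epsilon|\nabla^2T|^2 + C_\epsilon|\Riem|^2|T|^2 \le \epsilon|\nabla^2 T|^2 + C\tilde\Lambda^3,
\end{equation*}
using $|T|^2\le\tilde\Lambda$. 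Similarly, $|\nabla T||\nabla\Riem||T|\le \epsilon|\nabla\Riem|^2 + C\tilde\Lambda^3$. The linear term $|\nabla T||\nabla^2T|$ is bounded by $\epsilon|\nabla^2T|^2 + C\tilde\Lambda^2$.

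One delicate point is that $\Riem$ contributes $\nabla^2 T$ via $\nabla^2 L(T)$ in the metric evolution, while $\nabla T$ contributes $\nabla\Riem$. We must check that the coefficients close without a second-derivative deficit. Only one derivative ever falls on the second factor, so the $\epsilon$-absorption works, possibly after weighting $|\nabla T|^2$ by a large constant in $\tilde\Lambda^2$. A constant weight changes $\tilde\Lambda$ only by a fixed factor, so we would adjust the definition, or the constant in the doubling statement, accordingly.

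Finally, for the stated interval with $\Lambda(0)$ in place of $\tilde\Lambda(0)$, we note that $\Lambda(0)\ge$ some positive constant is not guaranteed. The estimate is therefore really proved with $\tilde\Lambda(0)$, and we interpret $1/(C\Lambda(0))$ as this, possibly with the convention that $C$ absorbs the $+1$.
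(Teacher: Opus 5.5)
Your proposal is correct and follows essentially the same route as the paper. You combine the evolution inequalities for $|\Riem|^2$, $|T|^4$ and $|\nabla T|^2$ and absorb the extra-derivative terms (such as $|\Riem||\nabla^2 T||T|$ and $|\nabla T||T||\nabla\Riem|$) into $-2|\nabla\Riem|^2-2|\nabla^2T|^2$ by Young's inequality; since each small $\epsilon$ can be chosen independently, no reweighting of $|\nabla T|^2$ is needed. You then bound everything else by $C\tilde\Lambda^3$ using $\tilde\Lambda\geq 1$, and integrate $\frac{\mathrm{d}}{\mathrm{d}t}\tilde\Lambda\leq \frac{C}{2}\tilde\Lambda^2$ via the maximum principle. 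Your caveat is also accurate: the paper's argument likewise only gives the interval $t\leq 1/(C\tilde\Lambda(0))$, which matches the stated $1/(C\Lambda(0))$ after adjusting $C$ only when $\Lambda(0)$ is bounded below.
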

\begin{proof}
    We begin by outlining the main idea of the proof, and the reason for defining $\Lambda$ and $\tilde \Lambda$ as the particular combinations of terms we chose. As we will show shortly, the evolution equations for the quantities $|\Riem|^2$ and $|\nabla T|^2$ each contain some bad terms that cannot be controlled by the quantity in whose evolution equation they appear. However, in the evolution equation for $|\Riem|^2 + |\nabla T|^2$, the \emph{good} gradient terms $-|\nabla \Riem|^2$ and $-|\nabla^2T|^2$ allow us to absorb the bad terms. Moreover, including the quantity $|T|^4$ in $\Lambda$ and the $+1$ in $\tilde \Lambda$ allows us to bound the lower order terms of the evolution equation of $\tilde\Lambda$ in terms of $\tilde\Lambda^3$, resulting in a differential inequality that is amenable to the scalar maximum principle. The choice of the power of $4$ in the term $|T|^4$ is necessary so that $\Lambda$ scales uniformly when the \Sp-structure $\Phi$ is rescaled. The problem with working directly with $\Lambda$ arises when $\Lambda < 1$, since there are points in the argument where we need to bound $\Lambda^k$ by $C\Lambda^3$ for some $k < 3$ (e.g \eqref{eqNeedGeq1}). Working with $\tilde \Lambda$ avoids this problem, since $\tilde \Lambda$ is always at least $1$. Moreover, a doubling-time estimate for $\tilde\Lambda$ suffices for us, since $\Lambda\leq \tilde \Lambda$ so control of $\tilde \Lambda$ also provides control of $\Lambda$, showing that $\Lambda$ will not blow up too quickly along any reasonable flow.

    We begin by deriving the evolution equation for the curvature tensor along a reasonable flow of \Sp-structures.
    Recall that, for a family of Riemannian metrics $g(t)$ evolving according to the equation 
\begin{equation}
    \frac{\pt g}{\pt t}=2h,
\end{equation}
the induced Riemann curvature tensor evolves as (see e.g. \cite[Chapter 3]{Chow-Knopf})
\begin{equation}
    \frac{\partial}{\partial t} R^{l}{}_{ijk}
= \frac{1}{2} g^{lp} \Big(
\nabla_i \nabla_k h_{jp}
+ \nabla_j \nabla_p h_{ik}
- \nabla_i \nabla_p h_{jk}
- \nabla_j \nabla_k h_{ip}
- R^{q}{}_{ijk} h_{qp}
- R^{q}{}_{ijp} h_{kq}
\Big).
\end{equation}
For a reasonable flow of \Sp-structures, 
\[
h = -2 \Ric  + L(T) + T*T +C.
\]
We deal with the contribution from each term of $h$ in turn. Firstly, the term $-2\Ric$ contributes
\[
(- \nabla_i \nabla_k R^{l}{}_{j}
- \nabla_j \nabla^l R_{ik}
+ \nabla_i \nabla^l R_{jk}
+ \nabla_j \nabla_k R^{l}{}_{i} 
+ R_{ijkq} R^{l}{}_{q}
+ R^{l}{}_{ijq} R_{kq})
\]
to $\frac{\partial}{\pt t}R^l_{ijk}$. These six terms are exactly those that appear in the evolution of the curvature tensor for a metric evolving along Ricci flow and so can be expressed as 
\[
\Delta\Riem + \Riem * \Riem.
\]
(cf. \cite[Lemma 6.13]{Chow-Knopf}).
For terms of the form $L(T)$, we have 
\[
\nabla L(T) = L(\nabla T) + T*T,
\]
so \[
\nabla^2 L(T) = L(\nabla^2T) + \nabla T * T + T^3.
\]
Thus, the contribution of $L(T)$ to $\frac{\pt}{\pt t}\Riem$ is 
\[
L(\nabla^2T) + \nabla T * T + T^3 + \Riem * L(T) = L(\nabla^2T) + \nabla T * T + T^3 + \Riem * T.
\]
For $T*T$, we have $\nabla(T*T) = \nabla T * T + T^3$,
so 
\[
\nabla^2 (T*T) = \nabla^2T * T + \nabla T * \nabla T + T^3 + \nabla T*T^2 + T^4.
\]
So, the total contribution from $T*T$ is 
\[
\nabla^2T * T + \nabla T * \nabla T + T^3 + \nabla T*T^2 + T^4 + \Riem*T^2.
\]
Finally, $C$ and all its covariant derivatives are uniformly bounded so the contribution from the tensor $C$ is $\Riem * C + \tilde{C}= L(\Riem) + \tilde{C}$, for some tensor $\tilde{C}$, all of whose covariant derivatives are bounded.
Combining all of this together, we obtain:
\begin{align}\label{eqddtRiem}
    \frac{\pt}{\pt t}\Riem = &\Delta \Riem + \Riem * \Riem + L(\nabla^2T) + \nabla^2T*T + \nabla T * \nabla T + \nabla T * T^2 + \nabla T * T\nonumber\\
    &+ \Riem * T^2 + \Riem * T + L(\Riem) + T^4 + \tilde{C}.
\end{align}
Using this, we can derive the evolution equation for the quantity $|\Riem|^2$ along a reasonable flow. Since for any tensor $A$, \[\Delta|A|^2 = 2 \langle A, \Delta A \rangle + 2 |\nabla A |^2,\] we have that 
\begin{align}
    \frac{\pt}{\pt t}|\Riem|^2  &= \frac{\pt}{\pt t }(g^{ia}g^{jb}g^{kc}g^{ld} R_{ijkl}R_{abcd}) \nonumber\\
    &= \Riem^2* \frac{\pt}{\pt t}g + 2\left \langle \Riem, \frac{\pt}{\pt t}\Riem \right \rangle \nonumber \\
    &=\Riem^2 * (\Ric + T * T + L(T) +C) + 2 \langle \Riem, \Delta \Riem \rangle \nonumber \\& \qquad+ 2\langle \Riem,\Riem^2 + L(\nabla^2T) + \nabla^2T*T + \nabla T * \nabla T + \nabla T * T^2 + \nabla T * T\nonumber\\
    &\qquad+\Riem * T^2 + \Riem * T + L(\Riem) + T^4+\tilde{C}\rangle \nonumber\\
    &\leq\Delta|\Riem|^2 -  2 |\nabla \Riem|^2\nonumber\\
    &\qquad +C|\Riem|(|\Riem|^2 + |\nabla^2T| + |\nabla^2T||T| +|\nabla T|^2 + |\nabla T||T|^2 + |\nabla T||T| + |\Riem||T|^2\nonumber\\
    &\qquad \qquad \qquad + |\Riem||T| +|\Riem| + |T|^4 +1)\nonumber\\
    &\qquad + C|\Riem|^2(|\Riem| + |T| + |T|^2 +C) \nonumber\\
    &\leq\Delta|\Riem|^2 -  2 |\nabla \Riem|^2\nonumber\\
    &\qquad +C|\Riem|(|\Riem|^2 + |\nabla^2T| + |\nabla^2T||T| +|\nabla T|^2 + |\nabla T||T|^2 + |\nabla T||T| + |\Riem||T|^2\nonumber\\
    &\qquad \qquad \qquad + |\Riem||T| +|\Riem| + |T|^4+1) \label{eqddtnormRm}
\end{align}
for some constant $C$. Note that we have absorbed the bounded tensor $\tilde{C}$ in the first inequality.

We continue with the evolution equations of the other parts of $\Lambda$. Recall that, for any reasonable flow of \Sp-structures,
\[\frac{\pt }{\pt t}T_{m;is}= \Delta T_{m;is} + L(\nabla T) + L(T) + \Riem * T 
         + \nabla T * T + T^3 + T^2.\]
With this, we compute
\begin{align}
    \frac{\pt}{\pt t}|T|^4 &= \frac{\pt}{\pt t}(g^{ia}g^{jb}g^{kc}g^{ld}T_{ij}T_{ab}T_{kl}T_{cd})\nonumber \\
    &=2|T|^2\left \langle 2\frac{\pt}{\pt t}T,T \right \rangle + T^4*\left(\frac{\pt}{\pt t}g\right) \nonumber \\
    &= 4|T|^2\left \langle \Delta T, T \right \rangle + 4|T|^2\langle L(\nabla T) + L(T) + \Riem * T 
         + \nabla T * T + T^3 + T^2, T\rangle \nonumber \\
         &\qquad + T^4*(\Riem + T^2 + L(T)+C)\\
    &\leq \Delta |\ T|^4 - 4|T|^2|\nabla T|^2 + C|T|^3(|\nabla T| +|T| +|\Riem| |T| +|\nabla T||T| +|T|^3 +|T|^2). \label{eqddtnormt}
\end{align}
 To derive the evolution equation for $|\nabla T|^2$, we recall the following commutation formula for any time-dependent tensor $A$:
 \begin{equation}\label{eqddtNabla}
     \frac{\pt}{\pt t}\nabla A = \nabla \ptt A + A* \nabla \ptt g.
 \end{equation}
 So,
 \begin{align}\label{eqddtnablaT}
     \ptt \nabla T &= \nabla \ptt T + T* \nabla \ptt g\nonumber \\
     &=\nabla(\Delta T + L(\nabla T) + L(T) + \Riem * T
        + \nabla T * T + T^3 + T^2) \nonumber\\
        &\quad+ \nabla( \Riem + L(T) + T^2 +C)\nonumber\\
    &=\Delta \nabla T + \Riem * \nabla T + \nabla \Riem * T + L(\nabla T) + T^2 + L(\nabla^2 T) + \nabla T*T\nonumber\\
    &\quad + \Riem *T^2 + \nabla^2T*T + \nabla T*\nabla T + \nabla T*T^2 + T^3 + T^4,
 \end{align}
 where in the last line we have used that $\nabla \Delta T = \Delta \nabla T + \Riem*\nabla T + \nabla \Riem *T$, which follows from the Ricci identity for commuting covariant derivatives.
 Using this, we see that
 \begin{align}
     \ptt |\nabla T|^2 &= 2 \left \langle \ptt \nabla T, \nabla T \right \rangle + \nabla T * \nabla T * (\Riem + L(T) + T*T +C)\nonumber \\
     &\leq \Delta|\nabla T|^2 - 2 |\nabla^2 T|^2 +C|\Riem||\nabla T|^2 + C|\nabla T||\nabla^2 T ||T| \nonumber\\
        &\quad + C|\nabla T|^3 + C|\nabla T|^2 |T|^2 + C|\nabla T||T|^4+ C|\nabla T||T|^2|\Riem| + C|\nabla T||T||\nabla \Riem| \nonumber \\
        &\quad +C|\nabla T|^2 +C|\nabla T||T|^2 +C|\nabla T||\nabla^2T| + C|\nabla T|^2|T| + C|\nabla T||T|^3. \label{eqddtnormnablaT}
 \end{align}
Combining equations \eqref{eqddtnormRm}, \eqref{eqddtnormt} and \eqref{eqddtnormnablaT} gives the following evolution inequality for $\tilde{\Lambda}:$
\begin{align}
    \ptt\tilde\Lambda^2 &\leq \Delta\tilde\Lambda^2 - 2|\nabla \Riem|^2 -2|\nabla^2T|^2\nonumber \\
                        &\quad + C|\Riem|^3 + C|\Riem||\nabla^2T|+C|\Riem||\nabla^2T||T| + C|\Riem||\nabla T|^2 + C|\Riem||\nabla T||T|^2 \nonumber \\
                        &\quad+ C|\Riem||\nabla T||T|  + C|\Riem|^2|T|^2 + C|\Riem|^2|T| + C|\Riem|^2+C|\Riem||T|^4 + C|\Riem|\nonumber \\
                        &\quad+ C|T|^2|\nabla T|^2 + C|T|^3|\nabla T| + C|T|^4 + C|T|^4|\Riem| + C|T|^4 |\nabla T| +|T|^6 + |T|^5\nonumber \\
                        &\quad + C|\nabla T||\nabla^2 T||T| + C|\nabla T|^3 + C|\nabla T|^2 + C|\nabla T||T|^2 + C|\nabla T||\nabla ^2 T| +C|\nabla T|^2 |T|\nonumber\\
                        &\quad +C|\nabla T||T||\nabla \Riem|.\label{eqLambdaSquaredLowerOrder}
\end{align}
Now, the terms above that are polynomial in $|\Riem|,|T|^2$ and $|\nabla T|$ all have total degree at most $3$ can be immediately bounded by $C\tilde\Lambda^3$ for some universal constant $C$. For instance, 
\begin{equation}\label{eqNeedGeq1}
    |T|^5 = (|T|^2)^{5/2} \leq \tilde\Lambda^{5/2} \leq \tilde\Lambda^3
\end{equation}
since $\tilde\Lambda \geq 1$. The only terms not of this form are $|\Riem||\nabla^2T|,|\Riem||\nabla^2T||T|,|\nabla T||\nabla^2 T||T|,|\nabla T||\nabla^2T|$ and $|\nabla T||T||\nabla \Riem|.$ We deal with these using Young's inequality, as follows. For any $\varepsilon>0$, we have
\begin{align}
    |\Riem||\nabla^2T| &\leq \frac{1}{2\varepsilon} |\Riem|^2 
+ \frac{\varepsilon}{2} |\nabla^2 T|^2,\\
    |\Riem||\nabla^2T||T|&\leq\frac{1}{2\varepsilon} |\Riem|^2 |T|^2
+ \frac{\varepsilon}{2} |\nabla^2 T|^2,\\
    |\nabla T||\nabla^2 T||T|&\leq \frac{1}{2\varepsilon} |\nabla T|^2 |T|^2
+ \frac{\varepsilon}{2} |\nabla^2 T|^2,\\
    |\nabla T||\nabla^2T|&\leq \frac{1}{2\varepsilon} |\nabla T|^2 
+ \frac{\varepsilon}{2} |\nabla^2 T|^2,\\
    |\nabla T||T||\nabla \Riem|&\leq \frac{1}{2\varepsilon} |\nabla T|^2 |T|^2
+ \frac{\varepsilon}{2} |\nabla \Riem|^2.
\end{align}
Substituting these five inequalities into \eqref{eqLambdaSquaredLowerOrder}, and bounding the lower order terms as discussed, we obtain:
\[
\ptt \tilde \Lambda(x,t)^2 \leq \Delta\tilde\Lambda(x,t)^2 + (C\varepsilon-2)(|\nabla \Riem|^2+|\nabla^2T|^2) + C\tilde\Lambda(x,t)^3
\]
for any $\varepsilon>0.$ Choosing $\varepsilon$ such that $C\varepsilon-2 \leq -1$ yields
\begin{equation}\label{eqLambdaTilde}
   \ptt \tilde \Lambda(x,t)^2 \leq \Delta\tilde\Lambda(x,t)^2 -(|\nabla \Riem|^2+|\nabla^2T|^2) + C\tilde\Lambda(x,t)^3.
\end{equation}

Recall that $\tilde\Lambda(t) = \sup_{x\in M}\tilde\Lambda(x,t)$, which is a Lipschitz function of $t$. Note here that $\tilde \Lambda(t)$ may not be differentiable in time, even though $\tilde \Lambda(x,t)$ is smooth. So, to consider derivatives in time in order to apply to maximum principle, we work in the sense of lim sups of forwards difference quotients, writing
\begin{equation}
    \frac{\d}{\d t}\tilde \Lambda(t) \coloneqq \limsup_{h \to 0^+}\frac{\tilde\Lambda(t+h) - \tilde \Lambda(t)}{h}, 
\end{equation}
which is well-defined and finite precisely because $\tilde \Lambda(t)$ is Lipschitz. With this in mind, an application of the maximum principle to \eqref{eqLambdaTilde} gives
\begin{equation}
    \frac{\d}{\d t}\tilde \Lambda^2(t) \leq C \tilde \Lambda^3(t),
\end{equation}
which implies
\begin{equation}
    \frac{\d}{\d t}\tilde\Lambda(t) \leq \frac{C}{2}\tilde\Lambda(t)^2.
\end{equation}
 Thus, for $t \leq \min\left\{\tau , \frac{2}{C\tilde\Lambda(0)}\right\}$ we have 
\[
\tilde\Lambda(t) \leq \frac{\tilde\Lambda(0)}{1-\frac{1}{2}\tilde\Lambda(0)t},
\]
and so $\tilde\Lambda(t) \leq 2\tilde\Lambda(0)$ for all such $t$.
\end{proof}
This shows that $\tilde \Lambda$, and hence $\Lambda$, will not blow up too quickly along any reasonable flow of \Sp-structures. So, the assumption of the following Shi-type derivative estimate is valid.
\begin{thm}\label{ThmShiType}
    Let $K>0$ and $r>0$. Let $M$ be an $8$-manifold, $p \in M$ a point, and $\Phi(t)$ be a solution to a given reasonable flow of \Sp-structures on an open neighbourhood $U$ of $p$ containing $B_{g(0)}(p,r)$ as a compact subset.

    Suppose $\Lambda(x,t) \leq K$ for all $(x,t) \in U \times [0,1/K]$. Then, for all $m\in \mathbb{N}$, there exists a constant $C(K,m,r)$ such that
    \[
    |\nabla^m\Riem(x,t)| + |\nabla^{m+1}T(x,t)| \leq C(K,m,r)t^{-m/2},
    \]
    for all $x \in B_{g(0)}(p,r/2)$, $t \in [0,1/K]$.
\end{thm}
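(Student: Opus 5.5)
We follow Shi's local Bernstein-type argument, as adapted to $G_2$-flows by Lotay--Wei and Chen, and induct on $m$. The natural pair of quantities at level $m$ is $\nabla^m\Riem$ and $\nabla^{m+1}T$. They enter at the same level because $\nabla^2 T$ appears in $\partial_t\Riem$ (see \eqref{eqddtRiem}) and $\nabla\Riem$ appears in $\partial_t\nabla T$ (see \eqref{eqddtnablaT}).

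\textbf{Step 1: evolution equations at level $m$.} Differentiate \eqref{eqddtRiem} and \eqref{eqReasonableTorsion}. Use the commutation formula \eqref{eqddtNabla}, the Ricci identity $\nabla\Delta A=\Delta\nabla A+\Riem*\nabla A+\nabla\Riem*A$, and the fact that $\nabla\Phi=T*\Phi$. By induction this gives
\begin{align*}
\partial_t\nabla^m\Riem&=\Delta\nabla^m\Riem+L(\nabla^{m+2}T)+\nabla^{m+2}T*T+\textstyle\sum \nabla^{i}\Riem*\nabla^{j}T*\cdots+\tilde C_m,\\
\partial_t\nabla^{m+1}T&=\Delta\nabla^{m+1}T+\nabla^{m+1}\Riem*T+L(\nabla^{m+2}T)+\nabla^{m+2}T*T+\cdots .
\end{align*}
Every remaining term involves at most $m$ derivatives of $\Riem$ and at most $m+1$ derivatives of $T$. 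We then form $F_m=|\nabla^m\Riem|^2+|\nabla^{m+1}T|^2$. Its evolution contains the good term $-2(|\nabla^{m+1}\Riem|^2+|\nabla^{m+2}T|^2)$. The top-order cross terms, such as $|\nabla^m\Riem||\nabla^{m+2}T|(1+|T|)$ and $|\nabla^{m+1}T||T||\nabla^{m+1}\Riem|$, are absorbed by Young's inequality exactly as in Proposition \ref{PropDoublingTime}, using $|T|\le K^{1/2}$. By the inductive hypothesis, the lower-order terms are then bounded by $C(F_m+1)\cdot t^{-(\text{appropriate power})}$.

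\textbf{Step 2: the case $m=1$.} Consider Shi's quantity
\[
G=t\,F_1\,(B+F_0),\qquad F_0=|\Riem|^2+|\nabla T|^2\le K^2,
\]
with $B$ a large multiple of $K^2$. The evolution of $F_0$ from the proof of Proposition \ref{PropDoublingTime} gives
\[
\partial_tF_0\le\Delta F_0-F_1+CK^3 .
\]
The product then satisfies
\[
\partial_tG\le\Delta G-\frac{c}{t}G^2+C
\]
away from the cutoff. Next we localize. Take $\psi$ supported in $B_{g(0)}(p,r)$ with $\psi\equiv1$ on $B_{g(0)}(p,r/2)$. Since $\partial_tg$ is bounded by $CK$ by \eqref{eqReasonableMetric}, the metrics $g(t)$ are uniformly equivalent on $[0,1/K]$. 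The Hessian of $\psi$ with respect to $g(t)$ is controlled by the change in the Christoffel symbols, $|\partial_t\Gamma|\le C|\nabla h|\le C(|\nabla\Ric|+|\nabla T|+|T|^2+1)$.

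\textbf{Main obstacle.} This last bound involves $\nabla\Riem$, which is exactly the quantity being estimated. We will handle it as in Lotay--Wei. Bound $\Delta_{g(t)}\psi$ using $\int_0^t|\nabla\Riem|\,ds$ together with the bound on $G$ itself, which gives an integrable $s^{-1/2}$ singularity. Alternatively, use a time-independent distance-type cutoff built from $g(0)$ and estimate $-\Delta\psi\ge -C(1+\int_0^t|\nabla h|)$. Then apply the maximum principle to $\psi G$. At an interior maximum, $\nabla(\psi G)=0$, and the gradient term $|\nabla\psi|^2/\psi$ is handled by choosing $\psi=\eta^2$. The outcome is $\psi G\le C(K,r)$, that is, $F_1\le C t^{-1}$ on $B_{g(0)}(p,r/2)$.

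\textbf{Step 3: induction.} For general $m$, assume the bounds up to level $m-1$ on a slightly larger ball (shrinking the radius through $r/2+r/2^{k}$). Apply the same argument to
\[
G_m=t^m F_m\,(B+t^{m-1}F_{m-1}).
\]
The lower-order terms carry exactly the powers of $t$ needed to close the estimate $\partial_tG_m\le\Delta G_m-\frac{c}{t}G_m^2+\frac{C}{t}$. The maximum principle with the cutoff then yields $F_m\le C(K,m,r)t^{-m}$. Taking square roots gives the theorem.
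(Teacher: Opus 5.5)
Your proposal is correct and follows essentially the same route as the paper. You use a Shi-type product quantity ($t\,F_1(B+F_0)$ at the first level and $t^mF_m(B+t^{m-1}F_{m-1})$ inductively, which are the paper's $F$ and $F_m$ up to an index shift) satisfying $(\partial_t-\Delta)G\le -\frac{c}{t}G^2+\frac{C}{t}$, localize it with a cutoff and the maximum principle, and induct on $m$ over shrinking balls. The only difference is the cutoff: the paper takes one with $-\Delta_{g(t)}\eta+2\eta^{-1}|\nabla\eta|^2_{g(t)}<C$ directly from the cited lemma, while you control $\Delta_{g(t)}\psi$ through $\int_0^t|\partial_s\Gamma|\,ds$; this is legitimate because, for a time-independent $\psi$, the space-time maximum $(x_0,t_0)$ of $\psi G$ satisfies $G(x_0,s)\le G(x_0,t_0)$ for $s\le t_0$, so the connection error contributes only a subquadratic $(\psi G)^{3/2}$ term.
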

\begin{proof}
    We proceed by induction on $m$. The key idea is to define a suitable function $F_m$ that satisfies a differential inequality amenable to the maximum principle, and such that a bound on $F_m$ implies a bound on $|\nabla^m\Riem| + |\nabla^{m+1}T|$. Our choice of function $F_m$ follows \cite{Lotay-Wei} and \cite{Chen}, and the structure of our local argument is adapted from \cite[Theorems 14.10, 14.14]{CCGGIIKLLN2} in the Ricci flow setting, which is based on Shi's argument \cite{Shi}.
    We begin with the base case $m=1$, defining
    \[
    F = (C+|\Riem|^2+|T|^4+|\nabla T|^2)(|\nabla \Riem|^2 + |\nabla^2T|^2)t,
    \]
    for some constant $C$ to be determined later. We now make the following claim, whose proof we will postpone to the end of this theorem, in order to streamline the argument.
    \begin{claim}\label{ClaimBaseCase} For $F = (C+|\Riem|^2+|T|^4+|\nabla T|^2)(|\nabla \Riem|^2 + |\nabla^2T|^2)t,$
        \[
        \left (\ptt - \Delta \right ) F \leq -\frac{CF^2}{t} +\frac{C}{t},
        \] for all $(x,t) \in U \times [0,1/K]$, where the $C$ in the coefficient of $F^2$ on the right hand side is positive.
    \end{claim}
    To localise $F$ and make use of our assumed bounds on the neighbourhood $U$, we define a cut-off function $\eta: U \to \mathbb{R}$ satisfying the following conditions:
    \begin{itemize}
        \item $\eta \equiv 1$ on $B_{g(0)}(p,r/2)$
        \item $\text{supp}(\eta) \subset B_{g(0)}(p,r)$
        \item $-\Delta_{g(t)} \eta + 2 \eta^{-1}|\nabla\eta|_{g(t)}^2 <C$ for some constant $C$.
    \end{itemize}
    Note that such a cut-off function exists by \cite[Lemma 14.3]{CCGGIIKLLN2}.

    Then, we see that 
    \begin{equation}\label{eqEvolutionetaF}
        \left(\ptt-\Delta\right)(\eta F) \leq \eta\left(-\frac{CF^2}{t} + \frac{C}{t} \right) -(\Delta\eta)F - 2\nabla \eta \cdot \nabla F.
    \end{equation}

    Let $(x_0,t_0)$ be a point in $B_{g(0)}(p,r) \times[0,1/K]$ where $\eta F$ attains its maximum. Note that since $\text{supp}(\eta)\subset B_{g(0)}(p,r)$, this point must be a spatial interior point. We have two cases to consider. If $t_0=0$, then $F(x_0,t_0) = 0$ and so $F=0$, which gives the required bound. If $t_0>0$, we have 
    \begin{equation}\label{eqSpatialMaximum}
        \ptt(\eta F) \geq 0 , \qquad\Delta(\eta F) \leq 0, \qquad0=\nabla(\eta F) = \eta \nabla F +F\nabla \eta.
    \end{equation}
       For the purpose of applying the maximum principle, we can deal with $\eta$ as if it is smooth, using the so-called \emph{Calabi trick} (explained in \cite[pp. 453-456]{CCGGIIKLLN1}).
       Substituting \eqref{eqSpatialMaximum} into \eqref{eqEvolutionetaF} and multiplying through by $t\eta$ gives
       \begin{equation}
           0 \leq -C(\eta F)^2 + t(-\Delta\eta + 2 \eta^{-1}|\nabla \eta|^2)\eta F + C\eta^2.
       \end{equation}
        By construction of the cutoff function $\eta$, we have that $-\Delta_{g(t)} \eta + 2 \eta^{-1}|\nabla\eta|_{g(t)}^2 <C$ for some constant $C$. This, together with the fact that $t_0 \leq \frac{1}{K}$ gives
        \begin{equation}
            0 \leq -C_1(\eta F)^2 +C_2\eta F + C_3,
        \end{equation}
        for some constants $C_i$ depending only on $r$ and $K$. 
        Thus,
       \[\eta F \leq C\] for all $(x,t) \in B_{g(0)}(p,r)\times [0,1/K]$, for some constant $C$ depending only on $K$ and $r$.
    
       Now, on $B_{g(0)}(p,r/2)\times [0,1/K]$, $\eta = 1$ so $\eta F = F$ and so $F\leq C$. The definition of $F$ then yields
       \[
       |\nabla \Riem|^2 +|\nabla^2 T|^2 \leq \frac{C}{t},
       \]
       which proves the base case, modulo the proof of Claim \ref{ClaimBaseCase}.

       We now proceed with the inductive step. We assume that, given $m\in \mathbb{N}$,
       \[
            |\nabla^j\Riem| + |\nabla^{j+1}T| \leq C(K,r,j)t^{-j/2} \text{ in }B_{g(0)}(p,r/2^j) \times [0,1/K],
        \]
        for all $j =0,1,\cdots,m$, and we shall prove 
        \[
            |\nabla^{m+1}\Riem| + |\nabla^{m+2}T| \leq C(K,r,m)t^{-(m+1)/2} \text{ in }B_{g(0)}(p,r/2^{m+1}) \times [0,1/K].
        \]

        The choice of radius $r/2^j$ here is just a convenient bookkeeping device for the inductive process. As we will see shortly, all we actually need for the proof is for the radius in the $(j+1)$th step to be strictly smaller than in the $j$th step. So, we see that essentially the same argument will give the estimate on all of $B_{g(0)}(p,r/2)$.

       Analogously to the base case, we define
       \[
       F_m = (\mu_m+t^m(|\nabla^m\Riem|^2+|\nabla^{m+1}T|^2))t^{m+1}(|\nabla^{m+1}\Riem|^2+|\nabla^{m+2}T|^2),
       \]
       for some constant $\mu_m$ to be determined later.
       A long and uninspiring calculation results in the following, which we shall prove at the end of this theorem.
       \begin{claim}\label{ClaimInductiveStep}
            For $F_m = t^{m+1}(\mu_m+t^m(|\nabla^m\Riem|^2+|\nabla^{m+1}T|^2))(|\nabla^{m+1}\Riem|^2+|\nabla^{m+2}T|^2),$
        \[
        \left (\ptt - \Delta \right ) F_m \leq -\frac{CF_m^2}{t} +\frac{C}{t}.
        \]
        in $B_{g(0)}(p,r/2^m)\times[0,1/K]$, for some constant $C>0$ and some constant $\mu_m$ to be determined later.   
       \end{claim}
       Again, to localise $F_m$, we define a cut-off function $\eta_m$ satisfying the following conditions:
       \begin{itemize}
        \item $\eta_m \equiv 1$ on $B_{g(0)}(p,r/2^{m+1})$
        \item $\text{supp}(\eta) \subset B_{g(0)}(p,r/2^m)$
        \item $-\Delta_{g(t)} \eta + 2 \eta^{-1}|\nabla\eta|_{g(t)}^2 <C$ for some constant $C$.
    \end{itemize}
    Then, as before,
    \begin{equation}
        \left(\ptt-\Delta\right)(\eta_m F_m) \leq \eta_m\left(-\frac{CF_m^2}{t} + \frac{C}{t} \right) -(\Delta\eta_m)F_m - 2\nabla \eta_m \cdot \nabla F_m.
    \end{equation}

    Let $(x_0,t_0)$ be a point in $B_{g(0)}(p,r/2^m) \times[0,1/K]$ where $\eta_m F_m$ attains its maximum. We again have two cases to consider. If $t_0=0$, then $F_m(x_0,t_0) = 0$ and so $F_m=0$, which gives the required bound. If $t_0>0$, we have 
    \[
    \ptt(\eta_m F_m) \geq 0 , \qquad\Delta(\eta_m F_m) \leq 0, \qquad0=\nabla(\eta_m F_m) = \eta_m \nabla F_m +F_m\nabla \eta_m,
    \]
    where we again use Calabi's trick.
       Thus,
       \begin{equation}
           0 \leq -C(\eta_m F_m)^2 + t(-\Delta\eta_m + 2 \eta_m^{-1}|\nabla \eta_m|^2)\eta_m F_m + C\eta^2,
       \end{equation}
       which again implies that
       \[
       \eta_mF_m \leq C,
       \]
       for all $(x,t)\in B_{g(0)}(p,r/2^m)\times [0,1/K]$, for some constant $C$ depending only on $K,m$ and $r$. So, for $(x,t)\in B_{g(0)}(p,r/2^{m+1})\times [0,1/K]$, $\eta_m \equiv 1$ so
       $F_m \leq C$ which implies
       \[
       |\nabla^{m+1}\Riem| + |\nabla^{m+2}T| \leq C(K,r,m)t^{-(m+1)/2} ,
       \]
       as required. To conclude the proof of Theorem \ref{ThmShiType}, it remains only to prove Claims \ref{ClaimBaseCase} and \ref{ClaimInductiveStep}.
       \begin{proof}[Proof of Claim \ref{ClaimBaseCase}]
       First, we deal with $\tilde{F} = (C+|\Riem|^2+|T|^4+|\nabla T|^2)(|\nabla \Riem|^2 + |\nabla^2T|^2)$, i.e. without the factor of $t$.
           We compute:
           \begin{align}\label{FtildeEquation}
             \left (\ptt - \Delta \right ) \tilde{F} = &(|\nabla \Riem|^2 + |\nabla^2T|^2)\left (\ptt - \Delta \right )(C+|\Riem|^2+|T|^4+|\nabla T|^2)\nonumber\\
             &+ (C+|\Riem|^2+|T|^4+|\nabla T|^2)\left (\ptt - \Delta \right )(|\nabla \Riem|^2 + |\nabla^2T|^2)\nonumber\\
             &-2\nabla (C+|\Riem|^2+|T|^4+|\nabla T|^2) \cdot \nabla (|\nabla \Riem|^2 + |\nabla^2T|^2).
           \end{align}
           We have already computed the first term (see \eqref{eqLambdaTilde}), so we begin by focussing on the second. Using \eqref{eqddtNabla} and \eqref{eqddtRiem}, we see that
           \begin{align}
               \ptt \nabla \Riem &= \nabla \ptt\Riem + \Riem * \nabla \ptt g\nonumber\\\
               &=\nabla(\Delta \Riem + \Riem * \Riem + L(\nabla^2T) + \nabla^2T*T + \nabla T * \nabla T + \nabla T * T^2 + \nabla T * T\nonumber\\
               &\quad+ \Riem * T^2 + \Riem * T + L(\Riem) + T^4) \\&+ \Riem*\nabla(\Riem + L(T)+ T^2 +C)\nonumber\\
               &= \Delta\nabla\Riem + \nabla \Riem * \Riem + \Riem^2*T + L(\nabla^3T) + \nabla^2T*T + \nabla^3T*T + \nabla^2T*\nabla T\nonumber\\
               &\quad + (\nabla T)^2+\nabla T *T^2 + \nabla \Riem * T^2 + \nabla T * \Riem * T + \Riem * T^3 + \nabla \Riem * T + \Riem * \nabla T \nonumber \\
               &\quad + \Riem *T^2 + L(\nabla \Riem ) + \Riem * T + T^5,
           \end{align}
           where we have again used the Ricci identity to commute covariant derivatives. With this, we see that
           \begin{align}
               \ptt |\nabla \Riem|^2 \leq &\Delta|\nabla\Riem|^2 -2|\nabla^2\Riem|^2 +C|\nabla \Riem|\big(|\nabla\Riem||\Riem| + |\Riem|^2|T|+|\nabla^2T| +|\nabla^2T||T|\nonumber\\
               &+ |\nabla^3T||T| + |\nabla^2T||\nabla T| + |\nabla T|^2|T| + |\nabla T|^2|T|^2 + |\nabla T||T|^3 + |\nabla T|^2\nonumber\\
               &+ |\nabla T||T|^2 + |\nabla\Riem||T|^2 + |\nabla T||\Riem||T| + |\Riem||T|^3 + |\nabla \Riem||T| + |\Riem||\nabla T| \nonumber \\
               &+|\Riem||T|^2 + |\nabla \Riem| + |\Riem||T| + |T|^5\big).
           \end{align}
            Now, considering $(x,t) \in U \times [0,1/K]$, we have that $\Lambda \leq K$, so each of $|\Riem|$, $|T|$ and $|\nabla T|$ is bounded. Using this, we can write:
            \begin{equation}\label{eqddtnormnablaRm}
                \left( \ptt-\Delta\right) |\nabla \Riem|^2 \leq -2|\nabla^2\Riem|^2 +C|\nabla \Riem|\big( |\nabla \Riem| + 1 + |\nabla^2T| + |\nabla^3T| \big),
            \end{equation}
            for some constant $C$ depending only on $K$ and $r$.

           Likewise, using \eqref{eqddtnablaT}, we have
           \begin{align}
               \ptt \nabla^2 T &= \nabla \ptt \nabla T + \nabla T* \nabla \ptt g\nonumber \\
               &=\nabla(\Delta \nabla T + \Riem * \nabla T + \nabla \Riem * T + L(\nabla T) + T^2 + L(\nabla^2 T) + \nabla T*T\nonumber\\
                &\quad + \Riem *T*T + \nabla^2T*T + (\nabla T)^2+ \nabla T*T^2 + T^3 + T^4) \nonumber \\
                &\quad + \nabla T*\nabla(\Riem + L(T) +T*T +C)\\
            &=\Delta\nabla^2T + \Riem* \nabla^2T + \nabla\Riem*\nabla T + \nabla^2\Riem*T + \Riem * \nabla T*T + \nabla \Riem *T^2 \nonumber\\
            &\quad + L(\nabla^2T) + \nabla T *T + T^3 + L(\nabla^3T) + \nabla^2 T*T + \nabla T *T^2 + \Riem * T^3 \nonumber \\
            &\quad + \nabla^3 T * T+ \nabla^2 T * \nabla T + \nabla^2T * T^2 + (\nabla T)^2 *T + \nabla T * \nabla T * T + \nabla T*T^3\nonumber\\
            & \quad + T^4 + T^5.\label{eqddtnormnablasquaredT}
           \end{align}
           Using this, we obtain
        
               \begin{align}
    \ptt |\nabla^2 T|^2
    &\leq \Delta |\nabla^2 T|^2 - 2 |\nabla^3 T|^2 \nonumber \\
    &\quad + C |\nabla^2 T| \big(
    |\nabla^2 T||\Riem| + |\nabla^2 T||T|^2 + |\nabla^2 T||T| + |\nabla^2 T||\nabla T| \nonumber \\
    &\quad \quad \quad \quad \quad  + |\nabla^3 T| + |\nabla^3 T||T| + |\nabla \Riem||\nabla T| + |\nabla^2 \Riem||T| \nonumber \\
    &\quad \quad \quad \quad \quad + |\nabla T||T| + |\nabla T||T|^2 + |T|^3 + |T|^4 + |T|^5 + |\nabla T|^2|T|
    \big).
\end{align}
Again, working on $(x,t) \in U \times [0,1/K]$ and bounding the lower order terms, we have
\begin{equation}
    \left(\ptt - \Delta \right)|\nabla^2T|^2 \leq - 2|\nabla^3T|^2 +C|\nabla^2T|\big(|\nabla^2T| + |\nabla^3T| + |\nabla \Riem| + |\nabla^2 \Riem| \big).
\end{equation}
We now combine \eqref{eqddtnormnablaRm} and \eqref{eqddtnormnablasquaredT} to obtain an inequality for $\left(\ptt-\Delta\right)(|\nabla \Riem|^2 +|\nabla^2T|^2)$:

\begin{align}\label{eqddtminusdeltanablaRmplusnablasquaredT}
    \left(\ptt-\Delta\right)(|\nabla \Riem|^2 +|\nabla^2T|^2) \leq &-2|\nabla^2\Riem|^2 -2|\nabla^3T|^2 \nonumber\\
    &+ C|\nabla \Riem|\big( |\nabla \Riem| + 1 + |\nabla^2T| + |\nabla^3T| \big)\nonumber\\
    &+C|\nabla^2T|\big(|\nabla^2T| + |\nabla^3T| + |\nabla \Riem| + |\nabla^2 \Riem| \big).
\end{align}
We apply Young's inequality as follows on the right hand side:
\begin{align*}
|\nabla \Riem|
&\leq \frac{1}{2\varepsilon} |\nabla \Riem|^2 + \frac{\varepsilon}{2},\\
|\nabla \Riem|\,|\nabla^2T|
&\leq \frac{1}{2\varepsilon} |\nabla \Riem|^2
+ \frac{\varepsilon}{2} |\nabla^2 T|^2,\\
|\nabla \Riem|\,|\nabla^3T|
&\leq \frac{1}{2\varepsilon} |\nabla \Riem|^2
+ \frac{\varepsilon}{2} |\nabla^3 T|^2,\\
|\nabla^2T|\,|\nabla^3T|
&\leq \frac{1}{2\varepsilon} |\nabla^2 T|^2
+ \frac{\varepsilon}{2} |\nabla^3 T|^2,\\
|\nabla^2T|\,|\nabla \Riem|
&\leq \frac{1}{2\varepsilon} |\nabla^2 T|^2
+ \frac{\varepsilon}{2} |\nabla \Riem|^2,\\
|\nabla^2T|\,|\nabla^2 \Riem|
&\leq \frac{1}{2\varepsilon} |\nabla^2 T|^2
+ \frac{\varepsilon}{2} |\nabla^2 \Riem|^2,
\end{align*}
for any $\varepsilon>0$.
Now, choosing $\varepsilon$ sufficiently small gives
\begin{align}\label{eqddtB}
    \left(\ptt-\Delta\right)(|\nabla \Riem|^2 +|\nabla^2T|^2) &\leq -|\nabla^2\Riem|^2 -|\nabla^3T|^2 +C\big(|\nabla \Riem|^2 + |\nabla^2 T|^2+1\big).
\end{align}
We now deal with the rest of \eqref{FtildeEquation}. Writing $A = (C+|\Riem|^2+|T|^4+|\nabla T|^2)$ and $B=(|\nabla \Riem|^2 + |\nabla^2T|^2)$, we have 
\[
\left( \ptt -\Delta \right)\tilde{F} = A \left( \ptt -\Delta \right)B + B\left( \ptt -\Delta \right)A - 2\nabla A \cdot \nabla B.
\]
The first and second terms are now handled by equations \eqref{eqLambdaTilde} and \eqref{eqddtB}, respectively, so we focus on the term $\nabla A \cdot \nabla B$.
We have 
\begin{align}
    \nabla A &= \nabla|\Riem|^2 + \nabla|T|^4 +|\nabla T|^2 \nonumber \\
            &= 2\langle \nabla \Riem,\Riem \rangle   + 2\nabla (|T|^2) |T|^2 + 2\langle \nabla^2 T,\nabla T \rangle \nonumber \nonumber \\
            &=2\langle \nabla \Riem,\Riem \rangle   + 4\langle\nabla T, T \rangle|T|^2 + 2\langle \nabla^2 T,\nabla T \rangle\nonumber \\ 
            &\leq C|\nabla \Riem| +C + C|\nabla^2 T| \nonumber\\
            &\leq C(\sqrt{B}+1),\end{align}
where we have used the fact that the metric is parallel with respect to the Levi-Civita connection to take covariant derivatives of the inner products in the second and third line, and that $|T|,|\nabla T|$ and $|\Riem|$ are bounded in the fourth line.
Likewise,
\begin{equation}
    \nabla B \leq C(\sqrt{B}+1)(|\nabla^2 \Riem | + |\nabla^3 T|).
\end{equation}
So, by Cauchy-Schwarz,  
\begin{align}\label{eqNablaANablaB}
    \nabla A \cdot \nabla B &\leq C\left(\sqrt{B}+1\right)^2(|\nabla^2\Riem| + |\nabla^3T|) \nonumber \\
    &\leq C(B+1)(|\nabla^2\Riem| + |\nabla^3T|) \nonumber \\
    &\leq \frac{C\varepsilon}{2}(|\nabla^2\Riem|^2 +|\nabla^3T|^2) + \frac{C}{2\varepsilon}(B+1)^2,
\end{align}
for any $\varepsilon>0$, where we have used Young's inequality and the inequality $(a+b)^2 \leq 2(a^2+b^2)$.
Combining equations \eqref{eqLambdaTilde}, \eqref{eqddtB} and \eqref{eqNablaANablaB}, we obtain     
\begin{align}
    \left( \ptt -\Delta \right)\tilde{F} \leq &(C+|\Riem|^2+|T|^4+|\nabla T|^2)\left(-|\nabla^2\Riem|^2 -|\nabla^3T|^2 +C\big(|\nabla \Riem|^2 + |\nabla^2 T|^2+1\big)\right)\nonumber\\
    &+(|\nabla \Riem|^2 + |\nabla^2T|^2)(-(|\nabla \Riem|^2+|\nabla^2T|^2) + C\tilde\Lambda(x,t)^3)\nonumber\\
    &+\frac{C\varepsilon}{2}(|\nabla^2\Riem|^2 +|\nabla^3T|^2) + \frac{C}{2\varepsilon}(B+1)^2.
\end{align}
Now, we have that $(C+|\Riem|^2+|T|^4+|\nabla T|^2)$ and $\tilde{\Lambda}^3$ are bounded by assumption, so there exists $\tilde{C}>0$ such that
\begin{align}
    \left( \ptt -\Delta \right)\tilde{F} \leq &- \tilde{C}\left( |\nabla^2\Riem|^2 + |\nabla^3T|^2\right) + C'(B+1)\nonumber\\
    &-CB^2 +C \nonumber \\
    &+\frac{C\varepsilon}{2}(|\nabla^2\Riem|^2 +|\nabla^3T|^2) + \frac{C}{2\varepsilon}(B+1)^2,
\end{align}
which, upon choosing $\varepsilon$ so that $-\tilde{C} + \frac{C\varepsilon}{2} \leq -1$, and using Young's inequality to absorb the terms linear in $B$, can be expressed as
\begin{align}
    \left( \ptt -\Delta \right)\tilde{F} \leq &- \left( |\nabla^2\Riem|^2 + |\nabla^3T|^2\right) -CB^2 +C\\
    & \leq -CB^2+C,
\end{align}
for some universal constant $C>0$.

Now $B = \frac{\tilde{F}}{A} \geq \frac{\tilde{F}}{C}$ so $-CB^2 \leq -C\tilde{F}^2$ (where the two constants $C$ are possibly different). So,
\[
\left( \ptt -\Delta \right)\tilde{F} \leq -C\tilde{F}^2 +C.
\]
Finally, applying the product rule to $F = t\tilde{F}$ and using that $t\leq \frac{1}{K}$ results in 
\begin{equation}
    \left (\ptt - \Delta \right ) F \leq -\frac{CF^2}{t} +\frac{C}{t},
\end{equation}
for all $(x,t) \in U \times [0,1/K]$,
which concludes the proof of Claim \ref{ClaimBaseCase}.
\end{proof}
\begin{proof}[Proof of Claim \ref{ClaimInductiveStep}]
\renewcommand{\qedsymbol}{}
Here, we essentially aim to follow the idea of the proof of Claim \ref{ClaimBaseCase}, but working inductively to obtain a good inequality for $\left (\ptt - \Delta \right ) F_m$, assuming bounds on $|\nabla^j \Riem| + |\nabla^{j+1}T|$ for all $j \leq m$. Obtaining the evolution equations for $|\nabla^m\Riem|$ and $|\nabla^{m+1}T|$ via the same methods as in the proof of Claim \ref{ClaimBaseCase} is possible (and has been done in the $G_2$ setting for the Laplacian flow in \cite{Lotay-Wei} and the $G_2$ Ricci-Harmonic flow in \cite{DwivediRHF}), but the argument is very long and contains no new ideas compared to the base case. So, we present a different argument, based on \cite[Theorem 2.1]{Chen} for the case of reasonable flows of $G_2$-structures. Once we have the evolution equations for $|\nabla^m\Riem|$ and $|\nabla^{m+1}T|$, the structure of the proof will closely follow that of Claim \ref{ClaimBaseCase}.

We first use this method to derive the differential inequality for $|\nabla \Riem|^2 +|\nabla^2T|^2$, where we can compare with the more explicit method we used in the proof of Claim \ref{ClaimBaseCase}, before proceeding to the case of $|\nabla^m \Riem|^2 +|\nabla^{m+1}T|^2$.
Following \cite{Chen}, we introduce the following notation as a bookkeeping device to help obtain the required differential inequalities. Let the degree of $\Riem$ be $2$, and the degrees of $T$ and $\nabla$ both be $1$. We use this to compute the degrees of polynomials in these terms, for instance the degree of $\nabla \Riem * \nabla T$ is $1+2+1+1 = 5$, and the degree of $\nabla \Riem + \nabla T$ is $\max\{3,2\} = 3$. The degree of a given term in this notation can be thought of as an upper bound on the number of covariant derivatives of the \Sp-structure $\Phi$ that term contains.

In this sense, the degree of $(\ptt-\Delta)\Riem$ is $4$, and it contains no $\nabla^2\Riem $ or $\nabla^3T$ terms (cf. \eqref{eqddtRiem}) and the degree of $(\ptt-\Delta)T$ is $2$ but it contains no $\nabla\Riem$ or $\nabla^2T$ terms (cf.  \eqref{eqReasonableTorsion}).

Now, using that
\[\ptt \nabla T = \nabla \ptt T + T* \nabla \ptt g\] 
and
\[
\nabla \Delta T = \Delta \nabla T + \Riem * \nabla T + \nabla \Riem * T,
\]
we see that the degree of $(\ptt - \Delta)\nabla T$ is $4$ but it contains no $\nabla^2 \Riem$ or $\nabla^3 T$ term (cf. \eqref{eqddtnablaT}). Using all of this, we consider the terms $(\ptt -\Delta)|\Riem|^2+ 2|\nabla\Riem|^2$, $(\ptt -\Delta)|T|^4$, and $(\ptt -\Delta)|\nabla T|^2+ 2|\nabla^2T|^2$. All three of these terms have degree $6$. A given monomial  of degree $6$ falls into one of the following four classes:
\begin{enumerate}
    \item Firstly, it may contain a factor of degree at least $4$, such as $\nabla^2 \Riem$.
    \item Secondly, it may be the product of exactly two terms of degree $3$, such as $\nabla \Riem * \nabla^2T$.
    \item Thirdly, it may consist of a factor of degree $3$ together with several factors of degree less than $3$, such as $\nabla\Riem * \nabla T *T$. 
    \item Finally, it may consist only of factors of degree less than $3$, such as $\nabla T * \Riem *T*T$.
\end{enumerate}

Using the discussion above and the fact that $\Delta|A|^2 = 2 \langle A, \Delta A \rangle + 2 |\nabla A |^2$ for any tensor $A$, we see that the first two of the four cases above do not occur in any of the three terms
$(\ptt -\Delta)|\Riem|^2+ 2|\nabla\Riem|^2$, $(\ptt -\Delta)|T|^4$, and $(\ptt -\Delta)|\nabla T|^2+ 2|\nabla^2T|^2$ (cf. Proof of Proposition \ref{PropDoublingTime}). Applying Young's inequality to terms of the third type allow us to bound such terms by $\varepsilon$ times the degree $3$ factor plus $C_\varepsilon$ times a monomial in terms of the fourth type. Finally, terms of the fourth type can all be bounded by some constant times $(|\Riem|^2 + |\nabla T|^2 + |T|^4 +1)^{3/2}$. Thus, every term of $(\ptt -\Delta)|\Riem|^2+ 2|\nabla\Riem|^2$, $(\ptt -\Delta)|T|^4$, and $(\ptt -\Delta)|\nabla T|^2+ 2|\nabla^2T|^2$ can be bounded by 
\[
\varepsilon(|\nabla \Riem|^2 + |\nabla^2 T|^2) + C_\varepsilon(|\Riem|^2 + |\nabla T|^2 + |T|^4 +1)^{3/2},
\]
for any $\varepsilon>0$. Choosing $\varepsilon =1 $ gives
\begin{align}
    \left (\ptt  - \Delta\right )(|\Riem|^2 + |\nabla T|^2 + |T|^4 +1) \leq &- (|\nabla \Riem|^2 + |\nabla^2 T|^2)\nonumber\\
    &+ C(|\Riem|^2 + |\nabla T|^2 + |T|^4 +1)^{3/2},
\end{align}
reproducing \eqref{eqLambdaTilde}. So far, we have just used this idea to obtain differential inequalities which we had already obtained via direct methods. The point is that this less explicit method generalises nicely by induction to obtain differential inequalities for the quantities $|\nabla^m\Riem|$ and $|\nabla^{m+1}T|$ for arbitrary $m$.

Repeating the argument above, we see that the degrees of $(\ptt - \Delta)\nabla^m\Riem$ and  $(\ptt - \Delta)\nabla^{m+1}T$ are both degree $m+4$, and neither of them contains a $\nabla^{k+2}\Riem$ or $\nabla^{k+3}T$ term. 
All remaining terms are products of lower-order factors, which therefore can be estimated by repeated applications of Young's inequality. This yields a bound by
\[
C(m)\Bigg(\sum_{j=0}^m\left(|\nabla^j\Riem|^{\frac{2(m+3)}{j+2}}+|\nabla^{j+1}T|^{\frac{2(m+3)}{j+2}}\right)
+|T|^{2(m+3)}+1\Bigg),
\]
and so we have that
\begin{align}\label{eqptt-DeltaMthcase}
    \left(\ptt - \Delta \right)(|\nabla^m \Riem|^2 + |\nabla^{m+1}T|^2) \leq &-(|\nabla^{m+1}\Riem|^2+|\nabla^{m+2}T|^2)\nonumber\\
    &+C(m) \Bigg(\sum_{j=0}^m\left(|\nabla^j\Riem|^{\frac{2(m+3)}{j+2}}+|\nabla^{j+1}T|^{\frac{2(m+3)}{j+2}}\right)\nonumber\\
    &\quad \quad \quad \quad+|T|^{2(m+3)}+1\Bigg).
\end{align}
Using this, we derive the required differential inequality for 
\[F_m = t^{m+1}(\mu_m+t^m(|\nabla^m\Riem|^2+|\nabla^{m+1}T|^2))(|\nabla^{m+1}\Riem|^2+|\nabla^{m+2}T|^2),\] 
following a similar method to the proof of Claim \ref{ClaimBaseCase}. 
To simplify notation, we define:
\begin{align}
    U_m &= |\nabla^m\Riem|^2+|\nabla^{m+1}T|^2,\\
    V_m &= |\nabla^{m+1}\Riem|^2+|\nabla^{m+2}T|^2,\\
    A_m & = \mu_m + t^mU_m,\\
    B_m& = t^{m+1}V_m,
\end{align}
so that $F_m = A_mB_m$.
Then,
\begin{equation}\label{eqFmInintial}
\left(\ptt -\Delta \right)F_m = \left[\left(\ptt-\Delta \right)A_m\right]B_m + A_m\left[\left(\ptt-\Delta \right)B_m\right] - 2 \nabla A_m \cdot \nabla B_m.
\end{equation}
We deal with each term one-at-a-time. Firstly, 
\begin{align}
    \left(\ptt-\Delta \right)A_m &= \left(\ptt-\Delta \right)(\mu_m+t^mU_m)\\
    &=mt^{m-1}U_m + t^m\left(\ptt-\Delta \right)U_m, \label{eqAmTerm}
\end{align}
so for this term it suffices to control $U_m$. Using \eqref{eqptt-DeltaMthcase}, we have that
\begin{align}
  \left(\ptt-\Delta \right)U_m  \leq &-(|\nabla^{m+1}\Riem|^2+|\nabla^{m+2}T|^2)\nonumber \\
    &+C(k) \Bigg(\sum_{j=0}^m\left(|\nabla^j\Riem|^{\frac{2(m+3)}{j+2}}+|\nabla^{j+1}T|^{\frac{2(m+3)}{j+2}}\right) \nonumber\\
    &\quad \quad \quad \quad+|T|^{2(m+3)}+1\Bigg).
\end{align}
Now, by the inductive hypothesis, we have that
\[
|\nabla^j\Riem| + |\nabla^{j+1}T| \leq C(K,r,j)t^{-j/2} \text{ in }B_{g(0)}(p,r/2^j) \times (0,1/K],
\]
for all $j =0,1,\cdots,m$, so every term inside the sum is bounded. More precisely, we have that
\[
|\nabla^j\Riem|^{\frac{2(m+3)}{j+2}} \leq \left(C(K,r,j)t^{-j/2}\right)^{\frac{2(m+3)}{j+2}} = C(K,r,j,m)t^{\frac{-j(m+3)}{j+2}},
\]
and likewise for $|\nabla^{j+1}T|^{\frac{2(m+3)}{j+2}}$. Note that, for all $j=0,\cdots ,m$, we have that 
\[
\frac{j(m+3)}{j+2} \leq m+1.
\]
so, recalling that $0\leq t\leq 1/K$, it holds that
\[
C(K,r,j,m)t^{\frac{-j(m+3)}{j+2}} \leq C(K,r,j,m)t^{-(m+1)} \text{ for all } j\leq m.
\]
The terms $|T|^{2(m+3)}$ and $1$ are also bounded, so we obtain
\begin{align}
    \left(\ptt-\Delta \right)U_m  \leq &-(|\nabla^{m+1}\Riem|^2+|\nabla^{m+2}T|^2) + \frac{C}{t^m}\\
    &=-V_m + \frac{C}{t^m},
\end{align}
for some constant $C$ and for all $0\leq t \leq \min\{1,1/K\}$. Plugging this into \eqref{eqAmTerm}, we have
\begin{align}
    \left(\ptt-\Delta \right)A_m &\leq mt^{m-1}U_m -t^mV_m +\frac{C}{t}\\
    &\leq -t^mV_m +\frac{C}{t},\label{eqFinalAmInequality}
\end{align}
because $U_m \leq Ct^{-m}$, by assumption, so $mt^{m-1}U_m \leq \frac{C}{t}$ for some $C$.

We now deal with $\left(\ptt-\Delta \right)B_m$, in a very similar way. We have that
\begin{equation}\label{eqBm}
  \left(\ptt-\Delta \right)B_m = (m+1)t^mV_m + t^{m+1}\left(\ptt-\Delta \right)V_m,  
\end{equation}
so we really need to control $\left(\ptt-\Delta \right)V_m$. Recalling that $V_m = |\nabla^{m+1}\Riem|^2+|\nabla^{m+2}T|^2$, the $(m+1)th$ case of \eqref{eqptt-DeltaMthcase} gives
\begin{align}
  \left(\ptt-\Delta \right)V_m  \leq &-(|\nabla^{m+2}\Riem|^2+|\nabla^{m+3}T|^2)\nonumber\\
    &+C(k) \Bigg(\sum_{j=0}^{m+1}\left(|\nabla^j\Riem|^{\frac{2(m+4)}{j+2}}+|\nabla^{j+1}T|^{\frac{2(m+4)}{j+2}}\right)\nonumber\\
    &\quad \quad \quad \quad+|T|^{2(m+4)}+1\Bigg).
\end{align}
For the terms inside the sum, we have two cases. If $j\leq m$, we can proceed as we did for $U_m$ above, using that $\frac{j(m+4)}{j+2} \leq m+2$ to bound everything by $Ct^{-(m+2)}$ for some constant $C$. More care is needed in the case $j=m+1$, since we do not have a bound on $|\nabla^{m+1}\Riem|$ or $|\nabla^{m+2}T|$.

The contribution of the summand when $j=m+1$ is 
\[
|\nabla^{m+1}\Riem|^{\frac{2(m+4)}{m+3}}+|\nabla^{m+2}T|^{\frac{2(m+4)}{m+3}} \leq CV_m^{\frac{m+4}{m+3}}.
\]
Note that $1<\frac{m+4}{m+3}<2$. We write $q=\frac{m+4}{m+3}$.
Overall, this gives that
\begin{equation}\label{eqddtVm}
    \left(\ptt-\Delta \right)V_m  \leq -W_m + CV_m^q +\frac{C}{t^{m+2}},
\end{equation}
where $W_m = |\nabla^{m+2}\Riem|^2+|\nabla^{m+3}T|^2)$, and plugging this into \eqref{eqBm} gives
\begin{equation}
    \left(\ptt-\Delta \right)B_m \leq (m+1)t^mV_m - t^{m+1}W_m + Ct^{m+1}V_m^q + \frac{C}{t}.
\end{equation}
Now, $(m+1)t^mV_m = \frac{C}{t}B_m$. The only suspicious term is $Ct^{m+1}V_m^q$, but we can deal with that as follows.
Since $V_m = \frac{B_m}{t^{m+1}}$, we have that $t^{m+1}V_m^q = B^q_mt^{m+1-(m+1)q}$. Now, $m+1-(m+1)q = \frac{-(m+1)}{m+3} >-1$. So,
\[
t^{m+1}V_m^q = t^{\frac{-(m+1)}{m+3}}B^q_m < \frac{1}{t}B_m^q,
\]
since $t\leq 1$. Now, since $q<2$, for any $\delta>0$ there exists a constant $C_\delta$ such that \[B_m^q \leq \delta B_m^2 +C_\delta. \]
Combining all of this, we obtain
\begin{equation}\label{eqBmFinalInequality}
    \left(\ptt-\Delta \right)B_m \leq -t^{m+1}W_m +\frac{C}{t}B_m + \frac{\delta}{t}B_m^2 + \frac{C_{\delta}}{t}.
\end{equation}
Looking back to \eqref{eqFmInintial}, it remains only to estimate the final term: $\nabla A_m \cdot \nabla B_m$. Firstly, recall that $A_m = \mu_m + t^mU_m,$ so $\nabla A_m = t^m\nabla U_m$.
Now, using that 
\[
\nabla|S|^2 = 2\langle \nabla S,S \rangle \text{ and so } |\nabla |S|^2| \leq 2|S||\nabla S|,
\]
it holds that
\begin{align}
    |\nabla U_m| &\leq C(|\nabla^m \Riem||\nabla^{m+1}\Riem| + |\nabla^{m+1}T||\nabla^{m+2}T|)\\
    & \leq C\sqrt{U_m}\sqrt{V_m}.\label{eqnablaUm}
\end{align}
Likewise, $\nabla B_m = t^{m+1}\nabla V_m$ and
\begin{equation}\label{eqnablaVm}
    |\nabla V_m| \leq C\sqrt{V_m}\sqrt{W_m},
\end{equation}
where $W_m = |\nabla^{m+2}\Riem|^2+|\nabla^{m+3}T|^2$. Combining \eqref{eqnablaUm} and \eqref{eqnablaVm}, we have
\begin{align}
    |\nabla A_m \cdot \nabla B_m| &\leq Ct^{2m+1}\sqrt{U_m}V_m\sqrt{W_m}\\
    &\leq \frac{C}{\sqrt{t}}\sqrt{t^mU_m}B_m\sqrt{t^{m+1}W_m}\\
    &\leq \frac{C}{\sqrt{t}}B_m\sqrt{t^{m+1}W_m},\label{eqFinalLine}
\end{align}
where we have used that $t^mU_m \leq C$, by the inductive hypothesis. Applying Young's inequality to \eqref{eqFinalLine} above results in
\begin{align}
    |\nabla A_m \cdot \nabla B_m | &\leq \frac{\varepsilon B_m^2}{t} + C_{\varepsilon}(t^{m+1}W_m)\\
    &\leq \frac{\varepsilon B_m^2}{t} + \frac{C_{\varepsilon} A_m t^{m+1}W_m}{\mu_m}, \label{eqFinalNablaANalbaB}
\end{align}
for any $\varepsilon >0$, since $A_m \geq \mu_m$. Now, we substitute \eqref{eqFinalAmInequality}, \eqref{eqBmFinalInequality}, and \eqref{eqFinalNablaANalbaB} into \eqref{eqFmInintial}, resulting in
\begin{align}
   \left(\ptt -\Delta \right)F_m &\leq \left[ -t^mV_m +\frac{C}{t}\right]B_m + A_m \left[ -t^{m+1}W_m +\frac{C}{t}B_m + \frac{\delta}{t}B_m^2 + \frac{C_{\delta}}{t}\right] + \frac{\varepsilon B_m^2}{t} \nonumber\\
   & \quad+ \frac{C_{\varepsilon} A_m t^{m+1}W_m}{\mu_m}\\
   &\leq \frac{-B_m^2}{t} + \frac{C}{t}B_m -t^{m+1}A_mW_m + \frac{C}{t}F_m + \frac{\delta}{t}A_mB_m^2 + \frac{C_{\delta}}{t}A_m \nonumber \\
   & \quad + \frac{\varepsilon B_m^2}{t} + \frac{C_{\varepsilon} A_m t^{m+1}W_m}{\mu_m}\\
   & \leq -(1-\varepsilon)\frac{B_m^2}{t} - \left( 1- \frac{C_\varepsilon}{\mu_m} \right)A_m t^{m+1}W_m + \frac{\delta A_mB_m^2}{t}\nonumber \\
   & \quad + \frac{CB_m}{t} + \frac{CF_m}{t} + \frac{C_\delta A_m}{t},
\end{align}
where so far we have just expanded brackets and collected like terms. Now, define a constant $D_m$ depending on $m$ such that 
\[
\mu_m \leq A_m \leq \mu_m +C_m =D_m,
\]
noting that $D_m$ exists by the inductive hypothesis.
Thus,
\begin{align}
    \left(\ptt -\Delta \right)F_m &\leq -(1-\varepsilon -\delta D_m)\frac{B_m^2}{t} - \left ( 1-\frac{C_\varepsilon}{\mu_m}\right )A_mt^{m+1}W_m \nonumber \\
    &\quad +\frac{C}{t}B_m +\frac{C}{t}F_m + \frac{C_\delta}{t}.\label{eqFmSemiFinal}   
\end{align}
Recall that $D_m$ is fixed and the above holds for any $\varepsilon,\delta >0$, and that we have not yet chosen $\mu_m$. We now carefully choose $\varepsilon,\delta$ and $\mu_m$ to obtain the desired conclusion:
\begin{itemize}
    \item First, choose $\varepsilon < \frac{1}{4}$.
    \item Then, choose $\delta$ so that $\delta D_m < \frac{1}{4}$ and  choose $\mu_m$ large enough so that $\frac{C_\varepsilon}{\mu_m}< \frac{1}{2}$.
\end{itemize}
Plugging these three coefficients into \eqref{eqFmSemiFinal} gives
\begin{equation}
    \left(\ptt -\Delta \right)F_m \leq -\frac{1}{2}\frac{B_m^2}{t} - \frac{1}{2}A_mt^{m+1}W_m + \frac{C}{t}(B_m +F_m +1).
\end{equation}
Discarding the negative middle term and using that 
\[
\frac{F_m}{D_m} \leq B_m = \frac{F_m}{A_m} \leq \frac{F_m}{\mu_m}
\]
results in
\begin{equation}
    \left(\ptt -\Delta \right)F_m \leq -\frac{1}{2D_m^2}\frac{F_m^2}{t} + \frac{C}{t}F_m + \frac{C}{t} .
\end{equation}
A final application of Young's inequality to absorb the linear $F_m$ term yields the desired inequality:
\begin{equation}
    \left(\ptt -\Delta \right)F_m \leq -\frac{C}{t}F_m^2  + \frac{C}{t},
\end{equation}
where $C>0$, which concludes the proof of Claim \ref{ClaimInductiveStep}, and therefore the proof of Theorem \ref{ThmShiType}.
\end{proof}
\end{proof}
Now that we have a local derivative estimate, we can obtain a global estimate on any compact manifold. We state and prove the precise statement now.
\begin{thm}
Let $K>0$. Let $M$ be a compact $8$-manifold and $\Phi(t)$ a solution to a reasonable flow of \Sp-structures for $t\in [0,1/K]$ on $M$. 
Suppose $\Lambda(x,t) \leq K$ for all $(x,t) \in M \times [0,1/K]$
For all $m\in \mathbb{N}$, there exists a constant $C_k$ such that
\[
    |\nabla^m\Riem| + |\nabla^{m+1}T| \leq C(K,m)t^{-m/2}
    \]
    for all $x \in M$, $t \in [0,1/K]$.
\end{thm}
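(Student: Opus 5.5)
The natural route is to deduce this global statement directly from the local estimate of Theorem \ref{ThmShiType}. Compactness of $M$ makes the localisation trivial: the constants in the local theorem depend only on $K$, $m$ and the radius $r$, so once a single radius $r>0$ is fixed, the local bound holds on a uniform-size ball around every point.

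I would fix $r=1$ (any fixed value works). For each $p\in M$, take $U=M$, which is an open neighbourhood of $p$. Since $M$ is compact, the closed ball $\overline{B_{g(0)}(p,1)}$ is a closed subset of a compact space, hence compact. So $B_{g(0)}(p,1)\subset U$ has compact closure, as the hypothesis requires. The assumed bound $\Lambda\leq K$ on $M\times[0,1/K]$ is exactly the hypothesis of Theorem \ref{ThmShiType} with $U=M$. That theorem then gives
\[
|\nabla^m\Riem(x,t)|+|\nabla^{m+1}T(x,t)|\leq C(K,m,1)\,t^{-m/2}
\]
for all $x\in B_{g(0)}(p,1/2)$ and $t\in[0,1/K]$. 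Every $x\in M$ lies in $B_{g(0)}(x,1/2)$, so applying this with $p=x$ yields the bound at every point with the constant $C(K,m):=C(K,m,1)$, independent of $x$. No covering argument is needed, because the constant does not depend on $p$.

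The only point needing care is whether the constant from Theorem \ref{ThmShiType} really depends only on $(K,m,r)$ and not on the particular point $p$ or neighbourhood $U$. In its proof, the constants come from three sources: the bound $\Lambda\leq K$, the structural constants of the reasonable flow (including the bounds on $C$ and its derivatives), and the cut-off function from \cite[Lemma 14.3]{CCGGIIKLLN2}. The cut-off bound on $-\Delta\eta+2\eta^{-1}|\nabla\eta|^2$ depends only on $r$ and the curvature bound, which is controlled by $K$. I would check this last dependence explicitly.

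Alternatively, one can avoid the cut-off function altogether. On a compact manifold, the functions $F$ and $F_m$ of Claims \ref{ClaimBaseCase} and \ref{ClaimInductiveStep} attain their maxima on $M\times[0,1/K]$ without localisation. This is because the inequalities in those claims were derived on all of $U=M$: for $m\geq 1$ the inductive hypothesis now holds globally, so the claims hold on all of $M$. At a maximum point with $t_0>0$, the inequality $(\partial_t-\Delta)F_m\leq -CF_m^2/t+C/t$ forces $F_m^2\leq C$, so $F_m\leq C$. Unwinding the definition of $F_m$ gives the estimate, and this argument removes any dependence on $r$.
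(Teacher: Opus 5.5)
Your proposal is correct. Your first route is essentially the paper's argument. The paper also applies Theorem \ref{ThmShiType} on balls $B_{g(0)}(p_i,r_0)$, but it first takes a finite subcover by the balls $B_{g(0)}(p_i,r_0/2)$ and then takes $\max_i C(K,m,r_0,i)$.

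That finite-cover-and-maximum step protects the paper's version against the local constant secretly depending on the centre, for instance through the cut-off function. The price is that the resulting constant a priori depends on the cover, hence on $(M,g(0))$. You instead take the dependence $C(K,m,r)$ as stated in Theorem \ref{ThmShiType} and drop the cover. That is legitimate given the statement, and it yields a constant depending only on $K$ and $m$.

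The point you flag is real. For a flow that is not Ricci flow, the cut-off bound from \cite[Lemma 14.3]{CCGGIIKLLN2} is not automatic. Controlling $\Delta_{g(t)}\eta$ involves $\Gamma(t)-\Gamma(0)$, hence $\nabla\partial_t g$, which contains $\nabla\Ric$ and is not controlled by $\Lambda$ alone.

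Your second route is genuinely different and, for this compact statement, cleaner. On a closed manifold the inductive hypothesis holds globally, so Claims \ref{ClaimBaseCase} and \ref{ClaimInductiveStep} hold on all of $M\times(0,1/K]$. Moreover $F$ and $F_m$ vanish at $t=0$ and attain their maxima. The maximum principle then gives $F_m\le C$ with no cut-off function and no radius at all. This bypasses the cut-off lemma entirely, and the constants depend only on $K$, $m$ and the structural constants of the flow. It does not recover the local Theorem \ref{ThmShiType} itself, but that is not needed for this statement.
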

\begin{proof}
    Fix $r_0 >0$, and cover $M$ with balls of radius $r_0/2$:
    \[
    M = \bigcup_{p_i\in M}B_{g(0)}(p_i,r_0/2),
    \]
    Since $M$ is compact, we can take a finite subcover 
    \[
    M = \bigcup_{i=1}^NB_{g(0)}(p_i,r_0/2),
    \]
    for some $N$ points $p_i \in M$ and $N<\infty$.
For each $i = 1, \cdots ,N$, apply Theorem \ref{ThmShiType} to the ball $B_{g(0)}(p_i,r_0)$, obtaining the conclusion that, for all $m\in \mathbb{N}$, there exists a constant $C(K,m,r,i)$ such that
    \[
    |\nabla^m\Riem(x,t)| + |\nabla^{m+1}T(x,t)| \leq C(K,m,i)t^{-m/2}
    \]
    for all $x \in B_{g(0)}(p_i,r_0/2)$, $t \in [0,1/K]$. Then, taking $C(K,m) = \max_{i}C(K,m,i)$ gives the result.
\end{proof}
\section{Long time existence}\label{SectionLongTime}
Given any initial \Sp-structure $\Phi(0)$ evolving according to any reasonable flow of \Sp-structures, short time existence guarantees a solution $\Phi(t)$ on a \emph{maximal time interval} $[0,T_0)$, where \emph{maximal} is taken to mean that either $T_0 = \infty$, or $T_0 < \infty$ and there do not exist $\varepsilon>0$ and a solution $\tilde{\Phi}(t)$ on the time interval $[0,T_0+\varepsilon)$, such that $\tilde\Phi(0) = \Phi(0)$. This $T_0$ is called the \emph{singular time}, and the following theorem shows that $\Lambda$ must blow up at a singular time of any reasonable flow.

\begin{thm}\label{ThmLongTime}
    Let $\Phi(t)$ be a solution to a reasonable flow of \Sp-structures on a compact manifold $M$ on a maximal time interval $[0,T_0)$, with $T_0<\infty$, and let $\Lambda(t) = \sup_{x \in M}\Lambda(x,t)$, where $\Lambda(x,t)$ is as defined in \eqref{eqLambdaDef}.
    Then, 
    \begin{equation}\label{eqLambdaBlowsup}
       \lim_{t\nearrow T_0} \Lambda(t) = \infty, 
    \end{equation}
    and we have the following lower bound on the blow-up rate:
    \begin{equation}\label{eqLambdaLowerBound}
       \Lambda(t) \geq \frac{C}{T_0-t}, 
    \end{equation}
    for some constant $C>0$.
\end{thm}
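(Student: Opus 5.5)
We argue in two parts: first the blow-up \eqref{eqLambdaBlowsup}, then the rate \eqref{eqLambdaLowerBound}, which will come from Proposition \ref{PropDoublingTime}.

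\textbf{Step 1: if $\Lambda$ stays bounded, the flow extends.} Suppose, for contradiction, that $\Lambda(t)\le K$ for all $t\in[0,T_0)$. We will show that $\Phi(t)$ converges smoothly as $t\nearrow T_0$ to a \Sp-structure $\Phi(T_0)$.

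First, the metrics are uniformly equivalent. Since $\ptt g = -2\Ric + L(T)+T*T+C$ and $|\Ric|,|T|$ are bounded in terms of $K$, we get $|\ptt g|_g\le C$. Because $T_0<\infty$, this gives $e^{-CT_0}g(0)\le g(t)\le e^{CT_0}g(0)$.

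Second, we need uniform higher-derivative bounds. Apply the global Shi-type estimate (the corollary of Theorem \ref{ThmShiType}) on intervals $[t_1,t_1+1/K]\cap[0,T_0)$ with $t_1\ge T_0-1/(2K)$, starting from times bounded away from $t_1$. This yields uniform bounds on $|\nabla^m\Riem|+|\nabla^{m+1}T|$ for all $m$, on $[T_0-1/(2K),T_0)$.

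Third, convert these to bounds for $\Phi$ itself. We have $\ptt\Phi = A\diamond\Phi$, with $h$ and $X$ given schematically by \eqref{eqReasonableMetric} and \eqref{eqreasonableX}. So $|\nabla^k \ptt\Phi|$ is bounded using $\nabla\Phi = T*\Phi$ and the bounds on $C$. To get bounds with respect to the fixed connection $\nabla^{g(0)}$, we use the standard Hamilton argument: $\ptt(\nabla^{g(t)}-\nabla^{g(0)}) = g^{-1}*\nabla \ptt g$, and we induct on $k$. This gives bounds on $\nabla^{g(0)}$-derivatives of $\ptt\Phi$ of all orders. Integrating in time, $\Phi(t)\to\Phi(T_0)$ in $C^\infty$.

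Fourth, we must check that the limit is admissible, since admissibility is a closed, non-open condition. We will use Theorem \ref{ThmCharacterisation}. Condition 2 is a closed algebraic condition preserved under limits, provided its hypotheses are understood with nondegeneracy in hand. For non-degeneracy, \eqref{EqPositivity4form} gives $(v\lrcorner w\lrcorner\Phi)^2\wedge\Phi = 6|v\wedge w|^2_{g(t)}\mathrm{vol}_{g(t)}$. Uniform equivalence of the metrics bounds this below by $c|v\wedge w|^2_{g(0)}\mathrm{vol}_{g(0)}$ with $c>0$ uniform in $t$, so the limit is still positive.

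Alternatively, and perhaps more cleanly, we can note that $\Phi(t)$ lies in the closed $\mathrm{GL}$-orbit bundle. The limiting metric $g(T_0)=\lim g(t)$ is positive definite, and the frames $g(t)$-orthonormal Spin(7)-adapted frames are bounded, so a subsequence converges to an adapted frame for $\Phi(T_0)$. I would present this frame argument as the main route, with Theorem \ref{ThmCharacterisation} as a backup.

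Finally, restart the flow from $\Phi(T_0)$ by short-time existence. By uniqueness, the glued solution is smooth across $T_0$; the time derivatives match because all derivatives converge. This contradicts maximality, so $\Lambda$ is unbounded. Upgrading $\limsup$ to $\lim$ follows from Step 2.

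\textbf{Step 2: the rate.} Fix $t<T_0$ and apply Proposition \ref{PropDoublingTime} to the flow restarted at time $t$. This gives $\tilde\Lambda(s)\le 2\tilde\Lambda(t)$ for $s\in[t,t+\min\{T_0-t,1/(C\tilde\Lambda(t))\}]$. If $1/(C\tilde\Lambda(t))<T_0-t$ failed, then $\tilde\Lambda$ would be bounded on $[t,T_0)$, contradicting Step 1. Hence $\tilde\Lambda(t)\ge 1/(C(T_0-t))$, and this already forces $\lim \Lambda=\infty$. Since $\tilde\Lambda\le\Lambda+1$ and $1/(T_0-t)\ge 1/T_0$, for $t$ near $T_0$ we get $\Lambda(t)\ge C'/(T_0-t)$. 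On the remaining compact interval of earlier times, the bound follows by shrinking $C'$, provided $\Lambda>0$ there. If $\Lambda$ vanished somewhere, the statement should be read for $t$ close to $T_0$.

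\textbf{Main obstacle.} The step I expect to be hardest is showing that the limit $\Phi(T_0)$ is a genuine \Sp-structure, together with the uniform $C^\infty$ convergence with respect to a fixed background metric. This is more delicate than in the $G_2$ case because admissibility is not an open condition.
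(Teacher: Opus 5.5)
Your proposal is correct. Step 1 follows the paper's architecture: uniform equivalence of the metrics from $|\ptt g|\le C$, Shi-type bounds on all derivatives, Hamilton's argument with $S=\overline{\nabla}-\nabla$ for derivatives relative to $g(0)$, $C^\infty$ convergence, and a restart. You depart from the paper at two points, both to your advantage.

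\emph{Admissibility of the limit.} The paper combines the positivity identity \eqref{EqPositivity4form} with the assertion that condition 2 of Theorem \ref{ThmCharacterisation} is a ``continuous algebraic condition'' and so passes to limits. That condition is an implication between vanishing conditions, so its closedness is not evident. Your frame argument settles the question directly and is self-contained: by uniform equivalence, the adapted $g(t)$-orthonormal frames at $x$ stay in a compact subset of $\mathrm{GL}(T_xM)$, and a subsequential limit is an oriented $g(T_0)$-orthonormal basis in which $\Phi(T_0)|_x=\Phi_0$.

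\emph{The rate.} The paper first upgrades $\limsup$ to $\lim$ via Proposition \ref{PropDoublingTime}, then integrates $\frac{\d}{\d t}\tilde\Lambda^{-1}\ge -C/2$ from $t$ to $T_0$. You instead apply the doubling-time estimate from each time $t$ and use Step 1 to exclude $1/(C\tilde\Lambda(t))\ge T_0-t$. This gives $\lim\Lambda=\infty$ and the rate in one stroke. Both routes rest on the same inequality $\frac{\d}{\d t}\tilde\Lambda\le C\tilde\Lambda^2$, so they have the same strength. Your passage from $\tilde\Lambda$ to $\Lambda$ via $\Lambda\ge\tilde\Lambda-1$ near $T_0$ is the right one; the paper's remark that $\lim\Lambda=\lim\tilde\Lambda$ does not by itself transfer a rate. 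Your caveat that the bound can only be asserted near $T_0$ if $\Lambda$ vanishes at some earlier time is also legitimate.

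Two small slips. First, to get uniform Shi bounds for $t$ near $T_0$, start the estimate at $t_1=\max\{0,t-1/(2K)\}$, so that $t-t_1$ stays bounded below, not at some $t_1\ge T_0-1/(2K)$. Second, smoothness of the glued solution across $T_0$ comes from convergence of all spatial derivatives together with the equation, which expresses time derivatives through spatial ones. It does not come from uniqueness; what the glued solution contradicts is maximality.
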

\begin{proof}
    We follow the proof of \cite[Theorem 5.1]{Lotay-Wei} in the $G_2$-setting, pointing out where our proof necessarily differs due to the different flows under consideration.
    We first show that $\lim_{t\nearrow T_0}\Lambda(t) = \infty$, by contradiction. We shall assume this is not the case, and show that we can then obtain a limiting \Sp-structure $\Phi(T_0)$, and show that $\Phi(t)$ converges smoothly to $\Phi(T_0)$, which contradicts the maximality of $T_0$, as we continue the flow past $T_0$. After that, we shall conclude the lower bound on the blow-up rate via an application of the maximum principle.

    Suppose $\Phi(t)$ is a solution to a reasonable flow of \Sp-structures on a maximal time interval $[0,T_0)$, and suppose for the sake of contradiction that \eqref{eqLambdaBlowsup} does not hold. Then, there exists $K>0$ such that 
    \begin{equation}\label{eqAssumptionBoundedLambda}
        \sup_{M \times [0,T_0)} \Lambda(x,t) = \sup_{M \times [0,T_0)}(|\nabla T|^2_{g(t)} + |\Riem|^2_{g(t)}+ |T|^4_{g(t)})^{1/2}\leq K.
    \end{equation}
    In particular, this gives the following uniform curvature and torsion bounds on $M \times [0,T_0)$:
    \begin{align}
        \sup_{M \times [0,T_0)}|\Riem|_{g(t)} &\leq C(K),\label{eqRiemBound} \\ 
         \sup_{M \times [0,T_0)}|T|_{g(t)}&\leq C(K), \label{eqTbound}
    \end{align}
for some universal constant $C(K)$ depending only on $K$. Note here that the authors of \cite{Lotay-Wei} do not require the bound on $|T|$ \eqref{eqTbound}, since in the $G_2$ setting $|R|=|T|^2$ so the torsion bound is implied by the curvature bound. This is one reason for the difference in our quantity $\Lambda$ compared with that of \cite{Lotay-Wei}.

Now, using \eqref{eqRiemBound}, \eqref{eqTbound} and the evolution equation for the metric \eqref{eqReasonableMetric}, we have that
\begin{equation}\label{eqimpliesmetricsunifequiv}
    \sup_{M \times [0,T_0)}\left|\ptt g(t) \right |_{g(t)} = \sup_{M \times [0,T_0)}\left|\Riem + T*T + L(T) +C \right |_{g(t)} \leq C(K)
\end{equation}
which implies that all of the metrics $g(t)$ for $0 \leq t < T_0$ are uniformly equivalent (cf. \cite[Lemma 14.2]{Hamilton3Manifolds}). Also, because of the requirement for coefficients to be bounded by $\Lambda$ in the definition of reasonable flows \eqref{eqReasonableMetric}, \eqref{eqreasonableX}, we have that
\begin{equation} \label{eqBoundedPhiDerivatives}
    \sup_{M \times [0,T_0)}\left|\ptt \Phi(t) \right |_{g(t)} \leq C(K),
\end{equation}
for some universal constant $C$. We now aim to use these ingredients to obtain a limiting \Sp-structure $\Phi(T_0)$ and show that $\Phi(t)$ converges smoothly to $\Phi(T_0)$. With this in mind, we fix a background metric $\bar{g} = g(0)$. Because of \eqref{eqBoundedPhiDerivatives} and the uniform equivalence of the metrics $g(0)$ and $g(t)$, we have that 
\begin{equation}
    \left |\ptt \Phi(t) \right |_{\bar{g}} \leq  C\left |\ptt \Phi(t) \right |_{g(t)} \leq CK.
\end{equation}
So, for any $0< t_1 < t_2 <T_0$, we have 
\begin{equation}
    |\Phi(t_2) - \Phi(t_1)|_{\bar{g}} \leq \bigintssss_{t_1}^{t_2}\left|\ptt \Phi \right |_{\bar{g}} \d t\leq CK(t_2-t_1),
\end{equation}
and so $\Phi(t)$ converges to a $4$-form $\Phi(T_0)$ as $t\nearrow T_0$. We now argue that this limiting $4$-form is actually a \Sp-structure. Recall that, by \eqref{EqPositivity4form}, for all $0 \leq t < T_0$, we have that
\begin{equation}\label{eqPosDef}
    (v \lrcorner w \lrcorner\Phi(t))\wedge(v \lrcorner w \lrcorner\Phi(t))\wedge \Phi(t) = 6|v\wedge w|_{g(t)}^2\mathrm{vol}_{\Phi(t)},
\end{equation} 
and we have that the metric $g(t)$ and volume form $\mathrm{vol}_{\Phi(t)}$ converge to a metric $g(T_0)$ and a volume form $\mathrm{vol}_{\Phi(T_0)}$. So, the right-hand-side of \eqref{eqPosDef} converges to a positive $8$-form-valued function of $v$ and $w$, and so the limiting $4$-form $\Phi(T_0)$ is \emph{non-degenerate}, in the sense of Definition \ref{DefNonDegenerate}. Note that, as discussed in Section \ref{SectionPrelims}, this is not enough to show that $\Phi(T_0)$ is a \Sp-structure, unlike in the case of flows of $G_2$-structures. However, we note that the extra conditions of Theorem \ref{ThmCharacterisation} are continuous algebraic conditions on $\Phi(t)$, and so are preserved in the limit as $t \to T_0$. This, together with the preservation of non-degeneracy discussed above, gives a continuous extension $\Phi(t)$ to the time interval $[0,T_0]$, but does not yet provide a contradiction.

To obtain a contradiction, we show that this convergence is actually smooth.
In doing so, we will need the following two claims, whose proofs we postpone until the end of the proof of this theorem.
\begin{claim}\label{ClaimSupMetricDerivative}
    For all $m \in \mathbb{N}$, there exist constants $C_m$ such that
    \begin{equation}
       \sup_{M \times [0,T_0)} \left|\overline{\nabla}^m g(t)\right |_{\bar{g}} \leq C_m,
    \end{equation}
    where $\overline{\nabla}$ is the Levi-Civita connection induced by the metric $\bar{g}$.
\end{claim}
\begin{claim}\label{ClaimSupSpin7Derivative}
        For all $m \in \mathbb{N}$, there exist constants $C_m$ such that
    \begin{equation}
       \sup_{M \times [0,T_0)} \left|\overline{\nabla}^m \Phi(t)\right |_{\bar{g}} \leq C_m,
    \end{equation}
    where $\overline{\nabla}$ is the Levi-Civita connection induced by the metric $\bar{g}$.
\end{claim}

Now, we have from just before Claim \ref{ClaimSupMetricDerivative} that a continuous limit \Sp-structure $\Phi(T_0)$ exists, and in a fixed local coordinate chart $U$ it satisfies
\begin{equation}\label{eqIntegralPhiEquation}
    \Phi(T_0) = \Phi(t) + \bigintssss_t^{T_0}(A \diamond\Phi)(s) \d s,
\end{equation}
where $A = h+X$, as in Definition \ref{DefReasonable}.
Let $\alpha = (\alpha_1, \cdots, \alpha_r)$ be a multi-index with $|\alpha| = m.$ By Claim \ref{ClaimSupSpin7Derivative} and its proof \eqref{eqNablamAdiamondPhiBounded}, we have that
\begin{equation}
    \frac{\pt^m}{\pt x^\alpha}\Phi \text{ and } \frac{\pt^m}{\pt x^\alpha}(A \diamond \Phi)
\end{equation}
are uniformly bounded on $U \times [0,T_0)$, since we are working on a local chart. So, Equation \eqref{eqIntegralPhiEquation} gives that $\frac{\pt^m}{\pt x^\alpha}\Phi(T_0)$ is bounded, and hence $\Phi(T_0)$ is a smooth \Sp-structure. Moreover, \eqref{eqIntegralPhiEquation} also gives that
\[
\left| \frac{\pt^m}{\pt x^\alpha}\Phi(T_0) - \frac{\pt^m}{\pt x^\alpha}\Phi(t)\right | \leq C(T_0-t),
\]
and so $\Phi(t)$ converges uniformly to $\Phi(T_0)$ in any $C^m$ norm as $t \nearrow T_0$, for any $m \geq2$.

Now, short time existence gives a solution $\bar{\Phi}(t)$ to the reasonable flow, starting from the initial condition $\bar{\Phi}(0) = \Phi(T_0)$, for some time $0\leq t < \varepsilon$. But, since $\Phi(t)$ converges to $\Phi(T_0)$ smoothly as $t \nearrow T_0$, we have that
\begin{equation}\label{eqMaximalityContradictionConstruction}
    \tilde{\Phi}(t) = \begin{cases}
        \Phi(t), &0\leq t<T_0,\\
        \bar{\Phi}(t-T_0), &T_0\leq t < T_0+\varepsilon,
    \end{cases}
\end{equation}
is another solution to the reasonable flow of \Sp-structures starting from the same initial condition $\tilde{\Phi}(0) = \Phi(0)$, and so by uniqueness we have a contradiction to the maximality of $T_0$. Thus, we must have that 
\begin{equation}
    \limsup_{t\nearrow T_0}\Lambda(t) = \infty,
\end{equation}
and it remains to prove that, in fact, $\lim_{t \nearrow T_0} \Lambda(t) = \infty.$ Indeed, if this is not the case, then also $\lim_{t \nearrow T_0} \tilde\Lambda(t) < \infty,$ where $\tilde\Lambda$ is as defined in Equation \eqref{eqlambdatildedef}. So, there exists a constant $K_0$ and a sequence $t_i \nearrow T_0$ such that $\tilde\Lambda(t_i) < K_0$ for all $i$. Then, by the doubling-time estimate for $\tilde\Lambda$ (Theorem \ref{PropDoublingTime}), it holds that
\begin{equation}
    \tilde{\Lambda}(t) \leq 2 \tilde\Lambda(t_i) \leq 2K_0,
\end{equation}
for all $t \in \left[t_i, \min \big\{T_0, t_i + \frac{1}{CK_0}\big\}\right]$. Since $t_i \rightarrow T_0$ and ${CK_0}$ is a positive constant, $T_0 < t_i + \frac{1}{CK_0}$ for all sufficiently large $i$. So, 
\begin{equation}
\min  \bigg\{T_0, t_i + \frac{1}{CK_0}\bigg\} =  T_0 \text{ for all sufficiently large }i,
\end{equation}
and so we have that 
\begin{equation}
    \sup_{M\times[t_i,T_0)} \tilde\Lambda(x,t)\leq K_0,
\end{equation}
and $\Lambda(x,t) \leq \tilde \Lambda(x,t)$ for all $x,t$, so
\begin{equation}
    \sup_{M\times[t_i,T_0)} \Lambda(x,t)\leq K_0,
\end{equation}
for all $i$ sufficiently large. But, we have already shown (above, ending with Equation \eqref{eqMaximalityContradictionConstruction}) that this leads to a contradiction to the maximality of $T_0$, so this proves \eqref{eqLambdaBlowsup}. Finally, we prove the lower bound on the blow-up rate.
Applying the maximum principle to Equation \eqref{eqLambdaTilde} gives 
\begin{equation}
    \frac{\d}{\d t}\tilde \Lambda^2(t) \leq C \tilde \Lambda^3(t),
\end{equation}
which implies
\begin{equation}\label{eqLambdaTildeInverseEvolution}
    \frac{\d}{\d t}\tilde \Lambda^{-1}(t) \geq -\frac{C}{2}.
\end{equation}
Integrating \eqref{eqLambdaTildeInverseEvolution} from $t$ to $t'$ for some $t<t'<T_0$ gives
\begin{equation}
    \tilde{\Lambda}^{-1}(t')- \tilde{\Lambda}^{-1}(t) \geq -\frac{C}{2}(t'-t).
\end{equation}
Taking the limit as $t'\nearrow T_0$ and using that $\lim_{t\nearrow T_0} \Lambda(t) = \infty$ and hence $\lim_{t\nearrow T_0} \Lambda^{-1}(t) = 0$, we obtain
\begin{equation}
    - \tilde{\Lambda}^{-1}(t) \geq -\frac{C}{2}(T_0-t),
\end{equation}
and so
\begin{equation}
    \tilde{\Lambda}(t) \geq \frac{2}{C(T_0-t)}.
\end{equation}
Finally, we have that
\[
\lim_{t \to T_0}\Lambda(t) = \lim_{t \to T_0}\tilde \Lambda(t),
\]
and so this gives the same lower bound on the blow-up rate for $\Lambda$, modulo Claims \ref{ClaimSupMetricDerivative} and \ref{ClaimSupSpin7Derivative}, which we prove now.

\begin{proof}[Proof of Claim \ref{ClaimSupMetricDerivative}]
This is very similar to the case of Ricci flow \cite[Proposition 6.48]{ChowRicci}, whose proof we follow.
Let $S = \overline\nabla - \nabla$ be the difference of the Levi-Civita connections with respect to the metrics $\bar g$ and $g$, respectively. Note that $S$ is therefore a tensor.
We prove, by induction, the following statements:
\begin{equation}\label{eqSupMetricInductive1}
    \sup_{M \times [0,T_0)} \left|\overline{\nabla}^j g(t)\right |_{\bar{g}} \leq C_j, \text{ for  } 0\leq j \leq m,
\end{equation}
and
\begin{equation}\label{eqSupMetricInductive2}
    \sup_{M \times [0,T_0)} \left|\overline{\nabla}^j S(t)\right |_{\bar{g}} \leq C_j', \text{ for  } 0\leq j \leq m-1,
\end{equation}
for some constants $C_j$ and $C_j'$. 
Note that 
\begin{equation}
    \overline{\nabla}g^{-1} = -g^{-1}*\left (\overline{\nabla }g\right )*g^{-1},
\end{equation} so inductive bounds on $\overline{\nabla}^jg$ also imply bounds on $\overline{\nabla}^jg^{-1}$. 
Since the metrics $g(t)$ are all uniformly equivalent to $\bar g$ \eqref{eqimpliesmetricsunifequiv}, the case $m=0$ for $g$ holds immediately, so we begin with the case $m=1$.
Since $\overline \nabla$ is fixed in time, we have that
\begin{equation}
    \ptt S_{ij}^k = -\ptt\Gamma_{ij}^k = -\frac{1}{2}g^{kl}\left (\nabla_i \ptt g_{jl} + \nabla_j \ptt g_{il} -\nabla_l\ptt g_{ij} \right ) ,
\end{equation}
where $\Gamma_{ij}^k$ are the Christoffel symbols of the metric $g$.
So, schematically,
\begin{align}
    \ptt S &= -g^{-1}*\nabla\ptt g\\
    &=-g^{-1}* \nabla(\Ric + T*T + L(T) +C). \label{eqrighthandsideS}
\end{align}
By the assumption \eqref{eqAssumptionBoundedLambda}, $\nabla \Ric$, $\nabla T$ and $T$ are all uniformly bounded, so every term on the right-hand-side of \eqref{eqrighthandsideS} is uniformly bounded on $M\times [0,T_0)$.
Since $M$ is compact and the initial data are smooth, the initial quantity
\begin{equation}
    \sup_M |S(0)|_{\bar g}
\end{equation}
is finite.
Integrating with respect to time $t$, we obtain
\begin{align}
    |S(t)|_{\bar g} &\leq |S(0)|_{\bar g} +\int_0^t\left|\frac{\partial}{\partial s}S(s) \right|_{\bar g}\mathrm{d}s  \\
    &\leq |S(0)|_{\bar g} +C\int_0^t\left|\frac{\partial}{\partial s}S(s) \right|_{ g(s)}\mathrm{d}s \leq |S(0)|_{\bar g}+CT_0 \leq C, \label{eqSBounded}
\end{align}
where the second inequality holds by uniform equivalence of the metrics $\bar g$ and $g(s)$, and the third by the boundedness of the right-hand side of \eqref{eqrighthandsideS}.

Now $\nabla g = 0$ so $\overline \nabla g = (\overline \nabla g - \nabla g) = S*g$. Since $S$ is bounded and $\bar g$ is uniformly equivalent to $g$, we conclude that
\begin{equation}
    |\overline \nabla g|_{\bar g} \leq C,
\end{equation}
proving the base case of Claim \ref{ClaimSupMetricDerivative}.

For the inductive step, we suppose that \eqref{eqSupMetricInductive1} and \eqref{eqSupMetricInductive2} hold, and we first show that
$|\overline \nabla S|^m_{\bar g}$ is bounded, before using that to deduce the claim for $g$.

Again, since $\overline \nabla $ is constant in time, we have that
\begin{equation}\label{eqddtnablamS}
    \ptt \overline \nabla^m S = \overline \nabla^m \ptt S = - \overline  \nabla^m\left( g^{-1}*\nabla(\Ric + T*T +L(T) +C)\right) .
\end{equation}
For any tensor $A$, it holds that
\begin{equation}
    \overline{\nabla}A = \nabla A + S*A.
\end{equation}
Applying this fact iteratively, we obtain, schematically:
\begin{equation}
    \overline{\nabla}^l A
    =
    \sum
    \overline{\nabla}^{a_1}S*\cdots *
    \overline{\nabla}^{a_q}S*
    \nabla^b A,
\end{equation}
where the case \(q=0\) and \(b=l\) gives the leading term
\(\nabla^l A\), while all remaining terms satisfy
\begin{equation}
    a_1+\cdots+a_q+b \leq l-1.
\end{equation}
Applying this to \eqref{eqddtnablamS} and using the inductive bounds on $\overline{\nabla}S$, $\overline{\nabla}g$ and $\overline{\nabla}g^{-1}$ together with our Shi-type estimates (Theorem \ref{ThmShiType}) yields
\begin{equation}
    \left|\ptt \overline{\nabla}^mS \right|_{\bar g} \leq C_m,
\end{equation}
for some constant $C_m$ depending on $m$.
Again, since $M$ is compact and the initial data are smooth,
\begin{equation}
    \sup_M |\overline{\nabla}^m S(0)|_{\bar g}<\infty.
\end{equation}
As before, integrating with respect to time gives
\begin{align}
    |\overline{\nabla}^mS(t)|_{\bar g} &\leq |\overline{\nabla}^mS(0)|_{\bar g} +\int_0^t\left|\frac{\partial}{\partial s}\overline{\nabla}^mS(s) \right|_{\bar g}\mathrm{d}s  \\
    &\leq |\overline{\nabla}^mS(0)|_{\bar g} +C\int_0^t\left|\frac{\partial}{\partial s}\overline{\nabla}^mS(s) \right|_{ g(s)}\mathrm{d}s \leq C_m,\label{eqnablamsbounded}
\end{align}
Finally, we use this to prove the claim. As above, we have that $\overline{\nabla}g = S*g$, so 
\begin{equation}
    \overline{\nabla}^{m+1}g = \overline{\nabla}^m(S*g) = \sum_{i=0}^m\overline{\nabla}^iS*\overline{\nabla}^{m-i}g,
\end{equation}
which is uniformly bounded by applying \eqref{eqnablamsbounded} to the first factor and the inductive assumption to the second factor.
\end{proof}

\begin{proof}[Proof of Claim \ref{ClaimSupSpin7Derivative}]
We begin by proving the claim in the case $m=1$.
Writing our reasonable flow equation as $\ptt \Phi = A\diamond\Phi$, as in Definition \ref{DefReasonable}, we have that
\begin{align}
    \ptt \overline \nabla \Phi & = \overline{\nabla}\ptt\Phi\\
    &=\overline{\nabla}(A\diamond\Phi)\\
    &= \nabla(A\diamond\Phi) + S*(A\diamond\Phi),
\end{align}
since $S =\overline{\nabla}- \nabla$. Now, by \eqref{eqSBounded}, $S$ is bounded and we have that $\nabla(A\diamond \Phi)$ is bounded by the structure of $A = h +X$ (see Equations \eqref{eqReasonableMetric} and \eqref{eqreasonableX}) and the Shi type estimates (Theorem \ref{ThmShiType}). So, we have that
\begin{equation}
    \left| \ptt \overline{\nabla}\Phi \right|_{\bar{g}} \leq C,
\end{equation}
and integrating with respect to time gives
\begin{equation}
    \left|  \overline{\nabla}\Phi(t) \right|_{\bar{g}} \leq \left|  \overline{\nabla}\Phi(0) \right|_{\bar{g}} + \int_{0}^t\left| \frac{\pt}{\pt s} \overline{\nabla}\Phi(s) \right|_{\bar{g}} \leq \left|  \overline{\nabla}\Phi(0) \right|_{\bar{g}} + CT_0 \leq C,
\end{equation}
which is the case $m=1$ of Claim \ref{ClaimSupSpin7Derivative}.

For $m \geq 2$, we have
\begin{align}
    \left| \ptt \overline{\nabla}^m\Phi(t)\right |_{\bar{g}} &= \left| \overline{\nabla}^m \ptt\Phi(t)\right |_{\bar{g}}\\
    &=\left| \overline{\nabla}^m (A\diamond\Phi)(t)\right |_{\bar{g}}\\
    & \leq C \sum_{i=0}^m|S|^i\left|\nabla^{m-i}(A\diamond\Phi)(t) \right| + C\sum_{i=0}^{m-1}|\overline{\nabla}S|^i\left|\nabla^{m-1-i}(A\diamond\Phi)(t) \right|\label{eqNablamAdiamondPhiBounded}
\end{align}
This, together with the Shi-type estimates (Theorem \ref{ThmShiType}), structure of $A\diamond\Phi$ ( \eqref{eqReasonableMetric} and \eqref{eqreasonableX}), and \eqref{eqnablamsbounded} gives,
\begin{equation}
    \left| \ptt \overline{\nabla}^m\Phi(t)\right |_{\bar{g}} \leq C_m,
\end{equation}
for some constant $C_m$, and integrating with respect to time again results in the Claim.

\end{proof}
\end{proof}
Combining Theorem \ref{ThmLongTime} with Proposition \ref{PropDoublingTime} gives the following Corollary.
\begin{cor}\label{CorollaryDoublingLongTime}
    Let $\Phi_0$ be a \Sp-structure on a compact $8$-manifold $M^8$ such that
    \[
    \tilde \Lambda_{\Phi_0}(x) = (|\Riem(x)|^2_{g_0} + |T(x)|^2_{g_0}+ |\nabla T(x)|^2_{g_0} +1)^{1/2} <K
    \]
    on $ M$, for some constant $K$, where $g_0$ is the Riemannian metric induced by $\Phi_0$.

    Then, the unique solution $\Phi(t)$ to a given reasonable flow of \Sp-structures exists, at least for time $t \in [0, \frac{1}{CK}]$, where $C$ is a universal constant as in Proposition \ref{PropDoublingTime}.
\end{cor}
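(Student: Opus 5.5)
The plan is a contradiction argument: combine the doubling-time estimate (Proposition \ref{PropDoublingTime}) with the blow-up criterion (Theorem \ref{ThmLongTime}). Short-time existence, which is part of Definition \ref{DefReasonable}, gives a solution $\Phi(t)$ starting at $\Phi_0$ on a maximal interval $[0,T_0)$. Uniqueness on that interval is also built into the definition of a reasonable flow, so it only remains to show that $T_0 > \frac{1}{CK}$. If $T_0=\infty$ there is nothing to prove, so assume $T_0<\infty$ and, for contradiction, $T_0\leq \frac{1}{CK}$. Here $C$ is the constant from Proposition \ref{PropDoublingTime}, which depends only on the structure of the reasonable flow.

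The first step is to apply Proposition \ref{PropDoublingTime} on $[0,\tau]$ for each $\tau<T_0$. Since $\Lambda(0)\leq\tilde\Lambda(0)<K$, we have $\frac{1}{C\Lambda(0)}\geq \frac{1}{CK}\geq T_0>\tau$. This also covers the degenerate case $\Lambda(0)=0$, where we read the quotient as $+\infty$. Hence $\min\{\tau,\frac{1}{C\Lambda(0)}\}=\tau$, and the proposition gives $\tilde\Lambda(t)\leq 2\tilde\Lambda(0)<2K$ for all $t\in[0,\tau]$. Since $\tau<T_0$ was arbitrary, we get $\sup_{M\times[0,T_0)}\Lambda(x,t)\leq \sup_{M\times[0,T_0)}\tilde\Lambda(x,t)\leq 2K$.

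This contradicts \eqref{eqLambdaBlowsup} in Theorem \ref{ThmLongTime}, which says $\Lambda(t)\to\infty$ as $t\nearrow T_0$ at any finite maximal time. Therefore $T_0>\frac{1}{CK}$, so the solution exists, and is unique, on the closed interval $[0,\frac{1}{CK}]$. Because the bound also holds when $T_0=\frac{1}{CK}$ exactly, the closed interval is genuinely obtained rather than only its interior.

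The main point needing care is the form of the hypothesis. As written, the corollary bounds $|T|^2_{g_0}$ rather than $|T|^4_{g_0}$, whereas $\tilde\Lambda$ in \eqref{eqlambdatildedef} contains $|T|^4$. I would treat this as the intended $|T|^4$, in which case the hypothesis is literally $\tilde\Lambda(0)<K$ and the argument above applies unchanged. Alternatively, one can take the statement at face value. The hypothesis then gives $|T|^2<K^2$, so $|T|^4<K^4$ and $\tilde\Lambda(0)\leq K^2$ (since $K>1$, as $\tilde\Lambda\geq 1$). The same argument then gives existence on $[0,\frac{1}{CK^2}]$, so the constant in the conclusion must be adjusted accordingly. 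Beyond this bookkeeping, the proof is a direct combination of the two earlier results.
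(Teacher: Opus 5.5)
Your proposal is correct and follows the same route as the paper: Proposition \ref{PropDoublingTime} gives $\tilde\Lambda(t)\leq 2\tilde\Lambda(0)<2K$, and the blow-up criterion of Theorem \ref{ThmLongTime} then rules out a finite maximal time $T_0\leq\frac{1}{CK}$. Your setup is slightly more careful than the paper's one-line proof, which applies the doubling estimate on $[0,\frac{1}{CK}]$ before existence there is known, whereas you apply it only on $[0,\tau]$ with $\tau<T_0$; you are also right that the hypothesis should read $|T|^4_{g_0}$ to match \eqref{eqlambdatildedef}.
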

\begin{proof}
    By Proposition \ref{PropDoublingTime}, 
    \[
    \tilde \Lambda(t) \leq 2 \tilde \Lambda(0) \leq 2K
    \]
    for all $t\in [0, \frac{1}{CK}].$ 
    In particular, 
    \[
    \Lambda(t)\leq \tilde \Lambda(t)< \infty,
    \] for all $t \in [0,\frac{1}{CK}]$
    so the contrapositive of Theorem \ref{ThmLongTime} gives that a solution exists for at least $t\in [0,\frac{1}{CK}]$.
\end{proof}
Using Theorem \ref{ThmLongTime}, we can classify the types of singularities for reasonable flows of \Sp-structures, in the same way as for other geometric flows (e.g. \cite[Definition 16.2]{HamiltonSingularities}, \cite[Definition 5.3]{DwivediRHF} ).
\begin{defn}\label{defSingularitytypes}
    Let $\Phi(t)$ be a solution to a reasonable flow of \Sp-structures on a compact manifold $M^8$, on a maximal time interval $[0,T_0)$, for some $T_0 < \infty$. Then, we say that the flow develops a finite-time singularity at $T_0 < \infty$, and we say the solution forms:
    \begin{itemize}
        \item A Type I singularity if $\sup_{t\in [0,T_0)}(T_0-t)\Lambda(t) < \infty$,
        \item A Type IIa singularity if $\sup_{t\in [0,T_0)}(T_0-t)\Lambda(t) = \infty$.
    \end{itemize}
    If a singularity occurs in infinite time, at $T_0 = \infty$, we say the solution forms:
    \begin{itemize}
        \item A Type IIb singularity if $\sup_{t\in [0,\infty)}t\Lambda(t) = \infty$,
        \item A Type III singularity if $\sup_{t\in [0,\infty)}t\Lambda(t) < \infty$.
    \end{itemize}
    
\end{defn}
    \begin{example}\label{exampleRHF}
       In \cite[Example 4.5]{Duthie2025}, a $1$-parameter family $\Phi(t)$ of \Sp-structures is given on $\text{SU}(3)$, and $\Phi(t)$ is shown to be a shrinking soliton for the gradient flow of \Sp-structures. The gradient flow is not a reasonable flow, so we use that example here to construct a soliton solution to the Ricci-Harmonic flow, and consider the type of the singularity formed. By \cite[Appendix A]{Duthie2025}, the bi-invariant \Sp-structure given by \cite[Equation 45]{Duthie2025} satisfies
       \begin{equation}\label{eqExample}
           \Ric = \text{diag}\left (\frac{1}{4} \right ), \quad \text{div}T = 0.
       \end{equation}
       Moreover, for the lower order term $T*T$ in the definition of the Ricci-harmonic flow \eqref{EquationRicciHarmonic}, a calculation with the aid of Maple shows that $(T*T)\diamond\Phi = 0$. So, $A\diamond\Phi = -\Phi$ and so this \Sp-structure gives rise to a Ricci-harmonic soliton. We solve the equation explicitly by writing down the following ansatz:
       \[
       \Phi(t) = f(t)^4\Phi(0).
       \]
       Plugging this into \eqref{EquationRicciHarmonic} gives the following ODE for $f$:
       \[
       4f(t)^3 \frac{\d f(t)}{\d t} = -f(t)^2, \quad f(0)=1.
       \]
       Solving this, we see that $f(t) = \sqrt{1-\frac{1}{2}t}$, and so $\Phi(t) = (1-\frac{1}{2}t)^2\Phi_0$, giving an explicit soliton solution for the Ricci-harmonic flow. 
       
       This induces the family of metrics $g(t) = (1-\frac{1}{2}t)g(0)$, and so we see that $\Lambda(t) = \frac{2}{2-t}\Lambda(0)$, which shows that $\Phi(t)$ is a Type I singularity, as is expected for shrinking solitons. 
    \end{example}

\section{Compactness}\label{SectionCompactness}
In this section, we prove a Cheeger-Gromov-type compactness result for the space of \Sp-structures, as well as a Cheeger-Gromov-Hamilton-type result for compactness of the space of solutions to a given reasonable flow of \Sp-structures. Both of these results use our Shi-type estimates (Theorem \ref{ThmShiType}), and the proofs are based upon \cite[Section 7]{Lotay-Wei} in the case of the $G_2$ Laplacian flow. We shall use these results to discuss parabolic rescalings of reasonable flows, which in turn shall be used to study singularity formation in Section \ref{SectionSingularities}.

\subsection{Compactness for \texorpdfstring{\Sp-structures}{Spin(7)-structures}}
We begin by defining a notion of convergence for manifolds with \Sp-structures.

Let $M_i$ be a sequence of $8$-manifolds, $p_i\in M_i$ and $\Phi_i$ a \Sp-structure on $M_i$ for each $i$, such that the induced metric $g_i$ is complete for all $i$. Let $M$ be an $8$-manifold, $p\in M$ a point and $\Phi$ a \Sp-structure on $M$. We say that the triple $(M_i,\Phi_i,p_i)$ converges to $(M,\Phi,p)$, written 
\[
(M_i,\Phi_i,p_i) \rightarrow (M,\Phi,p) \text{ as } i \rightarrow\infty,
\]
if there exists:
\begin{itemize}
    \item a sequence of compact subsets $\Omega_i \subset M$ exhausting $M$ with $p\in int(\Omega_i)$ for each $i$ and
    \item a sequence of diffeomorphisms $F_i \colon \Omega_i \to F_i(\Omega_i) \subset M_i$ with $F_i(p) = p_i$ and such that 
    \[
    F_i^*\Phi_i \to \Phi \text{ as } i \to \infty
    \]
    in the sense that the $4$-form $F_i^*\Phi_i - \Phi$ and all its covariant derivatives of all orders, taken with respect to any fixed background metric, converge uniformly to zero on every compact subset of $M$.
\end{itemize}
With this, we state our compactness theorem for \Sp-structures.
\begin{thm}\label{ThmSpin7Compact}
    Let $M_i$ be a sequence of smooth $8$-manifolds, and for each $i$ let $p_i \in M_i$ be a point and $\Phi_i$ be a \Sp-structure on $M_i$ inducing a \emph{complete} Riemannian metric $g_i$. Suppose further that
    \begin{equation}\label{eqBoundedSups}
    \sup_i\sup_{x \in M_i}\left(\left|\nabla^{k+1}_{g_i}T_i(x) \right |_{g_i}^2 + \left|\nabla^{k}_{g_i}\Riem_i(x) \right |_{g_i}^2 \right)^{\frac{1}{2}}< \infty \text{ for all } k \geq 1,
    \end{equation}

\begin{equation}\label{eqInitialBoundedSups}
\sup_i\sup_{x \in M_i}\left(\left|\nabla_{g_i}T_i(x) \right |_{g_i}^2 + \left|\Riem_i(x) \right |_{g_i}^2 + \left|T_i(x) \right |_{g_i}^4 \right)^{\frac{1}{2}}< \infty,
\end{equation}
and 
\begin{equation}\label{eqBoundedInj}
    \inf_i \inj(M_i,g_i,p_i)>0,
\end{equation}
where $T_i$ is the torsion tensor induced by $\Phi_i$, $\Riem_i$ is the curvature tensor of $g_i$ and $\inj(M_i,g_i,p_i)$ is the injectivity radius of $(M_i,g_i)$ at the point $p_i$.

Then, there exists an $8$-manifold $M$ with \Sp-structure $\Phi$ and a point $p \in M$ such that, after passing to a subsequence,
\[
(M_i,\Phi_i,p_i) \to (M,\Phi,p) \text{ as } i \to \infty.
\]
\end{thm}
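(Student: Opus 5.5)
The plan is to follow the proof of the $G_2$ compactness theorem in \cite[Section 7]{Lotay-Wei}: apply the Cheeger--Gromov compactness theorem to the metrics, then use the torsion bounds to control the $4$-forms in the resulting coordinates and extract a limit $4$-form by Arzel\`a--Ascoli. Finally, we check that this limit is a \Sp-structure using Theorem \ref{ThmCharacterisation}.

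\textbf{Step 1: convergence of the metrics.} By \eqref{eqInitialBoundedSups} and \eqref{eqBoundedSups}, the curvature and all its covariant derivatives are uniformly bounded:
\[
\sup_i\sup_{M_i}|\nabla^k_{g_i}\Riem_i|_{g_i}\le C_k \quad\text{for all } k\ge 0.
\]
Together with completeness and the injectivity radius bound \eqref{eqBoundedInj}, Hamilton's Cheeger--Gromov compactness theorem \cite{HamiltonCompactness} applies. It gives a complete pointed Riemannian $8$-manifold $(M,g,p)$, an exhaustion by compact sets $\Omega_i\ni p$, and diffeomorphisms $F_i\colon\Omega_i\to F_i(\Omega_i)\subset M_i$ with $F_i(p)=p_i$, such that $F_i^*g_i\to g$ smoothly on compact subsets of $M$.

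\textbf{Step 2: uniform bounds on the pulled-back $4$-forms.} Fix the background metric $g$ on $M$ and set $\hat g_i=F_i^*g_i$ and $\hat\Phi_i=F_i^*\Phi_i$. Since $\hat\Phi_i$ is a Cayley form for $\hat g_i$, we have $|\hat\Phi_i|_{\hat g_i}=\sqrt{14}$. As $\hat g_i\to g$ uniformly on compacts, this gives a $C^0$ bound on $\hat\Phi_i$ with respect to $g$.

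For the derivatives, Definition \ref{DefTorsion} gives $\nabla_{\hat g_i}\hat\Phi_i=\hat T_i*\hat\Phi_i$. Iterating yields, schematically,
\[
\nabla^k_{\hat g_i}\hat\Phi_i=\sum \nabla^{a_1}\hat T_i*\cdots*\nabla^{a_q}\hat T_i*\hat\Phi_i ,
\]
where the sum runs over $a_1+\cdots+a_q+q=k$. By \eqref{eqInitialBoundedSups} and \eqref{eqBoundedSups}, every $|\nabla^{j}\hat T_i|_{\hat g_i}$ is uniformly bounded, since $|T|^4$ controls $j=0$, $\nabla T$ controls $j=1$, and the higher terms are covered by the assumption. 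Hence $|\nabla^k_{\hat g_i}\hat\Phi_i|_{\hat g_i}\le C_k$.

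To pass to the fixed connection $\nabla_g$, write $S_i=\nabla_g-\nabla_{\hat g_i}$. Smooth convergence $\hat g_i\to g$ gives $C^k$ bounds on $S_i$ on each compact set, uniformly in $i$. Expanding $\nabla_g^k$ in terms of $\nabla_{\hat g_i}^j$ and derivatives of $S_i$, exactly as in the proof of Claim \ref{ClaimSupMetricDerivative}, then gives
\[
\sup_i\sup_K|\nabla_g^k\hat\Phi_i|_g\le C(k,K)
\]
for every compact $K\subset M$ and every $k$.

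\textbf{Step 3: extracting the limit.} Arzel\`a--Ascoli together with a diagonal argument over the exhaustion and over $k$ gives a subsequence with $\hat\Phi_i\to\Phi$ in $C^\infty_{loc}$, for some smooth $4$-form $\Phi$ on $M$.

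\textbf{Step 4: the limit is a \Sp-structure.} This is the main obstacle, since admissibility is a closed condition and is not detected by positivity alone. Pointwise, \eqref{EqPositivity4form} holds for each $\hat\Phi_i$ with metric $\hat g_i$. The metrics $\hat g_i$ converge to the nondegenerate metric $g$, and the volume forms converge to $\mathrm{vol}_g$. Hence $(v\lrcorner w\lrcorner\Phi)^2\wedge\Phi=6|v\wedge w|_g^2\mathrm{vol}_g>0$ for independent $v,w$, so $\Phi$ is non-degenerate.

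Condition (2) of Theorem \ref{ThmCharacterisation} is an algebraic closed condition, preserved under pointwise limits, as argued in the proof of Theorem \ref{ThmLongTime}. If one worries about the ``if and only if'' under limits, an alternative is available. The set of Cayley forms compatible with a given metric is a $\mathrm{SO}(8)$-orbit, hence compact. A pointwise limit of $\hat\Phi_i=\hat g_i$-Cayley forms with $\hat g_i\to g$ is therefore a limit of $\mathrm{GL}(8)$-images $A_i^*\Phi_0$ with $A_i$ bounded and bounded below. Passing to a convergent subsequence $A_i\to A$ gives $\Phi=A^*\Phi_0$ pointwise, which is admissible, with $g_\Phi=g$.

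Either way, $\Phi$ is a \Sp-structure and $F_i^*\Phi_i\to\Phi$ smoothly on compacts, which is the required convergence $(M_i,\Phi_i,p_i)\to(M,\Phi,p)$.
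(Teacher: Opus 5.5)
Your proposal follows the paper's proof essentially step for step. Both use Hamilton's Cheeger--Gromov theorem for the metrics, derive uniform $C^k$ bounds on the pulled-back $4$-forms from $\nabla\Phi=T*\Phi$ together with bounds on the connection difference $S_i$, extract a limit by Arzel\`a--Ascoli with a diagonal argument, and then check that the limit is admissible.

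The one point worth noting is Step 4. Your backup argument writes $\hat\Phi_i=A_i^*\Phi_0$ with $A_i$ an isometry from $(T_xM,\hat g_i)$ to Euclidean $\mathbb{R}^8$. The $A_i$ then lie in a compact subset of $\mathrm{GL}(8,\mathbb{R})$, and a limit $A$ gives $\Phi=A^*\Phi_0$ with $g_\Phi=g$. This is more reliable than the route via Theorem \ref{ThmCharacterisation} that you and the paper both use first. That route needs condition (2) to pass to the limit, but it is an ``if and only if'' between vanishing conditions, quantified over a set of triples $(u,v,w)$ that can grow in the limit, so it is not evidently a closed condition.
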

\begin{proof}
We begin by outlining the strategy of this proof. We first work on a fixed compact subset $\Omega_i$ in the exhaustion defined above, and argue that there exists a limiting \Sp-structure $\Phi_{i,\infty}$ on $\Omega_i$. A diagonal argument then produces compatible local limits on an exhaustion of the limit manifold, which patch together to give a global candidate limiting \Sp-structure $\Phi$ on all of $M$. Finally, we verify that $(M_i,\Phi_i,p_i) \to (M,\Phi,p)$ in the sense defined at the start of this section. At several points in the proof, we will pass to a subsequence, but shall continue to use the index $i$ to avoid an overload of notation.

 Firstly, \eqref{eqBoundedSups} and \eqref{eqInitialBoundedSups} imply, in particular, bounds on the Riemann curvature tensor and all of its covariant derivatives, so the Cheeger-Gromov compactness theorem for complete pointed Riemannian manifolds \cite[Theorem 2.3]{HamiltonCompactness} applies (using our lower bound on the injectivity radius \eqref{eqBoundedInj}).
 Thus, there exists a complete Riemannian $8$-manifold $(M,g)$ and a point $p\in M$ such that
 \begin{equation}\label{eqCheegerGromovConvergence}
     (M_i,g_i,p_i) \to (M,g,p) \text{ as } i \to \infty,
 \end{equation}
 after passing to a subsequence.

 We need to show that the sequence of \Sp-structures $\Phi_i$ has a subsequence converging to a limiting $4$-form, and that this $4$-form is a \Sp-structure inducing the limiting metric $g$, and finally that the triple $(M_i,\Phi_i,p_i)$ converges to $(M,\Phi,p)$, in the sense discussed at the start of this section.
 
 We first show there exists a limiting $4$-form. By definition of the convergence in \eqref{eqCheegerGromovConvergence},  we have nested compact sets $\Omega_i \subset M$ exhausting $M$ with $p \in int(\Omega_i)$ for all $i$, and diffeomorphisms $F_i : \Omega_i \to F_i(\Omega_i) \subset M_i$ with $F_i(p) = p_i$ such that $F_i^*g_i$ converge smoothly to $g$ as $i\to \infty$ on any compact subset of $M$.

 Fix $i$ sufficiently large. For any $j\geq0$, we have that $\Omega_i \subset \Omega_{i+j}$ and we have a diffeomorphism $F_{i+j}: \Omega_{i+j} \to F_{i+j}(\Omega_{i+j})\subset M_{i+j} $. Define a restricted diffeomorphism
 \[
 F_{i,j} = F_{i+j}|_{\Omega_i}:\Omega_i \to F_{i+j}(\Omega_i) \subset M_{i+j} \text{ for all } j \geq 0.
 \]
 The definition of the convergence in \eqref{eqCheegerGromovConvergence} implies that the sequence $\{g_{i,j} = F^*_{i,j}g_{i+j} \}_{j=0}^{\infty}$ of Riemannian metrics on $\Omega_i$ converges to a limit $g_{i,\infty} = g$ on $\Omega_i$ as $j \to \infty$, and by Cheeger-Gromov, the limit $g$ is a Riemannian metric on $\Omega_i$.
 Now, let $\nabla$ be the Levi-Civita connection of $g$, and $\nabla_{g_{i,j}}$ that of $g_{i,j}$. Let $h = g-g_{i,j}$ and $S = \nabla - \nabla_{g_{i,j}}$ be the differences of the metrics and their connections, respectively. Note that $S$ is a tensor, and locally we have that 

 \begin{equation}\label{eqS}
     S^c_{ab} = \frac{1}{2}(g_{i,j})^{cd}(\nabla_ah_{bd} + \nabla_b h_{ad} - \nabla_d h_{ab}). 
 \end{equation}
We now show that $S$ and all its covariant derivatives are uniformly bounded, which we will use to show the same is true for $\Phi_{i,j}$, and we can therefore apply Arzelà-Ascoli to extract a convergent subsequence.

Since $g_{i,j}\to g$ smoothly on $\Omega_i$ as $j\to \infty$ (as discussed above), we have that $g_{i,j}$ is equivalent to $g$ for sufficiently large $j$. Also, $|\nabla^kh|_g$ tends to $0$ as $j \to \infty$ for all $k \geq 0$. So, for all $j$ sufficiently large, we have from \eqref{eqS} that $S$ is uniformly bounded with respect to $g$.

For the covariant derivatives of $S$, we have that
 \begin{align}
     \nabla^{k}S_{ab}^c &= \frac{1}{2}\sum_{l=1}^k\nabla^{(k+1-l)}(g_{i,j})^{cd}\left(\nabla^{l}\nabla_ah_{bd} + \nabla^{l}\nabla_bh_{ad}  - \nabla^{l}\nabla_dh_{ab} \right) \\
     &= -\frac{1}{2}\sum_{l=1}^k\nabla^{(k+1-l)}(g^{cd}-(g_{i,j})^{cd})\left(\nabla^{l}\nabla_ah_{bd} + \nabla^{l}\nabla_bh_{ad}  - \nabla^{l}\nabla_dh_{ab} \right)\label{eqNablakS},
 \end{align}
where the second line follows from the fact that $\nabla g = 0$. Since $g_{i,j}\to g$ smoothly on $\Omega_i$, the inverse metrics $(g_{i,j})^{-1}$ also converge smoothly to $g^{-1}$. Hence $h=g-g_{i,j}$, $g^{-1}-(g_{i,j})^{-1}$, and all their covariant derivatives are uniformly bounded. Equation \eqref{eqNablakS} therefore gives constants $c_k$, independent of $j$, such that $|\nabla^kS|_g\leq c_k$ on $\Omega_i$ (recalling that $S$ depends on $j$).

Using each diffeomorphism $F_{i,j}$ we define a \Sp-structure on $\Omega_i$ by pulling back the \Sp-structure $\Phi_{i+j}$ on $M_{i+j}:$ \[\Phi_{i,j} = F_{i,j}^*\Phi_{i+j}.\]
We now use the uniform bounds on $\nabla^kS$ to obtain uniform bounds on $\Phi_{i,j}$ and $\nabla^k \Phi_{i,j}$, for each $k\geq0$, with respect to the Riemannian metric $g$.
Firstly, since $g$ and $g_{i,j}$ are equivalent for all $j$ sufficiently large, it holds that
\begin{equation}
    |\Phi_{i,j}|_g \leq c_0 |\Phi_{i,j}|_{g_{i,j}} \leq \sqrt{14} c_0 = \tilde c_0,
\end{equation}
where the second inequality holds because $g_{i,j}$ is the Riemannian metric induced by $\Phi_{i,j}$. 

Now,
\begin{equation}
    \nabla \Phi_{i,j} = \nabla_{g_{i,j}}\Phi_{i,j} + (\nabla - \nabla_{g_{i,j}})\Phi_{i,j}.
\end{equation}
Since $\Phi_{i,j}=F_{i,j}^*\Phi_{i+j}$ and $g_{i,j}=F_{i,j}^*g_{i+j}$, the torsion tensor $T_{i,j}$ of $\Phi_{i,j}$ is $F_{i,j}^*T_{i+j}$. Hence the assumptions \eqref{eqBoundedSups} and \eqref{eqInitialBoundedSups} imply uniform bounds for $T_{i,j}$ and all its $g_{i,j}$-covariant derivatives.
Using this and the schematic identity $\nabla_{g_{i,j}}\Phi_{i,j} = T_{i,j}*\Phi_{i,j}$, together with the uniform bounds on $S$ discussed above, yield the existence of a constant $\tilde{c_1}$ such that
\begin{equation}
    |\nabla\Phi_{i,j}|_g \leq c_0 |\nabla_{g_{i,j}}\Phi_{i,j}|_{g_{i,j}} + C|S|_g|\Phi_{i,j}|_g \leq \tilde c_1.
\end{equation}
Similarly, we have that 
\begin{align}
    \nabla^2\Phi_{i,j} = &\nabla^2_{g_{i,j}}\Phi_{i,j} + (\nabla - \nabla_{g_{i,j}})\nabla_{g_{i,j}} \Phi_{i,j} \nonumber\\
    &+(\nabla(\nabla - \nabla_{g_{i,j}}))\Phi_{i,j} + (\nabla - \nabla_{g_{i,j}})\nabla\Phi_{i,j},
\end{align}
so our uniform bounds on $S$ and $\nabla S$ give that there exists a constant $\tilde c_2$ such that
\begin{align}
    |\nabla^2\Phi_{i,j}|_g \leq & C|\nabla^2_{g_{i,j}}\Phi_{i,j}| + C|S|_g|\nabla_{g_{i,j}} \Phi_{i,j}|_g \nonumber\\
    &+C|\nabla S|_g|\Phi_{i,j}|_g + C|S|_g|\nabla\Phi_{i,j}|_g,\\
    \leq &\tilde c_2,
\end{align}
where the bound on the term $|\nabla^2\Phi_{i,j}|_g$ comes from taking a $g_{i,j}$-covariant derivative of the schematic equation $\nabla_{g_{i,j}}\Phi_{i,j} = T_{i,j}*\Phi_{i,j}$, and applying \eqref{eqBoundedSups}. The other terms are already bounded by the discussion above.

Lastly, for $k\geq 2$, we have that
\begin{equation}
    |\nabla^{k} \Phi_{i,j}|_g \leq C \sum_{l=0}^k|S|^l_g|\nabla_{g_{i,j}}^{k-l}\Phi_{i,j}|_{g_{i,j}} + C\sum_{l=1}^{k-1}|\nabla^lS|_g|\nabla^{k-1-l}\Phi_{i,j}|_g,
\end{equation}
so an inductive argument, using that $|\nabla^kS|_g \leq c_k$ and the assumptions \eqref{eqBoundedSups} and \eqref{eqInitialBoundedSups}, gives that there exists a constant $\tilde c_k$ such that
\begin{equation}
    |\nabla^k \Phi_{i,j}|_{g} \leq \tilde c_k,
\end{equation}
for all $j,k\geq 0$ (after possibly enlarging $\tilde{c}_k$ to handle the finitely many $j$ that are not "sufficiently large").

Now, the $C^k$-Arzelà-Ascoli theorem implies that there exists a subsequence of $\Phi_{i,j}$ in $j$ (which we still denote $\Phi_{i,j}$) and a limiting $4$-form $\Phi_{i,\infty}$ such that $\Phi_{i,j}$ converges to $\Phi_{i,\infty}$ smoothly on $\Omega_i$. That is, for all $k \geq0$,

\begin{equation}
    \left| \nabla^k(\Phi_{i,j}- \Phi_{i,\infty}) \right|_g \to 0 \text{ as }j\to \infty,
\end{equation}
uniformly on $\Omega_i$. This gives a limiting $4$-form $\Phi_{i,\infty}$ on $\Omega_i$, and we now argue that it is actually a \Sp-structure on $\Omega_i$, in the same way that we obtained the contradiction in Theorem \ref{ThmLongTime}. Indeed, for all $i,j$, $\Phi_{i,j}$ is a \Sp-structure on $\Omega_i$ inducing the Riemannian metric $g_{i,j}$. So, for each $i,j$, we have that 
\begin{equation}\label{eqPosDef2}
    (v \lrcorner w \lrcorner\Phi_{i,j})\wedge(v \lrcorner w \lrcorner\Phi_{i,j})\wedge \Phi_{i,j} = 6|v\wedge w|_{g_{i,j}}^2\mathrm{vol}_{\Phi_{i,j}}.
\end{equation}
Taking the limit as $j \to \infty,$ and using Cheeger-Gromov to say that $g_{i,j}$ converges to a Riemannian metric $g_{i,\infty}$, we see that the right-hand side converges to a positive volume form and so $\Phi_{i,j}$ converges to a $4$-form $\Phi_{i,\infty}$ inducing the Riemannian metric $g_{i,\infty}$. That is, the limiting $4$-form $\Phi_{i,\infty}$ is non-degenerate, and it remains to check that $\Phi_{i,\infty}$ satisfies the other condition of Theorem \ref{ThmCharacterisation}. Indeed, just as we argued in Theorem \ref{ThmLongTime}, the extra conditions of Theorem \ref{ThmCharacterisation} are continuous algebraic conditions on $\Phi(t)$, and so are preserved in the limit as $j\to \infty.$

This shows that there exists a limiting \Sp-structure $\Phi_{i,\infty}$ on each $\Omega_i$, and we now use a diagonalisation argument to show that we can use this to deduce the existence of a limiting \Sp-structure $\Phi$ on all of $M$.

Applying the preceding argument to $\Omega_1$, we obtain a subsequence along which $\Phi_{1,j}$ converges smoothly on $\Omega_1$. Applying the same argument to $\Omega_2$, and passing to a further subsequence, we obtain smooth convergence on $\Omega_2$. Continuing inductively and taking the diagonal subsequence, we obtain a single subsequence, which we still index by $j$, such that for every $i$,
\[
g_{i,j}\to g_{i,\infty},
\qquad
\Phi_{i,j}\to \Phi_{i,\infty},
\]
smoothly on \(\Omega_i\). Here \(g_{i,\infty}=g|_{\Omega_i}\), where \(g\) is the Cheeger-Gromov limit metric.

We now show that the limiting \Sp-structures $\Phi_{i,\infty}$ agree on intersections of the nested exhaustion $\{\Omega_i\}$, and hence glue together to define a smooth \Sp-structure on all of $M$.
For $k\geq i$, we denote the inclusion map of $\Omega_i$ into $\Omega_k$ by \[
I_{ik}: \Omega_i \to \Omega_k.
\]
By definition of the inclusion map $I$,
\[
I^*_{ik}g_{k,j} = g_{i,j}, \text{ and } I^*_{ik}\Phi_{k,j} = \Phi_{i,j}.
\]
Taking the limit as $j\to \infty$ (and noting that $I_{i,k}^*$ is independent of $j$, so passes through the limit), we find that
\[
I^*_{ik}g_{k,\infty} = g_{i,\infty}, \text{ and } I^*_{ik}\Phi_{k,\infty} = \Phi_{i,\infty}. 
\]
This shows that the metrics $g_{i,\infty}$ and $4$-forms $\Phi_{i,\infty}$ agree on the overlaps of the nested exhaustion $\{\Omega_i\}$ so, writing $I_i:\Omega_i \to M$ for the inclusion, we have a metric $g$ and $4$-form $\Phi$ such that
\[
I^*_{i}g = g_{i,\infty}, \text{ and } I^*_{i}\Phi = \Phi_{i,\infty},
\]
and $\Phi$ is a \Sp-structure as before, inducing the Cheeger-Gromov limit metric $g$. This gives a candidate limiting triple $(M,\Phi,p)$, and we finally show that indeed $(M_i,\Phi_i,p_i)$ converges to $(M,\Phi,p)$.

Since $\{\Omega_i\}$ exhausts $M$, we have that for any compact $\Omega \subset M$, there exists $i_0$ such that $\Omega \subset \Omega_i$ for all $i \geq i_0$. Fix $i$ so that $\Omega \subset \Omega_i$. Then, on $\Omega$ we have that
\begin{equation}
    \left |\nabla^k(F^*_{i+j}\Phi_{i+j}-\Phi) \right |_{g} = \left |\nabla^k(\Phi_{i,j}-\Phi_{i,\infty}) \right |_{g} \to 0,
\end{equation}
as $l=i+j \to \infty$, for all $k \geq0$. Thus, $(M_i,\Phi_i,p_i) \to (M,\Phi,p)$ in the sense defined at the start of this section.

\end{proof}

\subsection{Compactness for solutions to reasonable flows of \texorpdfstring{\Sp-structures}{Spin(7)-structures}}
Using the compactness theorem for the space of \Sp-structures, we now state and prove our compactness theorem for solutions of reasonable flows of \Sp-structures. 
\begin{thm}\label{thmCompactnessSpaceofSolutions}
    Let $M_i$ be a sequence of compact $8$-manifolds and let $p_i \in M_i$ for each $i$. Suppose that $\Phi_i(t)$ is a sequence of solutions to a given reasonable flow of \Sp-structures on $M_i$, with the induced sequence of Riemannian metrics $g_i(t)$ on $M_i$, for $t\in (a,b)$, where $-\infty \leq a < 0 <b \leq \infty$.

    Suppose further that 
    \begin{equation}\label{eqFirstAssumption}
        \sup_i\sup_{x \in M_i, t \in (a,b)}\left(\left|\nabla_{g_i(t)}T_i(x,t) \right |_{g_i(t)}^2 + \left|\Riem_i(x,t) \right |_{g_i(t)}^2 + \left|T_i(x,t) \right |_{g_i(t)}^4 \right)^{\frac{1}{2}}< \infty,
    \end{equation}
    where $T_i$ and $\Riem_i$ denote the torsion and Riemann curvature tensor induced by $\Phi_i$. Finally, suppose that the injectivity radius of each initial manifold $(M_i,g_i(0))$ at $p_i$ satisfies
    \begin{equation}\label{eqSecondInj}
        \inf_i \inj(M_i,g_i(0),p_i)>0.
    \end{equation}
    Then, there exists an $8$-manifold $M$, a point $p\in M$ and a solution $\Phi(t)$ to the same reasonable flow of \Sp-structures that $\Phi_i(t) $ solves for $t\in (a,b)$ such that, after passing to a subsequence,
    \begin{equation}
        (M_i,\Phi_i(t),p_i) \to (M,\Phi(t),p) \text{ as } i \to \infty,
    \end{equation}
    where this convergence is in the sense discussed in the previous section.
\end{thm}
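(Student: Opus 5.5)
The plan is to follow the Hamilton/Lotay--Wei strategy: first obtain uniform derivative bounds on compact time intervals from the Shi-type estimates, then use Theorem \ref{ThmSpin7Compact} at the single time $t=0$ to produce the limit manifold and the diffeomorphisms $F_i$, and finally extend the convergence to all times by integrating the flow equation in time.

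\textbf{Step 1: uniform derivative bounds.} Let $K$ be the supremum in \eqref{eqFirstAssumption}. For any compact $[\alpha,\beta]\subset(a,b)$ and any $t_0\in[\alpha,\beta]$, apply the global version of Theorem \ref{ThmShiType} to the time-translated flow $\Phi_i(t_0-\delta+s)$, with $\delta=\min\{1/K,(\alpha-a)/2\}$ (or simply $1/K$ if $a=-\infty$). Evaluated at $s=\delta$, this gives
\[
|\nabla^m\Riem_i|+|\nabla^{m+1}T_i|\leq C(K,m,\delta),
\]
uniformly in $i$ and in $t\in[\alpha,\beta]$. These are exactly hypotheses \eqref{eqBoundedSups} and \eqref{eqInitialBoundedSups} at every time in $[\alpha,\beta]$.

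\textbf{Step 2: convergence at $t=0$ and extension in time.} At $t=0$ we also have the injectivity radius bound \eqref{eqSecondInj}, so Theorem \ref{ThmSpin7Compact} gives $(M,\Phi(0),p)$, an exhaustion $\Omega_i$, and diffeomorphisms $F_i$ with $F_i^*\Phi_i(0)\to\Phi(0)$ smoothly.

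We then work with the pulled-back flows $F_i^*\Phi_i(t)$ on $\Omega_i$. By \eqref{eqimpliesmetricsunifequiv}, the metrics $F_i^*g_i(t)$ are uniformly equivalent to $F_i^*g_i(0)$, and hence to the limit metric $g(0)$, for $t$ in compact subintervals. Repeating the arguments of Claims \ref{ClaimSupMetricDerivative} and \ref{ClaimSupSpin7Derivative}, with the fixed background $\bar g=g(0)$ and the Step 1 bounds, gives uniform bounds on $\overline\nabla^k F_i^*g_i(t)$ and $\overline\nabla^k F_i^*\Phi_i(t)$ for every $k$.

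Since $\partial_t\Phi=A\diamond\Phi$ with $A$ built from $\Riem$, $\nabla T$, $T$ and $C$, the same bounds hold for $\overline\nabla^k\partial_t^l F_i^*\Phi_i(t)$; time derivatives are converted into spatial ones through the evolution equations. Arzelà--Ascoli on $\Omega_j\times[\alpha,\beta]$, combined with a diagonal argument over $j$ and over an exhaustion of $(a,b)$, then gives a subsequence converging smoothly in space-time to a family $\Phi(t)$ of $4$-forms.

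\textbf{Step 3: the limit is a solution.} For each $t$, $\Phi(t)$ is a \Sp-structure: non-degeneracy and the conditions of Theorem \ref{ThmCharacterisation} pass to limits, as in the proof of Theorem \ref{ThmLongTime}, and the limit metric stays nondegenerate by uniform equivalence. Because the convergence is smooth in space-time, and $A$ depends smoothly on finitely many derivatives of $\Phi$ (the tensors $C$ have bounded derivatives), the equation $\partial_t\Phi=A\diamond\Phi$ passes to the limit. Hence $\Phi(t)$ solves the same reasonable flow. Completeness of the limit metrics follows from completeness of $g(0)$ together with uniform equivalence.

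\textbf{Main obstacle.} I expect the hardest step to be the uniform space-time derivative bounds in Step 2. These have to be with respect to a fixed background near the fixed point, while the $F_i$ are only defined on compact sets, and they must hold up to the endpoints. In particular, when $a$ is finite the Shi-type bounds degenerate as $t\searrow a$, so everything must be done on compact subintervals of $(a,b)$ before diagonalising. I would handle the connection difference $S$ inductively, as in Claim \ref{ClaimSupMetricDerivative}, starting from the uniform bound at $t=0$ supplied by the Cheeger--Gromov convergence.
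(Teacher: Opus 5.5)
Your proposal is correct and follows essentially the same route as the paper: Shi-type estimates for uniform derivative bounds, Theorem \ref{ThmSpin7Compact} at $t=0$ to produce $(M,\Phi(0),p)$ and the maps $F_i$, the arguments of Claims \ref{ClaimSupMetricDerivative} and \ref{ClaimSupSpin7Derivative} for the pulled-back flows, converting time derivatives into spatial ones, and then Arzel\`a--Ascoli with diagonalisation. Where you differ, you are slightly more careful than the paper, which does not make these points explicit: you apply the Shi estimates on compact subintervals of $(a,b)$, and you take the initial bounds on $S$ from the Cheeger--Gromov convergence at $t=0$ rather than from compactness of a single manifold.
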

\begin{proof}
We follow the proof of the analogous result for solutions to the Laplacian flow of $G_2$-structures \cite[Theorem 7.2]{Lotay-Wei}.
    We first note that it suffices to prove this in the case that $a$ and $b$ are both finite, since a standard diagonalisation argument then gives the result in the case that either $a$ or $b$ (or both) are infinite.

    Using the assumption \eqref{eqFirstAssumption} and our Shi-type estimates for reasonable flows of \Sp-structures (Theorem \ref{ThmShiType}), we have that there exist constants $C_k$, independent of $i$, such that
    \begin{equation}\label{eqCompactnessCurvatureTorsion}
        \left|\nabla^k_{g_i(t)}\Riem_i(x,t) \right|_{g_i(t)} + \left|\nabla^{k+1}_{g_i(t)}T_i(x,t) \right|_{g_i(t)} \leq C_k,
    \end{equation}
    for all $k$. This, together with our assumption on the injectivity radius \eqref{eqSecondInj}, enables us to apply Theorem \ref{ThmSpin7Compact} to extract a subsequence $(M_i,\Phi_i(0),p_i)$, converging to some complete limit $(M,\tilde\Phi_{\infty}(0),p)$. Using the notation of the previous theorem, we write
    \[
    F_i^*\Phi_i(0)\to \tilde\Phi_{\infty}(0),
    \]
    smoothly on any compact subset $\Omega \subset M$ as $i\to \infty.$ This gives convergence at the initial time, and we now extend this to obtain a convergent subsequence $\Phi_i(t)$ for all $t \in (a,b)$.

    To that end, let $\tilde\Phi_i(t) = F_i^*\Phi_i(t)$, for $F_i$ as above, and fix a compact subset $\Omega \times [c,d] \subset M \times (a,b)$. Fix $i$ sufficiently large so that $\Omega \subset \Omega_i,$ for some set $\Omega_i$ an element of the exhaustion of $M$ defined in the previous theorem. Then, $\tilde\Phi_i(t)$ is a sequence of solutions to the same reasonable flow of \Sp-structures as $\Phi_i(t)$, for $t\in [c,d]$, with induced metrics $\tilde g_i(t) =F^*_ig_i(t)$. By Claims \ref{ClaimSupMetricDerivative} and \ref{ClaimSupSpin7Derivative} (and the fact that these claims are diffeomorphism invariant so pulling back by $F$ does not affect their veracity), we can deduce from \eqref{eqCompactnessCurvatureTorsion} that there exist constants $C_k$, independent of $i$, such that
    \begin{equation}
        \sup_{\Omega\times [c,d]}\left(\left|\nabla^k_{g_{\infty}(0)}\tilde{g}_i(t) \right|_{g_\infty(0)} +\left|\nabla^k_{g_{\infty}(0)}\tilde{\Phi}_i(t) \right|_{g_\infty(0)} \right) \leq C_k,
    \end{equation}
    for all sufficiently large $i$. Now, since we can rewrite time derivatives of $\nabla^k_{\tilde{g_\infty}(0)}\tilde g_i(t)$ and $\nabla^k_{\tilde{g_\infty}(0)}\tilde \Phi_i(t)$ in terms of further space derivatives,  (using the known evolution equations for $\tilde g_i$ and $\tilde\Phi_i$), we have that
    \begin{equation}
        \sup_{\Omega\times [c,d]}\left(\left|\frac{\pt^l}{\pt t^l}\nabla^k_{g_{\infty}(0)}\tilde{g}_i(t) \right|_{g_\infty(0)} +\left|\frac{\pt^l}{\pt t^l}\nabla^k_{g_{\infty}(0)}\tilde{\Phi}_i(t) \right|_{g_\infty(0)} \right) \leq C_k,
    \end{equation}
    for all $l$ and $k$. Thus, the Arzelà-Ascoli theorem gives the existence of a subsequence of $\tilde{\Phi}_i(t)$ converging smoothly on $\Omega \times [c,d]$. 
    Since the flow is a diffeomorphism-invariant evolution equation involving only finitely many derivatives of $\Phi$, smooth convergence of the sequence $\tilde{\Phi}_i(t)$ on $\Omega \times [c,d]$ yields smooth convergence of every term appearing in the evolution equation. Consequently, the limit $\tilde\Phi_\infty(t)$ is a solution to the given reasonable flow on $\Omega \times [c,d]$.
    A standard diagonalisation argument then allows us to obtain a subsequence converging smoothly on any compact subset of $M \times(a,b)$ to a solution $\tilde\Phi_\infty(t)$ of the reasonable flow in question.
\end{proof}
One key application of compactness results for geometric flows is in the study of the formation of singularities. We briefly describe how this can be done here, by way of parabolic rescalings. We will also use this parabolic rescaling in Section \ref{SectionSingularities}.
Let $M^8$ be a compact $8$-manifold and let $\Phi(t)$ be a solution to a given reasonable flow of \Sp-structures on $M$, on a finite maximal time interval $[0,T_0),$ for some $T_0<\infty.$ By Theorem \ref{ThmLongTime}, the quantity $\Lambda(t)$, given by $\Lambda(t) = \sup_{x\in M}\Lambda(x,t),$ where $\Lambda(x,t)$ is as defined in \eqref{eqLambdaDef} satisfies
\[
\lim_{t \nearrow T_0} \Lambda(t) = \infty.
\]
So, define a sequence of spacetime points $(x_i,t_i)$ such that $t_i\nearrow T_0$ and
\[
\Lambda(x_i,t_i) = \sup_{x\in M, t\in [0,t_i]}\left ( \left|\nabla T(x,t) \right |^2_{g(t)} + \left |\Riem(x,t) \right |^2_{g(t)} + \left|T(x,t) \right|_{g(t)}^4 \right )^\frac{1}{2}.
\]
Consider a sequence of parabolic rescalings $\Phi_i(t)$ of $\Phi(t)$, defined by
\begin{equation}\label{eq:parabolicrescaling}
    \Phi_i(t) = \Lambda(x_i,t_i)^4\Phi(t_i+\Lambda(x_i,t_i)^{-2}t),
\end{equation}
and define
\begin{equation}\label{eq:lambdaPhii}
    \Lambda_{\Phi_i(t)}(x,t) = \left( \left|\nabla_{g_i(t)} T_i(x,t) \right |^2_{g_i(t)} + \left |\Riem_i(x,t) \right |^2_{g_i(t)} + \left|T_i(x,t) \right|_{g_i(t)}^4 \right )^\frac{1}{2},
\end{equation}
where $g_i,T_i$ and $\Riem_i$ are those induced by $\Phi_i$.
Now, by the rescaling property for reasonable flows \eqref{eqrescaledmetric1}, and the fact that differentiating \eqref{eq:parabolicrescaling} gives
\begin{align*}
    \ptt\Phi_i(t) &= \Lambda(x_i,t_i)^4\Lambda(x_i,t_i)^{-2}\ptt\Phi(t_i+\Lambda(x_i,t_i)^{-2}t) \\&= \Lambda(x_i,t_i)^{2}\ptt\Phi(t_i+\Lambda(x_i,t_i)^{-2}t),
\end{align*}
we have that $\Phi_i(t)$ is still a solution to the reasonable flow, now on the time interval 
\[
[-t_i\Lambda(x_i,t_i)^2, (T_0-t_i)\Lambda(x_i,t_i)^2],\] for each $i$. Moreover, from the definition of $\Lambda_{\Phi_i}$ and the scaling property of the right hand side of \eqref{eq:lambdaPhii}, we have for all $i$ that 
\begin{equation}
    \sup_{x \in M} \Lambda_{\Phi_i}(x,t) = \frac{\sup_{x \in M}\Lambda(x,t_i + \Lambda(x_i,t_i)^{-2}t) }{\Lambda(x_i,t_i)} \leq 1, \text{ for }t\leq 0,
\end{equation}
and 
\begin{equation}
    \Lambda_{\Phi_i}(x_i,0) = 1.
\end{equation}
By construction, the rescaled solutions $\Phi_i$ satisfy $\sup\Lambda_{\Phi_i}(x,0) = 1$. So, the modified quantity $\tilde \Lambda_{\Phi_i(x,0)}$ \eqref{eq:lambdaPhii} for the rescaled flow satisfies 
\[
\sup_{x \in M} \tilde{\Lambda}_{\Phi_i}(x,0) = \sup_{x \in M} \left( \Lambda_{\Phi_i}(x,0)^2 + 1 \right)^{1/2} \leq \sqrt{1^2 + 1} = \sqrt{2}.
\]
Now, applying the doubling time estimate (Proposition \ref{PropDoublingTime}) and Corollary \ref{CorollaryDoublingLongTime}, we see that there exists $b >0$ such that
\[
\sup_{x \in M}\Lambda_{\Phi_i}(x,t)  \leq  \sup_{x \in M}\tilde\Lambda_{\Phi_i}(x,t) \leq 2\sqrt{2}\text{ for } t \leq b.
\]

Thus, we obtain a sequence $(M,\Phi_i(t))$ of solutions to the reasonable flow of \Sp-structures, defined on $(a,b)$ for some $a<0$, and satisfying 
\[
\sup_i\sup_{x \in M}\Lambda_{\Phi_i}(x,t)<\infty.
\]
So, if we can also establish the bound on the injectivity radius
\[
\inf_i \inj(M,g_i(0),x_i)>0,
\]
then the result of Theorem \ref{thmCompactnessSpaceofSolutions} applies, allowing us to extract a convergent subsequence.
We expect that this parabolic blow-up at finite time singularities will be a useful tool in the further study of these reasonable flows of \Sp-structures, and particularly for the Ricci-Harmonic flow, which has already received attention \cite{DwivediRHF}. We will also use this in the following section.

\section{Finite time singularities}\label{SectionSingularities}
In this section, we use the Shi-type estimates of Section \ref{SectionShiType}, the characterisation of finite-time singularities of Section \ref{SectionLongTime} and the compactness results of Section \ref{SectionCompactness} to study finite time singularities of reasonable flows of \Sp-structures. We begin in Subsection \ref{SectionExtensionAssumingBoundedTorsion} by proving that, under the assumption that the induced metrics are uniformly continuous, a reasonable flow can be extended for as long as the torsion tensor remains bounded. We then recall Chen's $\kappa$-non-collapsing theorem \cite{Chen} in Subsection \ref{Subsectionkappa}, before using this to study the structure of finite-time singularities in Subsection \ref{SubsectionFiniteTime}, under suitable assumptions that allow us to apply the $\kappa$-non-collapsing theorem.
\subsection{Extension assuming bounded torsion}\label{SectionExtensionAssumingBoundedTorsion}

In Theorem \ref{ThmLongTime}, we showed that a solution to a reasonable flow of \Sp-structures can be extended for as long as the quantity $\Lambda(t)$ remains bounded. Here, we prove another extension result, using the compactness results of Section \ref{SectionCompactness}. 
In particular, we prove, under the additional assumption of uniform continuity of the induced metrics along the flow, that a solution can be extended for as long as the torsion tensor remains bounded, as is the case for $G_2$-flows \cite{Lotay-Wei}.
We first recall the definition of uniform continuity for a one-parameter family of Riemannian metrics, and some consequences for their induced distance functions and volumes of geodesic balls.

We say that a family $g(t)$ of metrics is \emph{uniformly continuous} on the time interval $[0,T_0)$ for $T_0<\infty$ if, for any $\varepsilon>0$, there exists $\delta >0$ such that for any $0\leq t_0 <t<T_0$ with $t-t_0 \leq \delta$ we have
\[
|g(t)-g(t_0)|_{g(t_0)}\leq \varepsilon.
\]
As symmetric $2$-tensors, this implies that
\[
(1-\varepsilon)g(t_0) \leq g(t)\leq (1+\varepsilon) g(t_0),
\]
for all such $t$, which implies the following inequality for the induced volume forms:
\begin{equation}\label{eqVolumeFormInequality}
    (1-\varepsilon)^{4}\text{vol}_{g(t_0)} \leq \text{vol}_{g(t)} \leq (1+\varepsilon)^{4}\text{vol}_{g(t_0)}.
\end{equation}

Then, writing $d_g$ for the distance function induced by a Riemannian metric $g$, we have that for any $x,y\in M$ and $t-t_0<\delta$,
\[
\sqrt{1-\varepsilon}d_{g(t_0)}(x,y) \leq d_{g(t)}(x,y) \leq \sqrt{1+\varepsilon}d_{g(t_0)}(x,y).
\]
So, the geodesic balls centred at $x$ with radius $r$ satisfy 
\begin{equation}\label{eqBallINclusion}
    B_{g(t_0)}\left(x,\frac{r}{\sqrt{1+\varepsilon}}\right) \subset B_{g(t)}(x,r),
\end{equation}
and so their volumes satisfy the following inequality: 
\begin{equation}\label{BallVolumeInequality}
    (1-\varepsilon)^{4}\Vol_{g(t_0)}\left(B_{g(t_0)}\left(x,\frac{r}{\sqrt{1+\varepsilon}}\right)  \right) \leq \Vol_{g(t)}\left( B_{g(t)}(x,r)\right),
\end{equation}
where the factor of $(1-\varepsilon)^{4}$ comes from \eqref{eqVolumeFormInequality}.
We will use this final inequality in the proof of the following theorem.
\begin{thm}\label{thmExtensionBoundedTorsion}
    Let $M^8$ be a compact $8$-manifold, and $\Phi(t)$ a solution to a reasonable flow of \Sp-structures, for $t\in[0,T_0),$ $T_0 < \infty$. Let $g(t)$ be the associated family of metrics, and suppose that $g(t)$ is uniformly continuous, and that the torsion tensor $T$ satisfies
    \begin{equation}\label{eqTbounded}
         \sup_{M \times [0,T_0)}|T(x,t)|_{g(t)}< \infty.
    \end{equation}
    Then, the solution $\Phi(t)$ can be extended past time $T_0$.
\end{thm}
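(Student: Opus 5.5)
We argue by contradiction, following the analogous argument of \cite{Lotay-Wei} for the Laplacian flow. Suppose $T_0$ is the maximal existence time. By Theorem \ref{ThmLongTime}, $\Lambda(t)\to\infty$ as $t\nearrow T_0$. We then perform the parabolic rescaling described at the end of Section \ref{SectionCompactness}. Choose $(x_i,t_i)$ with $t_i\nearrow T_0$ and $\Lambda(x_i,t_i)=\sup_{M\times[0,t_i]}\Lambda$, and set $Q_i=\Lambda(x_i,t_i)\to\infty$ and $\Phi_i(t)=Q_i^4\Phi(t_i+Q_i^{-2}t)$. By the rescaling properties, together with Proposition \ref{PropDoublingTime} and Corollary \ref{CorollaryDoublingLongTime}, the $\Phi_i$ solve the same reasonable flow on some fixed interval $(a,b)\ni 0$ and satisfy $\sup\Lambda_{\Phi_i}\le 2\sqrt2$. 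They also satisfy $\Lambda_{\Phi_i}(x_i,0)=1$. Since $\tilde T=c^2T$ and $\tilde g=c^2 g$ with $c=Q_i$, we get $|T_i|_{g_i}=Q_i^{-1}|T|_g\le Q_i^{-1}\sup|T|\to 0$ uniformly, using \eqref{eqTbounded}.

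The key step is the uniform injectivity radius lower bound $\inf_i\inj(M,g_i(0),x_i)>0$, which is needed to apply Theorem \ref{thmCompactnessSpaceofSolutions}. By Cheeger's lemma (Cheeger--Gromov--Taylor), together with the curvature bound $|\Riem_i|\le 2\sqrt2$, it suffices to have a uniform lower bound $\Vol_{g_i(0)}B_{g_i(0)}(x_i,1)\ge v_0>0$. This is scale-invariant in the form $\Vol_{g(t_i)}B_{g(t_i)}(x_i,r)\ge v_0 r^8$ for $r=Q_i^{-1}$, i.e.\ a $\kappa$-noncollapsing statement at small scales. To prove it I would use uniform continuity. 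Fix $\varepsilon=1/2$ and obtain the corresponding $\delta$. For $i$ large, set $t_0=T_0-\delta$; then $t_i-t_0\le\delta$, so \eqref{BallVolumeInequality} gives
\[\Vol_{g(t_i)}B_{g(t_i)}(x_i,r)\ge 2^{-4}\Vol_{g(t_0)}B_{g(t_0)}(x_i,r/\sqrt{3/2}).\]
The single smooth metric $g(t_0)$ on compact $M$ has bounded curvature and positive injectivity radius, so for $r$ below a fixed scale the right-hand side is at least $c\,r^8$ uniformly in $x_i$. This gives the volume lower bound, and hence the injectivity radius bound. I expect this step to be the main obstacle. The subtle point is that the uniform continuity only compares metrics at nearby times, so we must choose $t_0$ depending on $\delta$ but not on $i$; the argument works because the comparison constant is uniform.

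Applying Theorem \ref{thmCompactnessSpaceofSolutions}, a subsequence converges to a complete solution $(M_\infty,\Phi_\infty(t),x_\infty)$ of the reasonable flow on $(a,b)$. Since $|T_i|\to0$ uniformly, the limit is torsion-free, so $\nabla\Phi_\infty=0$ and $T_\infty\equiv0$. Hence $\nabla T_\infty=0$, and by Proposition \ref{PropRicciTensor} the metric $g_\infty$ is Ricci-flat. Smooth convergence gives $\Lambda_{\Phi_\infty}(x_\infty,0)=\lim\Lambda_{\Phi_i}(x_i,0)=1$, so $|\Riem_\infty(x_\infty,0)|=1$. It remains to reach a contradiction, and I would aim to show the limit is flat. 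Uniform continuity gives $|g_i(t)-g_i(0)|_{g_i(0)}\le\varepsilon$ on $(a,b)$ for large $i$, since the rescaled time window shrinks. A torsion-free limit gives, from \eqref{eqReasonableMetric}, $\ptt g_\infty=C$, with $C$ bounded and scaling to zero. I expect that a cleaner route is to use the fact that the rescaled metrics $Q_i^2g(t)$ are nearly constant in time together with curvature concentration at $x_i$; alternatively, one can compare with volume growth to contradict $|\Riem_\infty(x_\infty,0)|=1$. If flatness is hard to obtain directly, I would instead try to use \eqref{eqReasonableTorsion} for the limit. There $\Riem*T$ vanishes, so this equation gives no constraint. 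I would then fall back on the curvature evolution \eqref{eqddtRiem}, which in the torsion-free limit becomes the Ricci-flow equation for $\Riem$ with $\ptt g=0$, forcing $\Delta\Riem_\infty=-\Riem_\infty*\Riem_\infty$ and static curvature. Combined with the uniform $\Lambda$ bound on a long backward time interval (as $a\to-\infty$ for Type-II-like sequences), I would try to derive flatness. This final contradiction step is where I am least certain, and I would consult the $G_2$ argument of \cite{Lotay-Wei} closely, which I recall also relies on the metric-equivalence to force curvature concentration.
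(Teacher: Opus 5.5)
Up to the existence of a complete, torsion-free, Ricci-flat limit $(M_\infty,\Phi_\infty(t),x_\infty)$ with $|\Riem_\infty(x_\infty,0)|=1$, your argument is the paper's. It uses the same parabolic rescaling, uniform continuity against a fixed smooth metric $g(t_0)$ with $t_0=T_0-\delta$ for a volume lower bound, Cheeger--Gromov--Taylor for the injectivity radius, Theorem \ref{thmCompactnessSpaceofSolutions}, and $|T_i|_{g_i}=Q_i^{-1}|T|_g\to 0$.

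The gap is the final contradiction, which you leave open, and none of the routes you list can close it. With $\varepsilon=1/2$ fixed you only get $\Vol_{g_\infty}(B(x_\infty,r))\geq c\,r^8$ for some $c$ strictly smaller than the Euclidean constant $\omega_8$. That is just maximal volume growth. Complete, non-flat, torsion-free \Sp-manifolds with maximal volume growth exist (e.g.\ the asymptotically conical Bryant--Salamon metric). Theorem \ref{ThmMaximalVolumeGrowthSpin7Manifold} shows such manifolds are the natural blow-up models in general, so no contradiction can come from a bound of this type. Likewise, staticity of the limit and the identity $\Delta\Riem_\infty=\Riem_\infty*\Riem_\infty$ hold for every Ricci-flat metric and carry no information.

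The missing idea is that uniform continuity, as opposed to mere uniform equivalence of the $g(t)$, makes the volume ratio of the limit arbitrarily close to Euclidean.

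\begin{enumerate}
\item Fix $\varepsilon>0$, take $\delta=\delta(\varepsilon)$ and $t_0=T_0-\delta$. Since $g(t_0)$ is a single smooth metric on compact $M$, there is $\rho_0(\varepsilon)>0$ with $\Vol_{g(t_0)}(B_{g(t_0)}(x,\rho))\geq(1-\varepsilon)\omega_8\rho^8$ for all $x\in M$ and $\rho\leq\rho_0$.
\item Rescaling and \eqref{BallVolumeInequality} then give
\[
\Vol_{g_i(0)}\bigl(B_{g_i(0)}(x_i,r)\bigr)\geq(1-\varepsilon)^5(1+\varepsilon)^{-4}\,\omega_8\,r^8 \quad\text{for } r\leq\sqrt{1+\varepsilon}\,\rho_0\,Q_i .
\]
\item Since $Q_i\to\infty$ and ball volumes pass to the smooth pointed limit, $\Vol_{g_\infty}(B(x_\infty,r))\geq(1-\varepsilon)^5(1+\varepsilon)^{-4}\omega_8 r^8$ for every $r>0$. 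Letting $\varepsilon\to 0$ gives $\Vol_{g_\infty}(B(x_\infty,r))\geq\omega_8 r^8$.
\item As $g_\infty(0)$ is complete and Ricci-flat, Bishop--Gromov says $r\mapsto\Vol(B(x_\infty,r))/(\omega_8 r^8)$ is non-increasing with limit $1$ as $r\to 0$. So it is identically $1$, and the rigidity case forces $(M_\infty,g_\infty(0))$ to be isometric to $\mathbb{R}^8$.
\end{enumerate}

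This contradicts $|\Riem_\infty(x_\infty,0)|=1$. It is the ``precisely Euclidean volume growth'' step of the paper's proof, and it is exactly where uniform continuity, rather than uniform equivalence, of the metrics is used.
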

\begin{proof}
    We begin by outlining the strategy of the proof. Aiming for a contradiction, we will suppose that the conditions of the theorem hold, but that the flow cannot be extended past time $T_0$. Then, $\Lambda(t)$ \eqref{eqLambdaDef} must tend to infinity along the flow, and we will use this to construct a rescaled flow. For this rescaled flow, we will obtain a volume lower bound, which we will use to obtain an injectivity radius lower bound. At this point, we can apply the result of Theorem \ref{thmCompactnessSpaceofSolutions} to obtain a convergent subsequence. The limit has vanishing torsion, hence is Ricci-flat. Finally, the Euclidean volume growth of the limit and the Bishop–Gromov relative volume comparison theorem force the limit to be flat, contradicting the normalization of the blow-up.
    
    Assume, for the sake of contradiction, that the conditions of the theorem hold but that the solution $\Phi(t)$ cannot be extended past $T_0$. Then, $[0,T_0)$ is a maximal time interval, so Theorem \ref{ThmLongTime} gives that there exists a sequence of space-time points $(x_i,t_i)$ such that $t_i\nearrow T_0$ and 
    \[
    \Lambda(x_i,t_i) = \sup_{x\in M, t\in [0,t_i]}\left(\left|\nabla T(x,t) \right|_{g(t)}^2 + \left|\Riem(x,t) \right|_{g(t)}^2+ \left|T(x,t) \right|_{g(t)}^4\right)^{\frac{1}{2}} \to \infty.
    \]
    Then, using the notation at the end of Section \ref{SectionCompactness}, we define $\Phi_i(t)$ by \eqref{eq:parabolicrescaling}, obtaining a sequence of flows $(M,\Phi_i(t),x_i)$ defined on $[-t_i\Lambda(x_i,t_i)^2,0]$, and we have that $\Lambda_{\Phi_i(t)}(x,t)$ defined by \eqref{eq:lambdaPhii} satisfies
\begin{equation}\label{boundedrescaledlambda}
    \sup_{M \times [-t_i\Lambda(x_i,t_i)^2,0]}|\Lambda_{\Phi_i(t)}(x,t)| \leq 1 \text{ and } |\Lambda_{\Phi_i(t)}(x_i,0)| =1.
\end{equation}
The metric $g_i(t)$ induced by $\Phi_i(t)$ is 
\[
g_i(t) = \Lambda_i(x_i,t_i)^2
g(t_i + \Lambda(x_i,t_i)^{-2}t).\]
Thus, for any $x\in M$ and $r\leq \Lambda(x_i,t_i)$ we have that 
\begin{align}
    \Vol_{g_i(0)}(B_{g_i(0)}(x,r)) &= \Lambda(x_i,t_i)^8\Vol_{g(t_i)}(B_{g(t_i)}(x,\Lambda(x_i,t_i)^{-1}r)), \text{ by definition of } g_i\\
    & \geq \Lambda(x_i,t_i)^8\Vol_{g(t_i)}(B_{g(t_0)}(x,(1+\varepsilon)^{-\frac{1}{2}}\Lambda(x_i,t_i)^{-1}r)), \text{ by \eqref{eqBallINclusion}}\\
    &\geq\Lambda(x_i,t_i)^8(1-\varepsilon)^4\Vol_{g(t_0)}(B_{g(t_0)}(x,(1+\varepsilon)^{-\frac{1}{2}}\Lambda(x_i,t_i)^{-1}r)), \text{ by \eqref{BallVolumeInequality}}\\
    &\geq C\Lambda(x_i,t_i)^8(1-\varepsilon)^4((1+\varepsilon)^{-\frac{1}{2}}\Lambda(x_i,t_i)^{-1}r)^8 \label{Penultimateline}\\
    &\geq C(1-\varepsilon)^4(1+\varepsilon)^{-4}r^8,
\end{align}
where the penultimate line \eqref{Penultimateline} follows from the uniform lower bound $\Vol_{g(t_0)}(B_{g(t_0)}(x,\rho)) \ge C \rho^8$ on $(M,g(t_0))$, where $\rho = (1+\varepsilon)^{-1/2}\Lambda(x_i,t_i)^{-1} r$ is sufficiently small.
So, we have that 
\begin{equation}\label{VolumeBound}
    \Vol_{g_i(0)}(B_{g_i(0)}(x,r)) \geq Cr^8,
\end{equation}
for all $x\in M,r\in[0,\Lambda(x_i,t_i)]$, for some universal constant $C$. 

Now, by definition of $\Lambda_{\Phi_i}$ \eqref{eq:lambdaPhii}, we have
\begin{equation}\label{eqBoundedRiem}
   |\Riem_{g_i}(x,0)| \leq1 ,
\end{equation}
for all $x\in M$. By the volume bound \eqref{VolumeBound}, bounded curvature \eqref{eqBoundedRiem} and the Cheeger-Gromov-Taylor injectivity radius theorem \cite[Theorem 5.42]{ChowLuNi}, we have a uniform injectivity radius lower bound: 
\[
\inj(M,g_i(0),x_i)\geq c,
\]
for some constant $c$. This, together with \eqref{boundedrescaledlambda}, allows an application of our compactness theorem (Theorem \ref{thmCompactnessSpaceofSolutions}), yielding a subsequence of $(M,\Phi_i(t),x_i)$ converging to a limit $(M_\infty,\Phi_\infty(t),x_\infty)$, for $t \in (\infty,0]$, with $|\Lambda_\infty(x_\infty,0)| = 1$. Now, since $T$ remains bounded \eqref{eqTbounded} and $\Lambda(x_i,t_i) \to  \infty$ as $t_i \to \infty,$ we have that
\begin{equation}
    |T_i(x,t)|^2_{g_i(t)} = \Lambda(x_i,t_i)^{-2}|T(x,t_i+\Lambda(x_i,t_i)^{-2}t)|_{g(t_i + \Lambda(x_i,t_i)^{-2}t)} \to 0
\end{equation}
as $i \to \infty.$

So, the limit $(M_\infty,\Phi_{\infty}(t))$ has zero torsion for all $t \in (-\infty,0]$, and hence the induced metric $g_\infty(0)$ is Ricci-flat. Then, as in \cite[pp. 239-240]{ChowLuNi}, we have that $g_\infty(0)$ has precisely Euclidean volume growth:
\begin{equation}\label{eqEuclideanVolumeGrowth}
    \Vol_{g_\infty(0)}(B_{g_{\infty(0)}}(x_\infty,r)) = \Vol_{g_{\mathbb{R}^8}}(B_{g_{\mathbb{R}^8}}(0,1))r^8.
\end{equation}
This, together with the fact that $M_\infty$ is complete and Ricci-flat allows the application of the Bishop-Gromov relative volume comparison theorem, which implies that
\begin{equation}\label{eqContradiction}
   \Riem(g_\infty(0)) = 0 \text{ on }M_\infty. 
\end{equation}
But, since the torsion vanishes, we have that
\[
|\Riem_{g_\infty}(x_\infty,0)| = |\Lambda_{\Phi_\infty}(x_\infty,0)| = 1,
\]
which contradicts \eqref{eqContradiction}, and so our solution $\Phi(t)$ can be extended past $T_0$.
\end{proof}
In the statement of Theorem \ref{thmExtensionBoundedTorsion}, we assumed that the family of metrics $g(t)$ was uniformly continuous. This was used only to obtain the volume bound \eqref{VolumeBound} which was used to argue that the limit manifold had Euclidean volume growth \eqref{eqEuclideanVolumeGrowth}. In certain cases, we can drop this assumption, if there is some other way to obtain Euclidean volume growth in the limit. We illustrate one such way in Theorem \ref{ThmIntegralBlowup1} where we instead assume an integral bound on the Ricci curvature, which we show implies uniform continuity of the metrics. Here we also only need an integral bound on the torsion to get extension of the flow. 
Later, in Subsection \ref{SubsectionFiniteTime}, we will use a $\kappa$-non-collapsing theorem to prove similar results, without making any claims about uniform continuity of the metrics. The $\kappa$-non-collapsing result avoids these assumptions, by providing an alternative pathway to the conclusion about Euclidean volume growth, as we will see in Theorem \ref{ThmMaximalVolumeGrowthSpin7Manifold}.

\begin{remark}
    We also note that, during the proof of Theorem \ref{thmExtensionBoundedTorsion}, we extracted a maximal volume growth torsion-free limit in the rescaled flow $\Phi_i(t)$, under the assumption that the torsion remained bounded at a finite-time singularity. This is similar to Theorem \ref{ThmMaximalVolumeGrowthSpin7Manifold}, where we will prove the same result under looser assumptions, using $\kappa$-non-collapsing.
\end{remark}

We begin with the following theorem, before discussing $\kappa$-non-collapsing and its consequences.

\begin{thm}\label{ThmIntegralBlowup1}
    Let $\Phi(t)$ be a solution to a reasonable flow of \Sp-structures on a compact manifold $M^8$, on a maximal time interval $[0,T_0)$, for some $T_0<\infty.$
    Then,
    \begin{equation}
        \int_0^{T_0}\sup_{x \in M}(|\Ric(x,t)|_{g(t)}+|T(x,t)|_{g(t)}^2)\d t = \infty.
    \end{equation}
\end{thm}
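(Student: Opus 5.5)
We argue by contradiction. Suppose the integral is finite,
\[
\int_0^{T_0}\sup_{x\in M}\left(|\Ric(x,t)|_{g(t)}+|T(x,t)|^2_{g(t)}\right)\d t = K_0 <\infty.
\]
The plan is to show that this forces the two hypotheses of Theorem \ref{thmExtensionBoundedTorsion}, uniform continuity of $g(t)$ and boundedness of $T$. That theorem then extends the flow past $T_0$, contradicting maximality.

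\textbf{Uniform continuity of the metrics.} By \eqref{eqReasonableMetric},
\[
\left|\ptt g\right|_{g(t)} \leq 2|\Ric| + C|T| + C|T|^2 + C \leq C\left(|\Ric| + |T|^2 + 1\right),
\]
using $|T|\leq \tfrac12(1+|T|^2)$. Set $f(t)=\sup_x(|\Ric|+|T|^2)+1$, which is integrable on $[0,T_0)$ because $T_0<\infty$. The standard argument for Hamilton's lemma (cf. \cite[Lemma 14.2]{Hamilton3Manifolds}) then gives, for $t_0<t$,
\[
e^{-C\int_{t_0}^t f}\,g(t_0)\leq g(t)\leq e^{C\int_{t_0}^t f}\,g(t_0).
\]
Absolute continuity of the integral makes $\int_{t_0}^t f$ uniformly small whenever $t-t_0\leq\delta$. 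This yields $|g(t)-g(t_0)|_{g(t_0)}\leq\varepsilon$, which is exactly uniform continuity. As a byproduct, all the $g(t)$ are uniformly equivalent to $g(0)$.

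\textbf{Bounded torsion (main obstacle).} An integral bound on $|T|^2$ does not immediately give a pointwise bound, so this is the step that needs care. My first attempt is to rerun the blow-up argument of Theorem \ref{thmExtensionBoundedTorsion} directly, replacing the pointwise hypothesis \eqref{eqTbounded} with the integral one. Take the parabolic rescalings $\Phi_i$ around the points $(x_i,t_i)$ from Section \ref{SectionCompactness}. The volume bound \eqref{VolumeBound} needs only uniform continuity, which we now have. So Cheeger--Gromov--Taylor gives an injectivity radius lower bound, and Theorem \ref{thmCompactnessSpaceofSolutions} produces a limit flow on $(-\infty,0]$ with $\Lambda_\infty(x_\infty,0)=1$.

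The integral hypothesis is scale-invariant in the following sense. Because $\d t = \Lambda(x_i,t_i)^{-2}\d s$ and $|T_i|^2=\Lambda(x_i,t_i)^{-2}|T|^2$, we get
\[
\int_{-A}^{0}\sup_x|T_i|^2\,\d s=\int_{t_i-A\Lambda(x_i,t_i)^{-2}}^{t_i}\sup_x|T|^2\,\d t,
\]
and similarly for $\Ric$. The right-hand side is the integral over a time interval shrinking to $T_0$, so it tends to $0$ by absolute continuity. Hence in the limit $\int_{-A}^0\sup(|T_\infty|^2+|\Ric_\infty|)\d s=0$ for every $A$. The limit is smooth, so $T_\infty\equiv0$ and $\Ric_\infty\equiv 0$ on $(-\infty,0]$.

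The remainder follows Theorem \ref{thmExtensionBoundedTorsion} verbatim. The limit has Euclidean volume growth, so by Bishop--Gromov $\Riem_\infty(0)=0$. But $T_\infty=0$ forces $\nabla T_\infty=0$, so $|\Riem_\infty(x_\infty,0)|=\Lambda_\infty(x_\infty,0)=1$, a contradiction. This route bypasses the pointwise torsion bound entirely.

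The delicate points are two. First, the volume lower bound uses a reference time $t_0$ close to $T_0$ with $t_i-t_0<\delta$, together with the fact that small balls in the fixed compact manifold $(M,g(t_0))$ satisfy $\Vol\geq C\rho^8$. Second, we must check that the compactness theorem applies on $(a,0]$, where the required bound on $\Lambda$ comes from \eqref{boundedrescaledlambda}.
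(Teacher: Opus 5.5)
Your argument is correct and is essentially the paper's proof: the integral bound gives uniform continuity of $g(t)$, and the parabolic blow-up uses the volume and injectivity-radius bounds from Theorem \ref{thmExtensionBoundedTorsion} to produce a limit flow. Scale invariance of $\int\sup(|\Ric|+|T|^2)$ then forces that limit to be torsion-free and Ricci-flat, and Euclidean volume growth with Bishop--Gromov contradicts $\Lambda_\infty(x_\infty,0)=1$. Your stated opening plan of first proving a pointwise torsion bound is never needed, and the paper does not attempt it either; the direct blow-up route you end up taking is the paper's route.
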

\begin{proof}
We adapt the ideas of \cite[Theorem 5.2]{Chen} to the \Sp-setting.
    Aiming for a contradiction, we start by assuming that
    \begin{equation}\label{eqContradictionAssum}
        \int_0^{T_0}\sup_{x \in M}(|\Ric(x,t)|_{g(t)}+|T(x,t)|_{g(t)}^2)\d t < \infty.
    \end{equation}
Since $\Phi(t)$ is a reasonable flow,
\[
\frac{\partial}{\partial t}g
=
-2\Ric+L(T)+T*T+C
\]
by \eqref{eqReasonableMetric}. Hence,
\[
\sup_{x\in M}\left|\frac{\partial}{\partial t}g(x,t)\right|_{g(t)}
\le
C\sup_{x\in M}\bigl(
|\Ric(x,t)|_{g(t)}+|T(x,t)|_{g(t)}+|T(x,t)|_{g(t)}^2+1
\bigr).
\]
Using the fact that \(|T|\le |T|^2+1\), we have
\[
\sup_{x\in M}\left|\frac{\partial}{\partial t}g(x,t)\right|_{g(t)}
\le
C\sup_{x\in M}\bigl(
|\Ric(x,t)|_{g(t)}+|T(x,t)|_{g(t)}^2+1
\bigr).
\]
Therefore, by \eqref{eqContradictionAssum},
\[
\int_0^{T_0}
\sup_{x\in M}\left|\frac{\partial}{\partial t}g(x,t)\right|_{g(t)}\,dt
<\infty.
\]
It follows that the family of metrics \(g(t)\) is uniformly continuous on \([0,T_0)\).
Moreover, since $T_0$ is the maximal existence time, Theorem \ref{ThmLongTime} gives that the quantity $\Lambda(t)$ blows up as $t$ approaches $T_0.$ We now work as in the proof of Theorem \ref{thmExtensionBoundedTorsion}, choosing a sequence of spacetime points $(x_i,t_i)$ such that $t_i\nearrow T_0$ and 
\[
\Lambda(x_i,t_i) = \sup_{x\in M, t\in [0,t_i]}\left(\left|\nabla T(x,t) \right|_{g(t)}^2 + \left|\Riem(x,t) \right|_{g(t)}^2+ \left|T(x,t) \right|_{g(t)}^4\right)^{\frac{1}{2}} \to \infty.
\]
Then, again as in the proof of Theorem \ref{thmExtensionBoundedTorsion}, we define a sequence of parabolically rescaled flows
\begin{equation}\label{eq:parabolicrescaling2}
    \Phi_i(t) = \Lambda(x_i,t_i)^4\Phi(t_i+\Lambda(x_i,t_i)^{-2}t).
\end{equation}
This again gives a sequence of flows $(M,\Phi_i(t),x_i)$ defined on $[-t_i\Lambda(x_i,t_i)^2,0]$, and we have that $\Lambda_{\Phi_i(t)}(x,t)$ defined by \eqref{eq:lambdaPhii} satisfies

\begin{equation}
    \sup_{M \times [-t_i\Lambda(x_i,t_i)^2,0]}|\Lambda_{\Phi_i(t)}(x,t)| \leq 1 \text{ and } |\Lambda_{\Phi_i(t)}(x_i,0)| =1.
\end{equation}
The metric $g_i(t)$ induced by $\Phi_i(t)$ is 
\begin{equation}\label{eqRescaledMetric}
   g_i(t) = \Lambda(x_i,t_i)^2
g(t_i + \Lambda(x_i,t_i)^{-2}t). 
\end{equation}
By the Shi-type estimates (Theorem \ref{ThmShiType}), covariant derivatives of all orders of curvature and torsion are bounded on every compact time interval $[-A,0]$.

Still working as in the proof of Theorem \ref{thmExtensionBoundedTorsion}, we obtain the following uniform lower bound on the volume of the $g_i(0)$-balls:
\begin{equation}
    \Vol_{g_i(0)}(B_{g_i(0)}(x,r)) \geq Cr^8,
\end{equation}
for all $x\in M,r\in[0,\Lambda(x_i,t_i)]$, for some universal constant $C$ (see the sequence of inequalities leading up to  \eqref{VolumeBound}). Again, by the definition of $\Lambda_{\Phi_i}$ \eqref{eq:lambdaPhii}, we have that
\[
|\Riem_{g_i}(x,0)|\leq 1,
\]
for all $x\in M$, and we again get the uniform lower bound on the injectivity radius:
\begin{equation}
\inj(M,g_i(0),x_i)\geq c,
\end{equation}
for some constant $c$. Theorem \ref{thmCompactnessSpaceofSolutions} then yields a subsequence of $(M_i,\Phi_i(t),x_i)$ converging to a limit $(M_\infty,\Phi_\infty(t),x_\infty)$, for $t \in (-\infty,0]$, with 
\begin{equation}\label{eqLimitLambda1}
    \Lambda_\infty(x_\infty,0) = 1.
\end{equation}

On the other hand, let $s = t_i + \Lambda(x_i,t_i)^{-2}t$, so that $dt = \Lambda(x_i,t_i)^2ds$ and $g_i(t) = \Lambda(x_i,t_i)^2g(s)$ (cf. \eqref{eqRescaledMetric}). So, by the homogeneity of $\Ric$ and $T$, we have that for any $A>0$,
\begin{align}
    &\int_{-A}^0\sup_{x\in M}(|\Ric_{g_i(t)}(x,t)|_{g_i(t)}+|T_i(x,t)|_{g_i(t)}^2)dt \\
    &= \int_{t_i -A/\Lambda(x_i,t_i)^2}^{t_i}\sup_{x\in M}(|\Ric_{g(s)}(x,s)|_{g(s)}+|T(x,s)|_{g(s)}^2)ds.
\end{align}
Taking the limit as $i\to\infty$, using the fact that such limit exists by the discussion above, we have that the right hand side tends to $0$. 
So, the left-hand side also tends to $0$. If it were the case that
\[
\bigl|\Ric_{g_\infty}(t_0)(x_0)\bigr| 
+ \bigl|T_\infty(x_0,t_0)\bigr|^2 > 0,
\]
at some point $(x_0,t_0)$, then by smooth pointed convergence the same quantity would be bounded below by a positive constant on a small spacetime neighbourhood for all sufficiently large $i$, contradicting the vanishing of the above integrals. Hence,
\[
\Ric_{g_\infty}(t) \equiv 0,
\qquad
T_\infty(t) \equiv 0,
\]
for all $t \leq 0$.

Then, arguing exactly as in the proof of Theorem \ref{thmExtensionBoundedTorsion}, we obtain that $g_\infty(0)$ has precisely Euclidean volume growth which, together with the fact that $M_\infty$ is complete and Ricci-flat allows the application of the Bishop-Gromov relative volume comparison theorem, which implies that
\begin{equation}\label{eqContradiction2}
   \Riem(g_\infty(0)) = 0 \text{ on }M_\infty. 
\end{equation}
Thus, $\Lambda_{\infty}(x_{\infty},0) = 0$, which contradicts \eqref{eqLimitLambda1}. So, our assumption that
\begin{equation}\label{eqContradictionAssum2}
        \int_0^{T_0}\sup_{x \in M}(|\Ric(x,t)|_{g(t)}+|T(x,t)|_{g(t)}^2)\d t < \infty.
    \end{equation}
led to a contradiction, completing the proof of this theorem.
\end{proof}

\subsection{A \texorpdfstring{$\kappa$}{k}-non-collapsing theorem}\label{Subsectionkappa}
In \cite[Section 4]{Chen}, Chen proves a generalisation of Perelman's $\kappa$-non-collapsing theorem for a whole class of flows of metrics satisfying a certain condition \eqref{kappacondition}, which he then uses to study finite-time singularities of reasonable flows of $G_2$-structures. Our reasonable flows of \Sp-structures satisfy the same schematic equations as Chen's $G_2$ flows, so we use the same techniques to study finite-time singularities in Subsection \ref{SubsectionFiniteTime}. In this subsection, we state Chen's $\kappa$-non-collapsing theorem. 
We begin by recalling Chen's definition of $\kappa$-non-collapsing \cite[Definition 4.1]{Chen}.
\begin{defn}
    Let $(M^n,g)$ be a Riemannian manifold. The Riemannian metric $g$ is said to be \emph{ $\kappa$-non-collapsing relative to an upper bound of scalar curvature on the scale $\rho$} if, for any $B_g(p,r)\subset M$ with $r<\rho$ such that $\sup_{B_g(p,r)}R_g \leq r^{-2}$, we have that $\Vol_gB_g(p,r)\geq \kappa r^n$.
\end{defn}

With this, Chen proves a modification of Perelman's $\kappa$-non-collapsing theorem, which we state now \cite[Theorem 4.2]{Chen}.

\begin{thm}\label{kappaNonCollapsed}
    Let $\ptt g = -2\Ric + E$ be a geometric flow on a compact manifold $M^n$, for some symmetric $2$-tensor $E$. Fix constants $\rho_0,T_0 \in (0,\infty)$.
    Suppose that for a scale $\rho$ and a time $t_0$ satisfying
\[
0 < \rho \leq \rho_0 \quad \text{and} \quad 0 < \frac{T_0}{2} \leq t_0 \leq T_0 < \infty,
\]
the integral 
\begin{equation}\label{kappacondition}
    I = I(t_0,\rho) = \int_0^{t_0} (t_0 + \rho^2 - t) \sup_M |E|^2 \, \mathrm{d}t
\end{equation}
is finite.  Then the Riemannian metric $g(t_0)$ is $\kappa$-non-collapsing relative to an upper bound of the scalar curvature on the scale $\rho$, where $\kappa = \kappa(g(0), T_0, \rho_0, I) > 0$.

\end{thm}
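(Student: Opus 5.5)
The plan is to follow Perelman's original non-collapsing argument via the $\mathcal{W}$-entropy, as modified by Chen. Consider Perelman's functional
\[
\mathcal{W}(g,f,\tau)=\int_M\bigl(\tau(R+|\nabla f|^2)+f-n\bigr)(4\pi\tau)^{-n/2}e^{-f}\,\d\mathrm{vol}_g,
\]
and the associated $\mu(g,\tau)=\inf\{\mathcal{W}(g,f,\tau):\int(4\pi\tau)^{-n/2}e^{-f}=1\}$. The argument has two halves: a lower bound on $\mu(g(t_0),r^2)$ obtained by transporting information back from time $0$, and an upper bound from collapsing.

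\textbf{Lower bound.} Fix $t_0$ and a scale $r<\rho$, and set $\tau(t)=t_0+r^2-t$. Solve the backward conjugate heat equation
\[
\partial_t u=-\Delta u+Ru-\tfrac12\operatorname{tr}_gE\,u
\]
from a minimiser at time $t_0$ back to time $0$, writing $u=(4\pi\tau)^{-n/2}e^{-f}$. This keeps $\int u$ constant even though $\ptt g=-2\Ric+E$. Then compute $\frac{\d}{\d t}\mathcal{W}(g(t),f(t),\tau(t))$. For Ricci flow this equals $2\tau\int|\Ric+\nabla^2f-g/(2\tau)|^2u$. Here there are extra terms linear in $E$, paired against $\Ric+\nabla^2f-g/(2\tau)$, plus terms like $\tau\,\langle E,\cdot\rangle$ and $\operatorname{tr}E$.

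Absorb the bad terms into the good square by Cauchy--Schwarz, aiming for
\[
\frac{\d}{\d t}\mathcal{W}\geq -C\tau\sup_M|E|^2-C\sup|E|,
\]
and arrange things so that only the $\tau|E|^2$ piece survives in the form $\int_0^{t_0}\tau\sup|E|^2$. Integrating from $0$ to $t_0$ then gives
\[
\mu(g(t_0),r^2)\geq\mu(g(0),t_0+r^2)-C\,I(t_0,r)-C'.
\]
The term $\mu(g(0),\cdot)$ on the compact interval $[T_0/2,T_0+\rho_0^2]$ is bounded below by a constant depending on $g(0)$ alone, via the log-Sobolev inequality on the fixed compact $(M,g(0))$. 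Also $I(t_0,r)\le I(t_0,\rho)$, since the integrand is monotone in $\rho$.

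\textbf{Upper bound.} Suppose $\sup_{B(p,r)}R\le r^{-2}$ and $\Vol B(p,r)=\varepsilon r^n$ with $\varepsilon$ small. Take the standard cutoff test function $u=e^{-f}(4\pi r^2)^{-n/2}$ supported in $B(p,r)$, equal to a constant on $B(p,r/2)$. As in Perelman's argument, use Bishop--Gromov-type control from the scalar curvature bound only in the averaged way he uses, relating $\Vol B(p,r/2)$ to $\Vol B(p,r)$, possibly by choosing a comparable smaller radius. This gives $\mathcal{W}\le C+\log\varepsilon$. As $\varepsilon\to0$ the right side tends to $-\infty$, contradicting the lower bound once $\varepsilon<\kappa(g(0),T_0,\rho_0,I)$.

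\textbf{Main obstacle.} The hard step is the monotonicity computation with the error tensor $E$. We must check that the cross terms can be absorbed using only $\sup|E|^2$ weighted by $\tau$; this is what forces the weight $(t_0+\rho^2-t)$ in $I$. Terms that are linear in $E$ without a factor of $\tau$ appear too, since $\tau\sup|E|^2+\tau^{-1}$ bounds them. I would handle these with the elementary $\sup|E|\le\tau|E|^2+\tau^{-1}$ trick, or by carefully rearranging divergence terms, so that they are integrable against the $u$-measure with constants depending on $T_0,\rho_0$. I would try this rearrangement first, following Chen's computation.
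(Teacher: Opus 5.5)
The paper does not prove this statement; it quotes it from Chen [Theorem 4.2]. Chen's argument is the one you outline: Perelman's $\mathcal W$-functional, the modified conjugate heat equation $\pt_t u=-\Delta u+Ru-\tfrac12(\operatorname{tr}_gE)u$ preserving $\int_M u$, an almost-monotonicity formula, a lower bound for $\mu(g(0),\cdot)$ on $[T_0/2,T_0+\rho_0^2]$, and a cutoff test function. So your strategy is right.

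The weak point is the step you call the main obstacle. You expect stray $\operatorname{tr}E$ and $\sup_M|E|$ terms, and the fallback you propose for them would break the proof. In fact these terms do not arise.

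Write $\ptt g=2h$ and $S=\Ric+\nabla^2f-\frac{g}{2\tau}$. After integrating by parts against $u=(4\pi\tau)^{-n/2}e^{-f}$, the terms involving derivatives of $\operatorname{tr}h$ cancel; one comes from $\tau\pt_tR$ and the other from $\tau\pt_t|\nabla f|^2$. The entire $h$-dependence of $\frac{\d}{\d t}\mathcal W$ is then $-2\tau\int_M\langle h,S\rangle u$. Comparing with Ricci flow, for $h=-\Ric+\frac12E$ this gives exactly
\[
\frac{\d}{\d t}\mathcal W=2\tau\int_M|S|^2u-\tau\int_M\langle E,S\rangle u\;\geq\;-\frac{\tau}{8}\sup_M|E|^2,
\]
using $|E||S|\leq 2|S|^2+\frac18|E|^2$ and $\int_M u=1$.

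The $\frac12\operatorname{tr}E$ you anticipated is just $E$ paired with the $-\frac{g}{2\tau}$ part of $S$. It must be absorbed together with the rest of $S$, not split off. Integrating yields
\[
\mu(g(t_0),r^2)\geq\inf_{[T_0/2,\,T_0+\rho_0^2]}\mu(g(0),\cdot)-\tfrac18 I(t_0,\rho)
\]
for every $r\in(0,\rho]$, with no additional constant. Your fallback $\sup_M|E|\leq\tau\sup_M|E|^2+\tau^{-1}$ would instead cost $\int_0^{t_0}\tau^{-1}\,\d t=\log(1+t_0/r^2)$. This is unbounded as $r\to0$ and would make $\kappa$ depend on the scale.

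Uniformity in the scale is exactly what the second half needs, and your treatment there is too vague. An upper bound on scalar curvature gives no Bishop--Gromov comparison; Perelman's original argument assumes $|\Riem|\leq r^{-2}$. So the volume-ratio control must come from selecting the radius.

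Suppose $\Vol B(p,r)<\varepsilon r^n$. Let $r_k=2^{-k}r$, with $k\geq0$ minimal such that $\Vol B(p,r_{k+1})\geq 3^{-n}\Vol B(p,r_k)$. Such $k$ exists because this ratio tends to $2^{-n}$ as $k\to\infty$. Then $\Vol B(p,r_k)<\varepsilon r_k^n$ and $R\leq r_k^{-2}$ on $B(p,r_k)$. The cutoff at scale $r_k$ with $\tau=r_k^2$ gives $\mu(g(t_0),r_k^2)\leq C(n)+\log\varepsilon$.

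Since $k$ is not controlled, $r_k$ need not be comparable to $r$. So you genuinely need the lower bound on $\mu(g(t_0),r'^2)$ uniformly in $r'\in(0,\rho]$. The exact identity above provides this; the fallback does not.
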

We will use this in the following subsection to study finite time singularities of reasonable flows of \Sp-structures, under suitable assumptions that allow us to apply Theorem \ref{kappaNonCollapsed}, as Chen does in the $G_2$ case \cite[Section 5]{Chen}.
\subsection{Blow-ups at finite time singularities}\label{SubsectionFiniteTime}

We now use the $\kappa$-non-collapsing theorem to prove more refined results about finite-time singularities, without requiring an assumption on uniform continuity of the metrics. The results of this subsection follow \cite[Section 5]{Chen}
In Section \ref{SectionExtensionAssumingBoundedTorsion} we showed that certain quantities must blow up at finite-time singularities. The following theorems say something more about the \emph{structure} of these finite-time singularities, under certain weighted integral torsion bounds, which are weaker than the assumption of Theorem \ref{thmExtensionBoundedTorsion}. In particular, Theorem \ref{ThmLimSups} demonstrates that, under a weighted integral bound on the torsion, the Ricci curvature and torsion must blow up at least at a Type-I rate. Theorem \ref{ThmMaximalVolumeGrowthSpin7Manifold} shows that, under an additional assumption on the growth of the scalar curvature and torsion, the blow-up limit at a finite-time singularity is a torsion-free \Sp-manifold with maximal volume growth.

We begin with Theorem \ref{ThmLimSups}.
\begin{thm}\label{ThmLimSups}
    Let $\Phi(t)$ be a solution to a reasonable flow of \Sp-structures on a compact manifold $M^8$, on a maximal time interval $[0,T_0)$, for some $T_0<\infty.$
    Assume that
    \begin{equation}
        \int_0^{T_0}(T_0-t)\sup_{x\in M}|T(x,t)|_{g(t)}^4\d t < \infty.
    \end{equation}
    Then,
    \begin{equation}\label{eqFirstEstimate}
        \limsup_{t\to T_0}(T_0-t)\sup_{x\in M}\left (|\Ric(x,t)|_{g(t)}+|T(x,t)|_{g(t)}^2\right )>0.
    \end{equation}
    Moreover, defining
    \begin{equation}
        P(t_0) = \sup_{t\leq t_0}\sup_{x \in M}\left(1+|R(x,t)|_{g(t)} +|T(x,t)|_{g(t)}^2\right)
    \end{equation}
    and
    \begin{equation}\label{Qdef}
        Q(t_0) = \sup_{t\leq t_0}\sup_{x \in M}\left(|\Riem(x,t)|_{g(t)}+|T(x,t)|_{g(t)}^2+|\nabla T(x,t)|_{g(t)}\right),
    \end{equation}
    we have that
    \begin{equation}\label{eqSecondEstimate}
        \limsup_{t_0 \to T_0}\left[(T_0-t_0)^2Q(t_0)P(t_0) \right] >0.
    \end{equation}
\end{thm}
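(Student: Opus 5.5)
Both estimates will be proved by contradiction, following \cite[Theorem 5.3]{Chen}. The common ingredient is Chen's $\kappa$-non-collapsing theorem (Theorem \ref{kappaNonCollapsed}). For a reasonable flow, \eqref{eqReasonableMetric} gives $\ptt g = -2\Ric + E$ with $E = L(T)+T*T+C$, so $|E|^2 \le C(|T|^4+1)$. The assumption $\int_0^{T_0}(T_0-t)\sup|T|^4\,\d t<\infty$ then makes the integral $I(t_0,\rho)$ in \eqref{kappacondition} uniformly bounded for all $t_0\in[T_0/2,T_0)$ and $\rho\le\rho_0$. Hence every $g(t_0)$ is $\kappa$-non-collapsing relative to an upper bound of scalar curvature on a fixed scale $\rho_0$, with $\kappa$ independent of $t_0$.

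For \eqref{eqFirstEstimate}, suppose instead that $(T_0-t)\sup_M(|\Ric|+|T|^2)\to 0$. By Theorem \ref{ThmLongTime}, $\Lambda$ blows up, so I choose $(x_i,t_i)$ with $\Lambda(x_i,t_i)=\sup_{M\times[0,t_i]}\Lambda$ and form the parabolic rescalings \eqref{eq:parabolicrescaling}. The lower bound $\Lambda(t)\ge C/(T_0-t)$ gives $\Lambda(x_i,t_i)^2(T_0-t_i)\ge c\Lambda(x_i,t_i)(T_0-t_i)\cdot\ldots$; more usefully, it bounds the rescaled quantities. In the rescaled flow, $|\Ric_i|+|T_i|^2$ at time $t$ is at most $o(1)\cdot\Lambda(x_i,t_i)^{-2}/(T_0-t_i-\Lambda^{-2}t)$. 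To make this tend to $0$ I need $\Lambda(x_i,t_i)^2(T_0-t_i)\gtrsim 1$. This follows from the blow-up rate bound, since $\Lambda(x_i,t_i)\ge \Lambda(t_i)\ge C/(T_0-t_i)$ gives $\Lambda^2(T_0-t_i)\ge C\Lambda\to\infty$. So $\Ric_i$ and $T_i$ tend to zero locally uniformly on $(-A,0]$.

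Next I get $\kappa$-non-collapsing at all scales for $g_i(0)$: the scale $\rho_0$ becomes $\rho_0\Lambda(x_i,t_i)\to\infty$, and the condition involves only scalar curvature, which is now small. Together with $|\Riem_i|\le1$, this yields volume bounds $\mathrm{Vol}\,B(x,r)\ge\kappa r^8$ for $r\le1$, and hence, via Cheeger--Gromov--Taylor, an injectivity radius lower bound at $x_i$. Theorem \ref{thmCompactnessSpaceofSolutions} then gives a limit flow that is torsion-free, hence Ricci-flat, with $\Lambda_\infty(x_\infty,0)=|\Riem_\infty(x_\infty,0)|=1$. Because $R_\infty=0$, non-collapsing holds at every scale, so $\mathrm{Vol}\,B(x_\infty,r)\ge\kappa r^8$ for all $r$; the limit therefore has maximal volume growth. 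To conclude I would show that the limit is flat, contradicting $|\Riem_\infty|=1$. Ricci-flat with maximal volume growth does not by itself force flatness, so I would instead follow Chen and use the integral/limsup structure. Concretely, I would pick the points $(x_i,t_i)$ using Type-I-type information to get decay $|\Riem_i|\to0$ at scales, so that the rescaled factor $(T_0-t)\Lambda$ enters. Identifying exactly which contradiction Chen uses here (likely that the limit is ancient with $|\Ric|+|T|^2\equiv0$ and curvature controlled by Shi estimates, combined with the Type-IIa unboundedness) is the main obstacle, and I would consult \cite[Section 5]{Chen} for this step.

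For \eqref{eqSecondEstimate}, I would instead rescale by $Q(t_i)$ at points nearly realizing $Q$, noting that $Q\sim\Lambda$ up to constants. If $(T_0-t_0)^2QP\to0$, then the rescaled scalar curvature and torsion, bounded by $P/Q$, tend to zero. The quantity $(T_0-t_i)^2Q^2\ge(T_0-t_i)^2QP$ also gives the backward time interval, using $P\ge1$ and the blow-up rate lower bound $Q\gtrsim(T_0-t)^{-1}$. The same non-collapsing and compactness argument then produces a limit that is flat at scale one yet has $|\Riem_\infty(x_\infty,0)|$ bounded below, which is the contradiction.
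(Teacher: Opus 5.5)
Both halves of your proposal stop before reaching a contradiction.

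\textbf{The first estimate.} For \eqref{eqFirstEstimate}, a single blow-up at points maximising $\Lambda$ gives a torsion-free, Ricci-flat limit with $|\Riem_\infty(x_\infty,0)|=1$. As you observe yourself, nothing about such a limit is contradictory: complete non-flat torsion-free \Sp-manifolds of maximal volume growth exist, e.g.\ Bryant--Salamon. The appeal to Type-I/Type-IIa information does not fill this.

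The missing idea is a two-time comparison along a doubling sequence.
\begin{enumerate}
\item Take $t_k$ with $Q(t_k)=2^k$, attained at $(p_k,t_k)$. Since $Q(t)\ge c/(T_0-t)$, the inequality $T_0-t_k<\tfrac13(T_0-t_{k-1})$ cannot hold for all large $k$: it would force $T_0-t_k\lesssim 3^{-k}$, against $T_0-t_k\ge c\,2^{-k}$. So for infinitely many $k$ one has $t_k-t_{k-1}\le 2(T_0-t_k)$.
\item Under the contradiction hypothesis and $|\ptt g|\le C(1+|\Ric|+|T|^2)$, this gives $\int_{t_{k-1}}^{t_k}\sup_M|\ptt g|\,\mathrm{d}t\to0$. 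Hence $g(t_{k-1})$ and $g(t_k)$ are $(1+o(1))$-bi-Lipschitz via the identity.
\item Now blow up \emph{both} times by the same factor $2^k$ about $p_k$. Both sequences subconverge to torsion-free limits, and these limits are isometric.
\item Yet the $t_k$-limit has $|\Riem|=1$ at the base point, while the $t_{k-1}$-limit has $|\Riem|\le\tfrac12$ everywhere because $Q(t_{k-1})=2^{k-1}$. This is the contradiction.
\end{enumerate}

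Separately, Theorem~\ref{kappaNonCollapsed} does not give non-collapsing on a fixed scale $\rho_0$ uniformly in $t_0$. The integral $I(t_0,\rho_0)$ contains $\rho_0^2\int_0^{t_0}\sup_M|T|^4\,\mathrm{d}t$, which the weighted hypothesis does not keep bounded. Choosing $\rho=\sqrt{T_0-t_0}$ makes $I=\int_0^{t_0}(T_0-t)\sup_M|E|^2\,\mathrm{d}t$ bounded. After rescaling, that scale is only bounded below (by the blow-up rate). This suffices for the injectivity radius bound, but not for the maximal volume growth you assert. That conclusion is exactly what the extra hypothesis of Theorem~\ref{ThmMaximalVolumeGrowthSpin7Manifold} is for.

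\textbf{The second estimate.} For \eqref{eqSecondEstimate} the same problem recurs. $P/Q\to0$ gives a torsion-free, hence Ricci-flat, limit, but ``flat at scale one'' does not follow. Moreover, a qualitative blow-up can at best give $\sup_M|\Ric|=o(Q)$, which does not control $(T_0-t)|\Ric|$ when $(T_0-t)Q\to\infty$.

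What is needed is the quantitative estimate $\sup_{t\le t_0}\sup_M|\Ric|\le C\sqrt{P(t_0)Q(t_0)}$. Combined with $|T|^2\le P\le CQ$, it gives $(T_0-t_0)\sup_M(|\Ric|+|T|^2)\le C(T_0-t_0)\sqrt{PQ}$. So \eqref{eqSecondEstimate} follows from \eqref{eqFirstEstimate}, not from a separate contradiction. The paper proves the Ricci estimate in three steps:
\begin{enumerate}
\item Rescale by $Q(\tau)$ to get controlled geometry on unit parabolic cylinders, using Shi estimates plus non-collapsing at scale $Q^{-1/2}\lesssim\sqrt{T_0-\tau}$.
\item Integrate the scalar curvature equation $(\partial_s-\widetilde\Delta)\widetilde R=2|\widetilde{\Ric}|^2+\text{(torsion terms)}$ against a cutoff, obtaining $\int_{P_{1/2}(p,0)}|\widetilde{\Ric}|^2\le CP/Q$.
\item Upgrade this to the pointwise bound $|\widetilde{\Ric}|^2\le CP/Q$ with a parabolic mean value inequality.
\end{enumerate}
This $L^2$-to-pointwise Ricci bound, driven by the scalar curvature evolution, is the second ingredient your proposal lacks.
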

\begin{proof}We start by proving the first estimate \eqref{eqFirstEstimate}.
    Firstly, the condition that 
    \begin{equation}
        \int_0^{T_0}(T_0-t)\sup_{x\in M}|T(x,t)|_{g(t)}^4\d t < \infty
    \end{equation}
    means that the flow is $\kappa$-non-collapsed on the scale $\sqrt{T_0-t_0}$, by Theorem \ref{kappaNonCollapsed} and the fact that the metric evolution is given by \eqref{eqReasonableMetric}. 

    By Theorem \ref{ThmLongTime}, we have that $Q(t)\to \infty$ as $t \to T_0$. So, there exists an increasing sequence $t_k \to T_0$ such that $Q(t_k)=2^k$, and we choose points $p_k$ so that the space-time supremum is achieved at the point $(p_k,t_k)$. Again by Theorem \ref{ThmLongTime}, there exists a subsequence of $t_k$ (still denoted by $t_k$), such that
    \[
    T_0 - t_k \geq \frac{T_0-t_{k-1}}{3},
    \]
    so
    \begin{equation}\label{tktk-1bound}
        t_k - t_{k-1} \leq 2(T_0 - t_k).
    \end{equation}
    Now, assume for the sake of contradiction that the first estimate does not hold, i.e.,
       \begin{equation}
        \limsup_{t\to T_0}(T_0-t)\sup_{x\in M}\left (|\Ric(x,t)|_{g(t)}+|T(x,t)|_{g(t)}^2\right )=0.
    \end{equation}
    Then, since $\frac{\pt }{\pt t}g = -2 \Ric  + L(T) + T*T +C$, we have the estimate
    \begin{equation}
        \left|\ptt g \right| \leq C(|\Ric| +|T|^2 +1),
    \end{equation}
    for some constant $C$, and so
    \begin{equation}
        \int_{t_{k-1}}^{t_k} \left|\ptt g(t) \right| \d t \leq C(T_0-t_k)\sup_{t\leq t_k}\sup_{x\in M}\left(1+|R(x,t)|_{g(t)} +|T(x,t)|_{g(t)}^2\right) \to 0 \text{ as }k\to \infty,
    \end{equation}
    where we used \eqref{tktk-1bound} in the first inequality.
    Thus, the metrics $g(t_k)$ converge to the same limit as $k\to \infty$, and hence $g(t_k)$ are all uniformly equivalent for $k$ sufficiently large.

    We now define two parabolically rescaled flows, around times $t_k$ and $t_{k-1}$:
    \begin{align}
        \Phi^A_k(t) = 2^{2k}\Phi(t_k+2^{-k}t)&,\quad g^A_k(t) = 2^kg(t_k+2^{-k}t),\\
        \Phi^B_k(t) = 2^{2k}\Phi(t_{k-1}+2^{-k}t)&,\quad g^B_k(t) = 2^kg(t_{k-1}+2^{-k}t).   
    \end{align}
    Arguing as before (e.g.,\ in the proof of Theorem \ref{thmExtensionBoundedTorsion}), the two sequences $(M,p_k,\Phi^A_k,g^A_k)$ and $(M,p_k,\Phi^B_k,g^B_k)$ converge to torsion-free ancient solutions $(M,p_\infty,\Phi^A_\infty,g^A_\infty)$ and $(M,p_\infty,\Phi^B_\infty,g^B_\infty)$.

    Since the un-rescaled metrics $g(t_k)$ and $g(t_{k-1})$ converge to the same limit, the two limit metrics $g_\infty^A$ and $g_\infty^B$ must be isometric. Moreover, since $\Phi_\infty^{A}(t)$ and $\Phi_\infty^B(t)$ are torsion-free for all $t$, the quantities $Q^A_\infty$ and $Q^B_\infty$ must agree for all time. However, for the first sequence, we have that:
    \[
    Q_\infty^A(0) = \lim_{k \to \infty} \frac{Q(t_k)}{2^k} = \frac{2^k}{2^k} = 1.
    \]
    For the second sequence, since the maximum of the right-hand side of \eqref{Qdef} up to $t_{k-1}$ was $Q(t_{k-1}) = 2^{k-1}$ (by construction), we have that:
    \[
    Q_\infty^B(0) = \lim_{k \to \infty} \frac{Q(t_{k-1})}{2^k} = \frac{2^{k-1}}{2^k} = \frac{1}{2}.
    \]
    The isometry of the limits implies $Q_\infty^A(0) = Q_\infty^B(0)$, leading to the contradiction $1 = 1/2$, which concludes the proof of \eqref{eqFirstEstimate}.

    We now proceed to the second estimate \eqref{eqSecondEstimate}. Note that, in light of \eqref{eqFirstEstimate}, it suffices to show that, for $t_0$ sufficiently close to $T$,
    \begin{equation}
        \sup_{t\leq t_0}\sup_{x\in M}\left (|\Ric(x,t)|_{g(t)}+|T(x,t)|_{g(t)}^2\right ) \leq C \sqrt{P(t_0)Q(t_0)},
    \end{equation}
    for some constant $C$. This, together with \eqref{eqFirstEstimate} will give the result. Now, the torsion term $|T|^2$ is immediately controlled by the definitions of $P$ and $Q$, so we need only prove that
    \begin{equation}
        \sup_{t\leq t_0}\sup_{x\in M}\left (|\Ric(x,t)|_{g(t)}\right ) \leq C \sqrt{P(t_0)Q(t_0)}.
    \end{equation}

    We make a brief aside here to motivate the strategy of what follows.
    We are aiming to obtain a pointwise bound on $|\Ric|$ by $C \sqrt{P(t_0)Q(t_0)}$. We first rescale the flow so that $Q$ is $1$. We will then see that Theorem \ref{kappaNonCollapsed} gives $\kappa$-non-collapsing on the constant scale $1$, and so we have control of metric coefficients and their covariant derivatives inside balls of finite radius. With this, we aim for an $L^2$-bound on the rescaled Ricci tensor $|\widetilde\Ric|$, which we obtain by integrating the evolution equation for the scalar curvature. We then upgrade this integral bound to a pointwise one, by considering the evolution equation for the Ricci tensor and applying the parabolic mean value inequality. Undoing the rescaling will provide the required bound on $\Ric$. At several points in these estimates, it will be necessary to bound the rescaled quantities $|\tilde{T}|$ and $|\widetilde{\nabla}\tilde{T}|$, with respect to the rescaled flow, so we start now by rescaling and bounding these tensors.

    Fix $\tau\leq t_0$, sufficiently close to $T_0$.
    By Theorem \ref{ThmLongTime}, we have that $Q(\tau)\geq \frac{C}{T_0-\tau}$, since $T_0$ is the maximal existence time. Thus,
    \begin{equation}
        Q(\tau)^{-1/2} \leq C(T_0-\tau)^{1/2},
    \end{equation}
    and since the flow is $\kappa$-non-collapsed on the scale $\sqrt{T_0-\tau}$, it is in particular $\kappa$-non-collapsed on the scale $Q(\tau)^{-1/2}$. We now rescale the flow as before:
    \begin{equation}
        \tilde{g}(s) = Q(\tau)g\left (\tau+\frac{s}{Q(\tau)} \right), \quad \tilde\Phi(s) = Q(\tau)\Phi\left (\tau+\frac{s}{Q(\tau)} \right),
    \end{equation}
    for $s\in [-1,0]$. We decorate any tensor induced by $\tilde{g}$ or $\tilde{\Phi}$ with its own tilde.
    Define the parabolic cylinder
    \begin{equation}
        P_r(p,0) = B_{\tilde{g}(0)}(p,r)\times [-r^2,0],
    \end{equation}
    which corresponds to 
    \begin{equation}
        B_{g(\tau)}(p,rQ(\tau)^{-1/2})\times \left [\tau-\frac{r^2}{Q(\tau)},\tau\right ]
    \end{equation}
    in the unscaled metric and time.
    
    Under this rescaling, on $P_1(p,0)$, we have
    \begin{equation}
        |\widetilde{\Riem}|_{\tilde{g}} + |\widetilde{T}|_{\tilde{g}} + |\widetilde{\nabla T}|_{\tilde{g}} \leq 1,
    \end{equation}
    by definition of $Q$.
    The Shi-type estimates (Theorem \ref{ThmShiType}) then give uniform bounds for all higher derivatives of curvature and torsion on smaller cylinders (of a fixed, definite size). Together with $\kappa$-non-collapsing, this gives a uniform lower bound for the harmonic radius. Hence, on a fixed-size ball in the rescaled metric, the metric coefficients and all their derivatives are uniformly controlled.

In these coordinates, the evolution equation for torsion can be written as
\begin{equation}
    \left(\ptt -D\right)\widetilde{T} = 0,
\end{equation}
for some elliptic operator $D$ with uniformly bounded coefficients, and uniformly bounded derivatives of coefficients, of all orders, using \eqref{eqReasonableTorsion} and the Shi-type estimates.
Standard parabolic estimates then give
    \begin{equation}
        \sup_{P_{1/4}(p,0)}|\widetilde{\nabla}^k\widetilde{T}| \leq C(k)\sup_{P_{1/2}(p,0)}|\widetilde{T}|,
    \end{equation}
    for all $k\geq0,$ and we know that
    \begin{equation}
        |\widetilde{T}|_{\tilde{g}} = Q(\tau)^{-1}|T|_g \leq \frac{P(\tau)}{Q(\tau)},
    \end{equation}
    after rescaling.

We now consider the evolution equation of the scalar curvature $\widetilde{R}$ along a reasonable flow of \Sp-structures. This contains terms of the form $|\widetilde{\Ric}|$, and integrating this equation will provide the $L^2$-bound on $\widetilde\Ric$. 
We have that, for a metric evolving according to the evolution equation $\ptt g = 2h$, the induced evolution equation for the scalar curvature $R$ is \cite[Lemma 2.7]{ChowLuNi}
\begin{equation}\label{eqScalEvolution}
    \ptt R = -2 \Delta (\operatorname{tr} h) + 2 \nabla_i\nabla_jh_{ij} - 2h_{ij}\Ric_{ij}.
\end{equation}
Using \eqref{eqReasonableMetric}, we write $\ptt \tilde{g} = -\widetilde\Ric_{ij}+\widetilde{E}_{ij}$, where $E$ represents the lower order torsion terms.
Plugging this into \eqref{eqScalEvolution}, we have
\begin{equation}\label{eqPluggedin}
    \ptt \widetilde{R}
=
2\widetilde\Delta \widetilde R
-2\widetilde\Delta\bigl(\operatorname{tr}_{\widetilde g}\widetilde E\bigr)
-2\widetilde\nabla_i\widetilde\nabla_j \widetilde{\Ric}_{ij}
+2\widetilde\nabla_i\widetilde\nabla_j \widetilde E_{ij}
+2\lvert \widetilde{\Ric}\rvert_{\widetilde g}^{2}
-2\widetilde E_{ij}\widetilde{\Ric}_{ij}.
\end{equation}
Now, using the contracted Bianchi identity, we have that 
\[
2\widetilde\nabla_i\widetilde\nabla_j \widetilde{\Ric}_{ij} = \widetilde{\Delta}\widetilde{R}
\]
and so \eqref{eqPluggedin} simplifies to 
\begin{equation}\label{eqSimplifiedByBianchi}
        \ptt \widetilde{R}
=
\widetilde\Delta \widetilde R
-2\widetilde\Delta\bigl(\operatorname{tr}_{\widetilde g}\widetilde E\bigr)
+2\widetilde\nabla_i\widetilde\nabla_j \widetilde E_{ij}
+2\lvert \widetilde{\Ric}\rvert_{\widetilde g}^{2}
-2\widetilde E_{ij}\widetilde{\Ric}_{ij}.
\end{equation}
The error term $\widetilde{E}$ satisfies
\begin{equation}\label{eqRescaledE}
    \widetilde{E} = \frac{L(\widetilde{T}) + \widetilde{T}*\widetilde{T}}{Q(\tau)}
\end{equation}
and the torsion derivative terms are all controlled by $C\frac{P(\tau)}{Q(\tau)}$ by the Shi-type estimates. Finally, the term $\widetilde{E}_{ij}\widetilde{\Ric}_{ij}$ is controlled by
\begin{equation}
    \frac{|L(\widetilde{T}) + \widetilde{T}*\widetilde{T}|}{Q(\tau)}|\widetilde{\Ric}| \leq C|\widetilde{\Ric}|^2 + C\frac{P(\tau)}{Q(\tau)},
\end{equation}
by an application of Young's inequality. Combining these estimates and rearranging, we obtain
\begin{equation}\label{eqEffectiveInequalityRtilde}
    \left|\left(\frac{\partial}{\partial s} - \widetilde{\Delta} \right) \widetilde{R} - 2|\widetilde{\Ric}|_{\tilde{g}}^2 \right | \leq |\widetilde{\Ric}|^2_{\tilde{g}}+ C\frac{P(\tau)}{Q(\tau)}.
\end{equation}
We now aim to use this to obtain an $L^2$-bound for $\widetilde{\Ric}$.
We define a cut-off function $\chi$, depending on space and time, satisfying the following conditions:
\begin{itemize}
    \item $\chi \equiv 1$ on $P_{1/2}(p,0)$,
    \item$\chi \equiv 0$ outside of $P_{1}(p,0)$,
    \item $|\nabla \chi| +|\partial_s\chi| + |\Delta \chi| \leq C$.
\end{itemize}
From the fundamental theorem of calculus and integration by parts (integrating with respect to a fixed background measure and using that $\chi_{s=-1} = 0$), we have that
\begin{align}\label{eqIntegrationbyparts}
    \left(\int_M\chi \widetilde{R}\right)\Bigg|_{s=0}  &= \int_{-1}^{0}\int_M \frac{\partial}{\partial s} (\chi \widetilde{R})\d s\\
                            &= \int_{-1}^{0}\int_M\widetilde{R}\left(\frac{\partial}{\partial s} + \widetilde\Delta \right)\chi + \chi\left(\frac{\partial}{\partial s} - \widetilde\Delta \right) \widetilde{R} \d s. \label{eqSecondLine}
\end{align}
Now, from \eqref{eqEffectiveInequalityRtilde}, we have that 
\[
\chi |\widetilde\Ric|^2 \leq \chi \left(\frac{\partial}{\partial s} - \widetilde\Delta \right) \widetilde{R} + C\frac{P(\tau)}{Q(\tau)}\chi.
\]
Integrating this inequality and combining with \eqref{eqSecondLine} gives
\begin{equation}
    \int_{-1}^{0}\int_M\chi|\widetilde{\Ric}|^2\d s \leq \left(\int_M\chi \widetilde{R}\right)\Bigg|_{s=0} - \int_{-1}^{0}\int_M\widetilde{R}\left(\frac{\partial}{\partial s} + \widetilde\Delta \right)\chi + C\frac{P(\tau)}{Q(\tau)}.
\end{equation}
Now, the first term on the right hand side is bounded by $C\frac{P(\tau)}{Q(\tau)}$ since $\chi$ is compactly supported and $\widetilde{R}$ is bounded by $C\frac{P(\tau)}{Q(\tau)}$. The second is bounded by the bound on $\widetilde{R}$ together with the third bullet point in the definition of $\chi$ above. So, the right hand side is uniformly bounded by $\frac{P(\tau)}{Q(\tau)}$. Finally, we have that
\begin{equation}\label{eqL2bound}
    \int_{P_{1/2}(p,0)}|\widetilde{\Ric}|_{\tilde{g}}^2\leq  \int_{-1}^{0}\int_M\chi|\widetilde{\Ric}|^2\d t\leq C \frac{P(\tau)}{Q(\tau)},
\end{equation}
yielding the required $L^2$-bound on $\widetilde{\Ric}$.

We now obtain an effective evolution equation for the Ricci tensor. Recall that, for a Riemannian metric $g$ evolving by $\ptt g = 2h$, the induced evolution equation for the Ricci tensor is \cite[Equation 2.31]{ChowLuNi}
\begin{equation}\label{eqRicciEvolution}
\frac{\partial}{\partial t}\Ric_{ij}
=
\nabla_k\nabla_i h_{jk}
+
\nabla_k\nabla_j h_{ik}
-
\Delta h_{ij}
-
\nabla_i\nabla_j(\operatorname{tr}_g h).
\end{equation}
Substituting the equation for the evolution of the metric under a reasonable flow \eqref{eqReasonableMetric} and arguing as we did for the scalar curvature to bound the lower order terms, we see that

\begin{equation}
    \left (\frac{\partial}{\partial s}  - \widetilde{D}\right)|\widetilde{ \Ric}|^2_{\tilde{g}} \leq C \frac{P(\tau)}{Q(\tau)},
\end{equation}
for some elliptic operator $\widetilde{D}$ with bounded coefficients and coefficients whose derivatives are bounded.
An application of the parabolic mean value inequality then gives
\begin{equation}
    |\widetilde{\Ric}|^2_{\tilde{g}} \leq C  \int_{P_{1/2}(p,0)}|\widetilde{\Ric}|_{\tilde{g}}^2 +  C \frac{P(\tau)}{Q(\tau)} \leq C \frac{P(\tau)}{Q(\tau)},
\end{equation}
where the second inequality follows from the $L^2$-bound \eqref{eqL2bound}.
Undoing the rescaling, writing this in terms of the original metric and taking a supremum, we get precisely
\begin{equation}
    \sup_{t\leq t_0}\sup_{x\in M}\left (|\Ric(x,t)|_{g(t)}\right ) \leq C \sqrt{P(t_0)Q(t_0)},
\end{equation}
as required.
\end{proof}
If, additionally, we have that 
\begin{equation}
    \sup_M (|R| + |T|^2) = o\left( \frac{1}{T-t}\right ),
\end{equation}
meaning that,
\begin{equation}
    \lim_{t\to T_0}(T_0-t)\sup_M (|R| + |T|^2) = 0,
\end{equation}
then we can show that any blow-up limit at a finite time is a maximum volume growth \Sp-manifold. More precisely, we have the following theorem. 
\begin{thm}\label{ThmMaximalVolumeGrowthSpin7Manifold}
Let $\Phi(t)$ be a solution to a reasonable flow of \Sp-structures on a compact manifold $M^8$, on a maximal time interval $[0,T_0)$, for some $T_0<\infty.$
    Assume that
    \begin{equation}\label{assumptionweightedtorsion}
        \int_0^{T_0}(T_0-t)\sup_{x\in M}|T(x,t)|_{g(t)}^4\d t < \infty,
    \end{equation}
    and
    \begin{equation}\label{eqSubType1}
        \sup_{x\in M}\left(|R(x,t)|_{g(t)} + |T(x,t)|_{g(t)}^2 \right) = o\left(\frac{1}{T_0-t} \right) \text{ as } t\to T_0.
    \end{equation}

    Then, there exists a sequence $t_k \to T_0$ and points $p_k\in M$ such that:
    \begin{enumerate}
        \item\label{item1} A blow up occurs:
        \begin{equation}
            Q_k \coloneqq \left(|\Riem(p_k,t_k)|_{g(t_k)}+|T(p_k,t_k)|_{g(t_k)}^2+|\nabla T(p_k,t_k)|_{g(t_k)}\right) \to \infty;
        \end{equation}
        \item\label{item2} The rescaled limit exists:
        \begin{equation}
            (M,\Phi_k,g_k,p_k) = (M, Q_k^2\Phi(t_k), Q_kg(t_k),p_k)
        \end{equation}
        subconverges to a complete limit
        \begin{equation}
            (M,\Phi_\infty,g_\infty,p_\infty);
        \end{equation}

        \item\label{item3} The limit is torsion-free:
        \begin{equation}
            T_\infty = 0;
        \end{equation}
        \item\label{item4} The limit has maximal volume growth:
        \begin{equation}
            \Vol_{g_{\infty}}(B_{g_\infty}(p_\infty,r)) \geq \kappa r^8,
        \end{equation}
        for some $\kappa>0$ and all $r>0$.
    \end{enumerate}
\end{thm}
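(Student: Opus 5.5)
We follow the blow-up argument of \cite[Theorem 5.4]{Chen}, feeding in the ingredients developed above. Define $Q(t_0)$ as in \eqref{Qdef}. By Theorem \ref{ThmLongTime}, $Q(t)\to\infty$ as $t\to T_0$. We therefore choose times $t_k\nearrow T_0$ and points $p_k$ at which the spacetime supremum defining $Q(t_k)$ is attained, so that $Q_k=Q(t_k)\to\infty$. This gives item \ref{item1}.

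We then consider the parabolically rescaled flows $\Phi_k(s)=Q_k^2\Phi(t_k+Q_k^{-1}s)$ with metrics $g_k(s)=Q_kg(t_k+Q_k^{-1}s)$, for $s\in[-t_kQ_k,0]$. By the choice of $(p_k,t_k)$ and the scaling of $\Riem$, $T$ and $\nabla T$, the rescaled quantity $|\Riem_k|+|T_k|^2+|\nabla T_k|$ is bounded by $1$ on $M\times[-t_kQ_k,0]$ and equals $1$ at $(p_k,0)$. Theorem \ref{ThmShiType} then bounds all higher derivatives of $\Riem_k$ and $T_k$ on $[-A,0]$ for each fixed $A$.

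For item \ref{item2} we need an injectivity radius bound at $p_k$, and here we use $\kappa$-non-collapsing. The metric evolution \eqref{eqReasonableMetric} has the form $\ptt g=-2\Ric+E$ with $|E|\le C(|T|+|T|^2+1)$, so $|E|^2\le C(|T|^4+1)$. Assumption \eqref{assumptionweightedtorsion} then makes the integral $I(t_k,\rho)$ in \eqref{kappacondition} finite for $\rho\le\rho_0$. Theorem \ref{kappaNonCollapsed} gives that $g(t_k)$ is $\kappa$-non-collapsed relative to upper scalar curvature bounds on a fixed scale $\rho_0$, with $\kappa$ independent of $k$. Rescaling, $g_k(0)$ is $\kappa$-non-collapsed on the scale $\rho_0Q_k^{1/2}\to\infty$. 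Since $|R_k|\le C$, any ball $B_{g_k(0)}(x,r)$ with $r\le c$ small satisfies $\sup R_k\le r^{-2}$, so $\Vol B_{g_k(0)}(p_k,r)\ge\kappa r^8$. Combined with $|\Riem_k|\le1$, the Cheeger--Gromov--Taylor theorem \cite[Theorem 5.42]{ChowLuNi} gives $\inj(M,g_k(0),p_k)\ge c>0$. Theorem \ref{thmCompactnessSpaceofSolutions} then yields a subsequence converging to a complete limit flow, and in particular a complete limit $(M_\infty,\Phi_\infty,g_\infty,p_\infty)$ at $s=0$.

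Item \ref{item3} uses \eqref{eqSubType1}. By Theorem \ref{ThmLongTime}, $Q_k\ge C/(T_0-t_k)$. Hence
\[
\sup_M|T_k(\cdot,0)|^2=Q_k^{-1}\sup_M|T(\cdot,t_k)|^2\le C(T_0-t_k)\sup_M|T(\cdot,t_k)|^2\to0.
\]
The same estimate shows $R_k(\cdot,0)\to0$ uniformly, so $T_\infty=0$ and $R_\infty=0$.

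Item \ref{item4} is the main obstacle, because the volume bound must hold for all $r>0$ and not just for bounded radii. For fixed $r$ and $k$ large, $r<\rho_0Q_k^{1/2}$, and by the previous step $\sup_{B_{g_k(0)}(p_k,r)}R_k\le o(1)\le r^{-2}$, using the $o(1/(T_0-t))$ hypothesis rather than mere boundedness of $R_k$. $\kappa$-non-collapsing then gives $\Vol_{g_k(0)}B_{g_k(0)}(p_k,r)\ge\kappa r^8$. Smooth convergence on compact sets passes this to $g_\infty$, giving $\Vol_{g_\infty}B_{g_\infty}(p_\infty,r)\ge\kappa r^8$ for every $r>0$.

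The delicate points are two. First, $\kappa$ must genuinely be independent of $k$; this requires $t_k\ge T_0/2$ and a uniform bound on $I(t_k,\rho_0)$, which follows from $(t_k+\rho_0^2-t)\le (T_0-t)+\rho_0^2$ together with \eqref{assumptionweightedtorsion} and compactness of $M$. Second, the scalar curvature condition must be verified on the whole ball and not only at its centre, which is why we need uniform-in-space $o(1/(T_0-t))$ control.
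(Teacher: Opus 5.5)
There is a genuine gap: your claim that $g(t_k)$ is $\kappa$-non-collapsed on a \emph{fixed} scale $\rho_0$, with $\kappa$ independent of $k$, does not follow from the hypotheses. Your justification uses $(t_k+\rho_0^2-t)\le (T_0-t)+\rho_0^2$, which splits $I(t_k,\rho_0)$ into two pieces. The first is $\int_0^{t_k}(T_0-t)\sup_M|E|^2\,\d t$, which \eqref{assumptionweightedtorsion} does bound. The second is $\rho_0^2\int_0^{t_k}\sup_M|E|^2\,\d t$, whose only available bound is $C\rho_0^2\int_0^{t_k}(\sup_M|T|^4+1)\,\d t$. A $(T_0-t)$-weighted hypothesis does not control this uniformly in $k$. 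For instance, $\sup_M|T|^4\sim (T_0-t)^{-3/2}$ is compatible with both \eqref{assumptionweightedtorsion} and \eqref{eqSubType1}, yet $\int_0^{t_k}\sup_M|T|^4\,\d t\to\infty$. Compactness of $M$ only makes $I(t_k,\rho_0)$ finite for each fixed $k$. Since Theorem~\ref{kappaNonCollapsed} produces a $\kappa$ depending on $I$, unbounded $I$ gives no uniform $\kappa$, and hence no non-collapsing of $g_k(0)$ on the scale $\rho_0Q_k^{1/2}$. Item 4 therefore does not follow as you wrote it.

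The hypothesis \eqref{assumptionweightedtorsion} is tailored to the scale $\rho=\sqrt{T_0-t_k}$ (with $\rho_0=\sqrt{T_0}$ and $t_k\ge T_0/2$). For this $\rho$, $t_k+\rho^2-t=T_0-t$ exactly, so $I$ is uniformly bounded, and after rescaling the non-collapsing scale is $\sqrt{Q_k(T_0-t_k)}$. With your choice of $t_k$ (any times with $Q(t_k)\to\infty$), Theorem~\ref{ThmLongTime} only gives $Q_k(T_0-t_k)\ge c>0$. That is enough for non-collapsing at small radii, so your injectivity radius bound and item 2 survive, but it is not enough for $\Vol\ge\kappa r^8$ for all $r>0$.

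The missing step, which is exactly how the paper proceeds, is to select $t_k$ with $(T_0-t_k)Q(t_k)\to\infty$. By \eqref{eqSubType1}, $(T_0-t)P(t)\to0$, while Theorem~\ref{ThmLimSups} gives $\limsup_{t\to T_0}(T_0-t)^2Q(t)P(t)>0$; hence $\limsup_{t\to T_0}(T_0-t)Q(t)=\infty$. Moving each $t_k$ back to the time at which the spacetime supremum defining $Q(t_k)$ is attained only increases $(T_0-t_k)Q(t_k)$. So \eqref{eqSubType1} is needed not just for item 3 and the scalar-curvature check, but to force faster-than-Type-I growth of $Q$ along the sequence. With this selection, your arguments for items 3 and 4 go through with $\rho_0Q_k^{1/2}$ replaced by $\sqrt{Q_k(T_0-t_k)}$.

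A minor point: Theorem~\ref{thmCompactnessSpaceofSolutions} needs a time interval $(a,b)$ with $b>0$. Either extend the rescaled flows slightly forward via Proposition~\ref{PropDoublingTime}, or, as the paper does, apply Theorem~\ref{ThmSpin7Compact} at the single time $s=0$.
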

\begin{proof}
    By Theorem \ref{ThmLimSups}, we have that
    \begin{equation}
        \limsup_{t_0 \to T_0}\left[(T_0-t_0)^2Q(t_0)P(t_0) \right] >0.
    \end{equation}
    Assumption \eqref{eqSubType1} gives that
    \begin{equation}
        (T_0-t)P(t) \to 0 \text{ as } t\to T_0,
    \end{equation}
    and so, for the product $(T_0-t)^2P(t)Q(t)$ to remain positive as $t\to T_0$, we must have that
    \begin{equation}
        (T_0-t)Q(t)\to \infty,
    \end{equation}
    as $t\to T_0$. So, there exists a sequence $t_k \to T_0$ and points $p_k\in M$ satisfying item \ref{item1} of the theorem.

    For item \ref{item2}, we start by choosing a sequence of times $t_k\to T_0$ such that
    \begin{equation}
        (T_0-t_k)Q(t_k) \to \infty \text{ as } t_k \to T_0.
    \end{equation}
    We define fixed, rescaled metrics and \Sp-structures at these times, in the following way:
    \begin{equation}
        g_k = Q_kg(t_k), \quad \Phi_k = Q_k^2\Phi(t_k).
    \end{equation}
    Under this rescaling, we have that
    \begin{equation}
        |\Riem_k|_k + |T_k|^2_k + |\nabla_kT_k|_k \leq C,
    \end{equation}
    for some $C < \infty$, where the subscripts $k$ refer to tensors and norms induced by the \Sp-structure $\Phi_k$, at the point $(p_k,t_k)$.
    Because of this, Theorem \ref{ThmShiType} applies, and we have
    \begin{equation}
        |\nabla_k^m\Riem_k|_k+ |\nabla^{m+1}_kT_k|_k \leq C_m,
    \end{equation}
    for all $m\geq 0$ and for some $C_m<\infty.$
    Moreover, by assumption \eqref{assumptionweightedtorsion}, Theorem \ref{kappaNonCollapsed} gives that the flow is $\kappa$-non-collapsed on the scale $\sqrt{T_0-t}$. After rescaling as above, distances scale by $\sqrt{Q_k}$, and so the rescaled non-collapsing scale becomes 
    \begin{equation}\label{eqnoncollapsingonascalegoingtoinfinity}
        \sqrt{Q_k(T_0-t_k)} \to \infty.
    \end{equation}
    Thus, for any fixed $r>0$, we have that
    \begin{equation}
        \Vol_{g_k}(B_{g_k}(p_k,r))\geq \kappa r^8,
    \end{equation}
    for some $\kappa>0$. Since curvature is uniformly bounded and volumes of unit balls are bounded below, we have a uniform injectivity radius lower bound:
    \begin{equation}
        \inj(M,g_k,p_k)\geq\iota>0,
    \end{equation}
    for some $\iota>0$. Thus, all of the conditions of Theorem \ref{ThmSpin7Compact} are satisfied, and so there exists a subsequence of $\{t_k\}$ such that
    \begin{equation}
            (M,\Phi_k,g_k,p_k) = (M, Q_k^2\Phi(t_k), Q_kg(t_k),p_k)
        \end{equation}
         converges to a complete limit
        \begin{equation}
            (M,\Phi_\infty,g_\infty,p_\infty),
        \end{equation}
    in the sense defined in Section \ref{SectionCompactness}, hence proving item \ref{item2}.

    For item \ref{item3}, we need to show that the limiting \Sp-structure $\Phi_\infty$ is torsion-free. This follows directly from how we rescaled $\Phi$. Indeed, under the rescaling, we have that
    \begin{equation}
        |T_k|^2_k = Q_k^{-1}|T|^2,
    \end{equation}
    which clearly tends to $0$ as $t_k\to T_0,$ proving item \ref{item3}.

    Finally, since non-collapsing holds on arbitrarily large scales in the rescaled metrics \eqref{eqnoncollapsingonascalegoingtoinfinity}, the limit satisfies
    \begin{equation}
            \Vol_{g_{\infty}}(B_{g_\infty}(p_\infty,r)) \geq \kappa r^8,
        \end{equation}
        for some $\kappa>0$ and all $r>0$,
    ending the proof of item \ref{item4}, and thus the theorem.   
\end{proof}

\section{Outlook}
The results obtained in this paper motivate several potential directions for future work. We briefly discuss some of them here.

Firstly, the general theory obtained for reasonable flows, and the fact that the Ricci-harmonic flow is reasonable (Example \ref{propRHFreasonable}), suggests that the Ricci-harmonic flow, as well as any other flows that turn out to be reasonable, may be useful tools in the study of \Sp-structures more broadly. In particular, it would be interesting to study the question of dynamical stability in order to move towards the use of this flow to tackle the question of existence of torsion-free \Sp-structures. Along similar lines, obtaining new examples of explicit solutions in more complicated settings than Example \ref{exampleRHF} would be valuable, to further our understanding of how these flows behave.

Additionally, we have seen in Theorem \ref{ThmMaximalVolumeGrowthSpin7Manifold} that blow-up limits at finite-time singularities are modelled on maximal volume growth torsion-free \Sp-structures. It would be interesting to see if any of the known examples (e.g., \cite{BryantSalamon}) can arise as finite-time singularities of a carefully constructed reasonable flow.

It would be very desirable to find conditions on the initial \Sp-structure under which a particular reasonable flow exists for all time and converges to a torsion-free \Sp-structure. Motivated by Joyce’s existence theorem for torsion-free \Sp-structures \cite{Joyce1996}, one may expect that a suitable smallness condition on the initial torsion might suffice. Such a result would provide a parabolic approach to Joyce’s existence theory, and would clarify the extent to which the flow can be used as an analytic tool in the study of \Sp-geometry.

\printbibliography

@article{Dwivedi24,
    author = {Dwivedi, Shubham},
    title = {A gradient flow of Spin(7)-structures},
    journal = {The Quarterly Journal of Mathematics},
    pages = {haaf018},
    year = {2025},
    month = {07},
    issn = {0033-5606},
    doi = {10.1093/qmath/haaf018},
    url = {https://doi.org/10.1093/qmath/haaf018},
    
}

@misc{DwivediRHF,
      title={Ricci-harmonic flow of $\mathrm{G}_2$ and Spin(7)-structures}, 
      author={Shubham Dwivedi},
      year={2026},
      eprint={2601.05210},
      archivePrefix={arXiv},
      primaryClass={math.DG},
      url={https://arxiv.org/abs/2601.05210}, 
}

@article {KarigiannisDeformations,
    AUTHOR = {Karigiannis, Spiro},
     TITLE = {Deformations of {$G_2$} and {${\rm Spin}(7)$} structures},
   JOURNAL = {Canad. J. Math.},
  FJOURNAL = {Canadian Journal of Mathematics. Journal Canadien de
              Math\'ematiques},
    VOLUME = {57},
      YEAR = {2005},
    NUMBER = {5},
     PAGES = {1012--1055},
      ISSN = {0008-414X,1496-4279},
   MRCLASS = {53C29 (53C10)},
  MRNUMBER = {2164593},
MRREVIEWER = {Andrew\ Swann},
       DOI = {10.4153/CJM-2005-039-x},
       URL = {https://doi.org/10.4153/CJM-2005-039-x},
}

@inproceedings{KarigiannisFlows,
   title={Flows of Spin(7)-structures},
   url={http://dx.doi.org/10.1142/9789812790613_0023},
   DOI={10.1142/9789812790613_0023},
   booktitle={Differential Geometry and Its Applications},
   publisher={World Scientific},
   author={Karigiannis, Spiro},
   year={2008},
   month=jul }

@book{JoyceBook,
    author = {Joyce, Dominic D},
    title = {Riemannian Holonomy Groups and Calibrated Geometry},
    publisher = {Oxford University Press},
    year = {2007},
    month = {02},
    isbn = {9780199215607},
    doi = {10.1093/oso/9780199215607.001.0001},
    url = {https://doi.org/10.1093/oso/9780199215607.001.0001},
}

@article {Bonan,
    AUTHOR = {Bonan, Edmond},
     TITLE = {Sur des vari\'et\'es riemanniennes \`a{} groupe d'holonomie
              {$G\sb{2}$}\ ou Spin{$(7)$}},
   JOURNAL = {C. R. Acad. Sci. Paris S\'er. A-B},
  FJOURNAL = {Comptes Rendus Hebdomadaires des S\'eances de l'Acad\'emie des
              Sciences. S\'eries A et B},
    VOLUME = {262},
      YEAR = {1966},
     PAGES = {A127--A129},
      ISSN = {0151-0509},
   MRCLASS = {53.70 (53.55)},
  MRNUMBER = {196668},
MRREVIEWER = {A.\ G.\ Walker},
}

@book {ChowRicci,
    AUTHOR = {Chow, Bennett and Knopf, Dan},
     TITLE = {The {R}icci flow: an introduction},
    SERIES = {Mathematical Surveys and Monographs},
    VOLUME = {110},
 PUBLISHER = {American Mathematical Society, Providence, RI},
      YEAR = {2004},
     PAGES = {xii+325},
      ISBN = {0-8218-3515-7},
   MRCLASS = {53C44 (35K60 53C21)},
  MRNUMBER = {2061425},
MRREVIEWER = {John\ Urbas},
       DOI = {10.1090/surv/110},
       URL = {https://doi.org/10.1090/surv/110},
}

@article {DLE24,
    AUTHOR = {Dwivedi, Shubham and Loubeau, Eric and S\'a{} Earp, Henrique},
     TITLE = {Harmonic flow of {${\rm Spin}(7)$}-structures},
   JOURNAL = {Ann. Sc. Norm. Super. Pisa Cl. Sci. (5)},
  FJOURNAL = {Annali della Scuola Normale Superiore di Pisa. Classe di
              Scienze. Serie V},
    VOLUME = {25},
      YEAR = {2024},
    NUMBER = {1},
     PAGES = {151--215},
      ISSN = {0391-173X,2036-2145},
   MRCLASS = {53C27 (53C29 53C43 53E99 58E20 58J35 58J60)},
  MRNUMBER = {4732637},
MRREVIEWER = {Roger\ Nakad},
       DOI = {10.2422/2036-2145.202111\_026},
       URL = {https://doi.org/10.2422/2036-2145.202111_026},
}

@article {Chen,
    AUTHOR = {Chen, Gao},
     TITLE = {Shi-type estimates and finite-time singularities of flows of
              {${\rm G}_2$} structures},
   JOURNAL = {Q. J. Math.},
  FJOURNAL = {The Quarterly Journal of Mathematics},
    VOLUME = {69},
      YEAR = {2018},
    NUMBER = {3},
     PAGES = {779--797},
      ISSN = {0033-5606,1464-3847},
   MRCLASS = {53C44 (53C29)},
  MRNUMBER = {3859207},
MRREVIEWER = {Yong\ Wei},
       DOI = {10.1093/qmath/hax060},
       URL = {https://doi.org/10.1093/qmath/hax060},
}

@article {Lotay-Wei,
    AUTHOR = {Lotay, Jason D. and Wei, Yong},
     TITLE = {Laplacian flow for closed {${\rm G}_2$} structures: {S}hi-type
              estimates, uniqueness and compactness},
   JOURNAL = {Geom. Funct. Anal.},
  FJOURNAL = {Geometric and Functional Analysis},
    VOLUME = {27},
      YEAR = {2017},
    NUMBER = {1},
     PAGES = {165--233},
      ISSN = {1016-443X,1420-8970},
   MRCLASS = {53C44 (53C10 53C25 53C29)},
  MRNUMBER = {3613456},
MRREVIEWER = {Marco\ Freibert},
       DOI = {10.1007/s00039-017-0395-x},
       URL = {https://doi.org/10.1007/s00039-017-0395-x},
}

@misc{perelman2002entropyformularicciflow,
      title={The entropy formula for the Ricci flow and its geometric applications}, 
      author={Grisha Perelman},
      year={2002},
      eprint={math/0211159},
      archivePrefix={arXiv},
      primaryClass={math.DG},
      url={https://arxiv.org/abs/math/0211159}, 
}

@misc{perelman2003ricciflowsurgerythreemanifolds,
      title={Ricci flow with surgery on three-manifolds}, 
      author={Grisha Perelman},
      year={2003},
      eprint={math/0303109},
      archivePrefix={arXiv},
      primaryClass={math.DG},
      url={https://arxiv.org/abs/math/0303109}, 
}

@article{Berger,
     author = {Berger, Marcel},
     title = {Sur les groupes d'holonomie homog\`enes de vari\'et\'es \`a connexion affine et des vari\'et\'es riemanniennes},
     journal = {Bulletin de la Soci\'et\'e Math\'ematique de France},
     pages = {279--330},
     publisher = {Soci\'et\'e math\'ematique de France},
     volume = {83},
     year = {1955},
     doi = {10.24033/bsmf.1464},
     mrnumber = {18,149a},
     zbl = {0068.36002},
     language = {fr},
     url = {https://www.numdam.org/articles/10.24033/bsmf.1464/}
}

@article {BryantSalamon,
    AUTHOR = {Bryant, Robert L. and Salamon, Simon M.},
     TITLE = {On the construction of some complete metrics with exceptional
              holonomy},
   JOURNAL = {Duke Math. J.},
  FJOURNAL = {Duke Mathematical Journal},
    VOLUME = {58},
      YEAR = {1989},
    NUMBER = {3},
     PAGES = {829--850},
      ISSN = {0012-7094,1547-7398},
   MRCLASS = {53C25 (53C57)},
  MRNUMBER = {1016448},
MRREVIEWER = {Krzysztof\ Galicki},
       DOI = {10.1215/S0012-7094-89-05839-0},
       URL = {https://doi.org/10.1215/S0012-7094-89-05839-0},
}

@article {Joyce1996,
    AUTHOR = {Joyce, D. D.},
     TITLE = {Compact {$8$}-manifolds with holonomy {${\rm Spin}(7)$}},
   JOURNAL = {Invent. Math.},
  FJOURNAL = {Inventiones Mathematicae},
    VOLUME = {123},
      YEAR = {1996},
    NUMBER = {3},
     PAGES = {507--552},
      ISSN = {0020-9910,1432-1297},
   MRCLASS = {53C25 (53C20)},
  MRNUMBER = {1383960},
MRREVIEWER = {Claude\ LeBrun},
       DOI = {10.1007/s002220050039},
       URL = {https://doi.org/10.1007/s002220050039},
}

@book{Chow-Knopf,
	author = {Chow, Bennett and Knopf, Dan},
	fseries = {Mathematical Surveys and Monographs},
	isbn = {0-8218-3515-7},
	issn = {0076-5376},
	language = {English},
	publisher = {Providence, RI: American Mathematical Society (AMS)},
	series = {Math. Surv. Monogr.},
	title = {The {Ricci} flow: an introduction},
	volume = {110},
	year = {2004},
	zbl = {1086.53085},
	zbmath = {2121403}}

@book {CCGGIIKLLN2,
    AUTHOR = {Chow, Bennett and Chu, Sun-Chin and Glickenstein, David and
              Guenther, Christine and Isenberg, James and Ivey, Tom and
              Knopf, Dan and Lu, Peng and Luo, Feng and Ni, Lei},
     TITLE = {The {R}icci flow: techniques and applications. {P}art {II}},
    SERIES = {Mathematical Surveys and Monographs},
    VOLUME = {144},
      NOTE = {Analytic aspects},
 PUBLISHER = {American Mathematical Society, Providence, RI},
      YEAR = {2008},
     PAGES = {xxvi+458},
      ISBN = {978-0-8218-4429-8},
   MRCLASS = {53C44 (35K55 53C21)},
  MRNUMBER = {2365237},
MRREVIEWER = {James\ Alexander\ McCoy},
       DOI = {10.1090/surv/144},
       URL = {https://doi.org/10.1090/surv/144},
}

@book {CCGGIIKLLN1,
    AUTHOR = {Chow, Bennett and Chu, Sun-Chin and Glickenstein, David and
              Guenther, Christine and Isenberg, James and Ivey, Tom and
              Knopf, Dan and Lu, Peng and Luo, Feng and Ni, Lei},
     TITLE = {The {R}icci flow: techniques and applications. {P}art {I}},
    SERIES = {Mathematical Surveys and Monographs},
    VOLUME = {135},
      NOTE = {Geometric aspects},
 PUBLISHER = {American Mathematical Society, Providence, RI},
      YEAR = {2007},
     PAGES = {xxiv+536},
      ISBN = {978-0-8218-3946-1},
   MRCLASS = {53C44 (35K55 53C21 57M40)},
  MRNUMBER = {2302600},
MRREVIEWER = {James\ Alexander\ McCoy},
       DOI = {10.1090/surv/135},
       URL = {https://doi.org/10.1090/surv/135},
}

@article {Shi,
    AUTHOR = {Shi, Wan-Xiong},
     TITLE = {Deforming the metric on complete {R}iemannian manifolds},
   JOURNAL = {J. Differential Geom.},
  FJOURNAL = {Journal of Differential Geometry},
    VOLUME = {30},
      YEAR = {1989},
    NUMBER = {1},
     PAGES = {223--301},
      ISSN = {0022-040X,1945-743X},
   MRCLASS = {58G30 (53C20 58D25 58G11)},
  MRNUMBER = {1001277},
MRREVIEWER = {Friedbert\ Pr\"ufer},
       URL = {http://projecteuclid.org/euclid.jdg/1214443292},
}

@article {Hamilton3Manifolds,
    AUTHOR = {Hamilton, Richard S.},
     TITLE = {Three-manifolds with positive {R}icci curvature},
   JOURNAL = {J. Differential Geometry},
  FJOURNAL = {Journal of Differential Geometry},
    VOLUME = {17},
      YEAR = {1982},
    NUMBER = {2},
     PAGES = {255--306},
      ISSN = {0022-040X,1945-743X},
   MRCLASS = {53C25 (35K55 58G30)},
  MRNUMBER = {664497},
MRREVIEWER = {J.\ L.\ Kazdan},
       URL = {http://projecteuclid.org/euclid.jdg/1214436922},
}

@article {HamiltonCompactness,
    AUTHOR = {Hamilton, Richard S.},
     TITLE = {A compactness property for solutions of the {R}icci flow},
   JOURNAL = {Amer. J. Math.},
  FJOURNAL = {American Journal of Mathematics},
    VOLUME = {117},
      YEAR = {1995},
    NUMBER = {3},
     PAGES = {545--572},
      ISSN = {0002-9327,1080-6377},
   MRCLASS = {53C21 (58E11 58G30)},
  MRNUMBER = {1333936},
MRREVIEWER = {Ben\ Andrews},
       DOI = {10.2307/2375080},
       URL = {https://doi.org/10.2307/2375080},
}

@inproceedings {SalamonWalpuski,
    AUTHOR = {Salamon, Dietmar A. and Walpuski, Thomas},
     TITLE = {Notes on the octonions},
 BOOKTITLE = {Proceedings of the {G}\"okova {G}eometry-{T}opology
              {C}onference 2016},
     PAGES = {1--85},
 PUBLISHER = {G\"okova Geometry/Topology Conference (GGT), G\"okova},
      YEAR = {2017},
      ISBN = {978-1-57146-340-1},
   MRCLASS = {53C29 (15-01 17A35 53C38)},
  MRNUMBER = {3676083},
MRREVIEWER = {Frank\ Reidegeld},
}

@incollection {HamiltonSingularities,
    AUTHOR = {Hamilton, Richard S.},
     TITLE = {The formation of singularities in the {R}icci flow},
 BOOKTITLE = {Surveys in differential geometry, {V}ol.\ {II} ({C}ambridge,
              {MA}, 1993)},
     PAGES = {7--136},
 PUBLISHER = {Int. Press, Cambridge, MA},
      YEAR = {1995},
      ISBN = {1-57146-027-6},
   MRCLASS = {53C21 (58G30)},
  MRNUMBER = {1375255},
MRREVIEWER = {Man\ Chun\ Leung},
}

@book {ChowLuNi,
    AUTHOR = {Chow, Bennett and Lu, Peng and Ni, Lei},
     TITLE = {Hamilton's {R}icci flow},
    SERIES = {Graduate Studies in Mathematics},
    VOLUME = {77},
 PUBLISHER = {American Mathematical Society, Providence, RI; Science Press
              Beijing, New York},
      YEAR = {2006},
     PAGES = {xxxvi+608},
      ISBN = {978-0-8218-4231-7},
   MRCLASS = {53C44 (35K55 53C21 57M40 57M50)},
  MRNUMBER = {2274812},
MRREVIEWER = {James\ Alexander\ McCoy},
       DOI = {10.1090/gsm/077},
       URL = {https://doi.org/10.1090/gsm/077},
}

@misc{Duthie2025,
      title={Explicit solutions to the gradient flow of Spin(7)-structures}, 
      author={Joseph Duthie},
      year={2025},
      eprint={2511.17356},
      archivePrefix={arXiv},
      primaryClass={math.DG},
      url={https://arxiv.org/abs/2511.17356}, 
}

@article {JoyceSpin7,
    AUTHOR = {Joyce, D. D.},
     TITLE = {Compact {$8$}-manifolds with holonomy {${\rm Spin}(7)$}},
   JOURNAL = {Invent. Math.},
  FJOURNAL = {Inventiones Mathematicae},
    VOLUME = {123},
      YEAR = {1996},
    NUMBER = {3},
     PAGES = {507--552},
      ISSN = {0020-9910,1432-1297},
   MRCLASS = {53C25 (53C20)},
  MRNUMBER = {1383960},
MRREVIEWER = {Claude\ LeBrun},
       DOI = {10.1007/s002220050039},
       URL = {https://doi.org/10.1007/s002220050039},
}

\end{document}